\numberwithin{equation}{subsection}
\newtheorem{theorem}[equation]{Theorem}
\newtheorem*{thm}{Theorem}
\newtheorem{conjecture}[equation]{Conjecture}
\newtheorem{corollary}[equation]{Corollary}
\newtheorem{lemma}[equation]{Lemma}
\newtheorem{proposition}[equation]{Proposition}
\theoremstyle{definition}
\newtheorem{caution}[equation]{Caution}
\newtheorem{construction}[equation]{Construction}
\newtheorem{convention}[equation]{Convention}
\newtheorem{definition}[equation]{Definition}
\newtheorem{example}[equation]{Example}
\newtheorem{notation}[equation]{Notation}
\newtheorem{question}[equation]{Question}
\newtheorem{remark}[equation]{Remark}
\newtheorem{hypo}[equation]{Hypothesis}
\def\bbf{\mathbf{f}}
\def\bbj{\mathbf{j}}
\def\bbk{\mathbf{k}}
\def\bbv{\mathbf{v}}
\def\bbP{\mathbf{P}}
\def\bbS{\mathbf{S}}
\def\bbT{\mathbf{T}}
\def\bbX{\mathbf{X}}
\def\AAA{\mathbb{A}}
\def\FF{\mathbb{F}}
\def\NN{\mathbb{N}}
\def\QQ{\mathbb{Q}}
\def\RR{\mathbb{R}}
\def\TT{\mathbb{T}}
\def\ZZ{\mathbb{Z}}
\def\calE{\mathcal{E}}
\def\calF{\mathcal{F}}
\def\calI{\mathcal{I}}
\def\calO{\mathcal{O}}
\def\calP{\mathcal{P}}
\def\calR{\mathcal{R}}
\def\gothb{\mathfrak{b}}
\def\gothd{\mathfrak{d}}
\def\gothm{\mathfrak{m}}
\def\gotho{\mathfrak{o}}
\def\gothA{\mathfrak{A}}
\def\gothB{\mathfrak{B}}
\def\gothD{\mathfrak{D}}
\def\gothX{\mathfrak{X}}
\renewcommand{\mod}{\mathrm{\,mod\,}}
\newcommand{\serie}[2]{{#11},\dots,{#1{#2}}}
\newcommand{\seriezero}[2]{{#10}, \dots, {#1{#2}}}
\newcommand{\vectzero}[2]{\left(\begin{array}{c}
{#10} \\ \vdots \\ {#1{#2}}
\end{array}\right)}
\newcommand{\Def}{\stackrel{\mathrm{def}}=}
\newcommand{\rar}{\rightarrow}
\newcommand{\lrar}{\longrightarrow}
\newcommand{\Rar}{\Rightarrow}
\newcommand{\inj}{\hookrightarrow}
\newcommand{\surj}{\twoheadrightarrow}
\newcommand{\isom}{\stackrel \sim \rar}
\newcommand{\map}[2]{\stackrel{#2}{#1}}
\newcommand{\bs}{\backslash}
\newcommand{\GL}{\mathrm{GL}}
\newcommand{\Sp}{\mathrm{Sp}}
\newcommand{\Mat}{\mathrm{Mat}}
\newcommand{\rank}{\mathrm{rank}\,}
\newcommand{\id}{\mathrm{id}}
\newcommand{\Ker}{\mathrm{Ker}\,}
\newcommand{\Fil}{\mathrm{Fil}}
\renewcommand{\log}{\mathrm{log}}
\newcommand{\alg}{\mathrm{alg}}
\newcommand{\sep}{\mathrm{sep}}
\newcommand{\pf}{\mathrm{pf}}
\newcommand{\Frac}{\mathrm{Frac}}
\newcommand{\Gal}{\mathrm{Gal}}
\newcommand{\unr}{\mathrm{unr}}
\newcommand{\Swan}{\mathrm{Swan}}
\newcommand{\Art}{\mathrm{Art}}
\newcommand{\Spec}{\mathrm{Spec}\,}
\newcommand{\pr}{\mathrm{pr}}
\newcommand{\geom}{\mathrm{geom}}
\newcommand{\OK}{{\calO_K}}
\newcommand{\Qp}{\QQ_p}
\newcommand{\Zp}{\ZZ_p}
\newcommand{\Fp}{\FF_p}
\newcommand{\Fq}{{\FF_q}}
\renewcommand{\sp}{\mathrm{sp}}
\newcommand{\Spf}{\mathrm{Spf}}
\newcommand{\Max}{\mathrm{Max}}
\newcommand{\dif}{\mathrm{dif}}
\newcommand{\CRP}{\mathtt{CRP}}
\newcommand{\PerfAlg}{\mathtt{PerfAlg}}
\newcommand{\inte}{\mathrm{int}}
\newcommand{\lead}{\mathrm{lead}}
\newcommand{\Lead}{\mathrm{Lead}}
\newcommand{\quot}{\mathrm{quot}}
\begin{document}

\title{On ramification filtrations and $p$-adic differential modules, I: equal characteristic case}

\author{Liang Xiao \\ Department of Mathematics, Room 2-090 \\ Massachusetts Institute of Technology \\ 77 Massachusetts Avenue \\ Cambridge, MA 02139 \\
\texttt{lxiao@math.mit.edu}}

\date{June 24, 2010}

\maketitle

\begin{abstract}
Let $k$ be a complete discretely valued field of equal characteristic $p > 0$ with possibly imperfect residue field, and let $G_k$ be its Galois group. We prove that the conductors computed by the arithmetic ramification filtrations on $G_k$ defined in \cite{AS-cond1} coincide with the differential Artin conductors and Swan conductors of Galois representations of $G_k$ defined in \cite{KSK-Swan1}. As a consequence, we obtain a Hasse-Arf theorem for arithmetic ramification filtrations in this case.  As applications, we obtain a Hasse-Arf theorem for finite flat group schemes; we also give a comparison theorem between the differential Artin conductors and Borger's conductors \cite{Borger-conductor}.
\end{abstract}

\tableofcontents

\setcounter{section}{-1}
\section{Introduction}

Let $k$ be a complete discretely valued field and let $G_k$ be the Galois group of a fixed separable closure $k^\sep$ over $k$.  When the residue field $\kappa_k$ of $k$ is perfect, classical ramification theory gives Artin conductors and Swan conductors, which measure the ramification of representations of $G_k$ of finite local monodromy (i.e., the image of the inertia group being finite).  A fundamental result, the Hasse-Arf theorem, states that Artin and Swan conductors are nonnegative integers.  However, when the residue field $\kappa_k$ is not perfect, the classical ramification theory is no longer applicable.  For one thing, the transition functions $\phi$ and $\psi$ in \cite[\S~IV.3]{BOOK-local-fields} fail the basic properties; for another, the extension of the rings of integers may not be generated by a single element (compare \cite[\S~III.6~Proposition~12]{BOOK-local-fields}).

In \cite{Kato-cond}, Kato defined Swan conductors for one-dimensional representations when the residue field is not perfect.  Later in \cite{AS-cond1, AS-cond2}, Abbes and Saito defined an arithmetic (non-logarithmic) filtration and a logarithmic variant on $G_k$ by counting geometric connected components of certain rigid spaces $as_{l/k}^a$ and $as_{l/k, \log}^a$ over $k$, which we refer to as Abbes-Saito spaces.  The filtrations give the arithmetic Artin conductors and Swan conductors naturally.

Abbes and Saito in \cite{AS-micro-local} showed that their definition of Swan conductors coincides with Kato's when $k$ is of equal characteristic $p>0$.  Moreover, they proved that the subquotients of both filtrations are abelian groups \cite{AS-cond2}. (See also \cite{Saito-wild-ram}, where Saito proved that the subquotients of the logarithmic filtration on wild inertia are elementary abelian $p$-groups.)  However, they were not able to establish certain integrality result that is analogous to the classical Hasse-Arf theorem.

Through another completely different path, when $k$ is of equal characteristic $p>0$ and has perfect residue field, Christol, Matsuda, Mebkhout, and Tsuzuki \cite{Mats-Swan-cond} gave a completely new interpretation of the classical Swan conductors using the theory of $p$-adic differential modules.  Given a $p$-adic Galois representation of finite local monodromy, they associated a $p$-adic differential module over the Robba ring and proved that the Swan conductor of the representation can be retrieved from the irregularity of the differential module, or equivalently, the spectral norms of the differential operator.

Partly inspired by Matsuda \cite{Mats-Conj-AS-fil}, Kedlaya generalized this framework to the case when the residue field $\kappa_k$ is not perfect.  In \cite{KSK-Swan1}, he adopted the same construction and counted in the effects of other differential operators corresponding to elements in a $p$-basis of $\kappa_k$.  He defined the differential Swan conductor to be, vaguely speaking, the maximum of the numbers computed by each of the differential operators, under certain normalization; he was aware of a definition for the differential Artin conductors using a slightly different normalization.  Most importantly, he was able to prove a Hasse-Arf theorem for the differential Swan conductors \cite[Theorem~3.5.8]{KSK-Swan1};  his proof can be easily adapted for differential Artin conductors (Theorem~\ref{T:properties-Ked-cond}(1)(4)). 

In \cite{KSK-Swan1}, Kedlaya asked, as Matsuda suggested, whether the differential conductors are the same as the arithmetic ones, in which case the Hasse-Arf theorem for the arithmetic filtrations in the equal characteristic case would follow from that for the differential conductors.  Chiarellotto and Pulita \cite{ChPu-cond} gave an affirmative answer to this question when the representations are one-dimensional, using the setting of Kato's conductors \cite{Kato-cond}.

There is a third story of defining conductors.  In \cite{Borger-conductor}, Borger introduced the notation of generic perfection of a complete discretely valued field and defined the Artin conductors to be the ones obtained by base changing to the generic residual perfection of $k$, which is a complete discretely valued field with perfect residue field satisfying certain universal properties.  The Hasse-Arf theorem of these conductors will follow immediately from that of the classical ones.  Kedlaya in \cite[P.297]{KSK-Swan1} asked if this also coincides with the two definitions above.

This paper assures these questions for all representations of finite local monodromy.  Our precise result is the following.

\begin{thm}
Let $k$ be a complete discretely valued field of equal characteristic $p>0$ and let $G_k$ be its absolute Galois group.
\begin{enumerate}
\item[1] (Hasse-Arf Theorem) Let $\rho: G_k \rar GL(V_\rho)$ be a $p$-adic representation of finite local monodromy.  Then the arithmetic Artin conductor $\Art_{ar}(\rho)$, the differential Artin conductor $\Art_\dif(\rho)$, and the Borger's conductor $\Art_B(\rho)$ are the same.  Similarly, the arithmetic Swan conductor $\Swan_{ar}(\rho)$ is the same as the differential Swan conductor $\Swan_\dif(\rho)$.  As a consequence, they are all nonnegative integers.
\item[2] The subquotients $\Fil^a G_k / \Fil^{a+} G_k$ of the arithmetic ramification filtrations are trivial if $a \notin \QQ$ and are elementary $p$-abelian groups if $a \in \QQ_{> 1}$; the subquotients $\Fil_\log^a G_k / \Fil_\log^{a+} G_k$ of the arithmetic logarithmic ramification filtrations are trivial if $a \notin \QQ$ and are elementary $p$-abelian groups if $a \in \QQ_{> 0}$.
\end{enumerate}
\end{thm}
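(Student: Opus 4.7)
The plan is to prove Part 1 by separately establishing the chain of equalities $\Art_{\mathrm{ar}}(\rho) = \Art_\dif(\rho) = \Art_B(\rho)$ (and likewise for Swan); the integrality assertion in Part 1 is then immediate from Kedlaya's Hasse-Arf theorem for differential conductors (\cite[Theorem 3.5.8]{KSK-Swan1} and its adaptation for the Artin variant). Part 2 is a formal consequence of Part 1 together with earlier structural results: integrality of every $\Art_{\mathrm{ar}}(\rho)$ forces the breaks of the upper-numbering filtration on $G_k$ (read off from the conductors of irreducible constituents) to be rational, so the subquotients at irrational $a$ vanish, while the elementary $p$-abelian structure on the subquotients at rational $a > 1$ (resp.\ $a > 0$ in the logarithmic case) is exactly what is proved in \cite{AS-cond2} and \cite{Saito-wild-ram}.

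For the first equality, fix a finite Galois extension $l/k$ with group $G$ and a representation $\rho$ factoring through $G$. The arithmetic conductors are encoded in the profile $a \mapsto \#\pi_0^{\geom}(as_{l/k}^a)$ and its logarithmic variant, while the differential conductors are read off from the spectral norms of the differential operators $\partial_0, \dots, \partial_m$ (one attached to a uniformizer of $k$, the rest to a lift of a $p$-basis of $\kappa_k$) acting on a $p$-adic differential module $M_{l/k}$ over a (log-)Robba-style ring. I would build a common ``thickening'': after lifting $\calO_l$ to a framed model over $\calO_k$ using the uniformizer together with a $p$-basis lift, the coordinates on the Abbes-Saito space match the coordinates of a polyannulus on which $M_{l/k}$ is defined. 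The goal is to exhibit a $G$-equivariant bijection between the geometric connected components of $as_{l/k}^a$ and the horizontal sections of $M_{l/k}$ over the polyannulus of radius $e^{-a}$; counting these by the Newton polygon of the associated differential operator, as in \cite{Mats-Swan-cond}, then matches the profile computed by spectral norms and yields the first equality.

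For the second equality, pass to the generic residual perfection $k^\pf$. The differential module $M_{l^\pf/k^\pf}$ is obtained from $M_{l/k}$ by base change; under this base change the operators $\partial_i$ for $i \geq 1$ become trivial while the spectral norm of $\partial_0$ is preserved. Applying the theorem of \cite{Mats-Swan-cond} over the perfect residue field $k^\pf$ identifies this spectral norm with the classical Artin/Swan conductor of $\rho|_{G_{k^\pf}}$, which by definition is $\Art_B(\rho)$. Hence $\Art_\dif(\rho) = \Art_B(\rho)$, and symmetrically for Swan.

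The main obstacle is the $G$-equivariant bijection of the second paragraph. In the perfect residue field case the analogous statement goes back to Christol--Dwork theory, but in the imperfect residue field setting one must simultaneously control the ``vertical'' operators $\partial_i$ for $i \geq 1$ coming from the $p$-basis and match their interaction with the radius parameter of the Abbes-Saito space. Aligning the Newton polygon of the global differential operator with the connected-component profile of the Abbes-Saito space, uniformly in all radii, is where the technical heart of the argument will lie.
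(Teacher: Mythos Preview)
Your outline has the right shape, and your third paragraph correctly identifies where the difficulty concentrates. But there is one structural obstacle you do not mention, and glossing over it would make the ``common thickening'' impossible to build as you describe it: the Abbes--Saito space $as_{l/k}^a$ is a rigid space over $k$, a field of characteristic $p$, whereas the differential module $M_{l/k}$ lives over a Robba ring in characteristic zero (over the fraction field $K$ of a Cohen ring of $\kappa_k$). There is no ambient space in which both objects sit simultaneously, so saying ``the coordinates on the Abbes--Saito space match the coordinates of a polyannulus on which $M_{l/k}$ is defined'' hides a genuine step. The paper handles this by a separate lifting construction (its Section~1): one lifts $as_{l/k}^a$ to a rigid space $AS_{l/k}^{a,\geq\eta_0}$ over an annulus over $K$, and proves via a Gr\"obner-basis argument that the connected components of the lift agree with those of the original when $\eta_0\to 1^-$. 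Only after this lift does the comparison with the thickening space become meaningful.

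Even after lifting, the identification is not the na\"ive one. The lifted Abbes--Saito space parametrizes points \emph{close to solutions} of the defining equations $P_j$, whereas the (base-changed) thickening space parametrizes \emph{exact solutions} of equations whose coefficients have been perturbed via $\tilde\pi^*$. The paper's Theorem~4.3.3 shows these two descriptions give isomorphic spaces when $l/k$ is totally wildly ramified, but this requires an explicit iterative construction of the inverse map and a Jacobian computation (Lemma~4.3.2) that is sensitive to the choice of generators; it is not just ``matching coordinates''. Your Newton-polygon framing via \cite{Mats-Swan-cond} is also not quite how the paper proceeds: rather than counting slopes of a single operator, the paper uses a Dwork transfer argument (Lemma~3.4.7) to show that triviality of the pulled-back differential module on the thickening space is equivalent to the connected-component count being $[l:k]$.

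Two smaller points. For $\Art_\dif=\Art_B$, your claim that ``the spectral norm of $\partial_0$ is preserved'' under passage to $k^g$ is not enough: the differential Artin conductor is computed by the \emph{maximum} over all $\partial_j$, and one must show this maximum is preserved even when $\partial_0$ is not dominant. The paper does this via a rotation argument (Proposition~5.2.3), replacing the $p$-basis elements $b_j$ by $b_j+x_js$ with $x_j$ generic, which forces $\partial_S$ to absorb all the other spectral norms. For Part~2, citing \cite{AS-cond2} for the non-logarithmic filtration only gives that the subquotients are abelian $p$-groups, not \emph{elementary} abelian; the paper instead deduces the elementary structure from the differential side (Theorem~2.4.1(4)) once the filtrations are identified.
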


This theorem consists of Theorems~\ref{T:main-theorem} and \ref{T:ar=diff=Borger} and Corollary~\ref{C:HA-thm-AS-fil}.
\vspace{10pt}

We now explain the main idea of the proof, which shows that the arithmetic conductors and the differential conductors coincide in a natural way.  (We will use the comparison of Artin conductors as an example; that of Swan conductors is proved similarly.)  

Let $k$ be a complete discretely valued field of equal characteristic $p$, with residue field $\kappa_k$.  Let $l$ be a finite Galois extension of $k$ with residue field $\kappa_l$. An elementary reduces the comparison to proving that the arithmetic highest ramification break of $l/k$ is the same as the differential one. There are three main ingredients.

(a) A useful way of visualizing spectral norms is to consider the convergence loci or radii at a generic point; see for example \cite[Section~5]{KSK-overview}.  However, the convergence loci cannot be defined on the rigid annulus because one cannot separate $m+1$ differential operators on a one-dimensional space.  Matsuda \cite{Mats-Conj-AS-fil} had a pioneering attempt to obtain an $(m+1)$-dimensional space on which we may discuss convergence loci.  Our approach, which is independently developed and looks very similar to Matsuda's work, uses a thickening technique. (Unfortunately, we do not know how to relate the two methods.)  If the field $k$ can be realized as the field of rational functions on a smooth variety over certain perfect field, the thickening space is just a subspace of the generic fiber of the tube corresponding to the diagonal embedding in a formal lifting (see Section~\ref{S:geometric-thickening}).  An observation is that this thickening space, after a certain base change, ``looks the same" as the Abbes-Saito space $as_{l/k}^a$, whose geometric connected components give the ramification information.  However, we have the following technical issue.

(b) The thickening space is a rigid space over $K$, the fraction field of a Cohen ring of $\kappa_k$, which in particular is a field of characteristic zero.  In contrast, the Abbes-Saito space $as_{l/k}^a$ is a rigid space over $k$, which is of characteristic $p$.  In order to ``identify" two spaces, we need a lifting technique (see Section 1) to lift the Abbes-Saito space to characteristic zero and compare the geometric connected components before and after the lifting process.  A similar idea is also alluded to as a conjecture in Matsuda's paper \cite{Mats-Conj-AS-fil}.  (Again, we do not know whether our result answers Matsuda's conjecture.)

(c) The lifted Abbes-Saito space is isomorphic to the thickening space after a certain base change (Theorem~\ref{T:AS=TS}), but not in the na\"ive way.  Very vaguely speaking, if the extension $l/k$ is generated by a series of equations, then the Abbes-Saito space consists of the points which are close to the solutions to those equations; in contrast, the (base change of the) thickening space consists of points which are solutions to some equations whose coefficients are close to the original equations.  These two types of points coincide when $l/k$ is totally and wildly ramified.

Combining these three ingredients, we can prove the comparison between the arithmetic conductors and the differential ones.  The following diagram may be helpful to illustrate the process.
\[
\xymatrix{
Y = A_L^1[\eta_0^{1/e}, 1) \ar[d] & TS^a \times_{\tilde \pi, Z} Y \ar[d] \ar@{~}[r]^-{(c)} \ar[l] & AS_{l/k}^a & as_{l/k}^a \ar@{~>}[l]_{(b)}\\
Z = A_K^1[\eta_0, 1) & TS^a = ``\cup_{\eta \in [\eta_0, 1)} A_K^1[\eta, 1) \times A_K^{m+1}[0, \eta^a)" \ar[l]^-{(a)}_-{\tilde \pi}
}
\]
Here, $K$ and $L$ are fraction fields of Cohen rings of $\kappa_k$ and $\kappa_l$, respectively; $A_K^1[\eta_0, 1)$ denotes the half-open annulus over $K$ (centered at the origin) with inner radius $\eta_0$ and outer radius $1$, for some $\eta_0 \in (0,1)$; $A_K^{m+1}[0, \eta^a)$ is the open polydisc (again centered at the origin) of dimension $m+1$ and radius $\eta^a$ for some $a \in \QQ_{>1}$; (for the quotation marks, see Caution~\ref{Cau:not-admissible}); $TS^a$ denotes the space obtained by the thickening process (a); $as_{l/k}^a$ is the rigid analytic space over $k$ defined by Abbes and Saito with respect to a set of distinguished generators, and $AS_{l/k}^a$ is the lifting space given by lifting process (b).  The argument in (c) links the two spaces as shown in the graph.

Part (a) is carried out throughout Section 3; see Theorem~\ref{T:ram-break-equiv-disconn}.  Part (b) is developed in Section 1; see Corollary~\ref{C:connected-components} and Example~\ref{Example:lifting-space}.
Part (c) occupies Section 4; see Theorem~\ref{T:AS=TS}.  We finally wrap up the proof in Theorem~\ref{T:main-theorem}.

We also obtain a comparison theorem between Borger's Artin conductors and the differential Artin conductors, or equivalently the arithmetic Artin conductors.  The key is to show that the differential Artin conductors are invariant under the operation of ``adding generic $p^\infty$-th roots" (see Definition~\ref{D:gen-pinfty-root}).  This fact follows easily from the study of differential operators.

\subsection*{Plan of the Paper}

In Section 1, we make a construction, which lifts a rigid space over $k$ to a rigid space over an annulus over $K$.  In particular, we prove that the connected components of the original rigid space are in one-to-one correspondence with the connected components of the lifting space, when the annulus is ``thin" enough.  This part is written in a relatively independent and self-contained manner, since we feel that it has its own interest.

In Section 2, we discuss how to associate a differential module $\calE_\rho$ on the Robba ring over $K$ to a representation $\rho$ of $G_k$ of finite local monodromy.  Then we review the definition of differential Swan conductors following \cite{KSK-Swan1}.  At the same time, we introduce differential Artin conductors and discuss their properties.

In Section 3, we introduce a thickening construction.  In Subsection~\ref{S:geometric-thickening}, as an intuitive example, we first construct the thickening space when $k$ can be realized geometrically.  In Subsection~\ref{S:thicken-annulus}, we define the thickening spaces for general $k$ and discuss spectral properties of the differential module obtained by pulling back $\calE_\rho$ to the thickening spaces.  In Subsections~\ref{S:construction-calE} and \ref{S:calF}, we link the (highest) differential breaks and spectral norms with the connected components of certain base change of the thickening spaces.

In Section 4, we first quickly review the definition of arithmetic ramification filtrations, following \cite{AS-cond1}.  Then, in Subsection~\ref{S:standard-AS}, we define the standard Abbes-Saito spaces $as_{l/k}^a$ and their lifts $AS_{l/k}^a$.  Next, we prove in Subsection~\ref{S4:comparison-rigid-spaces} that the lifted Abbes-Saito spaces and (the base change of) the thickening spaces are isomorphic (Theorem~\ref{T:AS=TS}).  From this, in Subsection~\ref{S4:Comparison-thm}, we deduce our main Theorem~\ref{T:main-theorem}: the differential conductors coincide with the arithmetic conductors.

In Section 5, we give two applications. One is to deduce a Hasse-Arf theorem for finite flat group schemes in Subsection~\ref{S:ffgs}; the other one is the comparison of the arithmetic and differential Artin conductors with the Borger's Artin conductors \cite{Borger-conductor}, which occupies the last three subsections.

\subsection*{Acknowledgments}
Many thanks are due to my advisor, Kiran Kedlaya, for introducing me to the problem, for generating ideas, for constant encouraging and supporting, and for spending many hours reviewing early drafts.  In particular, I would like to thank him for helping me fix a gap in an early version of this paper by providing the argument in Section 1.

The author owes many thanks to the referee of this paper; this paper is largely improved in presentation upon many invaluable suggestions by him/her.

Thanks to Ahmed Abbes and Takeshi Saito for organizing the great conference on vanishing cycles; it provided the author a great opportunity to learn about this field.
Thanks to Christopher Davis for helping me review early drafts, correct the grammar, and smooth the argument.  Thanks to Shin Hattori for helping me clarify Subsection~\ref{S:ffgs}.  Thanks to Shun Ohkubo for pointing out two errors in an early version of this paper and their link to the work of Sweedler.  Thanks to Brian Conrad for continuous encourage and support.   Thanks also to Michael Artin,  Johan de Jong, Ivan Fesenko, Xuhua He, Ruochuan Liu, Christian Kappen, Andrea Pulita, Yichao Tian and Wei Zhang for interesting discussions and helping review early drafts.

Financial support was provided by the MIT Department of Mathematics.  Also, when writing the paper, The author had Research Assistantship funded by Kedlaya's NSF CAREER grant DMS-0545904.

\section{Lifting rigid spaces}

In this section, we introduce a construction, which lifts a rigid space over a field of characteristic $p>0$ to a rigid space over an annulus over a field of characteristic zero.  This section is written in a relative independent and self-contained manner as it has its own interest.  The notation will not be carried over to other sections unless explicitly quoted.

Most of the content in this section should be credited to Kedlaya.  The author would like to thank him for allowing to include the proofs.

\begin{remark}
We make a pre-remark that for most of the places in the paper, we implicitly use rigid analytic spaces in the sense of Berkovich spaces \cite{Berkovich-book} by allowing discs or annuli with irrational radii.  This is mostly for notational convenience.  We will only encounter at two places (See Remarks~\ref{R:must-rigid-space} and \ref{R:Berkovich-vs-classical}) where we have to shift back to the classical rigid analytic setting to talk about connected components by assuming some rationality on the radii of discs or annuli.
\end{remark}

\subsection{A Gr\"obner basis argument}

In this subsection, we introduce a division algorithm using Gr\"obner basis, which enables us to find a representative in the quotient ring achieving the quotient norm.

\begin{notation}
Let $K$ be a complete discretely valued field of mixed characteristic $(0,p)$, with ring of integers $\calO_K$ and residue field $\kappa$.  Fix a uniformizer $\pi_K$ and normalize the valuation $v_K(\cdot)$ on $K$ so that $v_K(\pi_K) = 1$.  We also normalize the norm on $K$ so that $|p| = p^{-1}$.
\end{notation}

\begin{notation}
For a nonarchimedean ring $R$, we use $R \langle u_1, \dots, u_n \rangle$ to denote the Tate algebra, consisting of formal power series $\sum_{i_1, \dots, i_n \in \ZZ_{\geq 0}} f_{i_1, \dots, i_n}u_1^{i_1}\cdots u_n^{i_n}$ with $f_{i_1, \dots, i_n} \in R$ and $|f_{i_1, \dots, i_n}| \rightarrow 0$ as $i_1 + \cdots + i_n \rightarrow +\infty$.  For $\eta_1, \dots, \eta_n \in (0,1]$, the ring admits a $(\eta_1, \dots, \eta_n)$-\emph{Gauss norm} given by
\[
\Big|\sum_{i_1, \dots, i_n \in \ZZ_{\geq 0}} f_{i_1, \dots, i_n} u_1^{i_1} \cdots u_n^{i_n} \Big|_{\eta_1, \dots, \eta_n} = \max_{i_1, \dots, i_n} \big\{|f_{i_1, \dots, i_n}| \eta_1^{i_1} \cdots \eta_n^{i_n} \big\}.
\]

\end{notation}

\begin{notation}
Fix a positive integer $n$, and put
\begin{align*}
R^{\inte} &= \calO_K \langle u_1,\dots,u_n \rangle  ((S)), \\
R &= R^{\inte} \otimes_{\calO_K} K, \\
R_\kappa = R^{\inte} \otimes_{\calO_K} \kappa &\cong
\kappa [u_1,\dots,u_n] ((S)) = \kappa((S)) \langle u_1, \dots, u_n \rangle.
\end{align*}
For $\eta \in (0,1]$, let $|\cdot|_\eta$ (for short) denote the $(1,\dots,1,\eta)$-Gauss
norm on $R$. 
\end{notation}

\begin{notation}
The \emph{lexicographic} order on $\ZZ^n$ is that for $(i_1, \dots, i_n)$ and $(i'_1, \dots, i'_n) \in \ZZ^n$, we have $(i_1, \dots, i_n) \succ (i'_1, \dots, i'_n)$ if there exists some $j \in \{1, \dots, n\}$ such that $i_1 = i'_1, \dots, i_{j-1} = i'_{j-1}$ and $i_j > i'_j$.
\end{notation}

\begin{definition}  \label{D:Grobner-basis}
We equip $R_\kappa$ with the lexicographic term ordering induced by the correspondence $u_1^{i_1} \cdots u_n^{i_n} S^j 
\mapsto (-j, i_1,\dots,i_n)$, i.e., we write $\bar \alpha u_1^{i_1} \cdots u_n^{i_n} S^j \succeq \bar \beta u_1^{i'_1} \cdots u_n^{i'_n} S^{j'}$ if $(-j, i_1,\dots,i_n) \succeq (-j', i'_1,\dots,i'_n)$ under the lexicographic order, where $\bar \alpha, \bar \beta \in \kappa^\times$.

Using this ordering, we define the \emph{leading term} $\lead(\bar f)$ of a nonzero element $\bar f \in R_\kappa$ to be its largest term under the ordering.  In particular, for $\bar f, \bar g \in R_\kappa \bs \{0\}$, $\lead (\bar f \bar g) = \lead (\bar f) \lead (\bar g)$.

For an ideal $I_\kappa$ of $R_\kappa$, a \emph{Gr\"obner basis} of $I_\kappa$ is a finite subset
$\{\bar r_1,\dots, \bar r_m\} \subset I_\kappa$ such that $\lead(\bar r_i)$ do not have exponents on $S$ and the ideal consisting of the leading terms of all elements of $I_\kappa$ is generated by $\lead(\bar r_1),\dots, \lead(\bar r_m)$.   Such a basis exists because $R_\kappa$ is noetherian.  By \cite[Lemma~15.5]{Esbd-comm-alg}, $\serie{\bar r_}m$ also generate $I_\kappa$. 
\end{definition}

\begin{proposition} \label{P:grobner-basis-Rk}
For any $\bar f \in R_\kappa$, there exists $\bar g_1, \dots, \bar g_m, \bar f' \in R_\kappa$ such that 
\begin{equation} \label{E:grobner-algorithm}
\bar f = \bar g_1 \bar r_1 + \cdots +  \bar g_m \bar r_m + \bar f',
\end{equation}
where any term of $\bar f'$ is not divisible by any $\lead(\bar r_h)$, and $\lead(\bar f) \succeq \lead (\bar g_h \bar r_h)$ for all $h$.
\end{proposition}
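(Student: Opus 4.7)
My plan is to run the standard Gr\"obner-basis division algorithm, replacing the classical termination argument by $S$-adic convergence in the complete ring $R_\kappa$. The key structural observation is this: since $\lead(\bar r_h)$ involves no $S$ and the $(-j)$-coordinate dominates the lex order, every other term of $\bar r_h$ must have $S$-exponent $\ge 0$. Hence each $\bar r_h$ lies in the subring $\kappa[u_1,\ldots,u_n][[S]] \subset R_\kappa$, and at each $S$-level $\bar r_h$ contributes only a polynomial in $u_1,\ldots,u_n$.

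I then run the greedy division. Initialize the ``current remainder'' $\bar p = \bar f$, together with $\bar g_h = 0$ and $\bar f' = 0$. At each step let $T = c\, u^{\mathbf{i}} S^j$ be the leading term of $\bar p$. If $u^{\mathbf{i}}$ is a standard monomial (not divisible by any $\lead(\bar r_h)$), move $T$ to $\bar f'$ and remove it from $\bar p$. Otherwise pick $h$ with $\lead(\bar r_h) \mid u^{\mathbf{i}}$, form the quotient monomial $q$ so that $q \cdot \lead(\bar r_h) = T$, append $q$ to $\bar g_h$, and replace $\bar p$ by $\bar p - q\bar r_h$. In both cases the new leading term of $\bar p$ is strictly below $T$ in the lex order, so by induction $\lead(\bar p) \preceq \lead(\bar f)$ throughout the process.

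The crucial convergence step: because the leading term is the one of smallest $S$-exponent, the algorithm processes $\bar p$ level-by-level in $S^j$. At any fixed level $j$, the $S^j$-coefficient of $\bar p$ is at every moment a polynomial in $\kappa[u_1,\ldots,u_n]$ (finitely supported, as only finitely many prior operations at levels $\le j$ have affected it), and the steps performed at level $j$ are exactly polynomial Gr\"obner division of this polynomial by $\{\bar r_h\}$; since lex is a well-order on $\ZZ_{\ge 0}^n$ (Dickson), this sub-procedure finishes in finitely many steps. Operations at level $j$ only inject new terms into levels $\ge j$, so we then pass cleanly to level $j+1$, and so on. Consequently, the $S$-exponent $j_k$ of $\lead(\bar p^{(k)})$ tends to $+\infty$, giving $|\bar p^{(k)}|_\eta \le \eta^{j_k} \to 0$; and the $q$'s appended to any given $\bar g_h$ have non-decreasing $S$-exponents tending to $\infty$, so both $\bar g_h$ and $\bar f'$ converge in $R_\kappa$ to elements with the stated properties.

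The leading-term bound $\lead(\bar g_h \bar r_h) \preceq \lead(\bar f)$ is then immediate: each $q$ appended to $\bar g_h$ satisfies $\lead(q\bar r_h) = \lead(\bar p^{(k)}) \preceq \lead(\bar f)$, and by the multiplicativity of $\lead$ stated in Definition~\ref{D:Grobner-basis} one has $\lead(\bar g_h \bar r_h) = \lead(\bar g_h)\lead(\bar r_h) = \max_q \lead(q\bar r_h) \preceq \lead(\bar f)$. The main obstacle to overcome is that the lex order on $R_\kappa$ is \emph{not} a well-order (the $-j$ coordinate is unbounded below), so classical Gr\"obner termination fails; the resolution is that this failure occurs precisely along the $S$-direction, where $S$-adic completeness restores convergence, while the ``good'' $u$-direction, where lex \emph{is} well-ordered, handles each individual $S$-level in finitely many steps.
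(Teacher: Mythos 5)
Your proof is correct and follows essentially the same route as the paper: the paper likewise works level-by-level in the $S$-adic filtration, performing at each fixed $S$-level the finite polynomial division in $\kappa[u_1,\dots,u_n]$ (quoted from Eisenbud's Proposition--Definition 15.6 rather than re-derived via Dickson's lemma as you do) and then letting $S$-adic completeness of $R_\kappa$ provide convergence of the resulting series $\bar g_h$ and $\bar f'$. Your greedy term-by-term formulation, together with the observation that each $\bar r_h$ lies in $\kappa[u_1,\dots,u_n]\llbracket S\rrbracket$ so level-$j$ steps only feed levels $\geq j$, is just an inlined version of that same argument.
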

\begin{proof}
Let $j$ be the exponent of $S$ in $\lead (\bar f)$ and let $S^j\bar f_{(j)}$ be the sum of terms in $\bar f$ for which the exponents of $S$ are $j$.  Applying \cite[Proposition-Definition~15.6]{Esbd-comm-alg} to $\bar f_{(j)}$, we can write
\[
\bar f_{(j)} \equiv \bar g_{1, (j)} \bar r_1 + \dots + \bar g_{m, (j)} \bar r_m + \bar f'_{(j)} \pmod {S \cdot \kappa[u_1, \dots, u_m]\llbracket S \rrbracket},
\]
where $\bar g_{h, (j)} \in \kappa[u_1, \dots, u_m]$ and $\lead (\bar g_{h, (j)} \bar r_h) \preceq \lead(\bar f_{(j)})$ for $h = 1, \dots, m$ and any term in $\bar f'_{(j)} \in \kappa[u_1, \dots, u_m]$ is not divisible by any $\lead(\bar r_h)$.

If we repeat the above argument for $\bar f_{(j)} - S^j \big( \bar g_{1, (j)} \bar r_1 + \dots + \bar g_{m, (j)} \bar r_m + \bar f'_{(j)} \big) \in S^{j+1}\cdot \kappa[u_1, \dots, u_m]\llbracket S \rrbracket$ in place of $\bar f$, we will obtain $\bar f'_{(j')}$ and $\bar g_{h, (j')}$ for $h = 1, \dots, m$ and for some $j' \geq j+1$.  We can then iterate this process.

For $h = 1, \dots, m$, put $\bar g_h = S^j\bar g_{h, (j)} + S^{j+1} \bar g_{h, (j+1)} + \cdots$ and $\bar f' = S^j \bar f'_{(j)} + S^{j+1} \bar f'_{(j+1)} + \cdots$; the power series converge to elements in $R_\kappa$ we are looking for.
\end{proof}

\begin{definition}  \label{D:grobner-basis}

For $f \in R$, write
\begin{equation} \label{E:expression-f}
f = \sum_{i_1,\dots,i_n,j} f_{i_1,\dots,i_n,j} u_1^{i_1}
\cdots u_n^{i_n} S^j.
\end{equation}
Of the monomials for which $|f_{i_1,\dots,i_n,j}| = |f|_1$, there must
be one which is lexicographically largest; we call the corresponding term 
$f_{i_1,\dots,i_n,j} u_1^{i_1} \cdots u_n^{i_n} S^j$ the 
\emph{$1$-leading term} of $f$, denoted by $\Lead(f)$.
\end{definition}

\begin{hypo}
Let $I^{\inte}$ be an ideal of $R^{\inte}$ such that $R^{\inte}/I^{\inte}$ is \emph{flat} over $\gotho_K$.
\end{hypo}

\begin{notation} \label{N:j_I-and-epsilon}
Denote $I = I^{\inte} \otimes_{\calO_K} K$ and $I_\kappa = I^{\inte} \otimes_{\calO_K} \kappa$; the latter is an ideal in $R_\kappa$ by the flatness hypothesis above.  Choose $r_1,\dots,r_m \in I^{\inte}$
which project to elements of a Gr\"obner basis $\bar r_1, \dots, \bar r_m$ of $I_\kappa$.

For $f \in R$, let $\bbj_f$ denote the minimal exponents of $S$ in the expression \eqref{E:expression-f} of $f$.  Denote $\bbj_I = \min \{ \bbj_{r_h}; h = 1, \dots, m \}$; it is a nonpositive integer.
\end{notation}

\begin{notation} \label{N:eta_0}
In this subsection, fix $\eta_0 \in (|\pi_K|^{-1/\bbj_I}, 1)$.  In particular, $|\pi_K| \eta_0^{\bbj_I} <1$.
\end{notation}

\begin{notation}
Let $\calR_{\eta_0}$ be the Fr\'echet completion of $R$ for $|\cdot|_\eta$ for $\eta \in [\eta_0,1)$.  Let $R^{\inte}_{\eta_0}$ denote $\{ f \in \calR_{\eta_0} | |f|_1 \leq 1 \}$ and put
$R_{\eta_0} = R^{\inte}_{\eta_0} \otimes_{\calO_K} K$ and $I_{\eta_0} = I \otimes_R R_{\eta_0}$.
\end{notation}

\begin{notation}
For an element $f \in \calR_{\eta_0}$ written as in \eqref{E:expression-f} and $l \in \ZZ$, let $\pi_K^lf_{(l)}$ be the sum of all terms $f_{i_1,\dots,i_n,j} u_1^{i_1} \cdots u_n^{i_n} S^j$ for which $v_K(f_{i_1,\dots,i_n,j}) = l$.  Thus, $f_{(l)} \in R^\inte_{\eta_0}$;  we use $\bar f_{(l)}$ denote its reduction in $R_\kappa$.
\end{notation}

\begin{lemma} \label{L:redundancy-of-r_h}
For $h = 1, \dots, m$ and $\eta \in [\eta_0, 1]$, 
\[
|r_h|_\eta = 1, \qquad |r_{h, (l)}|_\eta \leq \eta^{\bbj_I}, \textrm{ for } l \in \ZZ_{\geq 0}.
\]
\end{lemma}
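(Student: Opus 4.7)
The plan is to bound each term of $r_h$ (and of $r_{h,(l)}$) in the $(1,\dots,1,\eta)$-Gauss norm by combining two inputs: the Gr\"obner basis hypothesis controls the terms with unit coefficients, while the definition of $\bbj_I$ controls the remaining terms. The constraint $\eta \geq \eta_0$ will enter at exactly one point, to secure the single inequality $|\pi_K|\eta^{\bbj_I} \leq 1$.

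First I would extract the following combinatorial consequence of the Gr\"obner hypothesis. Since $\lead(\bar r_h)$ has $S$-exponent $0$ and the lex order is induced by $u_1^{i_1}\cdots u_n^{i_n} S^j \mapsto (-j, i_1,\dots,i_n)$, any monomial in $\bar r_h$ with $j < 0$ would have strictly positive first coordinate and therefore outrank $\lead(\bar r_h)$---a contradiction. Hence every monomial of $\bar r_h$ has $S$-exponent $\geq 0$. Pulling this back: every term $r_{h,i_1,\dots,i_n,j} u_1^{i_1}\cdots u_n^{i_n} S^j$ of $r_h$ whose coefficient is a unit of $\calO_K$ satisfies $j \geq 0$. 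Meanwhile, any term with $v_K(r_{h,i_1,\dots,i_n,j}) \geq 1$ satisfies $j \geq \bbj_{r_h} \geq \bbj_I$ by the very definition of $\bbj_I$.

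Second, I would compute. For $\eta \in [\eta_0,1]$, terms of the first type contribute $\eta^j \leq 1$, and terms of the second type contribute at most $|\pi_K|\cdot\eta^{\bbj_I}$. Since $\bbj_I \leq 0$, the function $\eta \mapsto \eta^{\bbj_I}$ is non-increasing on $(0,1]$, so this is bounded by $|\pi_K|\eta_0^{\bbj_I} < 1$ by the choice of $\eta_0$. Thus every term of $r_h$ has Gauss norm $\leq 1$, while the $1$-leading term $\Lead(r_h)$ (unit coefficient, pure $u$-monomial) contributes exactly $1$; this yields $|r_h|_\eta = 1$. For $r_{h,(l)}$ every coefficient is a unit by construction, so the dichotomy simplifies: the $l=0$ piece contains only $j \geq 0$ terms, giving $|r_{h,(0)}|_\eta \leq 1 \leq \eta^{\bbj_I}$ (using $\bbj_I \leq 0$ and $\eta \leq 1$), while each $l \geq 1$ piece contains only $j \geq \bbj_I$ terms, giving $|r_{h,(l)}|_\eta \leq \eta^{\bbj_I}$ directly from $\eta \leq 1$.

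I do not foresee a substantive obstacle. The one subtle point is the sign convention in the lex order: because $j$ enters the tuple as $-j$, it is negative rather than large $S$-exponents that are forbidden in $\bar r_h$, and this is precisely the direction needed to turn the Gr\"obner basis hypothesis into a lower bound $j \geq 0$ on the unit-coefficient terms of $r_h$. Everything else is bookkeeping.
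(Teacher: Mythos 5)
Your proof is correct, and it follows the same route as the paper: the paper's own (two-line) proof just invokes the choice of $\eta_0$ and the definition of $\bbj_I$, and your argument is exactly the term-by-term verification behind that, including the needed observation that the Gr\"obner condition forces all $S$-exponents of $\bar r_h$ (hence of the unit-coefficient terms of $r_h$) to be nonnegative, with the lift of $\lead(\bar r_h)$ contributing norm exactly $1$.
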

\begin{proof}
The former inequality follows from the choice of $\eta_0$ in Notation \ref{N:eta_0}.  The latter follows from the definition of $\bbj_I$ in Notation~\ref{N:j_I-and-epsilon}.
\end{proof}

\begin{construction}
For $f \in R_{\eta_0}$ with $|f|_1 = |\pi_K|^{l_0}$, the \emph{division algorithm} is the following procedure.
Put $f_{l_0} = f$. Given $f_l$ for $l \geq l_0$, we apply Proposition~\ref{P:grobner-basis-Rk} to write
\[
\bar f_{l, (l)} = \bar g_{l, 1} \bar r_1 + \cdots +\bar g_{l, m} \bar r_m + \bar f'_{l, (l)},
\]
where $\bar g_{l,h} \in R_\kappa$ and $\lead(\bar g_{l,h} \bar r_h) \preceq \lead (\bar f_{l, (l)})$ for $h = 1, \dots, m$ and any term of $\bar f'_{l, (l)} \in R_\kappa$ is not divisible by any $\lead(\bar r_h)$.  For each $h$, pick lifts $g_{l, h}$ of $\bar g_{l,h}$ in $R^\inte$ so that $g_{l,h} = g_{l,h,(0)}$, namely, we only lift nonzero terms.  
Put
\[
f_{l+1} = f_l - \pi_K^l\big( g_{l,1} r_1 + \cdots + g_{l,m} r_m \big).
\]
\end{construction}

\begin{remark}
Division algorithm depends on many choices but we will prove in Proposition~\ref{P:uniqueness-division-alg} that the outcome $\lim_{l \rar +\infty} f_l$ is uniquely determined by $f$.
\end{remark}

\begin{lemma} \label{L:norm-goes-down}
At each step of the division algorithm, for $\eta \in [\eta_0, 1]$ and $h = 1, \dots, m$,
\begin{equation}
|g_{l,h}|_\eta \leq |f_{l, (l)}|_\eta, \qquad |f_{l+1, (l')} - f_{l, (l')}|_\eta \left\{
\begin{array}{ll}
\leq \eta^{\bbj_I} |f_{l, (l)}|_\eta & l' > l \\
\leq |f_{l, (l)}|_\eta & l' = l \\
= 0 & l' < l
\end{array}\right..
\end{equation}
\end{lemma}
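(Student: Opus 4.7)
My approach is to reduce everything to a monomial-by-monomial $\pi_K$-adic analysis. First, the inequality $|g_{l,h}|_\eta \leq |f_{l,(l)}|_\eta$ follows from the Gr\"obner constraint of Proposition~\ref{P:grobner-basis-Rk}: since $\lead(\bar r_h)$ carries no $S$-exponent, the relation $\lead(\bar g_{l,h}\bar r_h) \preceq \lead(\bar f_{l,(l)})$ forces the $S$-exponent of $\lead(\bar g_{l,h})$---which under our lexicographic ordering coincides with the minimal $S$-exponent appearing in $\bar g_{l,h}$---to be at least $j_{\lead(\bar f_{l,(l)})}$. Since $g_{l,h}$ is the unit-coefficient lift of $\bar g_{l,h}$, for $\eta \leq 1$ its $\eta$-Gauss norm equals $\eta$ to that minimal $S$-exponent, giving the stated bound. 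The case $l' < l$ is immediate: $f_{l+1} - f_l = -\pi_K^l\sum_h g_{l,h} r_h$ has every coefficient of $\pi_K$-adic valuation $\geq l$, so layers below $l$ are untouched.

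For the cases $l' \geq l$, I would set $T := \sum_h g_{l,h} r_h$ and argue one monomial at a time. The key observation is that the coefficient update $(f_{l+1})_\mu = (f_l)_\mu - \pi_K^l T_\mu$ on each monomial $\mu = u^\bbi S^j$ happens independently in $\OK$, with no cross-monomial interaction. Fix a section $\kappa \hookrightarrow \OK$ so that every element of $\OK$ admits a unique $\pi_K$-adic expansion in unit representatives $\{0\}\cup\OK^\times$, and let $d_{\mu,l^*}$ and $c_{\mu,l^*}$ denote the $l^*$-th digits of $(f_l)_\mu$ and $(f_{l+1})_\mu$ respectively. Two elementary observations suffice: (i) if $T_\mu = 0$, then $(f_{l+1})_\mu = (f_l)_\mu$, so $c_{\mu,l^*} = d_{\mu,l^*}$ for every $l^*$; (ii) if $\bar T_\mu = 0$ in $\kappa$ (i.e.\ $T_\mu \in \pi_K\OK$), then $\pi_K^l T_\mu$ lies in $\pi_K^{l+1}\OK$ and therefore leaves digits at positions $\leq l$ unchanged, so $c_{\mu,l} = d_{\mu,l}$. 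Since $|c_{\mu,l'} - d_{\mu,l'}| \leq 1$ always, these two facts immediately imply that $|f_{l+1,(l')} - f_{l,(l')}|_\eta \leq \eta^{j_{\min}(\bar T)}$ when $l' = l$ and $\leq \eta^{j_{\min}(T)}$ when $l' > l$.

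To finish I would bound the two minimal $S$-exponents. The Gr\"obner construction yields $\bar T = \sum_h \bar g_{l,h}\bar r_h = \bar f_{l,(l)} - \bar f'_{l,(l)}$; an inspection of the algorithm in Proposition~\ref{P:grobner-basis-Rk} shows that $j_{\min}(\bar f'_{l,(l)}) \geq j_{\lead(\bar f_{l,(l)})}$ (reductions never introduce smaller $S$-exponents), so $j_{\min}(\bar T) \geq j_{\lead(\bar f_{l,(l)})}$. For $T$, decompose it as $\sum_{l''\geq 0}\pi_K^{l''} Y_{l''}$ with $Y_{l''} := \sum_h g_{l,h}r_{h,(l'')}$; combining the bound on $|g_{l,h}|_\eta$ with Lemma~\ref{L:redundancy-of-r_h} gives $S$-exponents $\geq j_{\lead(\bar f_{l,(l)})}$ in $Y_0$ (since $r_{h,(0)}$ reflects the $S$-exponents of $\bar r_h$, which are $\geq 0$) and $\geq j_{\lead(\bar f_{l,(l)})} + \bbj_I$ in $Y_{l''}$ for $l'' \geq 1$, so $j_{\min}(T) \geq j_{\lead(\bar f_{l,(l)})} + \bbj_I$. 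Substituting $\eta^{j_{\lead(\bar f_{l,(l)})}} = |f_{l,(l)}|_\eta$ yields the claimed bounds. The main obstacle is conceptual rather than technical: one must recognize that direct operator-norm manipulations cannot distinguish the $l' = l$ and $l' > l$ layer bounds, and that the right framework is the per-monomial $\pi_K$-adic digit analysis; once that viewpoint is adopted the computation is elementary.
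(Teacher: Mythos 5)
Your proof is correct and takes essentially the same route as the paper's own (two-line) argument: the bound $|g_{l,h}|_\eta \leq |f_{l,(l)}|_\eta$ comes from $\lead(\bar g_{l,h}\bar r_h) \preceq \lead(\bar f_{l,(l)})$ together with the fact that the leading term realizes the minimal $S$-exponent, and the layer estimates come from $f_{l+1}-f_l = -\pi_K^l\sum_h g_{l,h}r_h$ combined with Lemma~\ref{L:redundancy-of-r_h} via the splitting $r_h = \sum_{l''}\pi_K^{l''}r_{h,(l'')}$. Your per-monomial $\pi_K$-adic digit bookkeeping is simply a careful elaboration of the carry/layer accounting that the paper's terse proof leaves implicit, not a different method.
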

\begin{proof}
The former inequality holds because $\lead(\bar g_{l,h} \bar r_h) \preceq \lead (\bar f_{l, (l)})$.  The latter relation follows from the former one, using Lemma~\ref{L:redundancy-of-r_h}.
\end{proof}

\begin{corollary} \label{C:converge}
For $h=1,\dots,m$, the series $g_h = \pi_K^{l_0}g_{l_0,h} + \pi_K^{l_0+1}g_{l_0+1,h} + \cdots$ converges under $|\cdot|_\eta$
for $\eta \in [\eta_0,1)$. Consequently, 
$g_h \in R_{\eta_0}$ for $h=1,\dots,m$.
\end{corollary}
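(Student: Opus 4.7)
The plan is to show, for each fixed $\eta \in [\eta_0, 1)$, that $|\pi_K^l g_{l,h}|_\eta \to 0$ as $l \to +\infty$; in the non-Archimedean setting this gives convergence of the series in $|\cdot|_\eta$, and since it holds for every $\eta$ in the range we obtain $g_h \in R_{\eta_0}$. By the first inequality of Lemma~\ref{L:norm-goes-down}, it is enough to bound $A_l := |\pi_K|^l |f_{l,(l)}|_\eta$.

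To control $F_l := |f_{l,(l)}|_\eta$, I would iterate the second inequality of Lemma~\ref{L:norm-goes-down} together with the ultrametric inequality. At step $l\to l+1$ one gets $F_{l+1}\leq\max(|f_{l,(l+1)}|_\eta,\eta^{\bbj_I}F_l)$; applying the same inequality to $|f_{l,(l+1)}|_\eta$ and iterating back to $l_0$ (where $|f_{l_0,(l')}|_\eta = |f_{(l')}|_\eta$), a straightforward induction on $l$ (using $\bbj_I \leq 0$ so that the geometric weighting accumulates monotonically) yields
\[
F_l \leq \max_{l_0 \leq l' \leq l}\, \eta^{(l-l')\bbj_I}\,|f_{(l')}|_\eta.
\]
Multiplying by $|\pi_K|^l$ and writing $c := |\pi_K|\eta^{\bbj_I}$ and $B_{l_0}(l') := |\pi_K|^{l'}|f_{(l')}|_\eta$, this rewrites as
\[
A_l \leq \max_{l_0 \leq l' \leq l}\, c^{l-l'}\,B_{l_0}(l').
\]

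Two facts then force the right-hand side to $0$. First, by Notation~\ref{N:eta_0}, $c < 1$. Second, since $f \in R_{\eta_0}$ the $\pi_K$-adic expansion $f = \sum_{l'} \pi_K^{l'} f_{(l')}$ converges in $|\cdot|_\eta$, which in the non-Archimedean setting forces $B_{l_0}(l') \to 0$ as $l' \to +\infty$. Given $\epsilon > 0$, choose $N$ with $c^N |f|_\eta < \epsilon$ and $L$ with $B_{l_0}(l') < \epsilon$ for $l' \geq L$; for $l \geq L + N$ the maximum in the bound on $A_l$ splits into $l' \leq l - N$ (where the geometric factor $c^{l-l'} \leq c^N$ kills the contribution) and $l - N < l' \leq l$ (where $l' \geq L$ makes $B_{l_0}(l')$ small), so in either case the contribution is $< \epsilon$, whence $A_l < \epsilon$.

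The main obstacle I anticipate is the inductive step establishing the bound on $F_l$: one must apply Lemma~\ref{L:norm-goes-down} together with the ultrametric inequality carefully at every level and use the sign $\bbj_I \leq 0$ to pull out the correct geometric weight $\eta^{(l-l')\bbj_I}$. The decay $B_{l_0}(l') \to 0$ is a standard feature of the Fr\'echet completion $R_{\eta_0}$, but it deserves explicit mention as the hypothesis that distinguishes $R_{\eta_0}$ from a larger ring of formal Laurent series.
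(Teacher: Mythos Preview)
Your proposal is correct and follows essentially the same approach as the paper's proof: both reduce to bounding $|\pi_K^l g_{l,h}|_\eta$ via $|\pi_K|^l|f_{l,(l)}|_\eta$, iterate Lemma~\ref{L:norm-goes-down} to obtain the estimate $|\pi_K^l g_{l,h}|_\eta \leq \max_{l'<l}\,(|\pi_K|\eta^{\bbj_I})^{l-l'}\,|\pi_K^{l'}f_{(l')}|_\eta$, and then use $|\pi_K|\eta^{\bbj_I}<1$ together with $|\pi_K^{l'}f_{(l')}|_\eta\to 0$ to conclude. Your explicit $\epsilon$--$N$ splitting of the maximum is a slightly more detailed version of the paper's terse ``this goes to zero as $l\to +\infty$,'' but the underlying argument is the same.
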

\begin{proof}
By Lemma~\ref{L:norm-goes-down}, 
\begin{align*}
|\pi_K^l g_{l,h}|_\eta &\leq |\pi_K^l f_{l, (l)}|_\eta 
\leq |\pi_K|^l \max\big\{\eta^{\bbj_I}|f_{l-1, (l-1)}|_\eta, |f_{l-1, (l)}|_\eta \big\}\\
&\leq |\pi_K|^l \max\big\{ \eta^{2\bbj_I}|f_{l-2, (l-2)}|_\eta, \eta^{\bbj_I}|f_{l-2, (l-1)}|_\eta, \eta^{\bbj_I}|f_{l-2, (l-2)}|_\eta, |f_{l-2, (l)}|_\eta\big\} \leq \cdots \\
&\leq |\pi_K|^l \max_{l' < l} \big\{\eta^{(l-l')\bbj_I}|f_{(l')}|_\eta \big\}
\leq \max_{l'<l} \big\{\big( |\pi_K| \eta_0^{\bbj_I} \big)^{l-l'} \big|\pi_K^{l'}f_{(l')} \big|_\eta \big\}; 
\end{align*}
this goes to zero as $l \rar +\infty$.
\end{proof}

\begin{proposition} \label{P:uniqueness-division-alg}
Keep the notation as above.  The quantity $f - g_1 r_1 - \cdots - g_m r_m$ is the unique element
of $f + I_{\eta_0}$ for which none of its term is divisible by any $\Lead(r_h)$.
\end{proposition}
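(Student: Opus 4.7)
The proposition has two parts: (i) the output $f - g_1 r_1 - \cdots - g_m r_m$ of the division algorithm has no term divisible by any $\Lead(r_h)$, and (ii) it is the unique such element in $f + I_{\eta_0}$. Part (i) is essentially a bookkeeping exercise with the construction: at step $l$ of the algorithm, Proposition~\ref{P:grobner-basis-Rk} produces $\bar f'_{l,(l)}$ with no term divisible by any $\lead(\bar r_h)$. By Lemma~\ref{L:norm-goes-down}, for $l' > l$ the $(l)$-part of $f_{l'}$ no longer changes, so the $(l)$-part of $f_\infty := \lim_l f_l = f - \sum g_h r_h$ reduces to $\bar f'_{l,(l)}$ modulo $\pi_K$. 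Because $|r_h|_1 = 1$, the monomial of $\Lead(r_h)$ agrees with that of $\lead(\bar r_h)$ (and both involve only the $u_i$'s), so term-by-term divisibility transfers from the reduction to $f_\infty$ itself.

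For part (ii), I would reduce to the following key statement: if $e \in I_{\eta_0}$ has no term divisible by any $\Lead(r_h)$, then $e = 0$. Applying this to $e := f' - f''$ for any two candidates gives uniqueness. Suppose for contradiction $e \neq 0$, and let $l_0 = v_K(|e|_1)$, so $\bar e_{(l_0)} \in R_\kappa \setminus \{0\}$. The strategy is to show $\bar e_{(l_0)} \in I_\kappa$, which produces a contradiction: by the Gröbner basis property, the lexicographically largest term of any nonzero element of $I_\kappa$ is divisible by some $\lead(\bar r_h)$; but the monomials appearing in $\bar e_{(l_0)}$ are all monomials of $e$, none of which is divisible by any $\Lead(r_h) = \lead(\bar r_h)$ (up to scalar).

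The heart of the argument is proving $\bar e_{(l_0)} \in I_\kappa$. First, the flatness of $R^\inte/I^\inte$ over $\calO_K$ (together with Nakayama in the $\pi_K$-adically complete ring $R^\inte$) guarantees that $r_1, \dots, r_m$ generate $I^\inte$, hence also $I$ and $I_{\eta_0} = I \otimes_R R_{\eta_0}$. So write $e = \sum_i h_i r_i$ with $h_i \in R_{\eta_0}$, and let $l^* = \min_i v_K(|h_i|_1)$. If $l^* = l_0$, reducing $\pi_K^{-l_0} e = \sum_i (\pi_K^{-l_0} h_i) r_i$ modulo $\pi_K$ immediately gives $\bar e_{(l_0)} = \sum_i \bar h_{i,(l_0)} \bar r_i \in I_\kappa$ and we are done. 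Otherwise $l^* < l_0$, in which case comparing $v_K$-levels in the identity $e = \sum h_i r_i$ forces the top-level syzygy $\sum_{i:\, v_K(|h_i|_1) = l^*} \bar h_{i,(l^*)} \bar r_i = 0$ in $R_\kappa$. The plan is to lift this syzygy to a relation inside $R_{\eta_0}$ using flatness, subtract off, and iterate; the condition $|\pi_K|\eta_0^{\bbj_I} < 1$ from Notation~\ref{N:eta_0} combined with Lemma~\ref{L:redundancy-of-r_h} gives exactly the geometric decay needed for the successive corrections to converge in each $|\cdot|_\eta$-norm, just as in Corollary~\ref{C:converge}.

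The main obstacle is precisely this last step: controlling the iteration that eliminates top-level cancellations. Directly lifting an individual syzygy in $R_\kappa$ to $R_{\eta_0}$ only introduces an error of order $\pi_K$, not of order zero, so one has to iterate and ensure the geometric series converges in all $|\cdot|_\eta$ for $\eta \in [\eta_0, 1)$ simultaneously. This is exactly where the quantitative estimates from Lemma~\ref{L:norm-goes-down} are designed to be reused, and I expect the flatness hypothesis together with the choice of $\eta_0$ (Notation~\ref{N:eta_0}) to be invoked in a way that parallels the convergence analysis already carried out for the division algorithm itself. Once the normal form $e = \sum h_i r_i$ with $v_K(|h_i|_1) \geq l_0$ is achieved, the reduction modulo $\pi_K^{l_0+1}$ yielding $\bar e_{(l_0)} \in I_\kappa$ is immediate, and the Gröbner basis contradiction closes the proof.
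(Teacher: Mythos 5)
Your part (i) and the overall reduction for uniqueness coincide with the paper's argument: the paper likewise reduces to showing that a nonzero $g \in I_{\eta_0}$ must contain a term divisible by some $\Lead(r_h)$, and gets the contradiction by applying the Gr\"obner property of $I_\kappa$ to the top-level reduction $\bar g_{(n)}$. The divergence is exactly at the step you yourself call the heart: the claim $\bar e_{(l_0)} \in I_\kappa$. You do not prove it. You propose writing $e = \sum_i h_i r_i$ and removing top-level syzygies by an iteration whose convergence you explicitly leave open (``I expect the flatness hypothesis \dots to be invoked \dots''), so the write-up stops precisely where the content is. In addition, the auxiliary claim that $r_1, \dots, r_m$ generate $I^{\inte}$ by ``Nakayama in the $\pi_K$-adically complete ring $R^{\inte}$'' is not justified: $R^{\inte} = \calO_K\langle u_1, \dots, u_n\rangle((S))$ is \emph{not} $\pi_K$-adically complete, and $\pi_K$ does not lie in its Jacobson radical (for instance $1 - \pi_K S^{-1}$ has no inverse there, since the geometric series would require unboundedly negative powers of $S$), so Nakayama cannot be invoked in that form; any generation statement you want for $I^{\inte}$ or $I_{\eta_0}$ would need a different argument, and your plan to ``lift the syzygy using flatness'' is likewise only a hope, since flatness of $R^{\inte}/I^{\inte}$ over $\calO_K$ is torsion-freeness and does not by itself hand you norm-controlled liftings of syzygies in the Fr\'echet ring $R_{\eta_0}$.

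For comparison, the paper's proof is much shorter at this point: from $g \in I_{\eta_0}$ with $|g|_1 = |\pi_K|^n$ it passes directly to ``$\bar g_{(n)} \in I_\kappa$'' (this is where the flatness hypothesis built into the definition of $I_{\eta_0}$ is meant to enter), and then applies the Gr\"obner basis property of $\lead(\bar r_1), \dots, \lead(\bar r_m)$; no expression of $g$ in terms of the $r_h$, no generation of $I^{\inte}$, and no syzygy iteration is used. So you correctly isolated the one nontrivial input, but as written your proposal neither supplies a proof of it nor completes the alternative iteration it sketches; the argument has a genuine gap at its central claim.
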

\begin{proof}
It follows from the definition of $g_1, \dots, g_m$ that none of the term of $f - g_1 r_1 - \cdots - g_m r_m$ is divisible by any $\Lead(r_h)$.

Assume that $f \in R_{\eta_0}$ does not contain any term divisible by any of $\Lead(r_h)$, then we need to show that for any nonzero $g \in I_{\eta_0}$, there is a term in $f+g$ divisible by some of $\Lead(r_h)$.  Assume the contrary.  Let $n = \log_{|\pi_K|} |g|_1$.  Then $\bar g_{(n)} \in I_\kappa$ does not contain any term which divides any of $\lead(\bar r_h)$.  This forces $\bar g_{(n)} = 0$  because the leading term of any nonzero element in $I_\kappa$ is divisible by some $\lead(\bar r_h)$.  Contradiction.  The lemma follows.
\end{proof}

\begin{lemma}
For $\eta \in [\eta_0,1]$,
$|f-g_1 r_1 - \cdots - g_m r_m|_\eta$ equals the minimum $\eta$-norm
of any element of $f + I_{\eta_0}$. Moreover, this continues to hold if
we pass from $R_{\eta_0}$ to its completion $R_{\eta_0}^{\wedge, \eta}$ under $|\cdot|_\eta$.
\end{lemma}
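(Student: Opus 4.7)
The plan is to first prove an auxiliary estimate that the division algorithm does not inflate the $\eta$-norm: writing $f^\ast := f - g_1 r_1 - \cdots - g_m r_m$ for the output of the algorithm applied to $f$,
\begin{equation} \label{E:key-estimate-norm}
|f^\ast|_\eta \leq |f|_\eta \qquad \text{for all } f \in R_{\eta_0} \text{ and all } \eta \in [\eta_0, 1].
\end{equation}
Once this is in hand, the minimality claim follows formally from Proposition~\ref{P:uniqueness-division-alg}: for any $g \in I_{\eta_0}$, put $h = f + g \in R_{\eta_0}$ and run the division algorithm on $h$ to obtain $h^\ast = h - \sum_i g_i' r_i$ with no term divisible by any $\Lead(r_i)$. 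Since $h^\ast$ and $f^\ast$ both lie in $f + I_{\eta_0}$ and neither contains any term divisible by any $\Lead(r_i)$, uniqueness forces $h^\ast = f^\ast$; combining this with \eqref{E:key-estimate-norm} applied to $h$ yields $|f^\ast|_\eta = |h^\ast|_\eta \leq |h|_\eta = |f+g|_\eta$, and since $f^\ast \in f + I_{\eta_0}$ the infimum is attained.

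To prove \eqref{E:key-estimate-norm}, I would iterate the inequalities of Lemma~\ref{L:norm-goes-down} in the same spirit as the proof of Corollary~\ref{C:converge}. Telescoping the recursion (which gives $|f_{l+1,(l')}|_\eta \leq \max\{|f_{l,(l')}|_\eta,\, \eta^{\bbj_I}|f_{l,(l)}|_\eta\}$ for $l' > l$, and $|f_{l+1,(l)}|_\eta \leq |f_{l,(l)}|_\eta$) yields
$$|f_{l,(l)}|_\eta \leq \max_{l' \leq l} \{\eta^{(l-l')\bbj_I}\, |f_{(l')}|_\eta\}.$$
Multiplying by $|\pi_K|^l$ and using $|g_{l,h}|_\eta \leq |f_{l,(l)}|_\eta$ then gives
$$|\pi_K^l g_{l,h}|_\eta \leq \max_{l' \leq l}\{(|\pi_K|\eta^{\bbj_I})^{l-l'}\, |\pi_K^{l'}f_{(l')}|_\eta\} \leq |f|_\eta,$$
where the last step uses $|\pi_K|\eta^{\bbj_I} \leq |\pi_K|\eta_0^{\bbj_I} < 1$ from Notation~\ref{N:eta_0}. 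Summing in $l$ gives $|g_h|_\eta \leq |f|_\eta$; combined with $|r_h|_\eta = 1$ from Lemma~\ref{L:redundancy-of-r_h} and the strong triangle inequality, we conclude $|f^\ast|_\eta \leq |f|_\eta$.

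The extension to the completion $R_{\eta_0}^{\wedge, \eta}$ follows from a density argument: any element of the extended ideal $I \cdot R_{\eta_0}^{\wedge, \eta}$ is a $|\cdot|_\eta$-limit of elements of $I_{\eta_0}$ (approximate each coefficient in $R_{\eta_0}^{\wedge, \eta}$ by elements of $R_{\eta_0}$), so continuity of the norm shows the infimum over the enlarged ideal equals the infimum over $I_{\eta_0}$, which is $|f^\ast|_\eta$. The main conceptual point of the whole argument is the interplay between the Gr\"obner-basis uniqueness (which pins down $f^\ast$ structurally) and the division-algorithm estimates (which control $\eta$-norms); neither alone would suffice, but together they identify $f^\ast$ as the unique $\eta$-norm minimizer in its coset.
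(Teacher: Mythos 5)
Your proof is correct and follows essentially the same route as the paper: the norm bound $|f - g_1r_1 - \cdots - g_mr_m|_\eta \leq |f|_\eta$ coming from Lemma~\ref{L:norm-goes-down} (you re-derive it via the Corollary~\ref{C:converge}-style telescoping, the paper just notes $|f_{l+1}|_\eta \leq |f_l|_\eta$), combined with the uniqueness of Proposition~\ref{P:uniqueness-division-alg} applied to any coset representative $f+g$, and a density argument for the completion. No substantive difference from the paper's argument.
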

\begin{proof}
For $\eta \in [\eta_0, 1]$, by Lemma~\ref{L:norm-goes-down}, $|f_{l+1}|_\eta \leq |f_l|_\eta$ and hence $|f - g_1 r_1 - \cdots -g_m r_m|_\eta \leq |f|_\eta$.  By Proposition~\ref{P:uniqueness-division-alg}, starting with any element in $f + I_{\eta_0}$, the division algorithm will eventually lead to a unique element $f - g_1r_1 - \cdots - g_m r_m$; hence the first statement follows.

The second statement follows from the fact that any element in $f+I_{\eta_0} R_{\eta_0}^{\wedge, \eta}$ is a limit of elements in $f+ I_{\eta_0}$.
\end{proof}

\begin{proposition} \label{P:norm-small<=>inte}
Let $f$ be a rigid analytic function on the space
\[
\bbX_{\eta_0} = \big\{
(u_1,\dots,u_n,S) \in \AAA^{n+1}_K \big| \eta_0 \leq |S| < 1; |u_1|, \dots, |u_n|
\leq 1; r_1, \dots, r_m = 0 \big\}.
\]
Then the following are equivalent.
\begin{enumerate}
\item[(a)]
$f$ is induced by an element of $R^{\inte}_{\eta_0}$.
\item[(b)]
There exists a function $r: [\eta_0, 1) \rightarrow \RR$ with $\lim_{\eta \rightarrow 1^-} r(\eta) \leq 1$, such that for each $\eta \in [\eta_0, 1)$, $f$ lifts to an element of
the $|\cdot|_\eta$-completion of $R_{\eta_0}$ having $\eta$-norm less than 
or equal to $r(\eta)$.
\end{enumerate}
\end{proposition}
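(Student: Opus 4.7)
My plan is to take any lift $F \in R^\inte_{\eta_0}$ of $f$ and set $r(\eta) := |F|_\eta$.  Since $|F|_1 \leq 1$ bounds every coefficient $a_{i,j}$ of $F$ by $1$ and $F$ converges in each $|\cdot|_\eta$ for $\eta \in [\eta_0,1)$, a termwise analysis of $|F|_\eta = \sup_{i,j} |a_{i,j}|\eta^j$ (using the fact that the coefficient decay forced by finiteness of $|F|_\eta$ localizes the supremum to a finite set of indices near any given $\eta$) should give $\lim_{\eta \to 1^-} |F|_\eta \leq 1$, verifying (b).

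\textbf{Direction (b) $\Rightarrow$ (a).}  The plan is to produce a single canonical lift of $f$ that simultaneously realizes the norm bound $r(\eta)$ from (b) at every $\eta$, by exploiting the formal uniqueness of the output of the division algorithm.  First I would pick any lift $F$ of $f$ in the Fr\'echet completion $\calR_{\eta_0}$, which exists because $\calR_{\eta_0}$ surjects onto the ring of rigid analytic functions on $\bbX_{\eta_0}$; note that $|F|_\eta < \infty$ for each $\eta \in [\eta_0,1)$.  For each such $\eta$, I would run the division algorithm inside the Banach completion $R_{\eta_0}^{\wedge,\eta}$ (legitimate by the preceding lemma) to produce $\tilde F^{(\eta)} \in R_{\eta_0}^{\wedge,\eta}$ in the coset $F + I\cdot R_{\eta_0}^{\wedge,\eta}$ which (i) has no term divisible by any $\Lead(r_h)$ and (ii) achieves the minimal $|\cdot|_\eta$-norm within that coset.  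Combining (ii) with (b) then gives $|\tilde F^{(\eta)}|_\eta \leq r(\eta)$.

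\textbf{Gluing across $\eta$.}  The key observation will be that the defining condition ``no term is divisible by any $\Lead(r_h)$'' is a purely combinatorial condition on the formal series expansion, independent of $\eta$.  Viewing the $\tilde F^{(\eta)}$ as formal series in a common ambient ring of formal power/Laurent series in $u_1,\dots,u_n,S$ over $K$, I expect to prove that they all coincide with a single formal series $\tilde F$: any two would differ by a nonzero element of the (formal) ideal generated by $r_1,\dots,r_m$ each of whose terms is not divisible by any $\Lead(r_h)$, which contradicts a formal extension of the uniqueness argument of Proposition~\ref{P:uniqueness-division-alg} via the Gr\"obner-basis property of $I_\kappa$.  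The series $\tilde F$ then lies in $R_{\eta_0}^{\wedge,\eta}$ for every $\eta \in [\eta_0,1)$, hence in $\calR_{\eta_0}$, and satisfies $|\tilde F|_\eta \leq r(\eta)$ throughout.  Writing $\tilde F = \sum_{i,j} a_{i,j} u^i S^j$, the termwise bound $|a_{i,j}|\eta^j \leq r(\eta)$ yields $|a_{i,j}| \leq \lim_{\eta \to 1^-} r(\eta) \leq 1$ upon letting $\eta \to 1^-$, so $|\tilde F|_1 \leq 1$ and $\tilde F \in R^\inte_{\eta_0}$.

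\textbf{Main obstacle.}  The delicate step will be the gluing: justifying that the canonical representatives $\tilde F^{(\eta)}$ produced in different, incomparable Banach completions agree as a single formal series.  This requires extending the uniqueness in Proposition~\ref{P:uniqueness-division-alg} beyond the finite $|\cdot|_1$-norm setting used there, by working directly with the combinatorial Gr\"obner-basis condition on supports and carefully comparing the series expansions in a common ambient formal-series ring.
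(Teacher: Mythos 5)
Your direction (a) $\Rightarrow$ (b) is fine and is what the paper treats as immediate. The problem is in (b) $\Rightarrow$ (a), and it begins earlier than the gluing step you flag. The division algorithm is only defined for elements of $R_{\eta_0}$, i.e.\ elements of $\calR_{\eta_0}$ with \emph{finite} $|\cdot|_1$-norm: the construction starts from the top $\pi_K$-adic slice, using $|f|_1 = |\pi_K|^{l_0}$, and proceeds valuation by valuation. A lift $F \in \calR_{\eta_0}$ of $f$, or an element of a Banach completion $R_{\eta_0}^{\wedge,\eta}$, need not have bounded coefficients (e.g.\ $\sum_{j \geq 0} \pi_K^{-\lfloor \sqrt{j} \rfloor} S^j$ lies in $\calR_{\eta_0}$ and in every $R_{\eta_0}^{\wedge,\eta}$ but has $|\cdot|_1 = \infty$), so you cannot ``run the division algorithm inside $R_{\eta_0}^{\wedge,\eta}$'' as stated; the lemma preceding the proposition only extends the minimal-norm \emph{equality} for the remainder of an element of $R_{\eta_0}$ to comparison against coset elements of the completion — it does not make the algorithm available on completion elements. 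The same finiteness problem defeats your gluing: the uniqueness in Proposition~\ref{P:uniqueness-division-alg} is proved by reducing the top slice $\bar g_{(n)}$, with $n = \log_{|\pi_K|}|g|_1$, modulo $\pi_K$, and this has no meaning when $|g|_1 = \infty$, which is exactly the situation for the difference of two candidate representatives living in the completions. So a ``formal extension of the uniqueness argument'' is not available; whether reduced (no term divisible by any $\Lead(r_h)$) representatives are unique in the completions is nowhere established in the paper and would need a genuinely new argument.

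The paper's proof avoids both issues by never running the algorithm outside $R$: it writes $f$ as a Fr\'echet limit of projections of elements $f_l \in R$, applies the division algorithm to each $f_l$ (where it is legitimate), and uses Lemma~\ref{L:norm-goes-down}, applied to $f_l - f_{l+1}$, to see that the remainders $h_l$ form a Fr\'echet-convergent sequence. The single limit $h \in \calR_{\eta_0}$ is a lift of $f$ whose $\eta$-norm equals the $\eta$-quotient norm of $f$ for every $\eta$ simultaneously, hence is $\leq r(\eta)$ by hypothesis (b) combined with the preceding lemma; letting $\eta \to 1^-$ gives $|h|_1 \leq 1$, i.e.\ $h \in R^{\inte}_{\eta_0}$. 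Note that if you tried to repair your per-$\eta$ representatives $\tilde F^{(\eta)}$, you would construct them precisely by this kind of approximation, at which point the limit already produces one element valid for all $\eta$ at once and no cross-$\eta$ gluing (hence no uniqueness in the completion) is needed. A smaller point: the surjectivity of $\calR_{\eta_0}$ onto the ring of functions on $\bbX_{\eta_0}$ that you invoke is itself a nontrivial quasi-Stein (Theorem B type) fact; the paper only uses that $f$ is a Fr\'echet limit of projections of elements of $R$ under the quotient norms.
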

\begin{proof}
It is clear that (a) implies (b), so assume (b).
We can write $f$ as a Fr\'echet limit of the projections of
some sequence of elements $f_1,f_2, \dots$ of $R$,
under the quotient norms associated to the 
$|\cdot|_\eta$ for $\eta \in [\eta_0,1)$. 
Use the division algorithm to write $f_l = g_{l,1} r_1 + \cdots + g_{l,m} r_m
+ h_l$ with $g_{l,1},\dots,g_{l,m},h_l \in R_{\eta_0}$.   Moreover, as $f_l - f_{l+1}$ tends to zero under the Fr\'echet topology, so is $h_l - h_{l+1}$ since it can be obtained from the division algorithm of $f_l - f_{l+1}$ and Lemma~\ref{L:norm-goes-down} ensures that $|f_l - f_{l+1}|_\eta \geq |h_l - h_{l+1}|_\eta$.  Hence, the $h_l$ form a F\'echet convergent sequence; denote the limit by $h$, which is a lift of $f$.  Note that for a fixed $\eta$,
$|h_l|_\eta$ equals the $\eta$-quotient norm of $f_l$, which in turn equals the $\eta$-quotient norm of $f$ when $l$ is large enough.  Thus, $|h|_\eta \leq r(\eta)$ for all $\eta \in [\eta_0, 1)$. Hence it lies in $R_{\eta_0}^{\inte}$.
\end{proof}

\begin{notation}
Define 
\begin{eqnarray*}
A^{\inte} = R^{\inte}/I^{\inte} && A = R/I \\
A_{\eta_0} = R_{\eta_0} / I_{\eta_0} &&
A_\kappa = A^{\inte} \otimes_{\gotho_K} \kappa \cong R_\kappa / I_\kappa;
\end{eqnarray*}
we may view $A_\kappa$ as an affinoid algebra over $\kappa((S))$, whose corresponding rigid analytic space is denoted by $X$.
\end{notation}

\subsection{Quotient norms versus spectral norms}
\label{S:quotient-norm}
In this subsection, we compare spectral norms with the quotient norms discussed in previous section.  As an application, we deduce that the connected components of $\bbX_{\eta_0}$ when $\eta_0 \rar 1^-$ as a rigid space over $K$ are the same as the connected components of $X$ as a rigid space over $\kappa((S))$.

Keep the notation as above and assume the following.

\begin{hypo} \label{H:S-reduced}
In this subsection, we assume that $A_\kappa$ is reduced.
\end{hypo}

\begin{notation}
Let $|\cdot|_{\kappa, \quot}$ denote the quotient norm on $A_\kappa$ induced by the Gauss norm on $R_\kappa$.  Let $|\cdot|_{\kappa, \sp} = \lim_{n \rightarrow +\infty} |\cdot^n|_{\kappa, \quot}^{1/n}$ be the spectral norm; it is a norm because $A_\kappa$ is reduced.  By \cite[Theorem~6.2.4/1]{BGR}, there exists $c > 0$ such that $|\cdot|_{\kappa, \sp} \leq |\cdot|_{\kappa, \quot} \leq |S|_\kappa^{-c} |\cdot|_{\kappa, \sp}$, where $|S|_\kappa$ is the norm of $S$ in $\kappa((S))$.
\end{notation}

\begin{notation} \label{N:new-eta_0}
In this subsection, fix $\eta_0 \in \big(|\pi_K|^{1/(-\bbj_I + pc)}, 1 \big)$; in particular, $|\pi_K| \eta_0^{\bbj_I} < \eta_0^{pc}$ and $\eta_0 > p^{-1/pc}$.
\end{notation}

\begin{notation}
For $\eta \in [\eta_0, 1]$, let $|\cdot|_{\eta, \quot}$ denote the quotient norm on $A_{\eta_0}$ or $A$ induced by the $\eta$-Gauss norm on $R_{\eta_0}$ or $R$.  Similarly, we have the $\eta$-spectral (semi)norm $|\cdot|_{\eta, \sp} = \lim_{n\rightarrow +\infty} |\cdot^n|_{\eta, \quot}^{1/n}$; we will see in Lemma~\ref{L:compare-norm} that it is a norm.
\end{notation}

\begin{proposition} \label{P:A^circ=A^inte}
The quotient norm $|\cdot|_{1, \quot}$ on $A$ is the same as the spectral (semi)norm $|\cdot|_{1, \sp}$.  As a consequence, the map 
$A^{\inte} \to A_\kappa$ induces an isomorphism 
$A^\circ/A^{\circ\circ} \cong A_\kappa$, where $A^\circ = \{f \in A | |f|_{1, \sp} \leq 1 \}$ and $A^{\circ\circ} = \{f \in A | |f|_{1, \sp} < 1 \}$.
\end{proposition}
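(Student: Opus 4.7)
The inequality $|\cdot|_{1,\sp}\leq|\cdot|_{1,\quot}$ is automatic, so I only need to bound $|f|_{1,\quot}$ by $|f|_{1,\sp}$ from above. By the scaling $f\mapsto\pi_K^k f$, which multiplies both norms by $|\pi_K|^k$, it suffices to show that every $f\in A$ with $|f|_{1,\quot}=1$ satisfies $|f^n|_{1,\quot}=1$ for all $n\geq 1$; then $|f|_{1,\sp}=\lim_n|f^n|_{1,\quot}^{1/n}=1=|f|_{1,\quot}$. The scaling is legitimate because (as the rest of the proof will show) the division algorithm forces $|\cdot|_{1,\quot}$ to take values in $|K^\times|\cup\{0\}$.

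The main idea is to iterate the division algorithm at $\eta=1$. One first checks that the estimates of Lemma~\ref{L:norm-goes-down} and Corollary~\ref{C:converge} remain valid for $\eta=1$: the geometric decay factor is $|\pi_K|\eta_0^{\bbj_I}<1$ by Notation~\ref{N:eta_0}, so the series defining each $g_h$ converges under $|\cdot|_1$ as well, and by $\pi_K$-adic completeness of $R^{\inte}$ the limit lies in $R^{\inte}$. The output $\tilde f=f-g_1r_1-\cdots-g_mr_m\in R^{\inte}$ satisfies $|\tilde f|_1=|f|_{1,\quot}=1$ (by the minimality lemma preceding Proposition~\ref{P:norm-small<=>inte}), and no term of $\tilde f$ is divisible by any $\Lead(r_h)$, by Proposition~\ref{P:uniqueness-division-alg}. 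Since $K$ is discretely valued, the equality $|\tilde f|_1=1$ with $\tilde f\in R^{\inte}$ forces some coefficient of $\tilde f$ to have norm exactly $1$ (otherwise all coefficients would lie in $\pi_K\calO_K$ and the supremum would be at most $|\pi_K|<1$), so the reduction $\bar{\tilde f}\in R_\kappa$ is nonzero. Because none of its terms is divisible by $\lead(\bar r_h)$, the Gröbner basis normal-form property shows $\bar{\tilde f}\notin I_\kappa$, so the class $\bar f\in A_\kappa$ is nonzero.

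Now apply reducedness: Hypothesis~\ref{H:S-reduced} gives $\bar f^n\neq 0$ in $A_\kappa$ for every $n\geq 1$. The element $\tilde f^n\in R^{\inte}$ is a lift of $f^n$ with $|\tilde f^n|_1=|\tilde f|_1^n=1$ by multiplicativity of the Gauss norm (valid in characteristic zero). Running the division algorithm on $\tilde f^n$ produces a Gröbner-reduced lift $\tilde h_n\in R^{\inte}$ of $f^n$ with $|\tilde h_n|_1=|f^n|_{1,\quot}$; the same argument as above shows that $\bar{\tilde h}_n$ represents the nonzero class $\bar f^n$ and is Gröbner-reduced, hence nonzero in $R_\kappa$, forcing $|\tilde h_n|_1\geq 1$. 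Combined with $|\tilde h_n|_1\leq|\tilde f^n|_1=1$ this yields $|f^n|_{1,\quot}=1$ as required.

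The consequence is then routine: by the norm equality just proved, $A^\circ=\{|f|_{1,\quot}\leq 1\}$, and the division algorithm shows every such $f$ lifts to $R^{\inte}$, making $A^{\inte}\to A^\circ$ surjective. Flatness of $R^{\inte}/I^{\inte}$ over $\calO_K$ makes the map injective and identifies $A^{\circ\circ}$ with the image of $\pi_K A^{\inte}$, so $A^\circ/A^{\circ\circ}\cong A^{\inte}/\pi_K A^{\inte}\cong A_\kappa$. The main obstacle throughout is the extension of the division-algorithm machinery from $R_{\eta_0}$ at $\eta<1$ to $R^{\inte}$ at $\eta=1$; this hinges entirely on the inequality $|\pi_K|\eta_0^{\bbj_I}<1$ built into Notation~\ref{N:eta_0}, together with discreteness of the valuation on $K$ to ensure the sup-norm of any lift in $R^{\inte}$ is attained.
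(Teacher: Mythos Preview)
Your strategy—using reducedness of $A_\kappa$ to force the quotient norm to be power-multiplicative—is exactly the idea behind the paper's one-line citation of \cite[6.2.1/4(iii)]{BGR}, and unpacking it is worthwhile since $A$ is not literally a $K$-affinoid algebra. However, there is a genuine error: the ring $R^{\inte}=\calO_K\langle u_1,\dots,u_n\rangle((S))$ is \emph{not} $\pi_K$-adically complete. The sequence $x_N=\sum_{k=1}^N\pi_K^kS^{-k}$ is $\pi_K$-adically Cauchy (each $x_{N+1}-x_N=\pi_K^{N+1}S^{-N-1}\in\pi_K^{N+1}R^{\inte}$), yet its limit has unbounded negative powers of $S$ and lies only in $R^{\inte}_{\eta_0}$. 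Consequently your division-algorithm output $\tilde f$ need not lie in $R^{\inte}$, and the minimality lemma you invoke computes the quotient norm on $A_{\eta_0}=R_{\eta_0}/I_{\eta_0}$, not on $A=R/I$ as the proposition requires.

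The fix is to drop the division algorithm entirely and argue directly on $A$. Flatness of $R^{\inte}/I^{\inte}$ over $\calO_K$ gives $I\cap R^{\inte}=I^{\inte}$. If $|f|_{1,\quot}=1$, discreteness of $|K^\times|$ alone furnishes a lift $\tilde f\in R^{\inte}$ with $|\tilde f|_1=1$; were $\bar{\tilde f}\in I_\kappa$ we could write $\tilde f=h+\pi_K g$ with $h\in I^{\inte}$ and $g\in R^{\inte}$, so $\pi_K g\in\tilde f+I$ would be a lift of norm $\le|\pi_K|$, contradicting minimality. Hence $\bar f\neq 0$ in $A_\kappa$, reducedness gives $\bar f^n\neq 0$, and since $\tilde f^n\in R^{\inte}$ is an honest lift of $f^n$ the same argument yields $|f^n|_{1,\quot}=1$. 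Your scaling step and the deduction of $A^\circ/A^{\circ\circ}\cong A_\kappa$ then go through as written; the residual case $|f|_{1,\quot}=0$ is harmless since then $|f|_{1,\sp}\le|f|_{1,\quot}=0$ as well.
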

\begin{proof}
Since $A^{\inte}/\gothm_K A^{\inte} = A_\kappa$ is reduced, by \cite[6.2.1/4(iii)]{BGR}, the quotient norm on $A$ is equal to the spectral seminorm, $A^\circ = A^{\inte}$,
and $A^{\circ\circ} = \gothm_K A^{\inte}$. This proves the claim.
\end{proof}

\begin{lemma} \label{L:compare-norm}
For $\eta \in [\eta_0, 1)$, we have $|\cdot|_{\eta, \sp} \leq |\cdot|_{\eta, \quot} \leq \eta^{-pc/(p-1)} |\cdot|_{\eta, \sp}$ on $A_{\eta_0}$.  The same is true when extending both norm to the completion of $A_{\eta_0}$ with respect to $|\cdot|_{\eta, \quot}$ (which is the same as the completion with respect to the spectral norm).  In particular, this shows that $|\cdot|_{\eta, \sp}$ is a norm on $A_{\eta_0}$.
\end{lemma}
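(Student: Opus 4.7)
The first inequality $|\cdot|_{\eta,\sp}\leq|\cdot|_{\eta,\quot}$ is immediate from the definition of the spectral norm together with submultiplicativity of the quotient norm: $|f^n|_{\eta,\quot}\leq|f|_{\eta,\quot}^n$ forces $|f|_{\eta,\sp}=\lim_n|f^n|_{\eta,\quot}^{1/n}\leq|f|_{\eta,\quot}$ for every $f\in A_{\eta_0}$.

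For the nontrivial inequality the plan is to first isolate the single-power estimate
\[
|f^p|_{\eta,\quot}\ \geq\ \eta^{pc}\,|f|_{\eta,\quot}^p \qquad \text{for all } f\in A_{\eta_0},\ \eta\in[\eta_0,1),
\]
and then bootstrap. The bootstrap step is routine: applying the single-power estimate with $f$ replaced by $f^{p^k}$ and telescoping for $k=0,\dots,n-1$ yields $|f^{p^n}|_{\eta,\quot}\geq\eta^{pc(p^n-1)/(p-1)}|f|_{\eta,\quot}^{p^n}$; taking $p^n$-th roots and sending $n\to\infty$ produces exactly $|f|_{\eta,\sp}\geq\eta^{pc/(p-1)}|f|_{\eta,\quot}$, which is the desired comparison. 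In particular this shows that $|\cdot|_{\eta,\sp}$ does not vanish on nonzero elements, so it is a norm.

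To prove the single-power estimate I would exploit Proposition~\ref{P:uniqueness-division-alg}: pick the canonical lift $g\in R_{\eta_0}$ of $f$ produced by the division algorithm, so that $|g|_\eta=|f|_{\eta,\quot}$ and no term of $g$ is divisible by any $\Lead(r_h)$. Multiplicativity of the Gauss norm gives $|g^p|_\eta=|g|_\eta^p=|f|_{\eta,\quot}^p$; applying the division algorithm a second time, now to $g^p$, yields the canonical lift $\tilde{g^p}$ of $f^p$ with $|\tilde{g^p}|_\eta=|f^p|_{\eta,\quot}$. The estimate thus reduces to the assertion that this second application of the algorithm cannot shrink the $\eta$-norm by worse than a factor of $\eta^{pc}$.

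This last claim is where I expect the main obstacle. The strategy is to transfer the residue-level BGR bound on $A_\kappa$, namely $|\cdot|_{\kappa,\quot}\leq|S|_\kappa^{-c}|\cdot|_{\kappa,\sp}$, to the $\eta$-level. Rewritten in its $p$-power avatar $|\bar\phi^p|_{\kappa,\quot}\geq|S|_\kappa^{pc}|\bar\phi|_{\kappa,\quot}^p$ --- obtained from $|\bar\phi|_{\kappa,\sp}^p=|\bar\phi^p|_{\kappa,\sp}\leq|\bar\phi^p|_{\kappa,\quot}$ --- this is exactly the $\kappa$-version of the single-power estimate. The lift proceeds one $\pi_K$-layer at a time: the residue of each layer is handled by the Gr\"obner reduction in $R_\kappa$, to which the BGR bound applies; cross-layer leakage is bounded by Lemma~\ref{L:norm-goes-down} by factors of $\eta^{\bbj_I}$; and the standing hypothesis $\eta_0>|\pi_K|^{1/(-\bbj_I+pc)}$ of Notation~\ref{N:new-eta_0}, equivalent to $|\pi_K|\eta_0^{\bbj_I}<\eta_0^{pc}$, keeps the resulting geometric series summable with the desired rate, so that after summation $|S|_\kappa$ in the residue bound is replaced by $\eta$.

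The final sentence of the lemma --- the persistence of the comparison on the $|\cdot|_{\eta,\quot}$-completion of $A_{\eta_0}$ --- is automatic, since both quotient and spectral norms are continuous for this topology.
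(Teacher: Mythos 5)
Your route is the same as the paper's: reduce everything to the single-power estimate $|f^p|_{\eta,\quot}\geq\eta^{pc}|f|_{\eta,\quot}^p$, iterate and take limits to get the factor $\eta^{-pc/(p-1)}$, and prove the single-power estimate by lifting the residue-level bound $|\cdot|_{\kappa,\quot}\leq|S|_\kappa^{-c}|\cdot|_{\kappa,\sp}$ through the $\pi_K$-layers, controlling the leakage produced by the division algorithm via Lemma~\ref{L:norm-goes-down} and the inequality $|\pi_K|\eta_0^{\bbj_I}<\eta_0^{pc}$. The framing of the first inequality, the bootstrap, and the completion statement are all fine.

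There is, however, one concrete hole in your sketch of the key step. You propose to apply the $p$-power avatar of the residue bound to ``the residue of each layer'' of $g^p$, but the $\pi_K$-layers of $g^p$ are \emph{not} $p$-th powers in $R_\kappa$: writing $g=\sum_l\pi_K^l g_{(l)}$, the expansion of $g^p$ contains cross terms $g_{(l_1)}\cdots g_{(l_p)}$ with multinomial coefficients divisible by $p$, and these contaminate every layer, so the identity $|\bar\phi^p|_{\kappa,\sp}=|\bar\phi|_{\kappa,\sp}^p$ has nothing to bite on. The paper's fix is to first replace $\tilde f^p$ by $\sum_l(\pi_K^l\tilde f_{(l)})^p$; the difference is divisible by $p$, hence has $\eta$-norm at most $p^{-1}|\tilde f|_\eta^p<\eta^{pc}|\tilde f|_\eta^p$, which is harmless by the ultrametric inequality --- and this is exactly where the \emph{other} consequence of Notation~\ref{N:new-eta_0}, namely $\eta_0>p^{-1/pc}$, is used; your proposal never invokes it, which is the symptom of the missing step. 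After this substitution each layer's residue is the honest $p$-th power $(\bar{\tilde f}_{(l)})^p$, the $\kappa$-level bound applies, your leakage estimate via Lemma~\ref{L:norm-goes-down} goes through, and one concludes as in the paper by noting that the remainders $\pi_K^{pl}h_{l,(0)}$ sit at distinct $\pi_K$-valuations (so they cannot cancel) and contain no term divisible by any $\Lead(r_h)$, so Proposition~\ref{P:uniqueness-division-alg} converts their Gauss norm into a lower bound for the quotient norm.
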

\begin{proof}
It suffices to show that for any $f \in A_{\eta_0}$, $|f^p|_{\eta, \quot} \geq \eta^{pc} |f|_{\eta, \quot}^p$; then it would follow that $|f^{p^n}|_{\eta, \quot} \geq \eta^{(p^n-1)pc/(p-1)} |f|_{\eta, \quot}^{p^n}$ for all $n \in \NN$ by iteration, and hence the statement follows by taking limit.

Pick a representative $\tilde f$ of $f$  in $R_{\eta_0}$ containing no terms divisible by any $\Lead(r_h)$ (hence by Proposition~\ref{P:uniqueness-division-alg}, $|\tilde f|_\eta = |f|_{\eta, \quot}$).  Fix $\eta \in [\eta_0, 1)$, we will show that
\begin{equation} \label{E:sp-norm-vs-quot-norm}
|\tilde f^p|_{\eta, \quot} = \Big|\sum_l (\pi_K^l\tilde f_{(l)})^p \Big|_{\eta, \quot} \geq  \eta^{pc} |\tilde f|_{\eta}^p = \eta^{pc} |f|_{\eta, \quot}^p.
\end{equation}
First, we remark that, given the middle inequality, the former equality follows; this is because $\tilde f^p - \sum_l (\pi_K^l \tilde f_{(l)})^p$ consists of products of $\pi_K^l\tilde f_{(l)}$ with an extra multiple $p$ from the multinomial coefficients and then $|\tilde f^p - \sum_l (\pi_K^l \tilde f_{(l)})^p|_{\eta, \quot} \leq |\tilde f^p - \sum_l (\pi_K^l \tilde f_{(l)})^p|_{\eta} \leq p^{-1} |\tilde f|_{\eta}^p < \eta^{pc} |\tilde f|_{\eta}^p$, for $\eta \in [\eta_0, 1)$.  So it suffices to prove the middle inequality in \eqref{E:sp-norm-vs-quot-norm}.  For any $l$, we have
\[
\big| (\bar{\tilde f}_{(l)})^p \big|_{\kappa, \quot} \geq \big| (\bar{\tilde f}_{(l)})^p \big|_{\kappa,\sp} = \big| \bar{\tilde f}_{(l)}\big|_{\kappa, \sp}^p \geq |S|_\kappa^{pc}\cdot \big| \bar{\tilde f}_{(l)}\big|_{\kappa, \quot}^p.
\]
Let $(\tilde f_{(l)})^p = g_{l, 1} r_1 + \cdots + g_{l, m} r_m + h_l$ be the result of the first step of applying the division algorithm to $(\tilde f_{(l)})^p$.  Then $ \log_\eta |h_{l, (0)}|_\eta = \log_{|S|_\kappa} \big| (\bar{\tilde f}_{(l)})^p \big|_{\kappa, \quot}$ and hence $|h_{l, (0)}|_\eta \geq \eta^{pc} |\tilde f_{(l)}|_\eta^p$.  Moreover, by Lemma~\ref{L:norm-goes-down}, $|h_l - h_{l, (0)}|_\eta \leq \eta^{\bbj_I}|\pi_K| |\tilde f_{(l)}|_\eta^p < \eta^{pc} |\pi_K|^{-pl}|\tilde f|_\eta^p$; this implies that $|h_{l, (0)}|_{\eta, \quot} = |h_{l, (0)}|_\eta$.

Now, we can write
\begin{equation} \label{E:sp-norm-vs-quot-norm2}
\sum_l (\pi_K^l\tilde f_{(l)})^p = \sum_l \pi_K^{pl}h_{l, (0)} + \sum_l \pi_K^{pl}(h_l - h_{l, (0)})
\end{equation}
in the quotient ring.
The former term on the right hand side of \eqref{E:sp-norm-vs-quot-norm2} has (quotient) norm at least $\eta^{pc} |\tilde f|_{\eta}^p$ because none of them is divisible by any $\Lead(r_h)$.  In contrast, the latter term on the right hand side of \eqref{E:sp-norm-vs-quot-norm2} has norm strictly less than $\eta^{pc} |\tilde f|_{\eta}^p$.  Thus, the inequality in \eqref{E:sp-norm-vs-quot-norm} holds.
\end{proof}

\begin{remark}
It is attractive to think that $|\cdot|_{\eta, \sp} \leq |\cdot|_{\eta, \quot} \leq \eta^{-c} |\cdot|_{\eta, \sp}$ when $\eta \rar 1^-$.  However, the best we know is that for any $c' > c$, we have an $\epsilon$ depending on $c'$, for which $|\cdot|_{\eta, \sp} \leq |\cdot|_{\eta, \quot} \leq \eta^{-c'} |\cdot|_{\eta, \sp}$ for all $\eta \in [\epsilon, 1)$.
\end{remark}

\begin{corollary} \label{C:sp-norm-small<=>inte}
For a rigid analytic function $f$ on $\bbX_{\eta_0}$, the following are equivalent.
\begin{enumerate}
\item[(a)]
$f$ is an element in $A^{\inte}_{\eta_0}$.
\item[(b)]
There exists a function $r: [\eta_0, 1) \rightarrow \RR$ with $\lim_{\eta \rightarrow 1} r(\eta) \leq 1$, such that for each $\eta \in [\eta_0, 1)$, $|f|_{\eta, \sp} \leq r(\eta)$.
\end{enumerate}
\end{corollary}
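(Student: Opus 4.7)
The plan is to reduce Corollary~\ref{C:sp-norm-small<=>inte} to the corresponding statement for the quotient norm, namely Proposition~\ref{P:norm-small<=>inte}, using the two-sided comparison between spectral and quotient norms provided by Lemma~\ref{L:compare-norm}. The crucial point is that although the comparison introduces a correction factor $\eta^{-pc/(p-1)}$, this factor tends to $1$ as $\eta \to 1^-$, and both statements are phrased in terms of only the limiting behaviour of the bounding function $r(\eta)$, so the factor is harmless.

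For the direction (a) $\Rightarrow$ (b), I would invoke Proposition~\ref{P:norm-small<=>inte} directly: since $f$ is induced by an element of $R^\inte_{\eta_0}$, there is a function $r(\eta)$ with $\lim_{\eta \to 1^-} r(\eta) \leq 1$ such that $|f|_{\eta, \quot} \leq r(\eta)$ for all $\eta \in [\eta_0, 1)$. The inequality $|f|_{\eta, \sp} \leq |f|_{\eta, \quot}$ from Lemma~\ref{L:compare-norm} then shows that the same $r(\eta)$ bounds the spectral norm, which yields (b).

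For the direction (b) $\Rightarrow$ (a), I would apply the other half of Lemma~\ref{L:compare-norm}, giving
\[
|f|_{\eta, \quot} \leq \eta^{-pc/(p-1)} |f|_{\eta, \sp} \leq \eta^{-pc/(p-1)} r(\eta).
\]
Setting $r'(\eta) := \eta^{-pc/(p-1)} r(\eta)$, we have $\lim_{\eta \to 1^-} r'(\eta) = \lim_{\eta \to 1^-} r(\eta) \leq 1$, since $\eta^{-pc/(p-1)} \to 1$. Hence condition (b) of Proposition~\ref{P:norm-small<=>inte} is satisfied with $r'$ in place of $r$, and we conclude that $f$ comes from an element of $R^\inte_{\eta_0}$, i.e.\ $f \in A^\inte_{\eta_0}$.

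There is no real obstacle here; the work has already been absorbed into Lemma~\ref{L:compare-norm}, whose proof via the division algorithm is the substantial step. The only thing to check is the simple observation that the correction factor $\eta^{-pc/(p-1)}$ does not spoil the limit condition — which is exactly why the statement of Corollary~\ref{C:sp-norm-small<=>inte} is phrased with a limit on $r(\eta)$ rather than a uniform bound.
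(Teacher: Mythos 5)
Your proposal is correct and is exactly the paper's argument: the paper proves this corollary in one line by combining Lemma~\ref{L:compare-norm} with Proposition~\ref{P:norm-small<=>inte}, and your write-up just makes explicit the two directions and the harmless correction factor $\eta^{-pc/(p-1)} \to 1$.
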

\begin{proof}
It follows from combining Lemma~\ref{L:compare-norm} with Proposition~\ref{P:norm-small<=>inte}.
\end{proof}

\begin{theorem}
There are one-to-one correspondences among the following four sets.
\begin{enumerate}
\item [(a)] the idempotent elements of $A_\kappa$;
\item [(b)] the idempotent elements of $A_{\eta_0}^{\inte}$;
\item [(c)] the idempotent elements of $A_{\eta_0}$;
\item [(d)] the idempotent elements on $\bbX_{\eta_0}$.
\end{enumerate}
\end{theorem}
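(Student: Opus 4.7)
The plan is to establish the three equivalences (c)~$\Leftrightarrow$~(d), (b)~$\Leftrightarrow$~(c), and (a)~$\Leftrightarrow$~(b) in turn, the last being the main point. For (c)~$\Leftrightarrow$~(d), one identifies $A_{\eta_0}$ with the ring of global Fr\'echet-analytic functions on $\bbX_{\eta_0}$ (viewed as the rising union of the admissible closed subannuli indexed by $\eta \in [\eta_0, 1)$) and appeals to the standard correspondence between idempotent global sections and admissible clopen decompositions of a rigid space. For (b)~$\Leftrightarrow$~(c), the inclusion $A_{\eta_0}^{\inte} \subset A_{\eta_0}$ makes one direction trivial; for the reverse, if $e \in A_{\eta_0}$ is idempotent then $e^n = e$ for all $n$ gives $|e|_{\eta, \sp} = \lim_n |e|_{\eta, \quot}^{1/n} \in \{0, 1\}$ for each $\eta \in [\eta_0, 1)$, so the uniform bound $|e|_{\eta, \sp} \leq 1$ combined with Corollary~\ref{C:sp-norm-small<=>inte} places $e$ in $A_{\eta_0}^{\inte}$.

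For (a)~$\Leftrightarrow$~(b), I first need to produce a reduction map $A_{\eta_0}^{\inte} \to A_\kappa$. One checks that naive term-by-term reduction modulo $\pi_K$ carries $R_{\eta_0}^{\inte}$ into $R_\kappa$ — the essential point is that $|f|_\eta < \infty$ for $\eta \in [\eta_0, 1)$ together with $|f_{i,j}| \leq 1$ forces only finitely many $(i,j)$ with $j < 0$ to have $|f_{i,j}| = 1$, so the reduction has bounded pole order in $S$ — and this descends to $A_{\eta_0}^{\inte} \to A_\kappa$ because $I_{\eta_0}^{\inte}$ reduces into $I_\kappa$. The map clearly sends idempotents to idempotents.

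For surjectivity of this map at the level of idempotents, I would use a Newton iteration. Given an idempotent $\bar e \in A_\kappa$, pick any lift $e_0 \in A_{\eta_0}^{\inte}$ and define inductively $e_{n+1} = 3 e_n^2 - 2 e_n^3$; direct algebra yields
\[
e_{n+1}^2 - e_{n+1} = -(3 - 2 e_n)(1 + 2 e_n)(e_n^2 - e_n)^2, \qquad e_{n+1} - e_n = (1 - 2 e_n)(e_n^2 - e_n).
\]
By induction one obtains $|e_n^2 - e_n|_{\eta, \sp} \leq |\pi_K|^{2^n}$ and $|e_{n+1} - e_n|_{\eta, \sp} \leq |\pi_K|^{2^n}$ uniformly in $\eta \in [\eta_0, 1)$, so Fr\'echet completeness yields a limit $e \in A_{\eta_0}^{\inte}$ that is idempotent and lifts $\bar e$. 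For injectivity, if $e, e'$ both lift $\bar e$, a short computation from $e^2 = e$ and $(e')^2 = e'$ gives $(e - e')(1 - e - e') = 0$; since $1 - 2\bar e$ is its own inverse in $A_\kappa$, a Neumann-series argument lifts $1 - e - e'$ to a unit in $A_{\eta_0}^{\inte}$, forcing $e = e'$.

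The main obstacle is the uniform-in-$\eta$ control of the Newton iteration: the spectral-norm estimates must hold simultaneously for every $\eta \in [\eta_0, 1)$ in order to produce Fr\'echet convergence in $A_{\eta_0}^{\inte}$. This is exactly what Lemma~\ref{L:compare-norm} (with its explicit $\eta^{-pc/(p-1)}$ comparison between quotient and spectral norms) and the calibrated choice of $\eta_0$ in Notation~\ref{N:new-eta_0} are designed to enable.
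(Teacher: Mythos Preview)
Your overall architecture matches the paper's: (b)$\Leftrightarrow$(c)$\Leftrightarrow$(d) via Corollary~\ref{C:sp-norm-small<=>inte}, and (a)$\Leftrightarrow$(b) by reduction modulo $\pi_K$ together with a Hensel-type iteration for surjectivity. In fact your iteration $e_{n+1}=3e_n^2-2e_n^3$ is literally the paper's $f_{\alpha+1}=f_\alpha+h_\alpha-2h_\alpha f_\alpha$ (with $h_\alpha=f_\alpha^2-f_\alpha$) rewritten.

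The gap is the base case of the iteration. You assert $|e_0^2-e_0|_{\eta,\sp}\leq|\pi_K|$ uniformly in $\eta\in[\eta_0,1)$ for an \emph{arbitrary} lift $e_0\in A_{\eta_0}^{\inte}$, but membership in $A_{\eta_0}^{\inte}$ only controls $|\cdot|_1$, not $|\cdot|_\eta$ for $\eta<1$: already $S^{-1}\in R_{\eta_0}^{\inte}$ has $|S^{-1}|_\eta=\eta^{-1}>1$. From $e_0^2-e_0\in\pi_K A_{\eta_0}^{\inte}$ you only get $|e_0^2-e_0|_{\eta,\sp}\leq|\pi_K|\cdot r(\eta)$ with $r(\eta)\to 1$ as $\eta\to 1^-$, which need not be $<1$ on all of $[\eta_0,1)$, so neither the squaring recursion nor the auxiliary bound $|(3-2e_n)(1+2e_n)|_{\eta,\sp}\leq 1$ is justified. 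The paper avoids this by choosing a \emph{specific} lift: take $\tilde f_0\in R^{\inte}$ containing only the terms present in $\bar e$. Since $|\bar e|_{\kappa,\sp}=1$ gives $|\bar e|_{\kappa,\quot}\leq|S|_\kappa^{-c}$, one has $|\tilde f_0|_\eta\leq\eta^{-c}$, and combining this with $|\tilde h_0|_1\leq|\pi_K|$ (from the division-algorithm representative reducing to $0$) yields an explicit bound $|h_0|_{\eta,\quot}<1$ for all $\eta\in[\eta_0,1)$ under the calibration of Notation~\ref{N:new-eta_0}; the iteration then runs in the quotient norm.

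Your injectivity argument has the same defect: inverting $1-e-e'$ by a Neumann series requires $|\pi_K w|_\eta<1$ for every $\eta\in[\eta_0,1)$, which again does not follow from $w\in A_{\eta_0}^{\inte}$. The paper sidesteps this by working only with $|\cdot|_{1,\quot}$: from $f-g=f^p-g^p=(f-g)(f^{p-1}+\cdots+g^{p-1})$ and the fact that $f^{p-1}+\cdots+g^{p-1}$ reduces to $p\bar e^{p-1}=0$ in $A_\kappa$, one gets $|f-g|_{1,\quot}\leq|\pi_K|\,|f-g|_{1,\quot}$, hence $f=g$ since $|\cdot|_{1,\quot}$ is a norm.
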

\begin{proof}
By Corollary~\ref{C:sp-norm-small<=>inte}, the sets (b), (c), and (d) are the same because idempotent elements have spectral norms 1.  It suffices to match up (a) and (b).  We have a map from the set of idempotent elements of $A_{\eta_0}^\inte$ to the set of idempotent elements of $A_\kappa$ by reducing modulo $\pi_K$.  We first show the injectivity.  Let $f, g \in R_{\eta_0}^\inte$ be idempotents whose reductions modulo $\pi_K$ are the same, i.e., $\bar f = \bar g \in A_\kappa$.  This implies that $\bar f^{p-1} + \bar f^{p-2} \bar g + \dots + \bar g^{p-1} = 0$ in $A_\kappa$.  Since $f - g = f^p - g^p = (f-g) (f^{p-1} + f^{p-2} g + \cdots + g^{p-1})$, we have
\begin{align*}
|f-g|_{1, \quot} &= \big| (f-g)(f^{p-1} + f^{p-2} g + \cdots + g^{p-1}) \big|_{1, \quot} \\
&\leq |f-g|_{1, \quot} \cdot |f^{p-1} + f^{p-2} g + \cdots + g^{p-1}|_{1, \quot} \leq |f-g|_{1, \quot} \cdot |\pi_K|.
\end{align*}
This forces $|f-g|_{1, \quot} = 0$ and hence $f = g$.

To see the surjectivity, we start with an idempotent $\bar f \in A_\kappa$, viewed as an element in $R_\kappa$ with none of its terms divisible by any of $\Lead(\bar r_h)$; pick a lift $\tilde f_0 \in R^\inte$ of $\bar f$ which only contains the terms that $\bar f$ has and let $f_0 \in A^\inte$ denote its image in $A^\inte$.  If we set $\tilde h_0$ be the result of applying the division algorithm to $\tilde f_0^2 - \tilde f_0$ and $h_0 = f_0^2 - f_0$, then $|h_0|_{1, \quot} = |\tilde h_0|_{1, \quot} \leq |\pi_K|$ and $|h_0|_{\eta, \quot} = |\tilde h_0|_{\eta, \quot} \leq p^{-1}\eta^{-2c} < 1$ for all $\eta \in [\eta_0, 1)$, where the latter inequality holds because all terms in $\tilde f_0$ come from the terms in $\bar f$ which have norms $\leq |\bar f|_{\kappa, \quot} \leq |S|_\kappa^{-c} |\bar f|_{\kappa, \sp} = |S|_\kappa^{-c}$.  We apply a Hensel lemma type iteration to $f_0$ as follows.  For $\alpha \geq 0$, we set $f_{\alpha+1} = f_\alpha + h_\alpha - 2h_\alpha f_\alpha$ and
\[
h_{\alpha+1} : = f_{\alpha+1}^2 - f_{\alpha+1} = (f_\alpha + h_\alpha - 2h_\alpha f_\alpha)^2 - (f_\alpha+h_\alpha-2h_\alpha f_\alpha) = 4h_\alpha^2(h_\alpha - 1).
\]
Hence, $|h_{\alpha+1}|_{\eta, \quot} \leq |h_\alpha|_{\eta, \quot}^2$ for all $\eta \in [\eta_0, 1]$.  Thus $|h_\alpha|_{\eta, \quot} \rar 0$ as $\alpha \rar +\infty$; hence $f_\alpha$ converges to an element $f \in A_{\eta_0}^\inte$ which is idempotent.  It is clear from the construction that the reduction of $f$ modulo $\pi_K$ is the same as $\bar f$.  This proves the surjectivity.
\end{proof}

\begin{corollary}  \label{C:connected-components}
When $\eta_0 \in p^\QQ$, there is a one-to-one correspondence between the connected components of $X$ and the connected components of $\bbX_{\eta_0}$.
\end{corollary}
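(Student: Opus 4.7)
The plan is to combine the preceding theorem (which sets up a bijection between the idempotents of $A_\kappa$ and the idempotents of the ring of rigid analytic functions on $\bbX_{\eta_0}$) with the standard correspondence, for an admissible rigid analytic space in the classical sense, between idempotents of the ring of global functions and clopen decompositions.

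First I would observe that $X$ is the affinoid rigid space over $\kappa((S))$ associated to the reduced affinoid algebra $A_\kappa$ (Hypothesis~\ref{H:S-reduced}), so its connected components correspond bijectively to the idempotent elements of $A_\kappa$, which is set (a) of the preceding theorem.

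Next I would address the non-quasi-compact space $\bbX_{\eta_0}$. Here the rationality assumption $\eta_0 \in p^\QQ$ is essential, as flagged in Remarks~\ref{R:must-rigid-space} and \ref{R:Berkovich-vs-classical}: it lets us treat $\bbX_{\eta_0}$ as a classical rigid analytic space (rather than only as a Berkovich space), and write it as an admissible union of affinoid subdomains
\[
\bbX_{\eta_0} = \bigcup_n \big\{ (u_1,\dots,u_n,S) \in \bbX_{\eta_0} \bigm| |S| \leq \eta_n \big\},
\]
for a sequence $\eta_n \in p^\QQ$ with $\eta_n \to 1^-$. Connected components of $\bbX_{\eta_0}$ then correspond to compatible families of idempotents on the affinoid pieces, equivalently to idempotents in the ring of rigid analytic functions on $\bbX_{\eta_0}$, which is set (d) of the preceding theorem.

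Combining these two observations with the bijection (a)$\leftrightarrow$(d) supplied by the theorem, the connected components of $X$ are in one-to-one correspondence with those of $\bbX_{\eta_0}$. The only subtlety is ensuring that we really are in the classical rigid setting so that ``clopen decomposition'' and ``idempotent'' mean the same thing on $\bbX_{\eta_0}$; this is precisely what the hypothesis $\eta_0 \in p^\QQ$ buys us, and after that the corollary is a formal consequence of the theorem.
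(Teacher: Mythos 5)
Your proposal is correct and is essentially the argument the paper intends: the corollary is a formal consequence of the preceding theorem's bijection between idempotents of $A_\kappa$ and idempotent functions on $\bbX_{\eta_0}$, with the hypothesis $\eta_0 \in p^\QQ$ invoked (as in Remark~\ref{R:must-rigid-space}) exactly so that $\bbX_{\eta_0}$ can be treated as a classical rigid space where connected components are governed by idempotents. Your explicit unwinding via an admissible affinoid exhaustion and gluing of idempotents is just the standard mechanism behind that remark, not a different route.
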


\begin{remark}  \label{R:must-rigid-space}
This is the first place where we need the rationality of $\log_p \eta_0$ to ensure that we are in the classical rigid analytic space setting to talk about connected components \cite[9.1.4/8]{BGR}.
\end{remark}

\subsection{Lifting construction}
In order to apply the results from previous two subsections later in the paper, we, reversing the picture, start with a rigid analytic space $X$ and try to construct $\bbX_{\eta_0}$ out from it.

Let $\kappa$ and $K$ be as before.

\begin{definition} \label{D:lifting-space}
Let $X$ be a \emph{reduced} affinoid rigid space over $\kappa((S))$ with ring of analytic functions $A_\kappa = R_\kappa / I_\kappa$ where $R_\kappa = \kappa ((S)) \langle u_1, \dots, u_n \rangle$ and $I_\kappa$ is some ideal.  The \emph{lifting construction} refers to the following.

(1) Find an ideal $I^\inte$ in $R^\inte = K \langle u_1, \dots, u_n \rangle ((S))$ so that $R^\inte / I^\inte$ is \emph{flat} over $\OK$ and $I^\inte \otimes_{\OK} \kappa = I_\kappa$.

(2) Choose a Gr\"obner basis of $I_\kappa$ and lift its elements to $r_1, \dots, r_m \in I^\inte$ as in Notation~\ref{N:j_I-and-epsilon} and define $\eta_0$ as in Notation~\ref{N:new-eta_0}.

(3) We call the rigid analytic space
\[
\bbX_{\eta_0} = \big\{
(u_1,\dots,u_n,S) \in \AAA^{n+1}_K \big|\; \eta_0 \leq |S| < 1; |u_1|, \dots, |u_n|
\leq 1; r_1, \dots, r_m = 0 \big\}
\]
the \emph{lifting space} of $X$; it depends only on the choice of $I^\inte$ and $\eta_0$.
\end{definition}

\begin{remark}
We do not know if such a lifting space always exists in general.  The only obstruction is to find an ideal $I^\inte$ lifting $I_\kappa$ such that $R^\inte / I^\inte$ is flat over $\calO_K$.
\end{remark}

\begin{question}
It would be interesting to know if the above lifting construction can be globalized for arbitrary rigid spaces over $\kappa((S))$.  In particular, given a morphism between two rigid spaces over $\kappa((S))$, can we lift the morphism (non-canonically) to a morphism between (some strict neighborhood of) their lifting spaces?  Can we ``glue" the lifting spaces up to homotopy?  This situation is very similar to Berthelot's construction of rigid cohomology \cite{Berthelot-rig-coh-part-I}.
\end{question}

For an affinoid subdomain of a polydisc, we explicate this lifting process.

\begin{example}  \label{Example:lifting-space}
Let $p_1, \dots, p_m \in \kappa \llbracket S\rrbracket [u_1, \dots, u_n]$ be polynomials and $\serie{a_}m \in \NN$.  We consider the following affinoid subdomain of the unit polydisc
\[
X = \big\{(u_1, \dots, u_n) \in \AAA_{\kappa((S))}^n \big|\; |u_1|, \dots, |u_n| \leq 1; |p_1| \leq |S|^{a_1}, \dots, |p_m| \leq |S|^{a_m}\big\}.
\]
The ring of analytic functions on $X$ is
\[
\kappa((S)) \langle u_1, \dots, u_n, v_1, \dots, v_m \rangle \big/ (v_1 S^{a_1} - p_1, \dots, v_m S^{a_m} - p_m).
\]

For each $i$, let $P_i$ be a lift of $p_i$ in $\calO_K \llbracket S \rrbracket [u_1, \dots, u_n]$ (here we allow $P_i$ to have new terms other than the terms of $p_i$).  We claim that the ring
\begin{equation} \label{E:ring-lifting-space}
\OK \langle u_1, \dots, u_n, v_1, \dots, v_m \rangle ((S)) \big/ (v_1 S^{a_1} - P_1, \dots, v_m S^{a_m} - P_m)
\end{equation}
is flat over $\calO_K$.  This is because the ring
\[
\OK ((S)) [u_1, \dots, u_n, v_1, \dots, v_m]  \big/ (v_1 S^{a_1} - P_1, \dots, v_m S^{a_m} - P_m) \simeq \OK ((S)) [u_1, \dots, u_n]
\]
is flat and hence torsion free over $\calO_K$, and so is its completion \eqref{E:ring-lifting-space} with respect to the topology induced by $(p, S)^r \calO_K \llbracket S \rrbracket [u_1, \dots, u_n, v_1, \dots, v_m]$ for $r \in \NN$.

Therefore, by Definition~\ref{D:lifting-space},
\[
\bbX_{\eta_0} = \big\{ (u_1, \dots, u_n, S) \in \AAA_K^{n+1} \big| \; \eta_0 \leq |S| <1; |u_1|, \dots, |u_n| \leq 1; |P_1| \leq |S|^{a_1}, \dots, |P_m| \leq |S|^{a_m}\big\}
\]
is a lifting space for $X$, for some $\eta_0 \in (0,1)$.
\end{example}

\section{Differential conductors}

In this section, we recall the definition of differential Swan conductors following \cite{KSK-Swan1}.  Along the way, we define the differential Artin conductors using a slightly different normalization.

As a reminder, we do not use any notation from the previous section.

\subsection{Setup}

\begin{convention}
Let $J$ be an index set.  We write $e_J$ for a tuple $(e_j)_{j \in J}$.  For an element $x$, we use $x^{e_J}$ to denote $(x^{e_j})_{j \in J}$.  For another tuple $b_J$, we denote $b_J^{e_J} = \prod_{j \in J} b_j^{e_j}$ if only finitely many $e_j \neq 0$. We also use $\sum_{e_J = 0}^{n}$ to mean the sum over $e_j \in \{0, 1, \dots, n\}$ for each $j \in J$, only allowing finitely many of them to be non-zero.
\end{convention}

\begin{definition}
For a finite field extension $l/k$ of characteristic $p>0$, a \textit{$p$-basis} of $l$ over $k$ is a set $(c_j)_{j \in J} \subset l$ such that $c_J^{e_J}$, where $e_j \in \{0, 1, \dots, p-1\}$ for all $j \in J$ and $e_j = 0$ for all but finitely many $j$, form a basis of the vector space $l$ over $kl^p$. By a \textit{$p$-basis} of $l$ we mean a $p$-basis of $l$ over $l^p$; it is an empty set if and only if $l$ is perfect. (For more details, see \cite[P.565]{Esbd-comm-alg} or EGA\cite[Ch.0~\S21]{EGA-IV-1}.)
\end{definition}

\begin{remark}\label{R:p-basis-expression}
For a $p$-basis $c_J \subset l$, $dc_J$ form a basis for the differentials $\Omega^1_l$ as an $l$-vector space.
\end{remark}

\begin{convention}
Throughout this paper, all differentials are $p$-adically continuous.  In other words, for a continuous homomorphism $A \rar B$ of $p$-adic rings, $\Omega_{B/A}^1$ is the relative $p$-adically continuous differentials.  Sometimes, we may use the corresponding geometric objects (e.g. rigid space $\Max(B)$) instead of $A$ or $B$ in the notation. When $A = \ZZ_p$, we may suppress it from the notation (by simply writing $\Omega^1_B$).

For a homomorphism $A \rar B$ between rings, a \textit{$\nabla$-module} or a \emph{differential module} over $B$ \emph{relative to} $A$ is a finite projective $B$-module $M$ equipped with an integrable connection $\nabla: M \rar M \otimes \Omega^1_{B/A}$.  Sometimes, we may use the corresponding geometric objects instead of $A$ or $B$ in the notation.  When $A = \ZZ_p$, we may omit the reference to the base ring.
\end{convention}

\begin{notation}
Let $k$ be a complete discretely valued field of equal characteristic $p>0$.  Denote its ring of integers, maximal ideal, and residue field by $\calO_k$, $\gothm_k$, and $\kappa_k$, respectively.  Fix a uniformizer $s$ and a non-canonical isomorphism
\begin{equation} \label{E:k((s))=k}
\kappa_k ((s)) \simeq k.
\end{equation}
Let $v_k(\cdot)$ denote the valuation, normalized so that $v_k(s) = 1$.  Let $(\bar b_j)_{j \in J}$ be a $p$-basis of $\kappa_k$, where $J$ is an index set.  Let $b_j$ be the image of $\bar b_j$ in $k$ under the isomorphism \eqref{E:k((s))=k}.  Hence, $(db_j)_{j \in J}$ and $ds$ form a basis of $\Omega^1_{\calO_k/\FF_p}$.  We set $\kappa_0= \cap_{n>0} k^{p^n} \cong \cap_{n>0} \kappa_k^{p^n}$; it is a perfect field.
\end{notation}

\begin{notation}
Let $\calO_K$ denote the Cohen ring of $\kappa_k$ with respect to $(\bar b_j)_{j \in J}$ and let $(B_j)_{j\in J} \subset \OK$ be the canonical lifts of the $p$-basis.  (For more about Cohen rings, see \cite[Section~3.1]{KSK-Swan1} or \cite{Whitney-Cohen-rings}.)  Denote $K = \Frac \calO_K$.  We use $\calO_{K_0}$ to denote the ring of Witt vectors $W(\kappa_0)$ of $\kappa_0$, as a subring of $\calO_K$.  Denote $K_0 = \Frac \calO_{K_0}$.
\end{notation}

We insert here a proposition discussing the functoriality of Cohen rings.  For more detailed study of functoriality of Cohen rings, one may consult \cite{Whitney-Cohen-rings}.

\begin{proposition} \label{P:cohen-functorial}
Keep the notation as above and let $R$ be a complete noetherian local ring with the maximal ideal $\gothm$ containing $p$. Assume that we have a homomorphism $\bar \psi: \kappa_k \inj R/\gothm$. Then, for any $B'_J \subseteq R$ lifting $\bar \psi(\bar b_J)$, there exists a unique continuous homomorphism $\psi: \OK \rar R$ lifting $\bar \psi$ and sending $B_j$ to $B'_j$ for all $j \in J$.
\end{proposition}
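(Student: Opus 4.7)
My approach is to reduce to constructing a compatible tower of ring homomorphisms $\psi_n\colon \OK/p^n\to R/\gothm^n$, since $p\in\gothm$ forces $p^n\in\gothm^n$, and the $\gothm$-adic completeness of $R$ identifies continuous maps $\OK\to R$ with such towers. The base case $n=1$ is given by $\bar\psi$; the real work is the inductive step.

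For uniqueness of $\psi_{n+1}$ given $\psi_n$, I would take two lifts $\psi_{n+1}$ and $\psi'_{n+1}$ agreeing on the $B_j$ modulo $\gothm^{n+1}$ and show $\delta:=\psi_{n+1}-\psi'_{n+1}$ vanishes. Its values lie in $I:=\gothm^n/\gothm^{n+1}$, and the identity
\[
\delta(xy)=\psi_{n+1}(x)\,\delta(y)+\delta(x)\,\psi'_{n+1}(y),
\]
combined with $\gothm^{2n}\subseteq\gothm^{n+1}$ for $n\ge 1$, exhibits $\delta$ as a $\bar\psi$-derivation from $\OK$ into the $\kappa_k$-module $I$. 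Since $\delta(p)=0$ and $\delta(B_j)=0$ for all $j$, and since the module of continuous differentials $\Omega^1_{\OK/\ZZ_p}$ is free over $\OK$ on $\{dB_j\}_{j\in J}$ (a fundamental property of the Cohen ring whose reduction modulo $p$ is the fact that $\{d\bar b_j\}$ is a $\kappa_k$-basis of $\Omega^1_{\kappa_k/\FF_p}$; cf.\ Remark~\ref{R:p-basis-expression}), the derivation $\delta$ must vanish.

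For existence, the thickening $R/\gothm^{n+1}\twoheadrightarrow R/\gothm^n$ has square-zero kernel $I$ for $n\ge 1$, so I would appeal to standard deformation theory for commutative rings: the obstruction to lifting a ring homomorphism across such a square-zero thickening lies in an $\mathrm{Ext}^1$ controlled by the cotangent complex of the source, and when lifts exist they form a torsor under $\Hom_{\OK}(\Omega^1_{\OK/\ZZ_p},I)\cong\prod_{j\in J}I$ via $\lambda\mapsto(\lambda(dB_j))_j$. The freeness of $\Omega^1_{\OK/\ZZ_p}$ on $dB_J$ kills the obstruction, and the prescribed values $B'_j\bmod\gothm^{n+1}$ then single out a unique element of the torsor. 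A more concrete route is to first lift $\psi$ on the perfect core $\calO_{K_0}=W(\kappa_0)$ via the universal property of Witt vectors of perfect fields, then extend by $B_j\mapsto B'_j$, and finally complete $p$-adically using the completeness of $R$.

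The main obstacle is making the formal-smoothness input for $\OK$ over $\ZZ_p$ rigorous when the $p$-basis indexing set $J$ is infinite and $\kappa_k$ is an arbitrary field of characteristic $p$; this is precisely the content of Whitney's development of Cohen rings \cite{Whitney-Cohen-rings}, which is why the paper cites that reference.
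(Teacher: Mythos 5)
Your proof is essentially correct, but it takes a genuinely different route from the paper's. The paper argues by hand: every $g\in\OK$ admits a ``level $n$ expression'' $g=\sum p^i A^{p^n}_{i,i',e_J}B_J^{e_J}$ with exponents $e_J<p^n$, and $\psi_n(g)$ is defined by replacing the $A$'s by arbitrary lifts of their residues and $B_J$ by $B'_J$; the $p^n$-th powers force all choices to agree modulo $\gothm^n$, the $\psi_n$ converge, and existence, multiplicativity and uniqueness all come out of this one computation with no input beyond the definition of a $p$-basis. Your induction over the square-zero thickenings $R/\gothm^{n+1}\to R/\gothm^n$ instead separates rigidity (a torsor under derivations) from existence. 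The rigidity/torsor half is fine, and is best phrased one level down: since $\gothm^n/\gothm^{n+1}$ is killed by $\gothm\ni p$, the relevant derivations factor through $\FF_p$-derivations $\kappa_k\to\gothm^n/\gothm^{n+1}$, and $\Omega^1_{\kappa_k/\FF_p}$ is honestly free on $d\bar b_J$ (Remark~\ref{R:p-basis-expression}); this also avoids any delicacy with completed differentials of $\OK$ when $J$ is infinite. The existence half is where your argument is not self-contained, and one inference there is wrong as stated: freeness of $\Omega^1_{\OK/\ZZ_p}$ does not by itself kill the obstruction (it only governs the torsor); what kills it is formal smoothness of $\OK$ over $\ZZ_p$ for the $p$-adic topology, equivalently formal smoothness of $\kappa_k$ over $\FF_p$, which holds because $\FF_p$ is perfect. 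Since you do ultimately name exactly this classical input (EGA $0_{IV}$, Matsumura, Whitney), your proof is completable by citation; what the paper's approach buys is that its level-$n$ expressions reprove the needed special case of formal smoothness explicitly and quantitatively, which is also what makes later refinements (such as Corollary~\ref{C:cohen-deformation} and the norm estimates in Theorem~\ref{T:thickening-morphism}) immediate. Finally, your parenthetical ``more concrete route'' through $W(\kappa_0)$ does not work as stated: in general $\kappa_k$ properly contains $\kappa_0(\bar b_J)$ (for instance by separably algebraic elements), so $\OK$ is much larger than any completion of $W(\kappa_0)[B_J]$ and ``extend by $B_j\mapsto B'_j$'' begs the same lifting question.
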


\begin{proof}
For any $n \in \NN$, a \emph{level $n$ expression} of an element $g \in \OK$ is a (non-canonical) way of writing $g$ as
\begin{equation}\label{E:cohen-functorial}
g = \sum_{i, i' \geq 0} \sum_{e_J = 0}^{p^n-1} p^i A^{p^n}_{i, i',e_J} B_J^{e_J}
\end{equation}
for some $A_{i, i',e_J} \in \calO_K$ and for a fixed $i$, $A_{i, i', e_J} = 0$ when $i' \gg 0$ for all $e_J$. Then we set
\[
\psi_n(g) = \sum_{i, i' \geq 0} \sum_{e_J = 0}^{p^n-1} p^i \tilde A^{p^n}_{i,i', e_J} B'^{e_J}_J
\]
where $\tilde A_{i,i', e_J}$ is some lift of $\bar \psi (a_{i,i', e_J})$ in $R$ with $a_{i, i',e_J}$ being the reduction of $A_{i,i', e_J}$ in $\kappa_k$.  Different choices of lifts $\tilde A_{i,i', e_J}$ may change the definition of $\psi_n(g)$ by an element in $\gothm^n$; a different level $n$ expression as in \eqref{E:cohen-functorial} may also vary $\psi_n(g)$ by some element in $\gothm^n$.  For a level $n$ expression of $g$ as in \eqref{E:cohen-functorial} with $n \geq 1$, we can rewrite it as
\[
g = \sum_{i, i' \geq 0} \sum_{e'_J = 0}^{p-1} \sum_{e_J = 0}^{p^{n-1}-1} p^i \big(A^p_{i, i',e_J+p^{n-1}e'_J} B_J^{e'_J})^{p^{n-1}} B_J^{e_J},
\]
which is a level $n-1$ expression for $g$.  From this, we conclude that $\psi_n(g) \equiv \psi_{n-1}(g) \mod \gothm^{n-1}$. Taking $n \rar \infty$, we get our map $\psi(g) = \lim_{n \rar \infty} \psi_n(g)$. It is not hard to check that $\psi$ is actually a homomorphism; this is because for $g, h \in \OK$, the formal sum and product of level $n$ expressions of $g$ and $h$ are level $n$ expressions of $g+h$ and $gh$, respectively.

To prove the uniqueness, take another continuous homomorphism $\psi': \OK \rar R$ satisfying all the conditions.  Then for a level $n$ expression of $g$ as in \eqref{E:cohen-functorial}, 
\[
\psi'\Big( \sum_{i, i' \geq 0} \sum_{e_J = 0}^{p^n-1} p^i A^{p^n}_{i, i',e_J} B_J^{e_J} \Big) = \sum_{i, i' \geq 0} \sum_{e_J = 0}^{p^n-1} p^i \psi'(A_{i, i',e_J})^{p^n} B'^{e_J}_J
\]
is exactly one possible definition for $\psi_n$. As we proved above, $\psi'(g) \equiv \psi_n(g) \equiv \psi(g) \mod \gothm^n$.  Let $n \rar \infty$ and we have $\psi = \psi'$.
\end{proof}

\begin{corollary} \label{C:cohen-deformation}
Keep the notation as above and assume that $J = \{1, \dots, m\}$ is a finite set.  There exists a unique continuous homomorphism $\psi: \OK \rar \OK \llbracket \serie{\delta_}m \rrbracket$ such that for all $j \in J$, $\psi(B_j) = B_j + \delta_j$ and for any $g \in \OK$, $\psi(g) - g$ lies in the ideal generated by $\serie{\delta_}m$.  Moreover, the $\psi$ is an $\calO_{K_0}$-homomorphism.
\end{corollary}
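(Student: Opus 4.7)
The plan is to derive this as a direct consequence of Proposition~\ref{P:cohen-functorial}. First I would take $R = \OK \llbracket \delta_1, \dots, \delta_m \rrbracket$ with its natural $(p, \delta_1, \dots, \delta_m)$-adic topology; this is a complete noetherian local ring whose maximal ideal $\gothm = (p, \delta_1, \dots, \delta_m)$ contains $p$, and whose residue field $R/\gothm$ is canonically $\kappa_k$. Taking $\bar \psi = \id_{\kappa_k}$ and $B'_j = B_j + \delta_j \in R$ (which manifestly reduce to $\bar b_j$ modulo $\gothm$), Proposition~\ref{P:cohen-functorial} immediately produces the desired continuous homomorphism $\psi: \OK \rar R$ together with its uniqueness.

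Next I would verify the congruence $\psi(g) \equiv g \pmod{(\delta_1, \dots, \delta_m)}$. Compose $\psi$ with the continuous projection $R \twoheadrightarrow \OK$ sending every $\delta_j$ to $0$. The result is a continuous endomorphism of $\OK$ lifting $\id_{\kappa_k}$ and fixing each $B_j$; the uniqueness clause of Proposition~\ref{P:cohen-functorial} applied with target $\OK$ forces this composition to be $\id_{\OK}$, which is exactly the asserted congruence.

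Finally, for $\calO_{K_0}$-linearity, I would apply Proposition~\ref{P:cohen-functorial} in the simpler setting where $\kappa_k$ is replaced by the \emph{perfect} field $\kappa_0$, whose Cohen ring is $\calO_{K_0} = W(\kappa_0)$ with empty $p$-basis (here the statement degenerates to the classical functoriality of Witt vectors). Both the inclusion $\calO_{K_0} \hookrightarrow \OK \hookrightarrow R$ and the restriction $\psi|_{\calO_{K_0}}$ are continuous homomorphisms $\calO_{K_0} \rar R$ lifting the same embedding $\kappa_0 \hookrightarrow \kappa_k = R/\gothm$, so uniqueness forces the two to agree. I do not expect any genuine obstacle: the proposition does all the heavy lifting, and the only points to verify are that the particular $R$ I choose satisfies the hypotheses of the proposition and that the uniqueness applies both when the target is $\OK$ and when the source is $\calO_{K_0}$, both of which are routine.
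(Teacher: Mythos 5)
Your proposal is correct and follows essentially the same route as the paper: existence and uniqueness come from Proposition~\ref{P:cohen-functorial} applied to $R = \OK\llbracket\delta_1,\dots,\delta_m\rrbracket$, and the $\calO_{K_0}$-linearity from the perfect-residue-field (Witt vector) case of the same functoriality, exactly as in the paper's proof. Your extra step composing with the projection $\delta_j \mapsto 0$ is a nice explicit way to upgrade the congruence from modulo $(p,\delta_J)$ to modulo $(\delta_J)$, a point the paper leaves implicit.
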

\begin{proof}
The first statement follows from previous proposition.  By the functoriality of Witt vectors, $\psi$ has to be identity when restricted to $\calO_{K_0}$ because $\kappa_0$ is perfect.  Hence, $\psi$ is an $\calO_{K_0}$-homomorphism.
\end{proof}

\begin{corollary} \label{C:k=k((s))-change}
Assume that $\kappa_k$ has a finite $p$-basis $b_J$.
Fix $j \in J$ and let $b'_j \in \calO_k$ be an element such that $b'_j \equiv b_j \pmod {\gothm_k}$.  Then there exists an automorphism $g^*: k \rar k$ such that $g^*(s) = s$, $g^*(b_j) = b'_j$, and $g^*(b_{J \bs j}) = b_{J \bs j}$.
\end{corollary}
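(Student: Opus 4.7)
The plan is to produce a new coefficient field $\sigma': \kappa_k \inj \calO_k$ that agrees with the original embedding $\sigma$ (inducing \eqref{E:k((s))=k}) on every $\bar b_{j'}$ with $j' \neq j$ but sends $\bar b_j$ to $b'_j$, and then take $g^* := \sigma' \circ \sigma^{-1}$, extended to fraction fields.

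The key observation is that Proposition~\ref{P:cohen-functorial} applies to $R = \calO_k$, despite $\calO_k$ having equal characteristic $p$: the hypothesis ``$\gothm$ contains $p$'' holds trivially, since $p = 0$ in $\calO_k$. Taking $\bar \psi = \id_{\kappa_k}$ and lifts $B'_j := b'_j$, $B'_{j'} := b_{j'}$ for $j' \neq j$, the proposition yields a unique continuous homomorphism $\psi: \OK \rar \calO_k$ lifting the identity and carrying $B_J$ to $B'_J$. Since $\calO_k$ has characteristic $p$, $\psi$ annihilates $p$ and thus factors through $\OK/(p) = \kappa_k$ as a ring map $\sigma': \kappa_k \rar \calO_k$, which is a section of the reduction with $\sigma'(\bar b_j) = b'_j$ and $\sigma'(\bar b_{j'}) = b_{j'}$.

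I would then verify that $(\sigma', s)$ induces an isomorphism $\kappa_k \llbracket s \rrbracket \isom \calO_k$: surjectivity follows from $s$-adic completeness together with $\sigma'$ being a section of the reduction, and injectivity reduces to noting that the leading coefficient of a nonzero power series in $s$ gives a nonzero element of $\sigma'(\kappa_k)$ outside $(s)$. Passing to fraction fields and setting $g^* := \sigma' \circ \sigma^{-1}$ then immediately gives $g^*(s) = s$, $g^*(b_j) = \sigma'(\bar b_j) = b'_j$, and $g^*(b_{j'}) = \sigma'(\bar b_{j'}) = b_{j'}$ for $j' \neq j$.

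There is no genuine obstacle here once Proposition~\ref{P:cohen-functorial} is available: the only conceptual step is recognizing that it applies in the equal-characteristic setting, after which the construction of $\sigma'$ is automatic and the remaining verifications are formal. Absent that proposition, one would instead build $\sigma'$ directly as the limit of compatible maps $\sigma'_n: \kappa_k \rar \calO_k/\gothm_k^{p^n}$ defined via the level-$n$ expansion $\bar a = \sum_{e_J = 0}^{p^n-1} \bar a_{e_J}^{p^n} \bar b_J^{e_J}$ and the lifts $B'_J$; this is essentially the content of Proposition~\ref{P:cohen-functorial} transported to equal characteristic.
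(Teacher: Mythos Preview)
Your proposal is correct and follows essentially the same approach as the paper's own proof: both apply Proposition~\ref{P:cohen-functorial} to $R = \calO_k \cong \kappa_k\llbracket s\rrbracket$ with $\gothm = (s)$ (using that $p = 0 \in \gothm$), obtain the induced section $\kappa_k \rar \calO_k$ hitting the prescribed lifts, and then extend via $s \mapsto s$ to get the automorphism of $k$. Your write-up is simply more explicit than the paper's two-line version; the only notational wrinkle is that when you write $g^* := \sigma' \circ \sigma^{-1}$ you mean the extensions of $\sigma, \sigma'$ to isomorphisms $\kappa_k((s)) \isom k$, which you have already set up.
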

\begin{proof}
Applying Proposition~\ref{P:cohen-functorial} to $R = \kappa_k \llbracket s \rrbracket$ and $\gothm = (s)$ gives us a homomorphism $g^*: \OK / (p) = \kappa_k \rar k \llbracket s \rrbracket$ such that $g^*(b_j) = b'_j$, and $g^*(b_{J \bs j}) = b_{J \bs j}$.  One can extend this to an automorphism $g^*: k \rar k$ by setting $g^*(s) = s$.
\end{proof}

\subsection{Construction of differential modules} \label{S2-construction-diff-eqn}

In this subsection we review Tsuzuki's construction \cite{Tsuzuki-finite-monodromy-on-var} of differential modules over the Robba ring associated to $p$-adic Galois representations. For a systematic treatment, one may consult, for example, \cite[Section~3]{KSK-Swan1}.

\begin{notation}
Keep the notation as in the previous subsection.  Fix a separable closure $k^\sep$ of $k$ and let $G_k = \Gal(k^\sep/k)$ be the absolute Galois group of $k$.

For a (not necessarily algebraic) separable extension $l/k$ of complete discretely valued fields, the \emph{na\"ive ramification degree} $e$ is the index of the valuation group of $k$ in that of $l$; note that this might not be the same as the usual ramification degree because the inseparable part of the residue field extension $\kappa_l / \kappa_k$ is not counted in.  We say $l/k$ is \emph{tamely ramified} if $p \nmid e$ and the residue field extension is algebraic and separable.  Moreover, if $e=1$, we say that $l/k$ is \emph{unramified}.
\end{notation}

\begin{notation} \label{N:representation}
By a \emph{representation} of $G_k$, we mean a continuous homomorphism $\rho: G_k \rar \GL(V_\rho)$, where $V_\rho$ is a vector space over a (topological) field $F$ of characteristic zero.  We say that $\rho$ is a \emph{$p$-adic representation} if $F$ is a finite extension of $\Qp$.

Let $F$ be a finite extension of $\Qp$.  Let $\calO$ and $\Fq$ denote its ring of integers and residue field, respectively, where $q$ is a power of $p$. Write $\ZZ_q$ for the Witt vectors $W(\Fq)$ and $\QQ_q$ for its fraction field.  By an \emph{$\calO$-representation} of $G_k$, we mean a continuous homomorphism $\rho: G_k \rar \GL(V_\rho)$ with $V_\rho$ a finite free $\calO$-module.

We always assume that $\Fq \subseteq \kappa_0$ (see the Remark~\ref{R:k_0-alg-closed}).  Denote $K' = KF$.  Since $F/\QQ_q$ is totally ramified, we have the ring of integers $\calO_{K'} \cong \calO_K \otimes_{\ZZ_q} \calO$.  Let $v_{K'}$ denote the valuation on $K'$ normalized so that $v_{K'}(p) = 1$.
\end{notation}

\begin{notation}\label{N:cohen-ring-k}
Let $C_k$ be the Cohen ring of $k$ with respect to the $p$-basis $\{(b_j)_{j \in J}, s\}$.  By functoriality of Cohen ring (Proposition~\ref{P:cohen-functorial}), $C_k$ has a natural structure of $\OK$-algebra via the isomorphism \eqref{E:k((s))=k}.  In particular, the (canonical) lifts of $(b_j)_{j \in J}$ in $C_k$ are $(B_j)_{j \in J}$.  We denote the canonical lift of $s$ in $C_k$ by $S$.

Put $\Gamma = C_k \otimes_{\ZZ_q} \calO$; it is a complete discrete valuation ring since $\calO$ is totally ramified over $\ZZ_q$.  It carries a Frobenius structure $\phi$ lifting the $q$th-power Frobenius on $k$ which acts trivially on $\calO$.
\end{notation}

\begin{definition}
If $R$ is equipped with an endomorphism $\sigma: R \rar R$, a \textit{$(\sigma, \nabla)$-module} over $R$ is a $\nabla$-module over $R$ (relative to $\ZZ_p$) equipped with an isomorphism $\sigma^* M \rar M$ of $\nabla$-modules.
\end{definition}

\begin{definition}
For every $\calO$-representation $\rho : G_k \rar \GL(V_\rho)$, define its associated $(\phi, \nabla)$-module over $\Gamma$ by
$$
D(\rho) = \big( V_\rho \otimes_\calO \widehat{\Gamma^\unr} \big)^{G_k},
$$
where $\widehat{\Gamma^\unr}$ is the $p$-adic completion of the maximal unramified extension of $\Gamma$.  All $\nabla$-modules we encounter in this section are relative to $\Zp$, so we omit the reference to the base ring $\Zp$ in the notation.
\end{definition}

\begin{proposition}
For any Frobenius lift $\phi$ on $\Gamma$, the functor $D$ from $\calO$-representations of $G_k$ to $(\phi, \nabla)$-modules over $\Gamma$ is an equivalence of categories.
\end{proposition}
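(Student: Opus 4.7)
The plan is to reduce to the classical Fontaine equivalence at the level of $\phi$-modules, and then show that the connection is canonically induced by and uniquely compatible with $\phi$.

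First, I would invoke Fontaine's theorem: the functor $D_0(\rho) = (V_\rho \otimes_\calO \widehat{\Gamma^\unr})^{G_k}$ gives an equivalence between $\calO$-representations of $G_k$ and \'etale $\phi$-modules over $\Gamma$ (finite free $\Gamma$-modules $M$ equipped with a $\phi$-semilinear isomorphism $\phi^*M \isom M$), with quasi-inverse $M \mapsto (M \otimes_\Gamma \widehat{\Gamma^\unr})^{\phi_M = 1}$. This handles everything except the connection.

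Next I would construct the connection on $D(\rho)$. Because $\widehat{\Gamma^\unr}$ is the $p$-adic completion of the maximal unramified extension of $\Gamma$, the continuous $\Zp$-linear derivations of $\Gamma$ extend uniquely to $G_k$-equivariant derivations on $\widehat{\Gamma^\unr}$; equivalently, the canonical integrable connection $d$ on $\Gamma$ extends to one on $\widehat{\Gamma^\unr}$ on which $G_k$ acts horizontally. Treating $V_\rho$ as a constant module, the product connection on $V_\rho \otimes_\calO \widehat{\Gamma^\unr}$ descends to $G_k$-invariants and produces an integrable connection $\nabla$ on $D(\rho)$. Its compatibility with $\phi$ is inherited from the analogous identity on $\widehat{\Gamma^\unr}$.

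The crucial step is uniqueness: for any \'etale $\phi$-module $M$, there is at most one connection compatible with $\phi_M$. If $\nabla, \nabla'$ are two such, their difference $f := \nabla - \nabla' : M \to M \otimes_\Gamma \Omega^1_{\Gamma/\Zp}$ is $\Gamma$-linear and satisfies $f \circ \phi_M = (\phi_M \otimes d\phi) \circ f$. Since $\phi$ lifts the $q$-th power Frobenius on $k$ with $q$ a power of $p$, a direct computation on the generators $dB_j$ and $dS$ shows $d\phi(\Omega^1_{\Gamma/\Zp}) \subseteq p \cdot \Omega^1_{\Gamma/\Zp}$. Iterating the compatibility relation places $f$ into $p^n \Hom_\Gamma(M, M \otimes_\Gamma \Omega^1_{\Gamma/\Zp})$ for every $n \geq 0$, so $p$-adic separation forces $f = 0$.

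Combining these, the forgetful functor from $(\phi, \nabla)$-modules over $\Gamma$ to \'etale $\phi$-modules is an equivalence: existence of a compatible $\nabla$ on an arbitrary \'etale $\phi$-module $M \cong D_0(\rho)$ comes from Step~2, uniqueness from Step~3. Composing with Fontaine's equivalence yields the proposition. I expect Step~3, the contraction argument based on $d\phi \equiv 0 \pmod p$, to be the main obstacle, since it is the heart of why the connection is a formal consequence of the Frobenius structure. A secondary technical check is integrability of the $\nabla$ constructed in Step~2, which however is immediate from integrability on $\widehat{\Gamma^\unr}$ and the fact that taking $G_k$-invariants is exact in this setting.
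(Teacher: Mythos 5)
Your argument is correct, and it follows the paper's overall skeleton (the paper reduces to the well-known equivalence between $\calO$-representations and \'etale $\phi$-modules over $\Gamma$, and then observes that the nontrivial point is that every $\phi$-module carries a unique compatible $\nabla$-structure), but you handle the existence step by a genuinely different mechanism. The paper, following the cited reference, produces the connection on an \emph{abstract} $\phi$-module by a successive-approximation construction, the convergence of which rests on the same fact $d\phi(\Omega^1_{\Gamma/\Zp}) \subseteq p\,\Omega^1_{\Gamma/\Zp}$ that drives your uniqueness contraction; you instead get existence for free on $D(\rho)$ by uniquely and $G_k$-equivariantly extending the derivations of $\Gamma$ to $\widehat{\Gamma^\unr}$ and descending the product connection, and then let Fontaine's equivalence supply essential surjectivity at the $\phi$-module level. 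Your route is arguably more structural and avoids the approximation construction entirely, at the cost of only proving the ``automatic $\nabla$-structure'' statement for $\phi$-modules in the essential image (which suffices here), whereas the approximation argument yields it intrinsically for any \'etale $\phi$-module. Two small points you should make explicit: in the contraction step, passing from control of $f$ on $\phi_M(M)$ to control on all of $M$ uses that $\phi^*M \to M$ is an isomorphism, i.e.\ that $\Gamma\cdot\phi_M(M) = M$; and fullness of the forgetful functor (every $\phi$-module morphism between $D(\rho)$'s is automatically horizontal) is not literally the uniqueness of $\nabla$ on objects, but follows by applying the same contraction to the $\Gamma$-linear defect $\nabla'\circ g - (g\otimes 1)\circ\nabla$.
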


\begin{proof}
For convenience of the reader, we briefly describe the functor here; for more details, one may consult \cite[Propositions 3.2.7 and 3.2.8]{KSK-Swan1}.  It is well-known that $D$ establishes an equivalence between the category of representations and the category of $\phi$-modules over $\Gamma$ (finite free $\Gamma$-modules with semi-linear $\phi$-actions), with $V(M) = \big(M \otimes_\Gamma \widehat{\Gamma^\unr} \big)^{\phi = 1}$ as the inverse. The nontrivial part is that every $\phi$-module over $\Gamma$ admits a unique structure of $(\phi, \nabla)$-module; this involves a standard approximation argument.
\end{proof}

\begin{definition}
Let $I_k = \Gal(k^\sep / k^\unr)$ be the inertia subgroup of $G_k$, where $k^\unr$ is the maximal unramified extension of $k$ in $k^\sep$. We say that an ($\calO$-)representation $\rho$ has \textit{finite local monodromy} if the image $\rho(I_k)$ is finite.
\end{definition}

For an $\calO$-representation $\rho$ of finite monodromy, one can refine the $(\phi, \nabla)$-module associated to $\rho$ as follows.

\begin{construction} \label{Cstr:repn->bounded-module}
Since $C_k$ has an $\OK$-algebra structure, any element $x \in \Gamma$ can be uniquely written in the form of $\sum_{i \in \ZZ} x_i S^i$ for $x_i \in \OK \otimes _{\ZZ_q} \calO = \calO_{K'}$ such that the indices $i$ for which $v_{K'}(x_i) \leq n$ are bounded below.

For $r > 0$, put $\Gamma^r = \{ x \in \Gamma | \displaystyle\lim_{n \rar -\infty} v_{K'}(x_n) + rn = \infty\}$ and $\Gamma^\dag = \cup_{r > 0} \Gamma^r$; the latter is commonly known as the \emph{integral Robba ring} over $K'$. It is not hard to show that the Frobenius $\phi$ preserves $\Gamma^\dag$ and that $\Omega^1_{\Gamma^\dag / \calO} = \bigoplus_{j \in J} \Gamma^\dag dB_j \oplus \Gamma^\dag dS$.  Also, $\Gamma^\dagger$ is Henselian discrete valuation ring as cited in Lemma~\ref{L:bounded-Robba-henselian}.

Since $\calO_{K'} \inj \Gamma^\dag$, we can identify $\calO_{K'}^\unr \inj (\Gamma^\dag)^\unr$, where the superscript unr means taking the maximal unramified extensions of discrete valuation rings. Put $\widetilde \Gamma^\dag = \widehat{\calO_K^\unr} \otimes_{\calO_K^\unr} (\Gamma^\dag)^\unr \subset \widehat{\Gamma^\unr}$, where we take the $p$-adic completion.  For a $p$-adic representation $\rho$ with finite local monodromy, define 
\begin{equation}\label{E:def-of-D-dag-rho}
D^\dag(\rho) = D(\rho) \cap \big( V_\rho \otimes_\calO \widetilde \Gamma^\dag \big) = \big( V_\rho \otimes_\calO \widetilde \Gamma^\dag \big) ^ {G_k}.
\end{equation}
\end{construction}

\begin{lemma}
\emph{\cite[Proposition~3.20]{KSK-overview}} \label{L:bounded-Robba-henselian}
The integral Robba ring $\Gamma^\dag$ is a henselian discrete valuation ring.
\end{lemma}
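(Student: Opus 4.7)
The plan is to establish the two assertions in sequence: first that $\Gamma^\dag$ is a discrete valuation ring, then that it is Henselian. Throughout one works with the valuation $v_{K'}$ restricted from $\Gamma$; a uniformizer $\pi_F$ of $\calO$ remains a uniformizer of $\Gamma^\dag$, and the residue field of $\Gamma^\dag$ agrees with that of $\Gamma$, namely $k$.

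For the discrete valuation ring property, the only non-obvious point is that every $x \in \Gamma^\dag$ with $v_{K'}(x)=0$ is already a unit in $\Gamma^\dag$. Fix $r>0$ with $x \in \Gamma^r$ and expand $x = \sum_i x_i S^i$. The growth condition $v_{K'}(x_n) + rn \to \infty$ as $n \to -\infty$, combined with $v_{K'}(x_n) \geq 0$ for all $n$ and $v_{K'}(x_{i_0}) = 0$ for some $i_0$, singles out a lowest index $i_0$ on which the element is ``dominant''; factoring $x = x_{i_0} S^{i_0}(1+y)$ reduces the problem to inverting $1+y$ for some $y \in \Gamma^\dag$ with $v_{K'}(y) > 0$. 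A formal geometric series $\sum_k (-y)^k$ converges $\pi_F$-adically to an inverse in $\Gamma$; a direct estimate of the partial sums shows the limit lies in $\Gamma^{r'}$ for some $r' \geq r$, hence in $\Gamma^\dag$.

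For the Henselian property, I would apply Newton iteration. Given $f(T) \in \Gamma^\dag[T]$ and a simple root $\bar a \in k$ of the reduction $\bar f \bmod \pi_F$, choose any lift $a_0 \in \Gamma^\dag$ (for instance a Laurent polynomial in $S$) and iterate $a_{n+1} = a_n - f(a_n)/f'(a_n)$. The $\pi_F$-adic completeness of $\Gamma$ produces a limit $a \in \Gamma$ which is a root of $f$; the entire burden is to show $a \in \Gamma^\dag$. By the DVR step, each $a_n$ stays in $\Gamma^\dag$, so the question reduces to producing a single $r'>0$ with $a_n \in \Gamma^{r'}$ for every $n$. This requires bounding how the growth parameter degrades under multiplication and inversion of units, and using the rapid $\pi_F$-adic convergence of Newton iteration to counterbalance any slow worsening of $r'$.

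The main obstacle is precisely this uniform control: a naive accounting would allow the parameter $r'$ to tend to zero across the iteration, in which case the limit would escape $\Gamma^\dag$ and lie only in $\Gamma$. The analysis must therefore exploit the fact that after the first step, the corrections $f(a_n)/f'(a_n)$ acquire strictly increasing $\pi_F$-valuation, so their degrading contributions to $r'$ form a summable tail and a uniform $r'$ can be extracted. This is exactly the estimate carried out in \cite[Proposition~3.20]{KSK-overview}, which I would invoke for the technical details rather than reproduce here.
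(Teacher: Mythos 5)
First, a structural remark: the paper offers no proof of this lemma at all --- it is quoted directly from \cite[Proposition~3.20]{KSK-overview} --- and your proposal, at the decisive point (the uniform overconvergence estimate across the Newton iteration), also ends by invoking that same Proposition~3.20. So what you have written is a sketch of the standard argument with the hard step outsourced to the very reference the paper cites, not an independent proof; that is fine in context, but it should be judged as such.

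Within the sketch there is one step that fails as stated. In the DVR part you factor $x = x_{i_0}S^{i_0}(1+y)$ with $i_0$ the lowest index having unit coefficient and claim $v_{K'}(y) > 0$. That is false in general: take $x = 1 + S$, so $i_0 = 0$ and $y = S$, which has $v_{K'}(y) = 0$; the geometric series $\sum_k (-y)^k$ then does not converge $\pi_F$-adically, so your inversion argument does not get off the ground, even though $1+S$ is of course a unit in $\Gamma^\dag$. The correct reduction uses the surjectivity of the reduction map $\Gamma^\dag \rar k$: since $\bar x^{-1} \in k = \kappa_k((s))$ is a Laurent series, it lifts to an element $y_0 \in \Gamma^\dag$ (lift the finitely many negative-index coefficients and the nonnegative tail coefficientwise), and then $x y_0 = 1 - w$ with $v_{K'}(w) \geq 1$. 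Now the geometric series applies, and the remaining point is the one you correctly identify in spirit: one must check $\sum_k w^k$ lies in some $\Gamma^{r'}$, which follows because $|w|_1 < 1$ and the $\eta$-Gauss norms interpolate (log-convexity in the radius), so $|w|_{e^{-r'}} < 1$ for $r'$ sufficiently small, making the series converge in $\Gamma^{r'}$. With that repair the DVR step is sound, and the Hensel step as you describe it (Newton iteration plus a uniform bound on the overconvergence parameter, exploiting the quadratic $\pi_F$-adic convergence) is indeed the argument carried out in the cited reference.
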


\begin{theorem} \label{T:equiv-diff-mod-repn}
\emph{\cite[Theorem~3.3.6]{KSK-Swan1}}
Let $\phi$ be a lift of Frobenius to $\Gamma$ acting on $\Gamma^\dag$. Then $D^\dag$ induces an equivalence between the category of $\calO$-representations with finite local monodromy and the category of $(\phi, \nabla)$-modules over $\Gamma^\dag$.
\end{theorem}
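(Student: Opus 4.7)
The plan is to bootstrap the desired equivalence from the already established equivalence $D$ between $\calO$-representations and $(\phi, \nabla)$-modules over $\Gamma$, descending along the inclusion $\Gamma^\dag \hookrightarrow \Gamma$. The key structural input is Lemma~\ref{L:bounded-Robba-henselian}: since $\Gamma^\dag$ is a henselian discrete valuation ring with residue field $k$, every finite separable extension $l/k$ lifts uniquely to a finite étale extension $\Gamma^\dag_l / \Gamma^\dag$ with group $\Gal(l/k)$, and moreover $\widehat{\Gamma^\dag_l}$ sits naturally inside $\widetilde\Gamma^\dag$.

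First, I would show that $D^\dag$ is well-defined and lands in $(\phi, \nabla)$-modules over $\Gamma^\dag$ of the correct rank. Given $\rho$ with finite local monodromy, choose a finite Galois extension $l/k$ inside $k^\sep$ so that $\rho|_{G_l}$ is trivial. Henselianness lifts $l/k$ to $\Gamma^\dag_l/\Gamma^\dag$, over which $\rho$ becomes trivial; Galois descent for finite étale extensions then identifies $D^\dag(\rho)$ with a finite free $\Gamma^\dag$-module of rank $\dim_F V_\rho$. The Frobenius structure is inherited because $\phi$ preserves $\Gamma^\dag$ by hypothesis, and the connection on $D(\rho)$ restricts to $D^\dag(\rho)$ because the derivations $\partial/\partial B_j$ and $\partial/\partial S$ preserve the growth condition that defines $\Gamma^\dag$.

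Second, I would verify that the natural map $D^\dag(\rho) \otimes_{\Gamma^\dag} \Gamma \to D(\rho)$ is an isomorphism. Both sides are finite free of rank $\dim_F V_\rho$, the map is injective by flatness of $\Gamma^\dag \hookrightarrow \Gamma$, and the trivializations of each over the appropriate completion of $\Gamma^\dag_l$ match, forcing the map to be surjective. This identification yields fully faithfulness: any $\Gamma^\dag$-linear $(\phi, \nabla)$-morphism $D^\dag(\rho_1) \to D^\dag(\rho_2)$ extends uniquely to a $\Gamma$-linear morphism of $(\phi, \nabla)$-modules $D(\rho_1) \to D(\rho_2)$ and hence, by the $\Gamma$-equivalence, corresponds to a unique $G_k$-equivariant map $V_{\rho_1} \to V_{\rho_2}$.

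The main obstacle is essential surjectivity: given a $(\phi, \nabla)$-module $M^\dag$ over $\Gamma^\dag$, set $M = M^\dag \otimes_{\Gamma^\dag} \Gamma$, apply the $\Gamma$-equivalence to obtain $\rho = V(M)$, and show that $\rho$ has finite local monodromy with $D^\dag(\rho) = M^\dag$. The difficulty is controlling $\rho(I_k)$: one must show that the ``unit-root'' nature of $M^\dag$ as a $\phi$-module over the bounded Robba ring $\Gamma^\dag$ already forces trivialization of $\rho|_{I_k}$ over a finite unramified cover. Here henselianness reenters decisively. Pick a $\phi$-stable $\calO$-lattice $T^\dag \subset M^\dag$; its reduction modulo the maximal ideal is an étale $\phi$-module over $k$, which trivializes over some finite separable $l/k$. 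Henselianness of $\Gamma^\dag$ lifts this trivialization to $\Gamma^\dag_l$, and a standard successive-approximation argument promotes it to a $\phi$-equivariant isomorphism $T^\dag \otimes \widehat{\Gamma^\dag_l} \cong \calO^{\dim V_\rho} \otimes \widehat{\Gamma^\dag_l}$. This identifies $\rho|_{G_l}$ with the trivial representation, proving finiteness of local monodromy. Finally, matching $D^\dag(\rho) = M^\dag$ follows by applying the first step to $\rho$ and noting that both sides recover $M$ after tensoring with $\Gamma$, combined with the faithfully flat injectivity of $\Gamma^\dag \hookrightarrow \Gamma$ on finite free modules.
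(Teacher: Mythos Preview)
The paper does not give its own proof of this theorem; it is stated purely as a citation of \cite[Theorem~3.3.6]{KSK-Swan1}. So there is no proof in the paper to compare against, and your sketch is effectively a reconstruction of the argument from the cited reference.

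That said, your outline is essentially correct and follows the standard route (due to Tsuzuki and Kedlaya): reduce to the known equivalence over $\Gamma$, and use henselianness of $\Gamma^\dag$ to control the overconvergent side. A couple of places deserve a bit more care. First, in the essential surjectivity step, the phrase ``unit-root nature of $M^\dag$'' is a little loose; what you are really using is that the definition of $(\phi,\nabla)$-module requires $\phi^* M^\dag \to M^\dag$ to be an isomorphism, so the reduction modulo the maximal ideal is automatically an \'etale $\phi$-module over $k$, hence trivializes over a finite separable $l/k$. Second, in the final identification $D^\dag(\rho) = M^\dag$, faithful flatness of $\Gamma^\dag \hookrightarrow \Gamma$ only lets you conclude once you have an actual map between the two $\Gamma^\dag$-modules; you should make explicit that the $\phi$-trivialization of $M^\dag \otimes \Gamma^\dag_l$ you built embeds $M^\dag$ into $V_\rho \otimes \widetilde\Gamma^\dag$, giving the inclusion $M^\dag \hookrightarrow D^\dag(\rho)$ to which faithful flatness can then be applied. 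With those two clarifications, your argument goes through.
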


\begin{notation}\label{N:affinoids}
For $I \subset [0, +\infty)$ an interval, let $A_K^1(I)$ denote
the annulus (centered at the origin) with radii in $I$.
(We do not impose any rationality condition on the endpoints of $I$, so this
space should be viewed as an analytic space in the sense of 
Berkovich \cite{Berkovich-book}.)
If $I$ is written explicitly in terms of its
endpoints (e.g., $[\alpha, \beta]$),
we suppress the parentheses around $I$ (e.g., 
$A_K^1[\alpha, \beta]$).

For $0 \leq \alpha \leq \beta < \infty$, let 
$K \langle \alpha/t, t/\beta \rangle$ denote the ring of analytic functions on $A_K^1[\alpha, \beta]$. (If $\alpha = 0$, we write $K \langle t/\beta \rangle$ instead.)  For $\eta \in [\alpha, \beta] \bs \{0\}$, $K \langle \alpha/t, t/\beta \rangle$ admits an $\eta$-Gauss norm: for $f = \sum_{i \in \ZZ} a_i x^i \in K \langle \alpha / t, t/\beta \rangle$,
\[
|f|_\eta = \max_{i \in \ZZ} \{ |a_i| \eta^i\}.
\]
\end{notation}

\begin{notation}\label{N:Robba-ring-radius-eta}
For $\eta_0 \in (0,1)$, we use $Z_k^{\geq \eta_0}$ for short to denote $A_K^1[\eta_0, 1)$.  Denote the ring of analytic functions on it by $\calR_K^{\eta_0}$.  We define the \emph{Robba ring over $K$} to be $\calR_K = \cup_{\eta \in [\eta_0, 1)} \calR_K^\eta$.  Also denote $\calR_{K'}^{\eta_0} = \calR_K^{\eta_0} \otimes_{\QQ_q} F$ and $\calR_{K'} = \calR_{K} \otimes_{\QQ_q} F$.  We will be only interested in the behavior when $\eta_0$ is close to $1$.  
\end{notation}

\begin{remark}
We use $k$ in the subscript of $Z_k^{\geq \eta_0}$ because the space is functorially in $k$ but not in $K$, as we made a non-canonical choice in \eqref{E:k((s))=k}.
\end{remark}

Now, we restrict the $(\phi, \nabla)$-module $D^\dag(\rho)$ to the Robba ring over $K$ as follows.

\begin{construction} \label{Cstr:bounded->R_K}
Consider the natural injection $\Gamma^\dag \inj \calR_{K'}$.  Note that the Frobenius $\phi$ extends by continuity to $\calR_{K'}$. Thus, from an $\calO$-representation $\rho$ with finite local monodromy, we obtain a differential module $\calE_\rho = D^\dag(\rho) \otimes _{\Gamma^\dag} \calR_{K'}$ over $\calR_{K'}$.

Moreover, if we start with a $p$-adic representation $\rho: G_k \rar \GL(V_\rho)$ of finite local monodromy, we can choose an $\calO$-lattice $V_\rho^\inte$ of $V_\rho$ stable under the action of $G_k$.  Then we associate a differential module $\calE_\rho$ to the $\calO$-representation given by $V_\rho^\inte$.  It is clear that $\calE_\rho$ does not depend on the choice of the lattice $V_\rho^\inte$.  We call $\calE_\rho$ the \emph{differential module associated to} $\rho$.
\end{construction}

\begin{proposition} \emph{\cite[Proposition~3.5.1]{KSK-Swan1}}
The $(\phi, \nabla)$-module $\calE_\rho$ over $\calR_{K'}$ is independent of the choice of the $p$-basis (up to a canonical isomorphism).
\end{proposition}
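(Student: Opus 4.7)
The plan is to compare the constructions arising from two different choices of $p$-basis and exhibit a canonical identification of the resulting $(\phi,\nabla)$-modules. Fix two $p$-bases $(\bar b_j)_{j\in J}$ and $(\bar b'_{j})_{j\in J'}$ of $\kappa_k$. Write $\calO_K^{(1)}, \calO_K^{(2)}$ for the associated Cohen rings of $\kappa_k$ with canonical lifts $B_J$ and $B'_{J'}$, and let $C_k^{(1)}, C_k^{(2)}$ be the Cohen rings of $k$ with respect to $\{\bar b_J,s\}$ and $\{\bar b'_{J'},s\}$, with lifts $S^{(1)}, S^{(2)}$ of $s$. Tensoring with $\calO$ yields $\Gamma^{(1)}, \Gamma^{(2)}$, their overconvergent subrings $\Gamma^{\dag,(1)}, \Gamma^{\dag,(2)}$, completed unramified extensions $\widetilde\Gamma^{\dag,(1)}, \widetilde\Gamma^{\dag,(2)}$, Robba rings $\calR_{K'}^{(1)}, \calR_{K'}^{(2)}$, and ultimately differential modules $\calE_\rho^{(1)}, \calE_\rho^{(2)}$. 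The goal is to produce a canonical isomorphism $\calE_\rho^{(2)} \isom \calE_\rho^{(1)}$.

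The main tool is Proposition~\ref{P:cohen-functorial} applied twice. First, choose any lifts in $C_k^{(1)}$ of $\bar b'_{J'}$ and of $s$; by functoriality, there is a unique continuous homomorphism $C_k^{(2)} \to C_k^{(1)}$ lifting the identity on $k$ and sending $B'_{J'}, S^{(2)}$ to the chosen lifts. The symmetric construction yields an inverse up to unique identifications, so this map is an isomorphism; tensoring with $\calO$ gives $\Gamma^{(2)}\isom\Gamma^{(1)}$. The first substantive step is to check that this isomorphism carries $\Gamma^{\dag,(2)}$ onto $\Gamma^{\dag,(1)}$: by Notation~\ref{N:cohen-ring-k} the defining condition is a linear growth bound on $v_{K'}$ of the coefficients in powers of $S$, and one verifies that a change of uniformizer $S^{(2)}\mapsto$ (a unit multiple of $S^{(1)}$ plus higher order terms) together with a change of coefficient field by a topological isomorphism of Cohen rings preserves such bounds. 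Since the maximal unramified extension and its $p$-adic completion are intrinsic invariants of a Henselian discrete valuation ring (Lemma~\ref{L:bounded-Robba-henselian}), the isomorphism extends canonically and $G_k$-equivariantly to $\widetilde\Gamma^{\dag,(2)} \isom \widetilde\Gamma^{\dag,(1)}$ inside $\widehat{\Gamma^{\unr}}$. Taking $G_k$-invariants of $V_\rho \otimes (-)$ in \eqref{E:def-of-D-dag-rho} now yields $D^\dag(\rho)^{(2)} \isom D^\dag(\rho)^{(1)}$ as $\phi$-modules, and tensoring with the Robba ring (which transports under the same isomorphism) produces the desired $\calE_\rho^{(2)} \isom \calE_\rho^{(1)}$ on the level of $\phi$-modules.

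Finally, one upgrades this to an isomorphism of $(\phi,\nabla)$-modules. By Theorem~\ref{T:equiv-diff-mod-repn}, once a Frobenius lift is fixed, there is a unique compatible $\nabla$-structure on a $\phi$-module over $\Gamma^\dag$, so the two $\nabla$-structures necessarily coincide under the isomorphism constructed above; the ambiguity from possibly different Frobenius lifts is absorbed by the standard fact that two Frobenius lifts on $\Gamma^\dag$ induce canonically isomorphic $(\phi,\nabla)$-module categories via the Taylor series isomorphism. The main obstacle is the overconvergence-preserving step, which is the only place where an actual estimate enters; the rest is pure functoriality packaging. In practice one can reduce even this check to the case where the two $p$-bases differ in a single element, which is essentially Corollary~\ref{C:k=k((s))-change} together with the $p$-adic deformation of Corollary~\ref{C:cohen-deformation} giving an explicit Taylor series expansion relating the two overconvergent structures.
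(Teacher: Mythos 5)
The paper gives no argument of its own here (it simply cites \cite[Proposition~3.5.1]{KSK-Swan1}), so your proof has to stand on its own, and as written it has two genuine gaps. First, the instruction ``choose any lifts in $C_k^{(1)}$ of $\bar b'_{J'}$ and of $s$'' breaks the overconvergence step: a general element of $C_k^{(1)}$ lifting $s$ or $b'_j$ need not lie in $\Gamma^{\dag,(1)}$, and if $\psi(S^{(2)})$ is such a non-overconvergent lift then $\psi$ certainly does not carry $\Gamma^{\dag,(2)}$ into $\Gamma^{\dag,(1)}$ (already the overconvergent element $S^{(2)}$ maps outside). To make this work you must choose the lifts inside $\calO_K^{(1)}\llbracket S^{(1)}\rrbracket$ (e.g.\ send $s\mapsto S^{(1)}$), check via the limit construction of Proposition~\ref{P:cohen-functorial} that then $\psi(\calO_K^{(2)})\subseteq \calO_K^{(1)}\llbracket S^{(1)}\rrbracket$ (this subring is $p$-adically closed), and only then does the growth bound on negative powers of $S$ pass through $\psi$. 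Note also that your description of $\psi$ as ``a change of uniformizer plus a change of coefficient field'' is inaccurate: in general $\psi(\calO_K^{(2)})\not\subseteq\calO_K^{(1)}$, the images of coefficients genuinely involve $S^{(1)}$, and the same care is needed to see that $\psi$ is bounded for the $\eta$-Gauss norms as $\eta\to 1^-$ so that it extends to $\calR_{K'}^{(2)}\rar\calR_{K'}^{(1)}$. Your reduction to $p$-bases differing in one element is also unjustified (two $p$-bases, possibly infinite in this part of the paper, need not be connected by such a chain).

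Second, and more importantly, the proposition asserts a \emph{canonical} isomorphism, while your $\psi$ depends on the chosen lifts; different admissible choices give different ring isomorphisms and hence a priori different identifications $\calE_\rho^{(2)}\isom\calE_\rho^{(1)}$, and nothing in your argument shows the induced map is independent of these choices. The missing ingredient is precisely the Taylor/parallel-transport isomorphism that you invoke only to absorb the Frobenius ambiguity: two admissible $\psi,\psi'$ both lift the identity of $k$, and since $\rho$ has finite local monodromy the connection on $\calE_\rho$ converges well enough near the boundary for the Taylor series to intertwine the two pullbacks; one must then verify the cocycle compatibility to obtain canonicity. The same device, rather than the uniqueness of the $\nabla$-structure in Theorem~\ref{T:equiv-diff-mod-repn} (which presupposes the Frobenius structures are already matched), is what compares the transported Frobenius $\psi\circ\phi^{(2)}\circ\psi^{-1}$ with $\phi^{(1)}$; as written your last paragraph is close to circular on this point. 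Without these repairs you have shown the two modules are isomorphic, but not canonically so, which is the actual content of the statement.
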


\begin{proposition}
The differential module $\calE_\rho$ descends to a differential module over $\calR_{K'}^{\eta_0}$ for some $\eta_0 \in (0,1)$.
\end{proposition}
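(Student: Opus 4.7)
The plan is to descend the module directly from $D^\dag(\rho)$ before tensoring up to the Robba ring. Since $\Gamma^\dag$ is a henselian discrete valuation ring (Lemma~\ref{L:bounded-Robba-henselian}), the finite projective module $D^\dag(\rho)$ is free; fix a $\Gamma^\dag$-basis $e_1, \dots, e_n$ of $D^\dag(\rho)$.

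With respect to this basis, the connection is encoded by the elements $\nabla(e_i) \in D^\dag(\rho) \otimes_{\Gamma^\dag} \Omega^1_{\Gamma^\dag/\Zp} = \bigoplus_{j \in J} D^\dag(\rho)\,dB_j \;\oplus\; D^\dag(\rho)\,dS$. Even when the $p$-basis index set $J$ is infinite, this is a genuine direct sum, so for each $i$ only finitely many $dB_j$-components of $\nabla(e_i)$ are nonzero, and summing over $i$ there are still only finitely many elements of $\Gamma^\dag$ appearing as connection coefficients. Because $\Gamma^\dag = \bigcup_{r > 0} \Gamma^r$ is an increasing union, these finitely many coefficients all lie in a common $\Gamma^r$.

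Finally I would use the natural inclusion $\Gamma^r \hookrightarrow \calR_{K'}^{\eta_0}$, valid once $\eta_0 \in (0,1)$ is chosen close enough to $1$ relative to $r$; this is immediate from unpacking the growth condition defining $\Gamma^r$ against the $\eta$-Gauss norms on $A_{K'}^1[\eta_0, 1)$. Define $M_0$ to be the free $\calR_{K'}^{\eta_0}$-module of rank $n$ with basis $\tilde e_1, \dots, \tilde e_n$ equipped with the connection whose matrix equals the connection matrix of $\nabla$ on $D^\dag(\rho)$. Integrability transfers from $D^\dag(\rho)$ because it is a polynomial identity in the matrix entries and their images under the derivations $\partial/\partial B_j$ and $\partial/\partial S$, each of which extends from $\Gamma^r$ to $\calR_{K'}^{\eta_0}$. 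By construction $M_0 \otimes_{\calR_{K'}^{\eta_0}} \calR_{K'} \cong \calE_\rho$ as differential modules, which is the desired descent. There is no serious obstacle; the only point requiring care is ensuring that only finitely many coefficients of the connection matrix are nonzero, and this is forced by the direct-sum structure of $\Omega^1_{\Gamma^\dag/\Zp}$.
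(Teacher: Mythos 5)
Your argument is correct and is essentially the paper's own proof spelled out in detail: the paper simply notes that ``defining a differential module needs only finite data'' (citing \cite[Remark~3.4.1]{KSK-Swan1}), which is precisely your observation that the finitely many connection coefficients of a $\Gamma^\dag$-basis of $D^\dag(\rho)$ lie in some $\Gamma^r \subset \calR_{K'}^{\eta_0}$ for $\eta_0$ sufficiently close to $1$. Your treatment of the possibly infinite $p$-basis via the direct-sum structure of $\Omega^1_{\Gamma^\dag}$ matches the paper's stated description of that module, so no gap remains.
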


\begin{proof}
Indeed, defining a differential module  needs only finite data.  So, we can realize it on a certain annulus.  See for instance \cite[Remark~3.4.1]{KSK-Swan1}.
\end{proof}

\begin{remark}
We will often make $\eta_0$ closer to $1^-$ along the way of proving main theorems.  We will see later that all we care about is the asymptotic behavior of $\calE_\rho$ as $\eta_0 \rar 1^-$.
\end{remark}

\begin{remark}
The current construction of associating differential module to a representation (Constructions~\ref{Cstr:repn->bounded-module} and \ref{Cstr:bounded->R_K}) is \emph{not} functorial with respect to the base field $F$ of the representation.
If $F'$ is a finite extension of $F$, for a $p$-adic representation $\rho$ over $F$ of finite local monodromy, one can naturally obtain $\rho \otimes_F F'$ as a $p$-adic representation over $F'$.  Assume that $\kappa_k$ contains the residue field $\FF_{q'}$ of $F'$.  Then the differential modules associated to $\rho$ and $\rho \otimes_F F'$ are the same if $F'/F$ is unramified and $\calE_\rho \otimes_F F' = \calE_{\rho \otimes_F F'}$ if $F'/F$ is totally ramified.

There are two reasons of keeping this non-functoriality flaw.  For one, the differential conductors we define later will be the same if we change $\rho$ to $\rho \otimes_F F'$.  For the other, if we define $\calE_\rho$ using the tensor over $\Zp$ instead of $\ZZ_q$ in Notation~\ref{N:cohen-ring-k}, in which case we do have the functoriality, we will get the direct sum of $[\Fq: \Fp]$ copies of $\calE_\rho$ as differential modules.  When proving the integrality of Swan conductors, we have to come back to study $\calE_\rho$ because $K \otimes_{\Zp} \calO \simeq K'^{\oplus[\Fq:\Fp]}$ is not a field if $q>p$.
\end{remark}

\subsection{Differential conductors}\label{S2-def-diff-conductors}

Given a $p$-adic representation $\rho$ of finite local monodromy, Kedlaya \cite[Section~3.5]{KSK-Swan1} showed that one can define a differential Swan conductor for $\rho$, using the $p$-adic differential module associated to $\rho$.  In this subsection, we review this definition and give an analogous definition for the differential Artin conductor.

\begin{remark}
Starting from this subsection, the Frobenius $\phi$ plays almost no role in our theory; most of the arguments work for solvable differential modules (see \cite[Definition~2.5.1]{KSK-Swan1}), and since all the decompositions for differential modules we encounter are canonical, they automatically respect the Frobenius structure.  The only place we need Frobenius is to link back with representations; see Proposition~\ref{P:phi-Gamma-decomposition}.
\end{remark}

\begin{hypo}\label{H:J-finite-set}
In this subsection, we make an auxiliary hypothesis that $k$ admits a \textit{finite} $p$-basis $\{\serie {b_}m, s\}$.
\end{hypo}

\begin{notation}\label{N:J+}
Let $J = \{\serie{}m\}$ for notational convenience. We save the letters $j$ and $m$ for indexing $p$-basis, except in Subsections~\ref{S:Review-AS} (See Notation~\ref{N:free-jJm}).  We also use $J^+$ to denote $J \cup \{0\}$, where $0$ refers to the uniformizer $s$ in the $p$-basis.
\end{notation}

\begin{definition}\label{D:scale-multiset}
Let $E$ be a differential field of order 1 and characteristic zero, i.e., a field of characteristic zero equipped with a derivation $\partial$.  Assume that $E$ is complete for a non-archimedean norm $|\cdot|$.  Let $M$ be a finite differential module over $E$, i.e. a finite dimensional $E$-vector space equipped with an action of $\partial$ satisfying the Leibniz rule. The \textit{spectral norm of $\partial$ on $M$} is defined to be
$$
|\partial|_{M, \sp} = \lim_{n \rar \infty} |\partial^n|_M^{1/n}
$$
for any norm $|\cdot|_M$ on $M$; it does not depend on the choice of $|\cdot|_M$.
One can prove that $|\partial|_{M, \sp} \geq |\partial|_{E, \sp}$ (\cite[Lemma~5.2.4]{KSK-notes}).
\end{definition}

\begin{remark} \label{R:sp-norm-inv-BC}
For a complete extension $E'$ of $E$, to which the derivation $\partial$ extends, $M \otimes_E E'$ can be viewed as a differential module over $E'$ with spectral norm $|\partial|_{M \otimes_E E', \sp} = \max\{|\partial|_{M, \sp}, |\partial|_{E', \sp}\}$.
\end{remark}

\begin{notation}
Let $\partial_0 = \partial / \partial S, \partial_1 = \partial / \partial B_1, \dots, \partial_m = \partial / \partial B_m$ denote a dual basis of $\Omega_{\calO_K\llbracket S \rrbracket / \calO_{K_0}}^1$ with respect to $dS, dB_1, \dots, dB_m$; they also give a dual basis of $\Omega_{\calR_{K'}^{\eta_0} / K_0}^1$ for all $\eta_0 \in (0,1)$. For a $(\phi, \nabla)$-module $\calE$ over $\calR_{K'}^{\eta_0}$, these differential operators act on $\calE$, commuting with each other and commuting with the Frobenius action.
\end{notation}

\begin{notation}
For $\eta \in [\alpha, \beta] \subset (0, +\infty)$, denote the completion of $\Frac (K' \langle \alpha/t, t/\beta \rangle)$ with respect to the $\eta$-Gauss norm by $F'_\eta$; this does not depend on the choice of $\alpha$ and $\beta$.
\end{notation}

\begin{example}
For $\eta \in \RR_{>0}$, the operator norms of $\partial_{J^+}$ and spectral norms on $F'_\eta$ are as follows.
\[
|\partial_j|_{F'_\eta} = \left\{
\begin{array}{ll}
\eta^{-1} & j = 0 \\
1 & j \in J
\end{array}
\right.; \quad\quad
|\partial_j|_{F'_\eta, \sp} = \left\{
\begin{array}{ll}
p^{-1/(p-1)}\eta^{-1} & j = 0 \\
p^{-1/(p-1)} & j \in J
\end{array}
\right..
\]
\end{example}

\begin{definition}\label{D:scale-multiset-on-pDE}
Let $\calE$ be a $\nabla$-module over $\calR_{K'}^{\eta_0}$.  For $\eta \in [\eta_0, 1)$, denote $\calE_\eta = \calE \otimes_{\calR_{K'}^{\eta_0}} F'_\eta$, which inherits differential operators $\partial_{J^+}$.  Define the \textit{$($non-logarithmic$)$ generic radius (of convergence)} $\bbT (\calE, \eta)$ of $\calE_\eta$ to be
\begin{equation} \label{E:scale}
\min \left\{ \frac{p^{-1/(p-1)}}{|\partial_j|_{\calE_\eta, \sp}} ; j \in J^+ \right\}.
\end{equation}
If $\calE_{\eta,i}$, $i = \serie{}n$, are the Jordan-H\"older factors of $\calE_\eta$ as a $\nabla$-module over $F'_\eta$, we define the \textit{$($non-logarithmic$)$ radius multiset} $\bbS(\calE, \eta)$ to be the set consisting of the generic radius of $\calE_{\eta,i}$ with multiplicity $\dim_{F'_\eta} \calE_{\eta,i}$ for each $i$.

We define the \textit{logarithmic generic radius (of convergence)} $\bbT_\log (\calE, \eta)$ to be
\begin{equation} \label{E:log-scale}
\min \left\{ \frac{p^{-1/(p-1)}\eta^{-1}} {|\partial_0|_{\calE_\eta, \sp}}, \;\frac{p^{-1/(p-1)}} {|\partial_j|_{\calE_\eta, \sp}} ; j \in J \right\}.
\end{equation}

Similarly, we define the \textit{logarithmic radius multiset} $\bbS_\log (\calE, \eta)$ of $\calE$.
\end{definition}

\begin{remark}\label{R:b-nonlog-geq-1}
It is worthwhile to mention that $\bbT(\calE, \eta) \leq \eta$, or more generally, every element in $\bbS(\calE, \eta)$ is smaller than $\eta$.
\end{remark}

\begin{remark}
The logarithmic generic radius and the logarithmic radius multiset are the same as the notions of the generic radius of convergence and the radius multiset in \cite{KSK-Swan1}.
\end{remark}

\begin{definition}
For $j \in J^+$, $\partial_j$ is called \emph{dominant} (resp. \emph{log-dominant}) for $\calE_\eta$ if the minimum of $\bbT(\calE, \eta)$ in \eqref{E:scale} (resp. $\bbT_\log (\calE, \eta)$ in \eqref{E:log-scale}) is achieved by the term involving the spectral norm of $\partial_j$.
\end{definition}

\begin{lemma} \label{L:eventually-dominant}
For a $(\phi, \nabla)$-module $\calE$ over $\calR_{K'}^{\eta_0}$ and $j \in J^+$, there exists $\eta'_0 \in (0, 1)$ such that one of the following two statements is true:
\begin{itemize}
\item for all $\eta \in [\eta'_0, 1)$, $\partial_j$ is (log-)dominant for $\calE_\eta$;
\item for all $\eta \in [\eta'_0, 1)$, $\partial_j$ is not (log-)dominant for $\calE_\eta$.
\end{itemize}
\end{lemma}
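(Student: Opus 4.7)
The plan is to reduce the dichotomy statement to an asymptotic statement about the spectral norms $|\partial_j|_{\calE_\eta,\sp}$ as functions of $\eta \to 1^-$, and then invoke the fact that for a $(\phi,\nabla)$-module these spectral norms have piecewise log-affine behavior with finitely many slopes near the boundary.

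First I would unwind the definition of (log-)dominance. In the non-logarithmic case, $\partial_j$ is dominant for $\calE_\eta$ iff $|\partial_j|_{\calE_\eta,\sp} \geq |\partial_{j'}|_{\calE_\eta,\sp}$ for every $j' \in J^+$. In the logarithmic case, one instead compares $\eta|\partial_0|_{\calE_\eta,\sp}$ with $|\partial_{j'}|_{\calE_\eta,\sp}$ for $j' \in J$. In either case, dominance of $\partial_j$ is determined by the signs of the finitely many functions
\[
g_{j,j'}(\eta) \;=\; \log|\partial_j|_{\calE_\eta,\sp} - \log|\partial_{j'}|_{\calE_\eta,\sp} + \varepsilon_{j,j'}\log\eta,
\]
for $j' \in J^+$, where $\varepsilon_{j,j'} \in \{-1,0,1\}$ depending on whether one is in the log case and whether $0 \in \{j,j'\}$. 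So it suffices to show that each $g_{j,j'}(\eta)$ is eventually of constant sign as $\eta \to 1^-$; then taking a common $\eta'_0$ over the finitely many $j' \in J^+$ gives the lemma.

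The key input is the variation of the generic radius: for each fixed $j \in J^+$, the function $\log|\partial_j|_{\calE_\eta,\sp}$ is a continuous, concave, piecewise affine function of $\log\eta$ on $[\eta_0,1)$, and moreover, because $\calE$ carries a Frobenius structure (hence is solvable at the boundary), only finitely many slopes occur on $[\eta_0,1)$; in particular there exists $\eta'_0 \in [\eta_0,1)$ such that $\log|\partial_j|_{\calE_\eta,\sp}$ is genuinely affine in $\log\eta$ throughout $[\eta'_0,1)$. This is the standard consequence of Kedlaya's analysis of convergence polygons together with the Frobenius antecedent theorem (compare \cite[\S3.5]{KSK-Swan1} and the corresponding chapters of \cite{KSK-notes}). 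Granting this, each $g_{j,j'}(\eta)$ is itself affine in $\log\eta$ on some common interval $[\eta'_0,1)$, hence either identically zero, or of constant and nonzero sign there.

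The main obstacle is exactly this finite-slope asymptotic behaviour: without the Frobenius structure, one could a priori have infinitely many slope changes of $\log|\partial_j|_{\calE_\eta,\sp}$ accumulating at $\eta = 1$, and then dominance could conceivably oscillate as $\eta \to 1^-$. The Frobenius structure is what forces the slopes to stabilize near the boundary, via Christol--Dwork / Kedlaya style Frobenius pullback arguments applied to the individual Jordan--H\"older factors of $\calE_\eta$. Once this finiteness is in hand, the rest of the proof is a formal comparison of finitely many affine functions of $\log\eta$ on a half-open interval ending at $1$.
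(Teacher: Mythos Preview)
Your argument is correct and is essentially the one the paper defers to: the paper's proof is just a citation to \cite[Lemma~2.7.5]{KSK-Swan1}, and what you sketch is precisely the content of that lemma---use piecewise log-affinity of each $\eta \mapsto |\partial_j|_{\calE_\eta,\sp}$, plus the Frobenius structure to get only finitely many slopes near $\eta = 1$, then compare finitely many affine functions. One small imprecision: an affine function of $\log\eta$ on $[\eta'_0,1)$ need not have constant sign throughout that interval, since it can have a single zero; you just need to shrink $\eta'_0$ past that zero (and repeat for each of the finitely many $j'$). With that adjustment the proof is complete and matches the paper's approach.
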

\begin{proof}
The logarithmic part is proved in \cite[Lemma~2.7.5]{KSK-Swan1}; the non-logarithmic part can be proved verbatim.
\end{proof}

\begin{definition}
Keep the notation as in previous lemma.
For $j \in J^+$, $\partial_j$ is called \emph{eventually dominant} (resp. \emph{eventually log-dominant}) for $\calE$ if it is dominant (resp. log-dominant) for $\calE_\eta$ for $\eta \rar 1^-$.
\end{definition}

\begin{lemma} \label{L:rotation}
Keep the notation as in Lemma~\ref{L:eventually-dominant}.
Assume that $\partial_0$ is \emph{not} eventually dominant and $\partial_j$ is.  Consider the rotation $g^*: B_j \mapsto B_j + S, B_{J \backslash j} \mapsto B_{J \backslash j}$, and $S \mapsto S$ given by Proposition~\ref{P:cohen-functorial}.  Then $\partial_0 = \partial /\partial S$ is eventually dominant in $g^*\calE$.
\end{lemma}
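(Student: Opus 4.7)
The plan is to compute the explicit transformation of derivations under pullback by $g^*$ and then invoke the non-archimedean inequality for spectral norms of commuting operators to transfer the dominance of $\partial_j$ to $\partial_0$.

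First I would verify that $g^*$ extends to an automorphism of $\calR_{K'}^{\eta_0}$ that is isometric for the $\eta$-Gauss norm for every $\eta \in [\eta_0, 1)$; this reduces to $|B_j + S|_\eta = 1 = |B_j|_\eta$ together with non-archimedean continuity. Next, I would compute the effect on derivations directly on the generators $S, B_1, \dots, B_m$: under the natural identification of underlying sets, the operator $\partial_0$ on $g^*\calE$ corresponds to $g^{*-1}\!\circ \partial_0 \circ g^* = \partial_0 \pm \partial_j$ acting on $\calE$ (the sign depends on the pullback convention but is immaterial below), while $\partial_i$ for $i \in J$ is left unchanged. Extending $g^*$ isometrically to the completion $F'_\eta$ then gives
\[
|\partial_0|_{g^*\calE_\eta, \sp} = |\partial_0 \pm \partial_j|_{\calE_\eta, \sp}, \qquad |\partial_i|_{g^*\calE_\eta, \sp} = |\partial_i|_{\calE_\eta, \sp} \text{ for } i \in J.
\]

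Second, I would invoke the following non-archimedean inequality for spectral norms of commuting operators: if $A, B$ commute and $|A|_\sp \neq |B|_\sp$, then $|A+B|_\sp = \max\{|A|_\sp, |B|_\sp\}$. The bound $|A+B|_\sp \leq \max\{|A|_\sp, |B|_\sp\}$ follows from the binomial expansion and the estimate $|\binom{n}{k}|_p \leq 1$; the reverse bound comes from $|A|_\sp = |(A+B) - B|_\sp \leq \max\{|A+B|_\sp, |B|_\sp\}$, forcing $|A+B|_\sp \geq |A|_\sp$ when $|A|_\sp > |B|_\sp$. By hypothesis, for $\eta$ close to $1$ we have $|\partial_j|_{\calE_\eta,\sp} > |\partial_0|_{\calE_\eta,\sp}$ strictly (because $\partial_j$ is eventually dominant but $\partial_0$ is not, so some $|\partial_i|_\sp$ strictly exceeds $|\partial_0|_\sp$, and $|\partial_j|_\sp$ realizes the maximum). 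Applying the claim with $A = \pm\partial_j$ and $B = \partial_0$ yields $|\partial_0|_{g^*\calE_\eta,\sp} = |\partial_j|_{\calE_\eta,\sp}$.

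Finally, since $\partial_j$ is eventually dominant for $\calE$, the quantity $|\partial_j|_{\calE_\eta,\sp}$ equals $\max_{i \in J^+} |\partial_i|_{\calE_\eta,\sp}$ for $\eta$ near $1$. Combining this with the identifications above gives $|\partial_0|_{g^*\calE_\eta,\sp} = \max_{i \in J^+} |\partial_i|_{g^*\calE_\eta,\sp}$, so $\partial_0$ is dominant on $g^*\calE_\eta$ for $\eta$ near $1$; by Lemma~\ref{L:eventually-dominant} this forces $\partial_0$ to be eventually dominant on $g^*\calE$. The only delicate bookkeeping is the sign in the transported derivation, which is harmless because spectral norms are sign-insensitive.
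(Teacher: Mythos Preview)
Your proof is correct and follows essentially the same approach as the paper's: the paper's one-line argument simply observes that the action of $\partial_0$ on $g^*\calE$ is the pullback of $\partial_0 + \partial_j$ on $\calE$ and defers the remaining details to \cite[Lemma~2.7.9]{KSK-Swan1}, which is precisely the spectral-norm computation you carry out explicitly. One small remark: your final appeal to Lemma~\ref{L:eventually-dominant} is unnecessary, since you have already established dominance of $\partial_0$ for all $\eta$ sufficiently close to $1$, which is the definition of eventual dominance.
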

\begin{proof}
The follows from the fact that the action of $\partial_0$ on $g^*\calE$ is the pull-back of the action of $\partial_0 + \partial_j$ on $\calE$.  For more details, one may consult the proof of \cite[Lemma~2.7.9]{KSK-Swan1}.
\end{proof}

\begin{remark} \label{R:rotation=changing-k=k((s))}
The rotation $g$ in the lemma corresponds to changing the isomorphism \eqref{E:k((s))=k} so that $\bar b_j$ is sent to $b_j + s$ instead; such an isomorphism can be obtained by Corollary~\ref{C:k=k((s))-change}.  In particular, if $\calE_\rho$ comes from a $p$-adic representation $\rho$ of finite local monodromy by Constructions~\ref{Cstr:repn->bounded-module} and \ref{Cstr:bounded->R_K}, $g^* \calE_\rho$ is the differential module associated to the same $\rho$ using the aforementioned alternative isomorphism in place of \eqref{E:k((s))=k}.
\end{remark}

\begin{proposition}\label{P:piecewise-linear}
The functions $f(r) = \log\, \bbT(\calE, e^{-r})$ and $f_\log(r) = \log\, \bbT_\log(\calE, e^{-r})$ on $(0, -\log\, \eta_0]$ are piecewise linear concave functions with slopes in $\frac 1 {\rank \calE !} \ZZ$. They are linear in a neighborhood of $0$.
\end{proposition}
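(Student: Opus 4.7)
The plan is to write $f$ and $f_\log$ as finite minima of single-operator spectral-radius functions, establish piecewise linearity, concavity, and the slope bound for each summand separately, and then combine. For each $j \in J^+$, set
\[
r_j(r) = \log p^{-1/(p-1)} - \log\bigl|\partial_j\bigr|_{\calE_{e^{-r}},\sp},
\]
and let $r_j^{\log}(r)$ agree with $r_j(r)$ for $j\in J$ and equal $r_j(r)-r$ for $j=0$. By construction $f(r)=\min_{j\in J^+} r_j(r)$ and $f_\log(r)=\min_{j\in J^+} r_j^{\log}(r)$.

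The first step is to show that each $r_j$ (equivalently each $r_j^{\log}$, as they differ by at most a linear function of $r$) is a piecewise linear concave function of $r$ with slopes in $\tfrac{1}{(\rank\calE)!}\ZZ$ and with only finitely many breakpoints on any compact subinterval of $(0,-\log\eta_0]$. For the logarithmic version this is essentially the content of \cite[\S2.5]{KSK-Swan1}, which I would recast in the present multi-derivation notation; the non-logarithmic case then follows formally by the change of variable described above. The technical heart is the one-variable Christol-Dwork-Robba-Kedlaya theory: after passing to a cyclic vector, the spectral norm of a single derivation on a differential module of rank $n=\rank\calE$ is computed from the Newton polygon of a characteristic polynomial of degree $n$, and the slopes of this polygon, viewed as a function of the Gauss parameter $-\log\eta$, lie in $\tfrac{1}{n!}\ZZ$.

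With the first step in hand, the second is purely formal: a finite minimum of piecewise linear concave functions whose slopes lie in $\tfrac{1}{N}\ZZ$ is again piecewise linear and concave with slopes in $\tfrac{1}{N}\ZZ$, so both $f$ and $f_\log$ inherit the asserted structure.

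For linearity in a neighborhood of $0$, I would invoke Lemma~\ref{L:eventually-dominant}: there exist $\eta'_0\in(\eta_0,1)$ and an index $j^*\in J^+$ (chosen separately for the two cases) such that the minimum defining $\bbT$ (resp.\ $\bbT_\log$) is achieved by $j^*$ for every $\eta\in[\eta'_0,1)$. On the corresponding interval in $r$, $f$ coincides with $r_{j^*}$ (resp.\ $r_{j^*}^{\log}$), which by the first step has only finitely many breakpoints there; shrinking the neighborhood once more yields genuine linearity. The main obstacle is the slope-denominator bound $(\rank\calE)!$ in the single-operator step; it requires the cyclic vector construction and careful tracking of how the Newton polygon slopes of the resulting characteristic polynomial depend on the Gauss parameter. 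Everything else is a formal combination.
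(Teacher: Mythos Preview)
Your approach is correct and is precisely what the paper's citation to \cite[\S2.5]{KSK-Swan1} unpacks; the paper itself just refers to that source for the logarithmic case and observes that the non-logarithmic case differs only by the linear term $r$ coming from the extra factor $\eta^{-1}$ in the normalization of $\partial_0$.

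One point to tighten in your argument for linearity near $0$: your first step only claims finitely many breakpoints on \emph{compact} subintervals of $(0,-\log\eta_0]$, and concavity with slopes in $\tfrac{1}{N}\ZZ$ alone does not prevent breakpoints from accumulating at the open endpoint $0$. What completes the argument is solvability (from the $\phi$-structure): one has $f_\log(r)\to 0$ as $r\to 0^+$, hence each $r_j^{\log}$, being squeezed between $f_\log$ and $0$, also tends to $0$. Combined with $r_j^{\log}\le 0$ and concavity this forces the right-hand slope at $0$ to be at most $0$, so all slopes of $r_j^{\log}$ lie in a bounded subset of $\tfrac{1}{N}\ZZ$, which is finite. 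This is part of what \cite[\S2.5]{KSK-Swan1} proves, so your citation covers it, but your written deduction from ``finitely many breakpoints on compacta'' to ``linear near $0$'' skips this step.
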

\begin{proof}
The logarithmic version is proved in \cite[Section~2.5]{KSK-Swan1}.  The non-logarithmic version can be proved verbatim as the only difference is the factor $\eta^{-1}$ on the spectral norm of $\partial_0$, which gives an extra linear term $r$.
\end{proof}

\begin{definition}
\label{D:differential-break}
As a consequence of the previous proposition, there exists $b_\dif(\calE) \in \QQ_{\geq 0}$ and $\eta_0 \in (0,1)$ such that $\bbT(\calE, \eta) = \eta^{b_{\dif}(\calE)}$ for all $\eta \in [\eta_0,1)$. This $b_\dif(\calE)$ is called the \emph{$($non-logarithmic$)$ differential ramification break} of $\calE$.  We say that $\calE$ has \emph{uniform slope $b$} if the radius multiset $\bbS(\calE, \eta)$ consists only of $\eta^b$ when $\eta \rar 1$.  The \emph{logarithmic differential ramification break} $b_{\dif, \log}(\calE)$ and the \emph{uniformness of log-slope $b$} are defined similarly except having the subscript $\log$ everywhere.
\end{definition}

The ramification breaks give rise to the break decomposition.

\begin{theorem}\label{T:decomposition}
Let $\calE$ be a $(\phi, \nabla)$-module over $\calR_{K'}^{\eta_0}$, for some $\eta_0 \in (0, 1)$.  Then after making $\eta_0$ sufficiently close to $1^-$, there exists a \emph{unique} decomposition of $(\phi, \nabla)$-modules $\calE = \bigoplus _{b \in \QQ_{\geq 1}} \calE_b$ (resp. $\calE = \bigoplus _{b \in \QQ_{\geq 0}} \calE_{b, \log}$) over $\calR_{K'}^{\eta_0}$, where each of $\calE_b$ (resp. $\calE_{b, \log})$ has uniform slope (resp. log-slope) $b$.
\end{theorem}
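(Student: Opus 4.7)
The plan is to adapt the proof of the logarithmic analog given in \cite{KSK-Swan1}, observing that the non-logarithmic setting differs from the logarithmic one only in that the spectral norm of $\partial_0$ is weighted by $\eta^{-1}$ rather than $1$. Since the logarithmic decomposition has already been established by Kedlaya, I would first explain how uniqueness follows formally from Proposition~\ref{P:piecewise-linear}: any morphism between two $(\phi,\nabla)$-modules of different uniform non-log slopes must vanish, because the generic radius of convergence of the internal Hom is smaller than $\eta$ on a cofinal neighborhood of $\eta=1$, which forces horizontal sections to vanish.

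For existence, first I would invoke Lemma~\ref{L:eventually-dominant} for each $j\in J^+$ to arrange, after shrinking $\eta_0$, that the eventual dominance pattern is stable on all of $[\eta_0,1)$. Combined with Proposition~\ref{P:piecewise-linear}, further shrinking $\eta_0$ lets us assume the functions $f(r)=\log\bbT(\calE,e^{-r})$ and the corresponding functions for each subquotient are linear on $[0,-\log\eta_0]$, so there are only finitely many non-log breaks occurring in $\bbS(\calE,\eta)$ and they are constant as $\eta\to 1^-$. Next, for each fixed $\eta\in[\eta_0,1)$, I would apply a spectral decomposition over the complete field $F'_\eta$ (in the spirit of \cite[Theorem~2.3.5]{KSK-Swan1} or the classical Christol--Mebkhout decomposition) to write $\calE_\eta=\bigoplus_b \calE_{\eta,b}$ with each summand of uniform non-log slope $b$. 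The point is that this pointwise decomposition is canonical, hence commutes with the Frobenius action and with the action of $G_k$ (equivalently, it is intrinsic to the $(\phi,\nabla)$-module).

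To globalize, I would reproduce the argument used in the logarithmic case: the projection operators onto the individual uniform-slope parts can be expressed as convergent series in the differential operators once one works on a sufficiently thin annulus, and the Frobenius structure forces the resulting idempotents to extend analytically across $\eta\in[\eta_0,1)$ rather than only exist pointwise. Concretely, one decomposes first over the completion at each boundary, then descends by invoking the fact that a $(\phi,\nabla)$-module over $\calR_{K'}^{\eta_0}$ is determined by its pullbacks to the Gauss points together with the Frobenius structure, and uses that the non-log generic radius differs from the log generic radius by at most a factor of $\eta$, so all the analytic estimates used in \cite[\S2.5,~\S3.4]{KSK-Swan1} transfer verbatim up to replacing $\eta^{b_{\dif,\log}}$ by $\eta^{b_\dif}$.

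The main obstacle is precisely the globalization step: producing the analytic idempotents on $\calR_{K'}^{\eta_0}$ rather than merely on each $F'_\eta$. In the logarithmic case this was handled in \cite{KSK-Swan1} by a careful Frobenius-antecedent argument combined with the piecewise linearity of the generic radius function; I expect the same machinery to apply here without essential change, since the extra factor $\eta$ appearing in the non-log version of the spectral norm of $\partial_0$ is bounded between $\eta_0$ and $1$ and affects none of the relevant analytic estimates. The constraint $b\geq 1$ in the non-log decomposition comes from Remark~\ref{R:b-nonlog-geq-1}, and compatibility with the Frobenius structure is automatic because the decomposition is canonical.
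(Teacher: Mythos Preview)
Your proposal has a genuine gap in the globalization step, and it misses the key maneuver the paper actually uses. You correctly identify that the obstacle is passing from the pointwise decomposition over each $F'_\eta$ to a decomposition over $\calR_{K'}^{\eta_0}$, but your plan to ``express the projection operators as convergent series in the differential operators'' and to ``descend using Frobenius'' is not a method that appears in \cite{KSK-Swan1} for this purpose, and you give no indication of how it would actually be carried out. The Frobenius-antecedent arguments in \cite{KSK-Swan1} are used to control spectral norms, not to manufacture global idempotents from pointwise ones.

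The paper's argument is much more direct and avoids the globalization problem entirely. The Christol--Mebkhout decomposition \cite[Lemma~2.7.3]{KSK-Swan1} is already a \emph{global} statement on the annulus: it gives a decomposition over $\calR_{K'}^{\eta_0}$ (not just over $F'_\eta$) provided that $\partial_0$ is eventually dominant. So the whole content of the non-logarithmic case is to reduce to that situation. This is exactly what Lemma~\ref{L:rotation} accomplishes: if $\partial_0$ is not eventually dominant but some $\partial_j$ is, the rotation $g^*$ sending $B_j \mapsto B_j + S$ makes $\partial_0$ eventually dominant for $g^*\calE$; one then applies Christol--Mebkhout to $g^*\calE$ and pulls the resulting decomposition back along $g^{-1}$. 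You invoke Lemma~\ref{L:eventually-dominant} but never use Lemma~\ref{L:rotation}, which is the essential new ingredient relative to the logarithmic case. Without it, you are stuck trying to globalize by hand, which is precisely the difficulty the rotation trick is designed to sidestep.
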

\begin{proof}
Since the differential operators act trivially on $\calO$ and commute with $\phi$, It suffices to obtain the decomposition of $\calE$ as a $\nabla$-module over $A_{K'}^1[\eta_0, 1)$;  the uniqueness of the decomposition of $\calE$ follows from the uniqueness of that over $F'_\eta$ for $\eta \in [\eta_0, 1)$.
The logarithmic part of this theorem is proved in \cite[Theorem~2.7.2]{KSK-Swan1}.  We will give the proof of the non-logarithmic decomposition by applying several lemmas in \cite{KSK-Swan1}.

We need to show that if $\calE$ does not have uniform slope $\geq 1$ (see Remark~\ref{R:b-nonlog-geq-1} for the reason of having 1 instead of 0), $\calE$ is decomposable.  If $\partial_0$ is eventually dominant, the decomposition theorem of Christol-Mebkhout \cite[Lemma~2.7.3]{KSK-Swan1} gives the decomposition.  If $\partial_0$ is not eventually dominant, Lemma~\ref{L:eventually-dominant} implies that $\partial_j$ is eventually dominant for some $j \in J$.  By Lemma~\ref{L:rotation}, $\partial_0$ is eventually dominant for $g^*\calE$.  Applying the decomposition theorem \cite[Lemma~2.7.3]{KSK-Swan1} to $g^*\calE$ and pulling back the decomposition along $g^{-1}$, we obtain a nontrivial decomposition of $\calE$ on $\calR_{K'}^{\eta_0}$ for some $\eta_0 \in (0,1)$.
\end{proof}

\begin{proposition}
\label{P:phi-Gamma-decomposition}
In Theorem \ref{T:decomposition}, if the $(\phi, \nabla)$-module $\calE_\rho$ is associated to a $p$-adic representation $\rho$ of finite local monodromy, then the decomposition of $(\phi, \nabla)$-modules induces a direct sum decomposition of the representation $\rho$ so that each of the direct summand of $\calE_\rho$ is the differential module associated to the direct summand of $\rho$.
\end{proposition}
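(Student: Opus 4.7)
The plan is to descend the decomposition of Theorem~\ref{T:decomposition} from $\calR_{K'}^{\eta_0}$ back along the inclusion $\Gamma^\dag \inj \calR_{K'}$ to a $(\phi, \nabla)$-module decomposition of $D^\dag(\rho)$, and then apply the equivalence in Theorem~\ref{T:equiv-diff-mod-repn} to extract a decomposition of $\rho$ itself.

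Write $D = D^\dag(\rho)$ and $\calE = \calE_\rho = D \otimes_{\Gamma^\dag} \calR_{K'}$. Theorem~\ref{T:decomposition} supplies pairwise orthogonal $(\phi, \nabla)$-equivariant idempotents $\{e_b\}$ in $\End_{\calR_{K'}}^{(\phi, \nabla)}(\calE)$ whose images are the pieces $\calE_b$. Because $D$ is finite free over $\Gamma^\dag$, base change gives an identification $\End_{\calR_{K'}}(\calE) \cong \End_{\Gamma^\dag}(D) \otimes_{\Gamma^\dag} \calR_{K'}$ respecting both $\phi$ and $\nabla$. The key step is to show that every $(\phi, \nabla)$-equivariant endomorphism of $\calE$ in fact lies in $\End_{\Gamma^\dag}^{(\phi, \nabla)}(D)$; granted this, the descended idempotents cut out a decomposition $D = \bigoplus_b D_b$ of $(\phi, \nabla)$-modules over $\Gamma^\dag$, and Theorem~\ref{T:equiv-diff-mod-repn} turns it into a direct sum decomposition $\rho = \bigoplus_b \rho_b$ with $D^\dag(\rho_b) = D_b$ and hence $\calE_{\rho_b} = \calE_b$.

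To establish the descent, I would argue via the internal Hom: $\End_{\Gamma^\dag}(D)$ is itself a $(\phi, \nabla)$-module over $\Gamma^\dag$ corresponding to the representation $\rho \otimes \rho^\vee$, which still has finite local monodromy. The claim then reduces to the special case that for any $(\phi, \nabla)$-module $M$ over $\Gamma^\dag$ arising from a representation with finite local monodromy, the canonical map $M^{\phi = 1,\, \nabla = 0} \to (M \otimes_{\Gamma^\dag} \calR_{K'})^{\phi = 1,\, \nabla = 0}$ is bijective. Both sides compute $\Hom_{G_k}(\mathbf{1}, \rho \otimes \rho^\vee)$ after unwinding the definition $D^\dag(\rho) = (V_\rho \otimes_\calO \widetilde\Gamma^\dag)^{G_k}$; the crucial ingredient is that a $\phi$-fixed element of the base-changed module is forced back into the $\Gamma^\dag$-lattice by the contracting property of $\phi$ on the Robba ring (so the $s$-adic expansion cannot have unbounded polar parts), together with the Henselian property of $\Gamma^\dag$ (Lemma~\ref{L:bounded-Robba-henselian}).

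The main obstacle is precisely this full-faithfulness descent of $\phi$-invariants across the enlargement $\Gamma^\dag \subset \calR_{K'}$; the matching of summands, compatibility with the functor $\rho \mapsto \calE_\rho$, and the entirely parallel treatment of the logarithmic decomposition (whose idempotents live in the same endomorphism ring) are then formal.
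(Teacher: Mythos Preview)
Your approach is correct and genuinely different from the paper's. The paper argues in two cited steps: first, since $\calE_\rho$ comes from $D^\dag(\rho)$ it is unit-root as a $\phi$-module, so by the slope filtration theorem \cite[Theorem~3.4.6]{KSK-Swan1} each $(\phi,\nabla)$-summand $\calE_b$ is again unit-root; second, by \cite[Proposition~3.4.4]{KSK-Swan1} every unit-root $(\phi,\nabla)$-module over $\calR_{K'}$ descends to $\Gamma^\dag$ and hence corresponds to a representation with finite local monodromy. You instead descend the \emph{idempotents} by establishing full faithfulness of the base change $\Gamma^\dag \to \calR_{K'}$ on $(\phi,\nabla)$-invariants of $D^\dag(\rho\otimes\rho^\vee)$, and then invoke Theorem~\ref{T:equiv-diff-mod-repn}.

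What each buys: the paper's route is shorter on the page but imports the slope filtration theorem, a substantial result, merely to conclude that direct summands of a unit-root object are unit-root. Your route is more elementary in spirit and isolates exactly the statement needed, namely that $\phi$-fixed, horizontal elements over $\calR_{K'}$ already live over $\Gamma^\dag$; this is essentially the content of \cite[Proposition~3.4.4]{KSK-Swan1} restricted to invariants, and your heuristic (the contracting effect of $\phi$ forces bounded coefficients) is the correct intuition behind its proof. The one place your write-up is thin is precisely there: ``both sides compute $\Hom_{G_k}(\mathbf{1},\rho\otimes\rho^\vee)$'' is immediate on the $\Gamma^\dag$ side from Theorem~\ref{T:equiv-diff-mod-repn}, but on the $\calR_{K'}$ side it \emph{is} the full-faithfulness you are trying to prove, so you should either cite \cite[Proposition~3.4.4]{KSK-Swan1} at that point or spell out the boundedness argument rather than leave it as a sketch.
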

\begin{proof}
By the slope filtration \cite[Theorem~3.4.6]{KSK-Swan1}, the Frobenius action on each direct summand of $\calE$ is of unit-root; the decomposition of the representation follows by \cite[Proposition~3.4.4]{KSK-Swan1}.
\end{proof}

\begin{definition}
Let $\rho: G_k \rar \GL(V_\rho)$ be a $p$-adic representation with finite local monodromy.  Let $\calE$ be the differential module corresponding to $V_\rho / V_\rho^{I_k}$ by Constructions~\ref{Cstr:repn->bounded-module} and \ref{Cstr:bounded->R_K}, where $V_\rho^{I_k}$ is the unramified piece of $V_\rho$ consisting of elements in $V_\rho$ which are fixed by $I_k$.  By Theorem~\ref{T:decomposition} above, there exists a multiset $\{\serie{a_}d\}$ such that for all $\eta$ sufficiently close to 1, $\bbS(\calE, \eta) = \{\eta^{a_1}, \dots, \eta^{a_d}\}$.  Define the \textit{differential Artin conductor} of $\calE$ (resp. $\rho$), denoted by $\Art_\dif (\calE)$ (resp. $\Art_\dif (\rho)$), to be $a_1 + \cdots + a_d$.  The \emph{differential Swan conductor} of $\calE$ (resp. $\rho$), $\Swan_\dif(\calE)$ (resp. $\Swan_\dif(\rho)$), is defined similarly by adding the subscript $\log$ everywhere.
\end{definition}

\begin{remark}  \label{R:cant-dist-unr-vs-tame}
In the above definition, we split off the unramified part because it has both conductors 0.  The reason of doing so is that the convergence radius multiset cannot distinguish the unramified part and the tame part which give different contributions to the Artin conductor.  This does not matter for Swan conductors and we may define the Swan conductor without first taking out the unramified piece.
\end{remark}

\begin{remark}
By \cite[Proposition~2.6.6]{KSK-Swan1}, the definition of the differential Swan conductors does not depend on the choice of the uniformizer $s$ and $p$-basis $\{\serie{b_}m, s\}$.  Moreover, we may feel free to remove the Hypothesis~\ref{H:J-finite-set} and define the differential Swan conductors for arbitrary complete discretely valued field of equal characteristic $p$ \cite[Corollary~3.5.7]{KSK-Swan1}.  The similar statement is also true for the differential Artin conductors and the proof is the same as for Swan conductors.
\end{remark}

\subsection{Basic properties}
\label{S3:basic-prop}

In this subsection, we state some basic properties of differential conductors.  We do not impose any hypothesis on $k$.

\begin{theorem}\label{T:properties-Ked-cond}
Differential conductors satisfy the following properties:

\emph{(0)} When the residue field $\kappa_k$ is perfect, the differential Artin and Swan conductors are the same as the classical ones defined in \emph{\cite{BOOK-local-fields}}.

\emph{(1)} For any representation $\rho$ of finite local monodromy, $\Swan_\dif(\rho) \in \ZZ_{\geq 0}$ and $\Art_\dif(\rho) \in \ZZ_{\geq 0}$.

\emph{(2)} Let $k'/k$ be a tamely ramified extension of ramification degree $e'$.  Let $\rho$ be a representation of $G_k$ of finite local monodromy and let $\rho'$ denote the restriction of $\rho$ to $G_{k'}$. Then $\Swan_\dif(\rho') = e' \cdot \Swan_\dif (\rho)$.  If $e' = 1$, i.e., $k'/k$ is unramified, $\Art_\dif(\rho') = \Art_\dif (\rho)$.

\emph{(3)} Let $\rho$ be a faithful $p$-adic representation of the Galois group of a Galois extension $l/k$.  If $l/k$ is tamely ramified and not unramified, $b_\dif(\rho) = 1$ and $b_{\dif, \log}(\rho) = 0$.  If $l/k$ is unramified, $b_\dif(\rho) = b_{\dif, \log}(\rho) = 0$.

\emph{(4)} Put $G_k^0 = G_k$ and $G_k^a = I_k$ for $a \in (0, 1]$.  For $a > 1$, let $R_a$ be the set of finite image representations $\rho$ with differential ramification breaks less than $a$.  Define $G_k^a = \bigcap _{\rho \in R_a} \big(I_k \cap \ker (\rho)\big)$ and write $G_k^{a+}$ for the closure of $\cup_{b > a} G_k^b$. This defines a differential filtration on $G_k$ such that for all finite image representation $\rho$, $\rho(G_k^a)$ is trivial if and only if $\rho \in R_a$.

Similarly, put $G_{k, \log}^0 = G_k$.  For $a > 0$, let $R_{a, \log}$ be the set of finite image representations $\rho$ with logarithmic differential ramification breaks less than $a$.  Define $G_{k, \log}^a = \bigcap _{\rho \in R_{a, \log}} \big(I_k \cap \ker (\rho)\big)$ and write $G_{k, \log}^{a+}$ for the closure of $\cup_{b > a} G_{k, \log}^b$. This defines a differential logarithmic filtration on $G_k$ such that for all finite image representation $\rho$, $\rho(G_{k, \log}^a)$ is trivial if and only if $\rho \in R_{a, \log}$.

Moreover, 
\begin{eqnarray*}
\textrm{for } a > 0 \textrm{, } G_k^a / G_k^{a+} & = & \left\{
\begin{array}{ll}
0 & a \notin \QQ \\
\textrm{an abelian group killed by $p$} & a \in \QQ
\end{array}
\right. \\
\textrm{for } a > 1 \textrm{, } G_{k, \log}^a / G_{k, \log}^{a+} & = & \left\{
\begin{array}{ll}
0 & a \notin \QQ \\
\textrm{an abelian group killed by $p$} & a \in \QQ
\end{array}
\right.
\end{eqnarray*}
\end{theorem}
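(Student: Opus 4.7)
My approach is to handle the five parts in turn, leveraging prior work for the logarithmic case from \cite{KSK-Swan1} and adapting it to the non-logarithmic setting. Part (0) follows immediately upon examining the definitions: when $\kappa_k$ is perfect, the $p$-basis is empty, the only derivation is $\partial_0$, and the definitions in Subsection~\ref{S2-def-diff-conductors} reduce to the single-variable framework of Christol--Matsuda--Mebkhout--Tsuzuki \cite{Mats-Swan-cond}, which is known to recover the classical conductors.

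For the logarithmic assertions in (1) and (4), I would directly cite \cite[Theorem~3.5.8]{KSK-Swan1}. To obtain integrality of $\Art_\dif$ in (1), I would use the non-logarithmic uniform-slope decomposition $\calE = \bigoplus_b \calE_b$ given by Theorem~\ref{T:decomposition}. For each component with $b > 1$, I would argue that $\partial_0$ is eventually strictly dominant, so the non-log slope equals the log slope plus one, yielding $\Art_\dif(\calE_b) = \Swan_\dif(\calE_b) + \rank(\calE_b)$, an integer; the $b=1$ component contributes exactly its rank to $\Art_\dif$, so the total is an integer by the already-established Swan integrality.

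For parts (2) and (3), my plan is direct computation of spectral norms under base change and for the tame case. An unramified $k'/k$ induces an unramified base change of Cohen rings preserving $|\partial_j|_{\calE_\eta, \sp}$ via Remark~\ref{R:sp-norm-inv-BC}, so $\Art_\dif$ is invariant. A tame totally ramified $k'/k$ of degree $e'$ translates, after lifting uniformizers compatibly, to the substitution $\eta \mapsto \eta^{1/e'}$, rescaling log-slopes by $e'$. For (3), a faithful tame representation factors through a cyclic prime-to-$p$ group, so $\calE_\rho$ is a sum of twists of the trivial module by tame characters, for which the generic radii compute directly to $\bbT(\calE_\rho, \eta) = \eta$ and $\bbT_\log(\calE_\rho, \eta) = 1$. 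Part (4)'s filtration is well-defined by rationality of breaks (Proposition~\ref{P:piecewise-linear}), so subquotients at irrational levels are trivial, and the abelian $p$-killed nature at rational levels follows from the functoriality of the break decomposition combined with the unit-root Frobenius structure on each summand via Proposition~\ref{P:phi-Gamma-decomposition}, paralleling Kedlaya's argument in the log case.

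The main obstacle I foresee is in part (1): rigorously excluding the case where, on a uniform non-log slope $b$ component with $b > 1$, some $|\partial_j|_{\calE_\eta, \sp}$ for $j \in J$ lies in an intermediate range that would make the log slope land strictly between $b-1$ and $b$, and hence make the Artin-minus-Swan difference non-integral. Handling this cleanly will likely require combining the piecewise linear concavity of the spectral norm functions from Proposition~\ref{P:piecewise-linear} with structural constraints imposed by the Frobenius action, analogous to but more delicate than Kedlaya's original argument for $\Swan_\dif$. A fallback would be to establish the relation $\Art_\dif(\calE_b) - \Swan_\dif(\calE_b) \in \{0, \rank(\calE_b)\}$ by a separate decomposition into $\partial_0$-dominant and $\partial_J$-dominant pieces (using Lemma~\ref{L:eventually-dominant} factor by factor), which sidesteps the awkward range issue entirely.
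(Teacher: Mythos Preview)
Your treatment of (0), (2), and (3) is fine and matches the paper. The real issue is your argument for (1).

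Your strategy for Artin integrality---deduce it from Swan integrality via a relation $\Art_\dif(\calE_b) - \Swan_\dif(\calE_b) \in \{0, \rank \calE_b\}$---does not go through, and you have correctly located the obstruction without resolving it. On a Jordan--H\"older factor where $\partial_0$ is strictly non-log dominant with break $b$, the log break is $\max\{b-1,\, c\}$ where $\eta^c = \min_{j \in J} p^{-1/(p-1)}/|\partial_j|_{\sp}$; nothing forces $c \leq b-1$, so the log break can land anywhere in $[b-1, b)$ and the difference anywhere in $(0,1]$. Your fallback does not repair this: Lemma~\ref{L:eventually-dominant} concerns a single module and furnishes no direct-sum decomposition by dominant derivation, and the intermediate-range problem already occurs on irreducible pieces, so slicing factor by factor cannot force the dichotomy you want.

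The paper avoids relating $\Art$ to $\Swan$ altogether. Its cleanest argument is rotation: if $\partial_0$ is not eventually dominant then some $\partial_j$ is, and the rotation $g^*$ of Lemma~\ref{L:rotation} (sending $B_j \mapsto B_j + S$) makes $\partial_0$ dominant for $g^*\calE$. By Remark~\ref{R:rotation=changing-k=k((s))} this corresponds to a different choice of the isomorphism $\kappa_k((s)) \simeq k$, so $\Art_\dif$ is unchanged. With $\partial_0$ dominant one forgets $\partial_1,\dots,\partial_m$ and lands in the perfect residue field case, which is (0) and hence classical Hasse--Arf. The same rotation device handles (4), and the paper remarks that this makes the non-log filtration \emph{simpler} than the log one, not more delicate as you anticipate. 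The paper's primary argument for (1) is different still: run Kedlaya's Frobenius-based proof of Swan integrality \cite[Theorem~2.8.2]{KSK-Swan1} verbatim, substituting the non-log decomposition of Theorem~\ref{T:decomposition} for the log one---that argument is indifferent to which normalization defines the breaks. Either way, Artin integrality is established in parallel to Swan integrality, not as a consequence of it.
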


\begin{proof}
For (0), see \cite[Theorem~5.23]{KSK-overview}. 

For the rest of the statements, the proof for Swan conductors can be found in \cite[Section~3.5]{KSK-Swan1}; we will only prove the corresponding properties for differential Artin conductors.  As in the proof for the differential Swan conductors, we may first reduce to the case when Hypothesis~\ref{H:J-finite-set} holds.

(1)  The proof goes the same as \cite[Theorem~2.8.2]{KSK-Swan1} because we have the decomposition Theorem \ref{T:decomposition}.  An alternative proof is to apply Lemma~\ref{L:rotation}, and reduce to the case when $\partial_0$ is dominant (see also Remark~\ref{R:rotation=changing-k=k((s))}); then one can forget about $\serie{\partial_}m$ and hence reduce to the perfect residue field case which is statement (0) of the theorem.

(2)  Since an unramified extension $l/k$ only changes the field $K$ but not the uniformizer $s$, we can use the same $s$ as the uniformizer of $l$.  The corresponding differential module $\calE_{\rho'}$ of $\rho'$ is just a simple extension of scalar.  Since the calculation of spectral norms does not depend on the base field (Remark~\ref{R:sp-norm-inv-BC}), we compute the same result on spectral norms and hence have the same Artin conductor.

(3) is an immediate consequence of the Swan case.  But be careful that the differential ramification breaks can not distinguish unramified extensions from tamely ramified extensions (see also Remark~\ref{R:cant-dist-unr-vs-tame}).

(4) The proof for the non-logarithmic differential filtration is much simpler than the logarithmic case because of the different normalization in the Definition~\ref{D:scale-multiset-on-pDE}.  In virtue of the proof of \cite[Theorem~3.5.13]{KSK-Swan1}, it suffices to show that we can do some rotation so that $\partial_0$ becomes dominant; this is exactly the content of Lemma~\ref{L:rotation}.
\end{proof}

\begin{remark}
The converse of (3) is a well-known and well-accepted fact for experts.  However, we are unable to find good references to support a proof.  As it will become an easy consequence of the comparison Theorem~\ref{T:main-theorem} and properties of arithmetic ramification conductors (Proposition~\ref{P:AS-space-properties}(6)), we do not state it here.
\end{remark}

\begin{remark}\label{R:k_0-alg-closed}
Note that the invariance of the differential conductors under unramified base changes enables us to assume that $\kappa_0$ is algebraically closed.  This justifies the assumption we made in Notation~\ref{N:representation}.
\end{remark}

\section{Thickening techniques}
\setcounter{equation}{0}

In this section, we introduce a thickening technique.  Vaguely speaking, it is to construct a reasonable object which can be thought of as a tubular neighborhood of the ``diagonal embedding of $A_K^1[\eta_0, 1)$ into $A_K^1[\eta_0, 1) \times_{K_0} A_K^1[\eta_0, 1)$".  Be careful that the latter rigid space is not really well-defined.

We first start with a geometric interpretation of this construction and then move on to the abstract definition of the thickening space.

We keep the Hypothesis~\ref{H:J-finite-set} throughout this section.

\begin{notation} \label{N:rigid-func-on-polydisc}
For $\alpha \in (0, +\infty)$, let $A_K^m[0,\alpha]$ (resp. $A_K^m[0, \alpha)$) denote the polydisc (resp. open polydisc) with radius $\alpha$ (centered at the origin).  Let $K \langle u_1 / \alpha, \dots, u_m / \alpha \rangle$ denote the ring of analytic functions on the disc $A_K^m[0, \alpha]$.
\end{notation}

Later, we will see many homomorphisms between rings of functions on $K$-rigid spaces which are only $K_0$-linear.  It is unfair to say that they induce morphisms of rigid spaces; however, we prefer to keep some geometric flavor of the whole construction.  On the other hand, these rigid spaces are all quasi-Stein or affinoid; knowing the ring of analytic functions is equivalent to knowing the rigid spaces.

\begin{notation}
For a continuous homomorphism $f^*: A \rar B$ between affinoid or Fr\'echet algebras (not necessarily respecting the ground field $K$), we write formally $f: \Max(B) \rar \Max(A)$, as the \emph{geometric incarnation} of the homomorphism.  Pullbacks along maps and Cartesian diagrams are thought of as (completed) tensor products.  (In fact, in all cases we encounter, we do not need to take the completion for the tensor products.)  In short, whenever such a map is given, strictly speaking, we should view it as a continuous ring homomorphism.
\end{notation}

\subsection{Geometric thickening}
\label{S:geometric-thickening}

In this subsection, we describe the thickening technique when the residue field $\kappa_k$ can be realized as the field of rational functions on a smooth $\kappa_0$-variety.  The purpose of this subsection is solely to provide some geometric intuition of the thickening construction in the next subsection; the content in this subsection will not be used in the rest of this paper.

\begin{hypo} \label{H:geom}
\emph{Only} in this subsection, we assume that the field $\kappa_k$ is a finite separable extension of $\kappa_0(\serie {\bar b_} m)$.
\end{hypo}

\begin{construction}
Let $\overline X$ be a smooth variety over $\kappa_0$ whose field of rational functions is $\kappa_k$; such an $\overline X$ exists because we may realize it an affine scheme \'etale over $\Spec \kappa_0 [\serie{\bar b_}m]$ which induces the extension $\kappa_k / \kappa_0(\serie{\bar b_}m)$.  We may further shrink $\overline X$ so that it is the special fiber of an affine smooth formal scheme $\gothX$ over $\calO_{K_0}$ of topological finite type, i.e. $\gothX \times_{\Spf \calO_{K_0}} \Spec \kappa_0 = \overline X$.  We may further shrink $\gothX$ and $\overline X$ so that we have lifts $B_1, \dots, B_m$ of $\bar b_1, \dots, \bar b_m$ on $\gothX$ and that $dB_1, \dots, dB_m$ form a basis of the sheaf of relative differentials $\Omega^1_{\gothX / \calO_{K_0}}$.  We use $\bbX$ to denote the ``generic fiber" of $\gothX$ as a rigid space over $\Sp(K_0)$, in the sense of Raynaud; it is affinoid.

Consider the following commutative diagram
\[
\xymatrix{
\overline X \ar@{_{(}->}[d] \ar[r] & \gothX \ar@{_{(}->}[d] & \bbX \ar[l] \ar@{_{(}->}[d] \\
P = \overline X \times_{\kappa_0} \AAA^1_{\kappa_0} \ar[d] \ar[r] & \calP = \gothX \times_{\Spf \calO_{K_0}} \widehat\AAA^1_{\calO_{K_0}} \ar[d] & \bbP = \bbX \times_{K_0} A_{K_0}^1[0, 1] \ar[d] \ar[l] \\
\Spec \kappa_0 \ar[r] & \Spf \calO_{K_0} & \Sp(K_0) \ar[l]
}
\]
where the vertical arrows from the first row to the second row are all embeddings of zero sections and the coordinates of $\AAA^1_{\kappa_0}$ and $\widehat\AAA^1_{\calO_{k_0}}$ are denoted by $ s$ and $S$, respectively.

The tube of $\overline X$ in $\bbP$, denoted by $]\overline X[_\calP$, is isomorphic to $\bbX \times A_{K_0}^1[0,1)$.  Let $\calO_\bbX$ be the ring of rigid analytic functions on $\bbX$; then $K$ is exactly the $p$-adic completion of $\Frac \calO_\bbX$.  If we base change the tube $]\overline X[_\calP$ from $\bbX$ over to $K$, we get $A_K^1[0,1)$.  We are interested in the annulus $A_K^1[\eta_0, 1)$ for some $\eta_0 \in (0,1)$, which can be obtained by base changing $\bbX \times A_{K_0}^1[\eta_0, 1)$ from $\bbX$ to $K$.
\end{construction}

Now, we consider the thickening space of this annulus $A_K^1[\eta_0, 1)$.

\begin{construction}
\label{Cstr:geometric}
Consider the following commutative diagram
\[
\xymatrix{
\overline X \ar@{^{(}->}[r] \ar[dr] &  P \ar@{^{(}->}[r]^-{\Delta_{ P}} \ar[d] &  P \times_{\kappa_0}  P \ar[dl] \ar@{^{(}->}[r] & \calP \times_{\calO_{K_0}} \calP \ar[d] & \bbP \times_{K_0} \bbP \ar[l] \ar[d] \\
& \Spec \kappa_0 \ar[rr] & & \Spf \calO_{K_0} & \Sp(K_0) \ar[l]
}
\]
where we use $\pr_i: \calP \times_{\calO_{K_0}} \calP \rar \calP$ to denote the projection to the $i$-th factor for  $i = 1, 2$.  Then $\calP \times_{\calO_{K_0}} \calP$ has a set of local parameter given by $B_1 = \pr_1^*(B_1), \dots, B_m = \pr_1^*(B_m), S = \pr_1^*(S), B'_1 = \pr_2^*(B_1), \dots, B'_m = \pr_2^*(B_m), S' = \pr_2^*(S)$.
By Berthelot's Fibration Theorem \cite[TH\'EOR\`EME~1.3.2]{Berthelot-rig-coh-part-I}, we have an isomorphism 
\[
]\overline X[_{\calP \times_{\calO_{K_0}} \calP} \simeq ]\overline X[_\calP \times_{K_0} A_{K_0}^{m+1}[0, 1),
\]
where the factor $]\overline X[_\calP$ respects the projection $\pr_1$ and the coordinates for the open polydisc on the right hand side are given by $\delta_0 = S - S', \delta_1 = B_1 - B'_1, \dots, \delta_m = B_m - B'_m$.  The geometric thickening space is the subspace of $]\overline X[_{\calP \times_{\calO_{K_0}} \calP}$ where $|\delta_0| = |S - S'| < |S|$, or more precisely,
\[
\bbX \times_{K_0} \big\{(S, \delta_0) \in A^2_{K_0}[0,1) \big| |\delta_0| < |S| \big\} \times_{K_0} A_{K_0}^m[0, 1).
\]
Thus, \emph{the thickening space}, denoted by $TS_k^{\geq \eta_0}$, of $A_K^1[\eta_0, 1)$ is the space obtained by base changing
\[
\bbX \times_{K_0} \big\{(S, \delta_0) \in A^2_{K_0}[0,1) \big| |S| \geq \eta_0, |\delta_0| < |S| \big\} \times_{K_0} A_{K_0}^m[0, 1).
\]
from $\bbX$ to $K$.

The projection $\pr_1: \bbP \times_{K_0} \bbP \rar \bbP$ gives a $K$-morphism of rigid spaces $\pi: TS_k^{\geq \eta_0} \rar A_K^1[\eta_0, 1)$; the projection $\pr_2: \bbP \times_{K_0} \bbP \rar \bbP$ gives a $K_0$-morphism of rigid spaces $\tilde \pi: TS_k^{\geq \eta_0} \rar A_K^1[\eta_0, 1)$.  The morphism $\tilde \pi$ does not respect the $K$-rigid space structure; one should always think of $\tilde \pi$ as the ring homomorphism between the corresponding ring of analytic functions.  In our notation earlier, it is just the geometric incarnation of the map on ring of global sections.
\end{construction}

\subsection{General thickening construction}
\label{S:thicken-annulus}

In this subsection, we introduce the thickening spaces and study basic properties of differential modules over them.

We keep Hypothesis~\ref{H:J-finite-set} in this subsection.  Note that Hypothesis~\ref{H:geom} is no longer in force from now on.

\begin{definition}\label{D:thickening-space}
For $\eta \in (0,1)$, we write $Z_k^\eta = A_K^1[\eta, \eta]$.  For $a \in \QQ_{>1}$ and $\eta_0 \in (0,1)$, we define the \emph{thickening space (of $A^1_K [\eta_0, 1)$ and level $a$)} to be the rigid space over $K$:
\begin{equation} \label{E:TS}
TS_k^{a, \geq \eta_0} = \big\{ (S, \delta_0, \dots, \delta_m) \in A_K^{m+2}[0,1) \big| |S| \geq \eta_0; |\delta_j| \leq |S|^a \textrm{ for } j \in J^+ \big\}.
\end{equation}
For $\eta \in [\eta_0, 1)$, we put
\[
TS_k^{a, \eta} = A_K^1[\eta, \eta] \times_K A_K^{m+1}[0, \eta^a].
\]

Similarly, for $a \in \QQ_{>0}$ and $\eta_0 \in (0,1)$, we define the \textit{log-thickening space (of $A^1_K [\eta_0, 1)$ and level $a$)} to be
\begin{equation}
\label{E:TS-log}
TS_{k, \log}^{a, \geq \eta_0} = \big\{ (S, \delta_0, \dots, \delta_m) \in A_K^{m+2}[0,1) \big| |S| \geq \eta_0; |\delta_0| \leq |S|^{a+1}; |\delta_j| \leq |S|^a \textrm{ for } j \in J \big\}.
\end{equation}
For $\eta \in [\eta_0,1)$, denote
\[
TS_{k, \log}^{a, \eta} = A_K^1[\eta, \eta] \times_K A_K^1[0, \eta^{a+1}] \times_K A_K^m[0, \eta^a].
\]

We use $\calO_{TS_k^{a, \geq \eta_0}}$, $\calO_{TS_k^{a, \eta}}$, $\calO_{TS_{k, \log}^{a, \geq \eta_0}}$, and $\calO_{TS_{k, \log}^{a, \eta}}$ to denote the ring of analytic functions on the corresponding spaces, respectively.

Let $|\cdot|_{Z_k^\eta}$ denote the $\eta$-Gauss norm on $Z_k^\eta$.  For $a \in \QQ_{>1}$, let $|\cdot|_{TS_k^{a, \eta}}$ denote the Gauss norm on $TS_k^{a, \eta}$; for $a>0$, let $|\cdot|_{TS_{k, \log}^{a, \eta}}$ denote the Gauss norm on $TS_{k, \log}^{a, \eta}$.

The union of all $TS_k^{a, \geq \eta_0}$ is the $TS_k^{\geq \eta_0}$ we discussed in Construction~\ref{Cstr:geometric}.
\end{definition}

\begin{caution}  \label{Cau:not-admissible}
One may want to write $TS_k^{a, \geq \eta_0} = \bigcup_{\eta \in [\eta_0, 1)} A_K^1[\eta, 1) \times_K A_K^{m+1}[0, \eta^a]$ for simplicity as in the introduction.  However, this will not define the same rigid space as in \eqref{E:TS}, because the union does  \emph{not} give an admissible cover of $TS_k^{a, \geq \eta_0}$.  Similar expression for log-thickening space is not valid either.  Nevertheless, it might be helpful to think the space and picture the geometry this way.

On the other hand, it is true that an element of $K\llbracket S, \delta_0, \dots, \delta_m\rrbracket$ lies in $\calO_{TS_k^{a, \geq \eta_0}}$ (resp. $\calO_{TS_{k, \log}^{a, \geq \eta_0}}$) if and only if it has bounded norms for all $|\cdot|_{TS_k^{a, \eta}}$ (resp. $|\cdot|_{TS_{k, \log}^{a, \eta}}$) for all $\eta \in [\eta_0, 1)$.
\end{caution}

\begin{remark}
We need $a \in \QQ$ in Definition~\ref{D:thickening-space} to make sure that \eqref{E:TS} and \eqref{E:TS-log} actually define a (Berkovich) rigid analytic space.  For individual $TS_k^{a, \eta}$ and $TS_{k, \log}^{a, \eta}$'s, one may just take $a \in \RR$.
\end{remark}

\begin{notation}
For $a \in \QQ_{>1}$ (resp. $a \in \QQ_{>0}$) and $\eta_0 \in (0,1)$, denote the natural embedding of $Z_k^{\geq \eta_0}$ into the locus where $\delta_j = 0$ for $j \in J^+$ by $\Delta: Z_k^{\geq \eta_0} \inj TS_k^{a, \geq \eta_0}$ (resp. $\Delta: Z_k^{\geq \eta_0} \inj TS_{k, \log}^{a, \geq\eta_0}$).  Also, we have the na\"ive projection $\pi : TS_k^{a, \geq \eta_0} \rar Z_k^{\geq \eta_0}$ (resp. $\pi : TS_{k, \log}^{a, \geq\eta_0} \rar Z_k^{\geq \eta_0}$) by projecting to the first factor.  These morphisms are compatible when changing $a$ and $\eta_0$, or replacing $\geq \eta_0$ by $\eta$ for some $\eta \in [\eta_0, 1)$.

To simplify notation, for $a$ and $\eta_0$ as above, we identify $\calO_{Z_k^{\geq \eta_0}}$ as a subring of $\calO_{TS_k^{a, \geq \eta_0}}$ and of $\calO_{TS_{k, \log}^{a, \geq \eta_0}}$ via $\pi^*$; same for $\eta$ instead of $\geq \eta_0$.  It is worthwhile to point out that $\pi^*$ is an isometry; hence the identification will not change any calculation on norms.
\end{notation}

\begin{proposition}
We have a unique continuous $\calO_{K_0}$-homomorphism $\tilde \pi^*: \OK \llbracket S \rrbracket \rar \OK \llbracket S, \delta_{J^+} \rrbracket$ such that $\tilde \pi^*(S) = S + \delta_0$ and $\tilde \pi^*(B_j) = B_j + \delta_j$ for all $j \in J$.  Moreover, for $g \in \OK$, $\tilde \pi^*(g) - g \in (\delta_1, \dots, \delta_m) (g) \OK \llbracket \delta_1, \dots, \delta_m \rrbracket$.
\end{proposition}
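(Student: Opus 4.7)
The plan is to bootstrap from the analogous statement over $\OK$ alone, namely Corollary~\ref{C:cohen-deformation}, and then extend across the additional formal variable $S$ by declaring $S \mapsto S + \delta_0$. Corollary~\ref{C:cohen-deformation} already provides a unique continuous $\calO_{K_0}$-homomorphism $\psi \colon \OK \to \OK\llbracket \delta_1, \dots, \delta_m\rrbracket$ sending each $B_j$ to $B_j + \delta_j$ and satisfying the ``moreover'' property on $\OK$. I would then define $\tilde\pi^*$ on $\sum_{i\geq 0} g_i S^i \in \OK\llbracket S\rrbracket$ by the formula $\tilde\pi^*(\sum_i g_i S^i) = \sum_{i\geq 0} \psi(g_i)(S+\delta_0)^i$.

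For existence, there are three routine points to check. First, the defining formula lands in $\OK\llbracket S, \delta_{J^+}\rrbracket$ with no convergence issue: for every monomial $S^s \delta_0^{d_0}\delta_1^{d_1}\cdots\delta_m^{d_m}$, only the single index $i = s + d_0$ can contribute, so its coefficient is a well-defined element of $\OK$. Second, $\tilde\pi^*$ is multiplicative, which reduces to multiplicativity of $\psi$ together with the binomial expansion of $(S+\delta_0)^i$. Third, $\tilde\pi^*$ is continuous for the natural $(p,S)$-adic topology on the source, since $\tilde\pi^*(p) = p$ and $\tilde\pi^*(S) = S + \delta_0$ both lie in the maximal ideal $(p, S, \delta_{J^+})$ of the target; the $\calO_{K_0}$-linearity is inherited from $\psi$.

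For uniqueness, any continuous $\calO_{K_0}$-homomorphism $\tilde\pi^*$ satisfying the stated conditions restricts on $\OK$ to a continuous $\calO_{K_0}$-homomorphism $\OK \to \OK\llbracket S, \delta_{J^+}\rrbracket$ sending $B_j$ to $B_j + \delta_j$; since the $B_j + \delta_j$ lie in the subring $\OK\llbracket \delta_1, \dots, \delta_m\rrbracket$, continuity forces the entire image of $\OK$ to land there, and Corollary~\ref{C:cohen-deformation} then identifies the restriction with $\psi$. Having pinned down $\tilde\pi^*$ on $\OK$ and on $S$, continuity determines it on all of $\OK\llbracket S\rrbracket$. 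The ``moreover'' clause follows immediately from the corresponding clause in Corollary~\ref{C:cohen-deformation}, applied to $\tilde\pi^*\big|_{\OK} = \psi$.

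I do not foresee any substantive obstacle: the whole argument is a mechanical repackaging of the Cohen-ring functoriality established in Proposition~\ref{P:cohen-functorial}, and the only mild bookkeeping is in identifying which subrings of $\OK\llbracket S, \delta_{J^+}\rrbracket$ receive which pieces of the construction.
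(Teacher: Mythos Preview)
Your proposal is correct and is exactly an elaboration of the paper's one-line proof, which simply cites Corollary~\ref{C:cohen-deformation}. One tiny refinement: the ``moreover'' clause in the proposition carries an extra factor of $(g)$ not present in the corollary's conclusion, which you should supply by writing $g = p^n u$ with $u \in \OK^\times$ and using that $\psi$ is an $\calO_{K_0}$-algebra homomorphism (hence fixes $p$), so that $\psi(g) - g = p^n(\psi(u)-u) \in (g)(\delta_1,\dots,\delta_m)$.
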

\begin{proof}
It follows from Corollary~\ref{C:cohen-deformation} immediately.
\end{proof}

\begin{theorem} \label{T:thickening-morphism}
For $a \in \QQ_{>1}$ (resp. $a \in \QQ_{>0}$) and $\eta_0 \in (0,1)$, the homomorphism $\tilde \pi^*$ induces a $K_0$-homomorphism $\tilde \pi^*: \calO_{Z_k^{\geq \eta_0}} = \calR_K^{\eta_0} \rar \calO_{TS_k^{a, \geq \eta_0}} $ (resp. $\tilde \pi^*: \calO_{Z_k^{\geq \eta_0}}  \rar \calO_{TS_{k, \log}^{a, \geq \eta_0}}$) such that $\Delta^* \circ \tilde \pi^* = \id$; same if replacing $\geq \eta_0$ by $\eta$ for some $\eta \in [\eta_0, 1)$.

For any $g \in \calO_{Z_k^\eta}$ and for $a>1$ (resp. $a>0$),
\begin{equation}
\label{E:compare-norm-of-pi-and-tilde-pi}
|\tilde \pi^* (g) - g|_{TS_k^{a, \eta}} \leq \eta^{a-1} \cdot |g|_{Z_k^\eta} \quad (\textrm{resp. }
|\tilde \pi^* (g) - g|_{TS_{k, \log}^{a, \eta}} \leq \eta^a \cdot |g|_{Z_k^\eta} \ ).
\end{equation}
In particular, $|\tilde \pi^*(g)|_{TS_k^{a, \eta}} = |\tilde \pi^*(g)|_{TS_{k, \log}^{a, \eta}} = |g|_{Z_k^\eta}$.  Moreover, we have the following bound for $TS_k^{a, \eta}$: if $g \in \calO_{Z_k^\eta} \cap \calO_K \llbracket S \rrbracket$, then
\begin{equation}\label{E:bound-of-tilde-pi-non-log}
|\tilde \pi^* (g) - g|_{TS_k^{a, \eta}} \leq \eta^a.
\end{equation}
\end{theorem}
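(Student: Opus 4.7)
My plan is to take $g \in \calR_K^{\eta_0}$, write $g = \sum_{i \in \ZZ} a_i S^i$ with $a_i \in K \otimes_{\calO_{K_0}} \OK$ subject to the usual convergence, and set $\tilde\pi^*(g) \Def \sum_i \tilde\pi^*(a_i)(S+\delta_0)^i$, where $\tilde\pi^*(a_i)$ comes from the $K_0$-linear extension of the preceding proposition and $(S+\delta_0)^i$ is expanded as a binomial series for $i<0$. This expansion converges on each $TS_k^{a,\eta}$ because $|\delta_0/S| \leq \eta^{a-1} < 1$ when $a>1$ (respectively $\eta^a<1$ in the log case), and the binomial coefficients $\binom{i}{k}$ for $i \in \ZZ$ have $p$-adic norm $\leq 1$. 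The identity $\Delta^* \circ \tilde\pi^* = \id$ is immediate, since $\Delta^*$ simply sets all $\delta_{J^+}$ to zero and the constant term of $\tilde\pi^*(g)$ in the $\delta_{J^+}$-variables is $g$ itself.

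\textbf{The main estimate.} To establish $|\tilde\pi^*(g) - g|_{TS_k^{a,\eta}} \leq \eta^{a-1}|g|_{Z_k^\eta}$, I plan to split
\[
\tilde\pi^*(g) - g \;=\; \sum_i \bigl(\tilde\pi^*(a_i) - a_i\bigr)(S+\delta_0)^i \;+\; \sum_i a_i\bigl((S+\delta_0)^i - S^i\bigr).
\]
By the preceding proposition, $\tilde\pi^*(a_i) - a_i$ lies in the ideal $(\delta_1, \dots, \delta_m)$, so its norm on $TS_k^{a,\eta}$ is bounded by $|a_i|\eta^a$; combined with $|S+\delta_0|_{TS_k^{a,\eta}} = \eta$ (since $\eta^a < \eta$ when $a>1$), the first sum contributes at most $\max_i |a_i|\eta^{i+a} = \eta^a |g|_{Z_k^\eta}$. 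For the second sum I expand $(S+\delta_0)^i - S^i = \sum_{k\geq 1} \binom{i}{k} S^{i-k}\delta_0^k$, dominated by the $k=1$ term, giving $|a_i|\eta^{i-1}|\delta_0| \leq |a_i|\eta^{i+a-1}$. Taking the maximum yields the advertised bound $\eta^{a-1}|g|_{Z_k^\eta}$ (which dominates the first sum). The log case is analogous: the tighter condition $|\delta_0| \leq \eta^{a+1}$ improves the second-sum bound to $\eta^{i+a}$, producing $\eta^a |g|_{Z_k^\eta}$.

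\textbf{Consequences, improved bound, and obstacle.} Since $\eta^{a-1}<1$ (respectively $\eta^a<1$), the ultrametric strong triangle inequality combined with the fact that $\pi^*$ is an isometry immediately gives $|\tilde\pi^*(g)|_{TS_k^{a,\eta}} = |g|_{Z_k^\eta}$. This uniform-in-$\eta$ bound shows $\tilde\pi^*(g) \in \calO_{TS_k^{a,\geq\eta_0}}$ in the sense of Caution~\ref{Cau:not-admissible}, so the map is globally well-defined; it is a ring homomorphism because it is so on the dense subring $\OK\llbracket S \rrbracket \otimes K$ and multiplication is continuous in the Fr\'echet topology. For the improved bound \eqref{E:bound-of-tilde-pi-non-log} when $g \in \calO_{Z_k^\eta} \cap \OK\llbracket S \rrbracket$, one has $|a_i|\leq 1$ and $a_i = 0$ for $i<0$, so the first sum contributes $\eta^{i+a} \leq \eta^a$ for $i \geq 0$, and the second sum, starting at $i=1$, contributes $\eta^a$ at $i=1$ and $\eta^{i+a-1} \leq \eta^{a+1}$ for $i\geq 2$. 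I do not anticipate any serious obstacle; the only real care is in justifying the binomial expansion for negative $i$, where it is precisely the condition $a>1$ (rather than the weaker $a>0$ sufficient in the log case) that ensures convergence via $|\delta_0|<|S|$.
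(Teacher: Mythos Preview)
Your proof is correct and follows essentially the same approach as the paper's own proof: the same splitting of $\tilde\pi^*(g)-g$ into the two sums, the same estimate $|\tilde\pi^*(a_i)-a_i|\leq |a_i|\eta^a$ from the preceding proposition, and the same bound $|(S+\delta_0)^i-S^i|\leq \eta^{a+i-1}$ (resp.\ $\eta^{a+i}$) obtained by the binomial expansion. You add a little more detail than the paper does on the convergence of the binomial series for negative $i$ and on why the map lands in $\calO_{TS_k^{a,\geq\eta_0}}$, but the argument is the same.
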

\begin{proof}
We need only to establish the bound on the norms.
Let $g = \sum_{i \in \ZZ} a_i S^i \in K \llbracket S \rrbracket$ such that $|g|_{Z_k^\eta} < +\infty$, we have
\begin{equation} 
\label{E:thickening-morphism1}
\tilde \pi^*(g) - g = \sum_{i \in \ZZ} \big( \tilde \pi^*(a_i) (S + \delta_0)^i - a_i S^i \big) = \sum_{i \in \ZZ} \big( (\tilde \pi^*(a_i)- a_i) (S + \delta_0)^i + a_i ((S + \delta_0)^i - S^i) \big).
\end{equation}

Since $\tilde \pi^*(a_i) - a_i \in (\serie{\delta_}m) (a_i) \OK \llbracket \serie{\delta_}m \rrbracket$, we have
\begin{equation} 
\label{E:thickening-morphism2}
|\tilde \pi(a_i) - a_i|_{TS_k^{a, \eta}} \leq |a_i| \eta^a, \quad |\tilde \pi(a_i) - a_i|_{TS_{k, \log}^{a, \eta}} \leq |a_i| \eta^a.
\end{equation}
Moreover, we can bound $(S + \delta_0)^i - S^i$ by
\begin{equation} 
\label{E:thickening-morphism3}
\big|(S + \delta_0)^i - S^i \big|_{TS_k^{a, \eta}} \leq \eta^{a+i-1}, \quad \big|(S + \delta_0)^i - S^i \big|_{TS_{k, \log}^{a, \eta}} \leq \eta^{a+i}.
\end{equation}

Plugging the estimates \eqref{E:thickening-morphism2} and \eqref{E:thickening-morphism3} into \eqref{E:thickening-morphism1}, we obtain \eqref{E:compare-norm-of-pi-and-tilde-pi}.  When $g \in \OK \llbracket S \rrbracket$, \eqref{E:thickening-morphism3} always gives $\big|(S + \delta_0)^i - S^i \big|_{TS_k^{a, \eta}} \leq \eta^a$ for $i \geq 0$ (when $i=0$, we have zero); the equation \eqref{E:bound-of-tilde-pi-non-log} follows.

Finally, the equalities $|\tilde \pi^*(g)|_{TS_k^{a, \eta}} = |\tilde \pi^*(g)|_{TS_{k, \log}^{a, \eta}} = |g|_{Z_k^\eta}$ ensures that we have well-defined continuous homomorphisms $\tilde \pi^*: \calO_{Z_k^{\geq \eta_0}}  \rar \calO_{TS_{k}^{a, \geq \eta_0}}$ or $\calO_{TS_{k, \log}^{a, \geq \eta_0}}$.
\end{proof}

\begin{notation}
We use $\tilde \pi: TS_k^{a, \geq \eta_0} \rar Z_k^{\geq \eta_0}$ and $\tilde \pi: TS_{k, \log}^{a, \geq \eta_0} \rar Z_k^{\geq \eta_0}$ to denote the geometric incarnations of the homomorphisms $\tilde \pi^*$ constructed in Theorem~\ref{T:thickening-morphism}; same for $\eta$ in place of $\geq \eta_0$ when $\eta \in [\eta_0, 1)$.  To emphasize again, whenever we refer to $\tilde \pi$, strictly speaking, we are referring to the corresponding homomorphism $\tilde \pi^*$ on rings.
\end{notation}

\begin{remark}
For $a>0$, one can factor the map $\tilde \pi$ for non-log thickening space as $TS_k^{a+1, \geq \eta_0} \rar TS_{k, \log}^{a, \geq \eta_0} \map \rar {\tilde \pi} Z_k^{\geq \eta_0}$, where the second map is the $\tilde \pi$ for the log-thickening space.  Again, this should be thought of as factorization for ring homomorphisms.
\end{remark}

\begin{notation}
For a $\nabla$-module $(\calE, \nabla_\calE)$ over $Z_k^{\geq \eta_0}$ relative to $K_0$, we call $\tilde \pi^*\calE$ the \textit{thickened differential module} of $\calE$, denoted by $\calF$.  We view $\calF$ as a differential module over $TS_k^{a, \geq \eta_0}$ or $TS_{k, \log}^{a, \geq \eta_0}$ relative to $Z_k^{\geq \eta_0}$, with respect to the differential operators $\seriezero{\partial / \partial \delta_}m$.  In precise terms, the connection is given by
\begin{eqnarray*}
\calF = \calE \otimes_{\calR_K^{ \eta_0}, \tilde \pi^*} \calO_{TS_k^{a, \geq \eta_0}} &\stackrel{\nabla_\calE}\lrar & \calE \otimes \Omega^1_{Z_k^{\geq \eta_0} / K_0}\otimes_{\calR_K^{ \eta_0}, \tilde \pi^*} \calO_{TS_k^{a, \geq \eta_0}}\\
&\lrar & \calE \otimes_{\calR_K^{ \eta_0}, \tilde \pi^*} \Omega^1_{TS_k^{a, \geq \eta_0} / K_0}\\
&\lrar & \calE \otimes_{\calR_K^{ \eta_0}, \tilde \pi^*} \Omega^1_{TS_k^{a, \geq \eta_0} / Z_k^{\geq \eta_0}}
\end{eqnarray*}
in the nonlog case and the log case is obtained similarly, with subscript log at appropriate places.  Moreover, this construction is compatible for different $a$'s and $\eta_0$'s.
\end{notation}

The following proposition links the spectral norms on $\calE$ and the spectral norms on its thickening $\calF$.

\begin{proposition}\label{P:sp-norm-on-TS}
Let $\eta \in [\eta_0, 1)$.  The spectral norms of $\partial_{J^+}$ on $\calE_\eta$ over $Z_k^\eta$ and the spectral norms of $\partial / \partial \delta_{J^+}$ on $\calF_{a, \eta} = \calF \otimes \Frac(\calO_{TS_k^{a, \eta}})^\wedge$ and $\calF_{a, \eta, \log} = \calF \otimes \Frac(\calO_{TS_{k, \log}^{a, \eta}})^\wedge$ are related as follows.
\begin{eqnarray*}
|\partial / \partial \delta_j|_{\calF_{a, \eta}, \sp} &=& \max\big\{ |\partial_j|_{\calE_\eta, \sp}, p^{-1/(p-1)}\eta^{-a} \big\}\ , j \in J^+; \\
|\partial / \partial \delta_0|_{\calF_{a, \eta, \log}, \sp} &=& \max\big\{ |\partial_0|_{\calE_\eta, \sp}, p^{-1/(p-1)}\eta^{-a-1} \big\},\\
|\partial / \partial \delta_j|_{\calF_{a, \eta, \log}, \sp} &=& \max\big\{ |\partial_j|_{\calE_\eta, \sp}, p^{-1/(p-1)}\eta^{-a} \big\}\ , j \in J. 
\end{eqnarray*}
\end{proposition}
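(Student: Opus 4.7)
The plan is to deduce the proposition from the base-change formula for spectral norms (Remark~\ref{R:sp-norm-inv-BC}), after identifying $\calF_{a,\eta}$ (respectively $\calF_{a,\eta,\log}$) as a scalar extension of $\calE_\eta$. Concretely, set $L_{a,\eta} = \Frac(\calO_{TS_k^{a,\eta}})^\wedge$ and $L_{a,\eta,\log} = \Frac(\calO_{TS_{k,\log}^{a,\eta}})^\wedge$. Unwinding the definition, one has
\[
\calF_{a,\eta} \;\cong\; \calE_\eta \otimes_{F_\eta,\tilde\pi^*} L_{a,\eta},
\qquad
\calF_{a,\eta,\log} \;\cong\; \calE_\eta \otimes_{F_\eta,\tilde\pi^*} L_{a,\eta,\log},
\]
where $F_\eta$ is embedded in $L_{a,\eta}$ (resp.\ $L_{a,\eta,\log}$) via $\tilde\pi^*$; this is just the associativity of tensor products applied to the defining formula $\calF = \calE \otimes_{\calR_K^{\eta_0},\tilde\pi^*} \calO_{TS_k^{a,\geq\eta_0}}$.

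Next I would check the two hypotheses needed to apply the base-change formula. First, Theorem~\ref{T:thickening-morphism} shows that $\tilde\pi^*\colon F_\eta \to L_{a,\eta}$ and $\tilde\pi^*\colon F_\eta \to L_{a,\eta,\log}$ are isometric, so these are honest complete extensions of normed fields. Second, I need the derivations to match: by the chain rule applied to the generators $S,B_J$ of $\calR_K^{\eta_0}$ (and the formulas $\tilde\pi^*(S)=S+\delta_0$, $\tilde\pi^*(B_j)=B_j+\delta_j$), one obtains the identity
\[
\tfrac{\partial}{\partial \delta_j}\circ \tilde\pi^* \;=\; \tilde\pi^*\circ \partial_j
\qquad (j \in J^+),
\]
which says exactly that $\partial/\partial\delta_j$ on $L_{a,\eta}$ (or $L_{a,\eta,\log}$) restricts, along $\tilde\pi^*$, to $\partial_j$ on $F_\eta$. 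Then Remark~\ref{R:sp-norm-inv-BC} yields
\[
\bigl|\tfrac{\partial}{\partial\delta_j}\bigr|_{\calF_{a,\eta},\sp}
\;=\;
\max\bigl\{|\partial_j|_{\calE_\eta,\sp},\; \bigl|\tfrac{\partial}{\partial\delta_j}\bigr|_{L_{a,\eta},\sp}\bigr\},
\]
and similarly in the logarithmic case.

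What remains is a standard computation of the spectral norm of a partial derivative on a polydisc factor of the base. Viewing $L_{a,\eta}$ as the completed fraction field of $K\langle \eta/S, S/\eta\rangle\,\widehat\otimes\, K\langle \delta_{J^+}/\eta^a\rangle$, the operator $\partial/\partial\delta_j$ has spectral norm $p^{-1/(p-1)}\eta^{-a}$ for every $j \in J^+$ by the elementary $p$-adic calculation for $\partial/\partial t$ on a disc of radius $\eta^a$. For $L_{a,\eta,\log}$ the polydisc radius in the $\delta_0$ direction is $\eta^{a+1}$ while the radii in the $\delta_j$ directions ($j\in J$) remain $\eta^a$, giving the two distinct values in the logarithmic formulas. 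Substituting these into the max in the previous display gives all three asserted equalities. The main conceptual point — and the only subtlety that requires care — is the verification that base-change along $\tilde\pi^*$ applies despite $\tilde\pi^*$ not being the naive inclusion $S\mapsto S$; this is precisely what the chain-rule identity above handles, and everything else is routine.
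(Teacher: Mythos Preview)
Your proof is correct and follows essentially the same approach as the paper: both identify the chain-rule identity $\partial/\partial\delta_j \circ \tilde\pi^* = \tilde\pi^* \circ \partial_j$, invoke the isometry of $\tilde\pi^*$ from Theorem~\ref{T:thickening-morphism}, and then apply the base-change formula for spectral norms (Remark~\ref{R:sp-norm-inv-BC}) together with the standard computation on the polydisc factor. Your write-up is simply more explicit about each of these steps than the paper's terse two-sentence argument.
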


\begin{proof}
Note that $\tilde \pi^*(dB_j) = dB_j + d\delta_j$ for $j \in J$ and $\tilde \pi^*(dS) = dS + d\delta_0$. The actions of $\partial / \partial \delta_j$, $j \in J$ (resp. $j = 0$),  on $\calF_{a, \eta}$ and $\calF_{a, \eta, \log}$ are the same as the action of $\partial / \partial B_j$ (resp. $\partial / \partial S$) on $\calE_\eta$.  More precisely, we have $\tilde \pi^*(\partial / \partial S(x)) = \partial / \partial \delta_0 (\tilde \pi^*(x))$ and $\tilde \pi^*(\partial / \partial B_j(x)) = \partial / \partial \delta_j (\tilde \pi^*(x))$ for any $j \in J$ and $x \in \calR_{K}^{\eta}$ \emph{or} $\calE_\eta$.

The statement follows from the facts that $\delta_J$ are transcendental over $\calO_{Z_k^\eta}$ and the homomorphism $\tilde \pi^*$ is isometric (via Theorem~\ref{T:thickening-morphism}).
\end{proof}

\subsection{Good generators of the extension}
\label{S:construction-calE}

In this subsection, we first show that when $l/k$ is totally and wildly ramified, we can choose nice generators of $\calO_l$ as an $\calO_k$-algebra, so that the corresponding extension on the Robba rings takes a simple form.
Then, we give a more explicit construction of the differential module associated to a $p$-adic representation.

We keep Hypothesis~\ref{H:J-finite-set} for this subsection.  Moreover, we impose the following.

\begin{hypo}\label{H:tot-ram}
For the rest of this section, we assume that $l/k$ is a finite totally and wildly ramified Galois extension.
\end{hypo}

\begin{remark}
This is a mild hypothesis as both arithmetic and differential conductors behave well under unramified extensions and the tamely ramified case is well-known (see Theorem~\ref{T:properties-Ked-cond}(3) and Proposition~\ref{P:AS-space-properties}(6)).
\end{remark}

\begin{notation}  \label{N:l}
Let $l$ be as above, and let $G_{l/k}$ denote the Galois group of $l/k$.  Denote the ring of integers and the residue field of $l$ by $\calO_l$ and $\kappa_l$, respectively.  Given a uniformizer $t$ of $l$, we fix a non-canonical isomorphism $\kappa_l ((t)) \simeq l$.  For a $p$-basis $\bar c_J$ of $\kappa_l$, we use $c_J$ to denote the image of $\bar c_J$ under this isomorphism; we may use the same index set $J$ because $\kappa_l / \kappa_k$ is a finite extension.

Let $\calO_L$ be the Cohen ring of $\kappa_l$ with respect to $\bar c_J$ and let $C_J$ be the canonical lifts of $\bar c_J$.  Set $L = \Frac \calO_L$.
\end{notation}

\begin{caution} \label{Cau:residue-extn-not-embedable}
The residue field extension $\kappa_l / \kappa_k$ is typically not separable and hence \emph{cannot} be embedded into the extension $l = \kappa_l ((t))$ over $k = \kappa_k ((s))$.
\end{caution}

The reader may skip the following construction and remark when reading the paper first time, as they are not the key of the theory.  The upshot is that we need some ``good" generators and relations of $\calO_l$ as an $\calO_k$-algebra.

\begin{construction}\label{C:cj-generate-l}
We may temporarily drop the finiteness Hypothesis~\ref{H:J-finite-set} on $p$-basis for this construction.  Let $\bbk_0 = \kappa_k$ with $p$-basis $(\bar b_j)_{j \in J}$.  By possibly rearranging the indexing in $\bar b_J$, we will inductively construct a ``good" $p$-basis $(\bar c_j)_{j \in J}$ of $\kappa_l$ and $\bbk_j = \kappa_k(\bar c_1, \dots, \bar c_j)$ with $p$-basis $\big\{ \bar c_1, \dots, \bar c_j, \bar b_{J \bs \{1, \dots, j\}} \big\}$ so that $\bbk_m = \kappa_l$ for $m$ sufficiently large.

Assume that we have constructed $\bbk_{j-1}$.  Let $r_j$ be the unique integer such that $\kappa_l \subseteq \bbk_{j-1}^{p^{-r_j}}$ but $\kappa_l \nsubseteq \bbk_{j-1}^{p^{-r_j+1}}$.  If $r_j = 0$, we must have $\bbk_{j-1} = \kappa_l$; in this case, we set $\bar c_\alpha = \bar b_\alpha$ and $r_\alpha = 0$ for all $\alpha \in J \bs \{1, \dots, j-1\}$ and stop the induction.  Otherwise we assume that $r_j >0$.  Take $\bar c_j$ to be any element in $\kappa_l \bs \bbk_{j-1}^{p^{-r_j+1}}$ and let $\bbk_j = \bbk_{j-1}(\bar c_j)$.  Then $\bar c_j^{p^{r_j}} \in \bbk_{j-1}$ and $[\bbk_j: \bbk_{j-1}] = p^{r_j}$.  There must exist one element in $\bar b_{J \bs\{1, \dots, j-1\}}$ such that the rest together with $\bar c_1, \dots, \bar c_j$ form a $p$-basis of $\bbk_j$.  We assume that this element is $\bar b_j$ by re-indexing $\bar b_{J \bs \{1, \dots, j-1\}}$.  This finishes the induction.

From the induction, one can see that $r_j$ form a decreasing sequence of nonnegative integers.  But we do not need this fact.

Since $\kappa_l / \kappa_k$ is finite, the above construction ensures that  $\bar c_j \in \kappa_k^\times$ for $j \in J \bs J_0$ with $J_0 = \{1, \dots, m\}$ a finite subset.  By the functoriality of $p$-basis (Corollary~\ref{C:k=k((s))-change}), we may change the isomorphism $\kappa_l((t)) \simeq l$ so that $\bar c_{J \bs J_0}$ are sent to elements in $\calO_k^\times$.  Let $c_J$ denote the images of $\bar c_J$ under the above isomorphism.

As a consequence, $\serie{c_}m$ and $t$ generate $\calO_l$ over $\calO_k$. More precisely,
$$
\big\{c_J^{e_J}t^i\,|\,e_j \in \{0, \dots, p^{r_j}-1\} \textrm{ for }j = \serie{}m \textrm{, and } i \in \{0, \dots, e-1\}\big\}
$$
form a basis of $\calO_l$ as a finite free $\calO_k$-module.  They also form a basis of $l$ as a $k$-vector space.
\end{construction}

\begin{remark} \label{R:Sweedler}
It is attractive to hope that we can find a $p$-basis $(\bar b_j)_{j \in J}$ of $\kappa_k$ so that $\kappa_l = \kappa_k (\bar b_j^{p^{-r_j}})$ for some $r_j \in \ZZ_{\geq 0}$.  This however is false in general, as pointed out to the author by Shun Ohkubo.  In fact, Sweedler \cite{Sweedler-insep-extn} studied this phenomenon and called the above case \emph{modular}.  He also gave the following non-modular example \cite[Example~1.1]{Sweedler-insep-extn}.

Let $\kappa_0$ be a perfect field of characteristic $p$ and let $X, Y, Z$ be indeterminants.  Let $\kappa_k = \kappa_0(X^p, Y^p, Z^{p^2})$ and $\kappa_l = \kappa_k(Z, XY + Z)$.  Then $[\kappa_l : \kappa_l \cap \kappa_k^{p^{-1}}] = p^2$ and $[ \kappa_l \cap \kappa_k^{p^{-1}}: \kappa_k ] = p$.  Hence, $\kappa_l / \kappa_k$ cannot be modular.
\end{remark}

Now, we continue to assume Hypothesis~\ref{H:J-finite-set}.

\begin{notation}
For a nonarchimedean ring $R$, we use $R \langle u_0, \dots, u_m \rangle$ to denote the completion of $R [ u_0, \dots, u_m]$ with respect to the natural topology induced from $R$.  When $R = F$ is a complete nonarchimedean field, $F \langle u_0, \dots, u_m \rangle$ is the ring of analytic functions on the unit polydisc $A_F^{m+1}[0,1]$.
\end{notation}

\begin{notation}\label{N:generators-of-calI}
Let $\calO_k \langle\seriezero{u_}m \rangle / \calI \isom \calO_l$ be the homomorphism sending $u_0$ to $t$ and $u_j$ to $c_j$ for $j \in J$.  We choose a set of generators $\seriezero{p_}m$ of $\calI$ as follow:  for $t^e$ or each $c_j^{p^{r_j}}$, one can write it in terms of the basis of $\calO_l$ over $\calO_k$ listed in Construction~\ref{C:cj-generate-l}. This will give us an element $p_0$ or $p_j$ in $\calI$. Obviously, $\seriezero{p_}m$ generate $\calI$.  Moreover,
\begin{eqnarray*}
p_0 & \in & u_0^e - \gothd s + (u_0s, s^2) \cdot \calO_k [\seriezero{u_}m]; \\
p_j & \in & u_j^{p^{r_j}} - \gothb_j + (u_0, s) \cdot \calO_k [\seriezero{u_}m], \textrm{ for }  j = \serie{}m,
\end{eqnarray*}
where $\gothb_j$ is a polynomial in $u_1, \dots, u_{j-1}$ with coefficients in $\calO_k$ and with degree on $u_{j'}$ strictly smaller than $p^{r_{j'}}$ for $j' = 1, \dots, j-1$, and $\gothd \in \calO_k[u_J]$ such that $\gothd(c_1, \dots, c_m) \in \calO_L^\times$.  Denote $\bar \gothb_j$ to be the reduction of $\gothb_j$ in $\kappa_k [u_1, \dots, u_{j-1}]$.
\end{notation}

\begin{remark}
It was pointed out to the author by Shun Ohkubo that introducing $\gothd$ is necessary.  In general, one may not be able to  find uniformizers $s$ and $t$ of $k$ and $l$, respectively, such that $t^e \equiv s \mod t^{e+1}\calO_l$.  Ohkubo provided the following counterexample.  
Although we do not know if there is a counterexample for which $L/K$ is Galois, we still try to avoid this restriction.
\end{remark}

\begin{example}
Let $k$ be a complete discretely valued field with non-perfect residue field $\kappa_k$.  Let $b \in \calO_k$ be such that $\bar b \in \kappa_k \bs \kappa_k^p$.  Choose $\alpha, \beta \in \overline K$ as follows: let $\alpha$ be a root of the polynomial $X^p +s X + b \in k[X]$ and $\beta$ a root of the polynomial $Y^p + s Y + s \alpha \in k(\alpha)[Y]$.  Denote $l = k(\alpha, \beta)$.  Then $l/k$ is a separable extension of degree $p^2$ with na\"ive ramification degree $p$.  The ring of integers of $k(\alpha)$ and $k(\alpha, \beta)$ are $\calO_k[\alpha]$ and $\calO_k[\alpha, \beta]$, respectively.  We claim that we cannot choose uniformizers $t$ and $s$ so that $t^p / s \equiv 1 \mod \gothm_l$.

It is clear that $\beta$ is a uniformizer of $l$.  For any uniformizer $t$ of $l$,
\[
\xymatrix{
\dfrac{t^p} {s} = \dfrac{\beta^p} {s}  \big( \dfrac{t} \beta \big)^p\in (-\alpha - \beta) (\calO_l^\times)^p \ar[rr]^-{\pmod {\gothm_l}} &&   (-\alpha)\kappa_l^p \subset \kappa_l. 
}
\]
In particular, $t^p /s$ is not congruent to 1 modulo $ \gothm_l$.
\end{example}

\begin{remark}
It is generally false that $\seriezero{p_}m$ generate $\Ker(\calO_k [\seriezero{u_}m] \rar \calO_l)$; this will not matter since we take $a>0$ and $a>1$ in Definition~\ref{D:thickening-space}.
\end{remark}

\begin{construction}\label{C:map-C_k-to-C_l}
For each $j \in J$, fix an element in $\calO_L \llbracket T \rrbracket$ lifting $b_j \in \calO_k \subset \kappa_l \llbracket t \rrbracket$ for $j \in J$ and fix an element in $T^e + T^{e+1} \calO_L \llbracket T \rrbracket$ lifting $s \in \calO_k \subset \kappa_l \llbracket t \rrbracket$.  By Proposition~\ref{P:cohen-functorial}, there exists a continuous homomorphism $f^*: C_k \inj C_l$ sending $B_J$ and $S$ to the elements chosen above; it naturally restricts to $f^*: \OK \llbracket S \rrbracket \inj \calO_L \llbracket T \rrbracket$.
\end{construction}

The proof of the following lemma is not enlightening.  The reader may skip it when reading the paper for the first time.  The upshot is that we can turn the ``good" generators and relations of $\calO_l$ as an $\calO_k$-algebra into ``good" generators and relations of $\calR_L^{\eta_0^{1/e}}$ as an $\calR_K^{\eta_0}$-algebra.

\begin{lemma}\label{L:base-change-map}
Keep the notation as above.

\emph{(1)} The homomorphism $f^*$ is finite, and  $\serie{C_}m$ and $T$ generate $\calO_L \llbracket T \rrbracket$ over $\OK \llbracket S \rrbracket$.  Hence, $f^*$ induces a surjective map $\OK \llbracket S \rrbracket \langle \seriezero{U_}m \rangle \surj \calO_L \llbracket T \rrbracket$ sending $U_0$ to $T$ and $U_j$ to $C_j$ for $j \in J$.  Moreover, one can choose generators $\seriezero{P_}m$ of the kernel so that, modulo $p$, they are exactly $p_{J^+}$ in Notation~\ref{N:generators-of-calI}.  In particular, 
\begin{eqnarray*}
P_0 & \in & U_0^e - \gothD S + (p, U_0S, S^2) \cdot \OK \llbracket S \rrbracket \langle \seriezero{U_}m \rangle, \\
P_j & \in & U_j^{p^{r_j}} - \gothB_j + (p, U_0, S) \cdot \OK \llbracket S \rrbracket \langle \seriezero{U_}m \rangle,
\end{eqnarray*}
where $\gothB_j$ is a polynomial in $U_1, \dots, U_{j-1}$ with coefficients in $\calO_K$ and with degree on $U_{j'}$ strictly smaller than $p^{r_{j'}}$ for $j' = 1, \dots, j-1$, and $\gothD \in \calO_K[U_J]$ lifts $\gothd$.  Moreover, $\big\{ U_{J^+}^{e_{J^+}} \big|\, 0 \leq e_0 < e;\, 0 \leq e_j < p^{r_j}, j \in J \big\}$ form a basis of $\calO_K \llbracket S \rrbracket \langle U_0, \dots, U_m \rangle / (P_{J^+})$ over $\calO_K \llbracket S \rrbracket$.

\emph{(2)} The map $f^*$ extends to a map $f_\eta^*: K \langle \eta / S, S / \eta \rangle \rar L \langle \eta^{1/e} / T, T / \eta^{1/e} \rangle$ for $\eta \in [0, 1)$.  Thus $f^*$ extends by continuity to a homomorphism $f^*: \calR_K^{\eta_0} \rar \calR_L^{\eta_0^{1/e}}$, or in geometric notation, $f: A_K^1[\eta_0, 1) \rar A_L^1[\eta_0^{1/e}, 1)$ for $\eta_0 \in (0, 1)$.

\emph{(3)} Let $\Gamma_K^\dag$ and $\Gamma_L^\dag$ be the integral Robba rings over $K$ and $L$, respectively, similarly constructed as in Construction~\ref{Cstr:repn->bounded-module} but without tensoring with $F$.  Let $\calR_L$ be the Robba ring over $L$ as in Notation~\ref{N:Robba-ring-radius-eta}.  Then $\Gamma_L^\dag$ is a finite \'etale extension of $\Gamma_K^\dag$ with Galois group $G_{l/k}$.  Moreover, $\calR_L \simeq \Gamma_L^\dag \otimes_{\Gamma_K^\dag} \calR_K$.

\emph{(4)} For some $\eta_0 \in (0,1)$, $A_L^1[\eta_0^{1/e}, 1)$ is Galois \'etale over $\eta \in [\eta_0, 1)$ via $f^*$ with Galois group $G_{l/k}$.  Hence, $\calR_L^{\eta_0^{1/e}}$ becomes a regular $G_{l/k}$-representation over $\calR_K^{\eta_0}$ via $f^*$.
\end{lemma}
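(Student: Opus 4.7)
The plan is to prove the four parts in order, with Part (1) being the technical crux. For Part (1), I would first construct suitable lifts $P_{J^+}$: lift $\gothd$ and $\gothb_J$ to polynomials $\gothD, \gothB_J \in \calO_K[U_J]$ respecting the degree bounds (noting that $\gothD$ inherits the unit-ness $\gothD(C_1,\dots,C_m) \in \calO_L^\times$ from $\gothd$), then adjust remaining coefficients so that each $P_j$ lies in the kernel $\gothN$ of the natural map $\OK \llbracket S \rrbracket \langle U_{J^+} \rangle \to \calO_L \llbracket T \rrbracket$ sending $U_0 \mapsto T$, $U_j \mapsto C_j$. Since the mod-$p$ reductions $p_{J^+}$ already lie in the characteristic-$p$ kernel by Notation~\ref{N:generators-of-calI}, the correction terms automatically land in $(p, U_0, S)$ (resp.\ $(p, U_0 S, S^2)$), as required.

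For the basis/surjectivity claim in Part (1), I apply Nakayama in the $(p, S)$-adic topology of $\OK \llbracket S \rrbracket$. Set $A := \OK \llbracket S \rrbracket \langle U_{J^+} \rangle / (P_{J^+})$; it is $(p, S)$-adically complete. Modulo $(p, S)$ it reduces to $\kappa_k[U_{J^+}]/(U_0^e, U_1^{p^{r_1}} - \bar \gothb_1, \dots, U_m^{p^{r_m}} - \bar \gothb_m)$, which visibly admits $\{U_{J^+}^{e_{J^+}} : 0 \leq e_0 < e,\, 0 \leq e_j < p^{r_j}\}$ as a $\kappa_k$-basis. Completeness of $A$ together with separation in the $(p, S)$-adic topology promotes these monomials to a free $\OK \llbracket S \rrbracket$-basis of $A$. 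The natural surjection $A \surj \calO_L \llbracket T \rrbracket$ sends this basis to the basis produced by Construction~\ref{C:cj-generate-l} (lifted to characteristic zero), hence is an isomorphism. Finiteness of $f^*$ is immediate.

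Part (2) is a Gauss-norm calculation: since $f^*(S) \in T^e + T^{e+1}\calO_L\llbracket T \rrbracket$ and each $f^*(B_j)$ has $|f^*(B_j)|_{\eta^{1/e}} = 1$ as a lift of $b_j \in \calO_l$, the $\eta$-Gauss norm on $K\langle \eta/S, S/\eta \rangle$ matches the $\eta^{1/e}$-Gauss norm on $L\langle \eta^{1/e}/T, T/\eta^{1/e}\rangle$ under $f^*$, and Fr\'echet completion over $\eta \in [\eta_0, 1)$ yields $f^*: \calR_K^{\eta_0} \to \calR_L^{\eta_0^{1/e}}$. For Part (3), I invoke the henselian property of $\Gamma_K^\dag$ (Lemma~\ref{L:bounded-Robba-henselian}), whose residue field is $k$ and whose uniformizer is that of $\calO_{K'}$. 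Restricting $f^*$ to bounded overconvergent elements yields $\Gamma_K^\dag \inj \Gamma_L^\dag$; since the latter is a DVR with the same uniformizer and residue field $l$, the extension is unramified with residue field extension $l/k$, and by henselianness of $\Gamma_K^\dag$ together with separability of $l/k$, it is finite \'etale with Galois group $G_{l/k}$. The identity $\calR_L \cong \Gamma_L^\dag \otimes_{\Gamma_K^\dag} \calR_K$ follows from finite-\'etale base change along $\Gamma_K^\dag \inj \calR_K$.

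Part (4) descends Part (3) to an annulus: the finite \'etale extension $\Gamma_L^\dag/\Gamma_K^\dag$ is cut out by finitely many equations converging on some $A_K^1[\eta_0, 1)$, so for $\eta_0$ sufficiently close to $1$ the map $A_L^1[\eta_0^{1/e}, 1) \to A_K^1[\eta_0, 1)$ realizes a Galois \'etale cover with group $G_{l/k}$, and the $G_{l/k}$-action on $\calR_L^{\eta_0^{1/e}}$ is regular over $\calR_K^{\eta_0}$. The main obstacle is Part (1): tracking the adjustment terms into the prescribed ideals $(p, U_0, S)$ and $(p, U_0 S, S^2)$, and checking the Nakayama argument carefully in the two-dimensional $(p, S)$-adic topology. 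Once Part (1) is in place, the remaining parts are relatively formal, with Part (3) relying only on the henselian property established earlier and Part (4) being a standard rigid-analytic localization.
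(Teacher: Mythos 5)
Your overall route coincides with the paper's: part (1) is reduced to its mod-$p$ incarnation, which is exactly Construction~\ref{C:cj-generate-l} together with Notation~\ref{N:generators-of-calI}, and then lifted by completeness/Nakayama; part (2) is the Gauss-norm isometry $|g|_{Z_k^\eta} = |f^*(g)|_{Z_l^{\eta^{1/e}}}$ coming from $f^*(\calO_K) \subset \calO_L \llbracket T \rrbracket$ and $f^*(S) \in T^e + T^{e+1}\calO_L\llbracket T \rrbracket$; part (4) descends the finite data to an annulus, as in the paper. Two small points in (1): the lifts $P_j$ exist \emph{inside} the kernel because that kernel surjects mod $p$ onto $\calI$ ($\calO_L\llbracket T\rrbracket$ being $p$-torsion free), and the reduction of $A$ modulo $(p,S)$ is not literally $\kappa_k[U_{J^+}]/(U_0^e, U_j^{p^{r_j}}-\bar\gothb_j)$, since the reductions of the $P_j$ may still contain $U_0$-multiples; nevertheless the stated monomials do form a basis of the reduction, because by Notation~\ref{N:generators-of-calI} that reduction is $\calO_l/(s)$ and Construction~\ref{C:cj-generate-l} exhibits exactly this basis, so your Nakayama argument goes through after this adjustment.

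The one step that would not survive scrutiny as written is in part (3). From ``$\Gamma_L^\dag$ is a DVR with the same uniformizer and residue field $l$, hence the extension is unramified, hence by henselianness and separability it is finite \'etale'' you cannot conclude finiteness: a henselian DVR and its completion share a uniformizer and residue field, yet the completion is not finite over it. To get finiteness and \'etaleness you must actually produce the finite \'etale $\Gamma_K^\dag$-algebra with residue field $l$ (this is what henselianness, Lemma~\ref{L:bounded-Robba-henselian}, buys you) and then identify it with $\Gamma_L^\dag$, for instance by showing $\Gamma_L^\dag = \Gamma_K^\dag[T, C_J]$ using the generators and relations of part (1) together with the norm estimates underlying part (2) (overconvergence of an element of $\Gamma_L$ is detected on its coordinates in the basis $U_{J^+}^{e_{J^+}}$). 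Likewise ``$\calR_L \simeq \Gamma_L^\dag \otimes_{\Gamma_K^\dag} \calR_K$ follows from finite \'etale base change'' is not an argument: base change produces some finite $\calR_K$-algebra, and identifying it with $\calR_L$ still requires the observation the paper uses, namely that $\Gamma_L^\dag \otimes_{\Gamma_K^\dag} \calR_K$ is complete and dense in $\calR_L$ (or, again, the explicit basis). With these repairs your proof matches the paper's.
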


\begin{proof}
(1) is equivalent to its mod $p$ version, which is exactly Construction~\ref{C:cj-generate-l}.

(2) It suffices to prove that $f^*$ is continuous respecting the norms $|\cdot|_{Z_k^\eta}$ on $C_k$ and $|\cdot|_{Z_l^{\eta^{1/e}}}$ on $C_l$, for all $\eta \in [\eta_0, 1)$.  Since $f^*(\OK) \in \calO_L \llbracket T \rrbracket$ and $f^*(S) \in T^e + T^{e+1}\calO_L \llbracket T \rrbracket$, we have $|g|_{Z_k^\eta} = |f^*(g)|_{Z_l^{\eta^{1/e}}}$ for any $g \in C_k$.  Hence the map $f^*$ extends continuously to $f^*_\eta: K \langle \eta / S, S / \eta \rangle \rar L \langle \eta^{1/e} / T, T / \eta^{1/e} \rangle$.

(3) The first statement follows from Lemma~\ref{L:bounded-Robba-henselian}.  The second statement is true because  $\Gamma_L^\dag \otimes_{\Gamma^\dag} \calR_K$ is complete and dense in $\calR_L$.

(4) It follows from (2) and (3) since $\calR_K$ (resp. $\calR_L$) is a limit of $\calR_K^{\eta_0}$ (resp. $\calR_L^{\eta_0^{1/e}}$).
\end{proof}

\begin{remark}\label{R:naive-K-structure}
The homomorphism $f^*$ does not respect the na\"ive $K$-algebra structure on $\calR_L^{\eta_0^{1/e}}$; this is precisely because of Caution~\ref{Cau:residue-extn-not-embedable}.  But it respects the $K$-algebra structure on $\calR_L^{\eta_0^{1/e}}$ induced by $\OK \inj \calO_K \llbracket S \rrbracket \stackrel {f^*} \rar \calO_L \llbracket T \rrbracket$.  So, it might be better not to view $Z_l^{\geq \eta_0^{1/e}} \rar Z_k^{\geq \eta_0}$ as a morphism between rigid spaces, but the geometric incarnation of $f^*$.
\end{remark}

\begin{construction} \label{Cstr:calE}
Keep the notation as in Construction~\ref{Cstr:bounded->R_K}.  Let $\rho: G_{l/k} \rar GL(V_\rho)$ be a $p$-adic representation, where $V_\rho$ is a finite dimensional vector space over $F$.  We have
\[
\calE_\rho = D^\dag(\rho) \otimes_{\Gamma_K^\dag} \calR_K =
 \big( V_\rho \otimes_\calO \tilde \Gamma^\dag \big)^{G_k} \otimes_{\Gamma_K^\dag} \calR_K =
\big( V_\rho \otimes_{\ZZ_q} \Gamma_L^\dag \big)^{G_{l/k}} \otimes_{\Gamma_K^\dag} \calR_K
= \big( V_\rho \otimes_{\ZZ_q} \calR_L \big)^{G_{l/k}}.
\]
Hence, for some $\eta_0 \in (0, 1)$, the differential module $\calE_\rho$ descents to
\[
\calE_\rho = \Big( V_\rho \otimes_{\ZZ_q} f_* \calO_{Z_l^{\geq \eta_0^{1/e}}}\Big)^{G_{l/k}};
\]
this is a differential module over $\calR_K^{\eta_0} \otimes_{\QQ_q} F = \calR_{K'}^{\eta_0}$ relative to $K_0$.  This construction respect tensor products, i.e., given another $p$-adic representation $\rho'$ of $G_{l/k}$ over $F$, we have
\[\calE_{\rho \otimes \rho'} = \calE_\rho \otimes_{\calR_{K'}^{\eta_0}} \calE_{\rho'}
\]
\end{construction}

\begin{hypo} \label{H:eta_0}
From now on, we always assume that $\eta_0 \in (0,1)$ is close enough to $1^-$ so that all statements in Lemma~\ref{L:base-change-map} hold and $\calE_\rho$ descents to $\calR_{K'}^{\eta_0}$.
\end{hypo}

\subsection{Spectral norms and connected components of thickening spaces}
\label{S:calF}

In this subsection, we relate the spectral norms of differential operators on $\calE$ to the connected components of certain rigid spaces.

We keep Hypotheses~\ref{H:J-finite-set}, \ref{H:tot-ram}, and \ref{H:eta_0} in this subsection.

\begin{definition}\label{D:tilde-TS}
Let $a \in \QQ_{>1}$.  We define
\begin{align*}
\calO_{TS_{l/k}^{a, \geq \eta_0}} &= \calR_L^{\eta_0^{1/e}} \otimes_{f^*, \calR_K^{\eta_0}, \pi^*} \calO_{TS_k^{a, \geq \eta_0}} \\
\calO_{TS_{k\backslash l}^{a, \geq \eta_0}} &= \calO_{TS_k^{a, \geq \eta_0}} \otimes_{\tilde \pi^*, \calR_K^{\eta_0}, f^*} \calR_L^{\eta_0^{1/e}}\\
\calO_{TS_{l/k \backslash l}^{a, \geq \eta_0}} &= \calR_L^{\eta_0^{1/e}} \otimes_{f^*, \calR_K^{\eta_0}, \pi^*} \calO_{TS_k^{a, \geq \eta_0}} \otimes_{\tilde \pi^*, \calR_K^{\eta_0}, f^*} \calR_L^{\eta_0^{1/e}}
\end{align*}
Here we do not have to complete the tensor products because $f^*$ is finite.
(We intentionally put the tensor products on different sides so that it is easy to distinguish the two base changes by $f^*$ through $\pi^*$ and $\tilde \pi^*$ respectively.)
Let $TS_{l/k}^{a, \geq \eta_0}$, $TS_{k\backslash l}^{a, \geq \eta_0}$, and $TS_{l/k \backslash l}^{a, \geq \eta_0}$ denote the geometric incarnations of these rings respectively.  We have formally the following Cartesian diagram.
\begin{equation}\label{E:diagram-ts-space}
\xymatrix{
Z_l^{\geq \eta_0^{1/e}} \ar[d]^f & TS_{l/k}^{a, \geq \eta_0} \ar[d]^{f \times 1} \ar[l]_-{1 \times \pi} & \ar[l]_{\tilde f} \ar[d] TS_{l/k\backslash l}^{a, \geq \eta_0}\\
Z_k^{\geq \eta_0} & TS_k^{a, \geq \eta_0} \ar[l]_-\pi \ar[d]^{\tilde \pi} & \ar[l]_-{1 \times f} TS_{k \backslash l}^{a, \geq \eta_0} \ar[d] \\
& Z_k^{\geq \eta_0} & \ar[l]_-f Z_l^{\geq \eta_0^{1/e}}
}
\end{equation}

We make similar constructions for the logarithmic version of all spaces if $a \in \QQ_{>0}$.
\end{definition}

\begin{remark}
The morphisms $\pi$ and $1 \times f$ are genuine morphisms between rigid spaces over $Z_k^{\geq \eta_0}$ and $\tilde f$ and $1 \times \pi$ are genuine morphisms between rigid spaces over $Z_l^{\geq \eta_0^{1/e}}$; this is because the rigid space structures on thickening spaces are given by the projection $\pi$ and $1 \times \pi$, respectively.  In contrast, all the vertical arrows in \eqref{E:diagram-ts-space}
should all be thought of as just geometric incarnations for the corresponding ring homomorphisms.
\end{remark}

\begin{remark}
The na\"ive base change $f \times 1$ helps to realize geometric connected components as connected components (see Theorem~\ref{T:ram-break-equiv-disconn}).  The base change $\tilde f$ (and also $1 \times f$) encodes the ramification information, which is what we are interested in.
\end{remark}

\begin{remark}
One may want to relate $TS_{l / k\backslash l}^{a, \geq \eta_0}$ to the thickening space of $Z_l^{\geq \eta_0^{1/e}}$.  However, it is not clear how to compare the levels or radii of the two spaces.  We do not need this result in our paper.
\end{remark}

\begin{corollary}
The space $TS_{l/k\backslash l}^{a, \geq \eta_0}$ admits an action of $G_{l/k}$ by morphisms between $K$-rigid spaces, obtained by pulling back the action on $Z_l^{\geq \eta_0^{1/e}}$ over $Z_k^{\geq \eta_0}$ via $\tilde \pi \circ (f \times 1)$.  Under this action, $\tilde f_* \calO_{TS_{l/k\backslash l}^{a, \geq \eta_0}}$ is a regular representation of $G_{l/k}$ over $\calO_{TS_{l/k}^{a, \geq \eta_0}}$.  For a $p$-adic representation $\rho$ of $G_{l/k}$ over $F$, define
\[
\widetilde \calF_\rho = \big(V_\rho \otimes_{\QQ_q} \tilde f_* \calO_{TS_{l/k\backslash l}^{a, \geq \eta_0}} \big)^{G_{l/k}};
\]
this is a differential module over $TS_{l/k}^{a, \geq \eta_0} \times_{\QQ_q} F$ relative to $Z_l^{\geq \eta_0^{1/e}} \times_{\QQ_q} F$.  Moreover, we have $\widetilde \calF_\rho \simeq (f \times 1)^* \tilde \pi^* \calE_\rho$.

The same statement also holds for log-space.
\end{corollary}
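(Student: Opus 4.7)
The plan is to build everything by successive base change from Lemma~\ref{L:base-change-map}(4), which furnishes the finite \'etale Galois structure on $\calR_L^{\eta_0^{1/e}}/\calR_K^{\eta_0}$, and then apply Galois descent.

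First I would establish that the $G_{l/k}$-action on $TS_{l/k\backslash l}^{a, \geq \eta_0}$ is the one obtained by pulling back the Galois action on $\calR_L^{\eta_0^{1/e}}$ over $\calR_K^{\eta_0}$ along the ring homomorphism $\tilde\pi^* \colon \calR_K^{\eta_0} \to \calO_{TS_{l/k}^{a, \geq \eta_0}}$. Since finite \'etale Galois covers are preserved under arbitrary base change, the extension $\calO_{TS_{l/k\backslash l}^{a, \geq \eta_0}}/\calO_{TS_{l/k}^{a, \geq \eta_0}}$ is again finite \'etale Galois with group $G_{l/k}$, and $\tilde f_* \calO_{TS_{l/k\backslash l}^{a, \geq \eta_0}}$ is the corresponding regular representation. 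To check $K$-linearity I would note that the image of $K$ in $\calO_{TS_{l/k\backslash l}^{a, \geq \eta_0}}$ lands inside $\calO_{TS_{l/k}^{a, \geq \eta_0}}$ (via $\pi^*$ through $\calO_{TS_k^{a, \geq \eta_0}}$), which is the $G_{l/k}$-fixed subring.

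Next I would equip $V_\rho \otimes_{\QQ_q} \tilde f_* \calO_{TS_{l/k\backslash l}^{a, \geq \eta_0}}$ with the diagonal $G_{l/k}$-action (through $\rho$ on $V_\rho$ and Galois on the structure sheaf) together with the trivial connection $1 \otimes d$ relative to $Z_l^{\geq \eta_0^{1/e}} \times_{\QQ_q} F$, i.e.\ the one given by $\partial/\partial \delta_{J^+}$. Because the Galois action sits entirely over $Z_l^{\geq \eta_0^{1/e}}$, it commutes with these relative differentials and the connection descends to the $G_{l/k}$-invariants; regularity of the representation forces the descended module to be finite projective of rank $\dim_F V_\rho$, giving the differential module structure claimed for $\widetilde\calF_\rho$.

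For the isomorphism $\widetilde\calF_\rho \simeq (f \times 1)^* \tilde\pi^* \calE_\rho$, the plan is a direct tensor-product calculation. On the one hand,
\[
(f \times 1)^* \tilde\pi^* \calE_\rho \;=\; \calE_\rho \otimes_{\calR_K^{\eta_0},\tilde\pi^*} \calO_{TS_k^{a, \geq \eta_0}} \otimes_{\calO_{TS_k^{a, \geq \eta_0}}} \calO_{TS_{l/k}^{a, \geq \eta_0}} \;=\; \calE_\rho \otimes_{\calR_K^{\eta_0},\tilde\pi^*} \calO_{TS_{l/k}^{a, \geq \eta_0}}.
\]
On the other hand, Construction~\ref{Cstr:calE} gives $\calE_\rho = (V_\rho \otimes_{\QQ_q} \calR_L^{\eta_0^{1/e}})^{G_{l/k}}$, and pulling the $\calO_{TS_{l/k}^{a, \geq \eta_0}}$-factor outside the $G_{l/k}$-invariants (legitimate because that factor carries trivial Galois action and $\calR_L^{\eta_0^{1/e}}/\calR_K^{\eta_0}$ is finite \'etale, so invariants commute with this tensor) yields
\[
\widetilde\calF_\rho \;=\; \calO_{TS_{l/k}^{a, \geq \eta_0}} \otimes_{\tilde\pi^*,\calR_K^{\eta_0}} \calE_\rho.
\]
The two connections match because both are inherited from $\nabla_{\calE_\rho}$ pulled back along $\tilde\pi$. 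The main obstacle, as foreshadowed in Remark~\ref{R:naive-K-structure}, is keeping straight which of $\pi^*$ and $\tilde\pi^*$ is being used to regard each factor of the iterated tensor products as an $\calR_K^{\eta_0}$-module, since neither map respects the naive $K$-structure on its target; once that bookkeeping is pinned down, the descent and identification steps are formal.
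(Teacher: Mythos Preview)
Your proposal is correct and follows essentially the same approach as the paper: both arguments reduce to flat base change along the two right-hand Cartesian squares in Diagram~\eqref{E:diagram-ts-space}, together with the fact that $G_{l/k}$-invariants commute with tensoring over $\calR_K^{\eta_0}$ because $\calR_L^{\eta_0^{1/e}}/\calR_K^{\eta_0}$ is finite \'etale Galois. The paper compresses your tensor-product calculation into the single phrase ``flat base change,'' while you spell out the descent explicitly.

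One small point of presentation: where you describe the connection on $V_\rho \otimes_{\QQ_q} \tilde f_* \calO_{TS_{l/k\backslash l}^{a, \geq \eta_0}}$ as ``the trivial connection $1 \otimes d$,'' the paper instead records the differential module structure on $\tilde f_* \calO_{TS_{l/k\backslash l}^{a, \geq \eta_0}}$ via the canonical chain
\[
\tilde f_* \calO \longrightarrow \tilde f_* \Omega^1_{TS_{l/k\backslash l}^{a, \geq \eta_0}/Z_l^{\geq \eta_0^{1/e}}} \simeq \tilde f_* \tilde f^* \Omega^1_{TS_{l/k}^{a, \geq \eta_0}/Z_l^{\geq \eta_0^{1/e}}} \simeq \tilde f_* \calO \otimes \Omega^1_{TS_{l/k}^{a, \geq \eta_0}/Z_l^{\geq \eta_0^{1/e}}},
\]
valid for any finite \'etale map. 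This is the same object as your $d$, but the paper's formulation makes clearer why the result is a genuine $\nabla$-module over $TS_{l/k}^{a, \geq \eta_0}$ (not just over $TS_{l/k\backslash l}^{a, \geq \eta_0}$) without appeal to coordinates.
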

\begin{proof}
The differential module structure on $\tilde f_* \calO_{TS_{l/k\backslash l}^{a, \geq \eta_0}}$ is given by the composition of natural homomorphisms
\[
\tilde f_* \calO_{TS_{l/k\backslash l}^{a, \geq \eta_0}} \lrar \tilde f_* \Big(\Omega^1_{TS_{l/k\backslash l}^{a, \geq \eta_0} \big/ Z_l^{\geq \eta_0^{1/e}}} \Big) \simeq \tilde f_* \Big(\tilde f^* \Big(\Omega^1_{TS_{l/k}^{a, \geq \eta_0} \big/ Z_l^{\geq \eta_0^{1/e}}} \Big)\Big) \simeq \tilde f_* \calO_{TS_{l/k\backslash l}^{a, \geq \eta_0}} \otimes \Omega^1_{TS_{l/k}^{a, \geq \eta_0} \big/ Z_l^{\geq \eta_0^{1/e}}}.
\]
(In fact this construction works for any finite \'etale morphisms.)
The statement of the corollary is an easy consequence of flat base change for the two Cartesian squares on the right in Diagram~\eqref{E:diagram-ts-space}.
\end{proof}

\begin{notation}
We may view $1 \times \pi:TS_{l/k}^{a, \eta_0} \rar Z_l^{\eta_0^{1/e}}$ as bundles, whose fibers are polydiscs (of different radii) with parameters $\delta_0, \dots, \delta_m$; again this morphism is a genuine morphism between rigid spaces.  By \emph{zero section} $\mathbf Z$, we mean the natural closed subspace of this bundle defined by $\delta_0 = 0, \dots, \delta_m = 0$.
\end{notation}

\begin{notation}
\label{N:Taylor-series}
Let $M$ be a differential module over a differential ring $R$ with derivatives $\partial_1, \dots, \partial_n$.  For $x \in M$ and $r_1, \dots, r_n \in R$, we define the \emph{Taylor series} 
\[
\TT(x; \partial_1, \dots, \partial_n; r_1, \dots, r_n) = \sum_{\alpha_1, \dots, \alpha_n \in \ZZ_{\geq 0}} \frac{r_1^{\alpha_1}\cdots r_n^{\alpha_n}\partial_1^{\alpha_1} \cdots \partial_n^{\alpha_n}}{n!}(x)
\]
if it converges.  If $x \in R$, we have $\TT(ax; \partial_1, \dots, \partial_n; r_1, \dots, r_n) = \TT(a; \partial_1, \dots, \partial_n; r_1, \dots, r_n) \cdot \TT(x; \partial_1, \dots, \partial_n; r_1, \dots, r_n)$ if all terms converge.
\end{notation}

\begin{notation}
Let $M$ be a differential module over a differential ring $R$ with derivatives $\partial_1, \dots, \partial_n$.  We denote $H^0_\nabla(R, M) = \{x \in M \;|\; \partial_i(x) = 0, i = 1, \dots, n\}$ to be the set of horizontal sections of $M$ over $R$.  In particular, if $r_1, \dots, r_n \in R$ be elements such that $\partial_i(r_j) = 1$ if $i = j$ and $0$ otherwise, then an elementary calculation shows that the Taylor series $\TT(x; \partial_i, \dots, \partial_n; r_1, \dots, r_n)$ is an element in $H^0_\nabla(R, M)$ for any $x \in M$ such that the Taylor series converges.

We usually use the geometric counterparts in places of $R$ and $M$ in the notation, e.g., we write $H^0_\nabla(\Max(R), M)$ if $R$ is an affinoid algebra.
\end{notation}

The following lemma is known as the Dwork's transfer theorem, which will be frequently used in proving the theorem below.  It works in greater generality, but we content ourselves with this special case.

\begin{lemma}[Dwork]
\label{L:Dwork-transfer}
Let $a>1$.
Let $\widetilde \calF$ be a differential module over $TS_{l/k}^{a, \geq \eta_0}$ relative to $Z_l^{\geq \eta_0^{1/e}}$.  Assume $|\partial / \partial\delta_i|_{\widetilde \calF_{a, \eta}} \leq p^{-1/(p-1)} \eta^{-a}$ for all $j \in J$ and $\eta \in [\eta_0, 1)$.  Then for any rational number $c>a$, the natural homomorphism of finite $\calR_L^{\eta_0^{1/e}}$-modules
\begin{equation}\label{E:Dwork}
\Theta: H^0_\nabla (TS_{l/k}^{c, \geq \eta_0}, \widetilde \calF ) \isom \Gamma( \mathbf Z, \widetilde \calF | _{\mathbf Z} )
\end{equation}
is an isomorphism.  In particular, $\widetilde \calF$ is a trivial $\nabla$-module relative to $Z_l^{\geq \eta_0^{1/e}}$. 
The same statement is also true if we base change everything to $F$ over $\QQ_q$.  Moreover, when $\widetilde \calF = f_*\calO_{TS_{l/k\bs l}^{a, \geq \eta_0}}$, $\Theta$ induces a \emph{ring} homomorphism for any rational number $c>a$
\[
\Gamma\Big( \mathbf Z, f_* \calO_{TS_{l/k \backslash l}^{c, \geq \eta_0}} \Big| _{\mathbf Z} \Big)  \stackrel {\stackrel{\Theta}\sim}\longleftarrow H^0_\nabla \Big(TS_{l/k}^{c, \geq \eta_0}, f_* \calO_{TS_{l/k \backslash l}^{c, \geq \eta_0}} \Big) \inj \Gamma \Big(TS_{l/k \backslash l}^{c, \geq \eta_0}, \calO_{TS_{l/k \backslash l}^{c, \geq \eta_0}} \Big).
\]

The same statements hold for the log version with $a>0$ and inserting subscript log appropriately.
\end{lemma}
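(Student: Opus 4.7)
My plan is to construct a two-sided inverse to $\Theta$ via a Taylor series expansion around the zero section $\mathbf Z$. Given $x \in \Gamma(\mathbf Z, \widetilde\calF|_\mathbf Z)$, I would choose any lift $\tilde x \in \widetilde\calF$ (working locally, using a finite projective trivialization of $\widetilde\calF$ over $TS_{l/k}^{a,\geq\eta_0}$) and set $\Theta^{-1}(x) = \TT(\tilde x;\, \partial/\partial\delta_0,\dots,\partial/\partial\delta_m;\, \delta_0,\dots,\delta_m)$ in the sense of Notation~\ref{N:Taylor-series}. Four facts need to be verified: (i) the series converges on the level-$c$ thickening; (ii) it is horizontal; (iii) its restriction to $\mathbf Z$ recovers $x$; (iv) any horizontal section is determined by its restriction to $\mathbf Z$. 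Fact (iv) simultaneously gives injectivity of $\Theta$ and well-definedness of $\Theta^{-1}$ independently of the lift.

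For convergence, I would use the spectral-norm hypothesis $|\partial/\partial\delta_j|_{\widetilde\calF_{a,\eta},\sp} \leq p^{-1/(p-1)}\eta^{-a}$ to control $|(\partial/\partial\delta_j)^n(\tilde x)|_{TS_{l/k}^{a,\eta}}$, combined with the standard inequality $|n!| \geq p^{-n/(p-1)}$, yielding
\[
\Big| \tfrac{1}{\alpha_{J^+}!}(\partial/\partial\delta)^{\alpha_{J^+}}(\tilde x) \Big|_{TS_{l/k}^{a,\eta}} \;\lesssim\; \eta^{-a|\alpha_{J^+}|}\cdot|\tilde x|.
\]
Since on $TS_{l/k}^{c,\eta}$ we have $|\delta_{J^+}^{\alpha_{J^+}}| \leq \eta^{c|\alpha_{J^+}|}$, each term in the Taylor series is bounded by $\eta^{(c-a)|\alpha_{J^+}|}\cdot|\tilde x|$, which tends to zero termwise as $c>a$. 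Horizontality (ii) is a formal consequence of termwise differentiation and reindexing, and (iii) is immediate by setting $\delta_{J^+}=0$.

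For the uniqueness (iv), I would trivialize $\widetilde\calF$ locally with a basis $e_1,\dots,e_r$ pulled back from $Z_l^{\geq\eta_0^{1/e}}$, so the connection matrices $A_j$ in $\partial/\partial\delta_j = \partial_{\delta_j} + A_j$ commute with the pullback and the equation $\nabla y = 0$ becomes a first-order system in the $\delta$-variables. Expanding $y = \sum_{\alpha_{J^+}} y_{\alpha_{J^+}}\delta_{J^+}^{\alpha_{J^+}}$ as a power series on $TS_{l/k}^{c,\geq\eta_0}$, the horizontality equations recursively express each coefficient $y_{\alpha_{J^+}}$ with $|\alpha_{J^+}|\geq 1$ as a polynomial combination of lower-order coefficients and derivatives of the entries of the $A_j$, so that the $\alpha=0$ coefficient $y|_\mathbf Z$ determines $y$ uniquely. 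In particular, $y|_\mathbf Z = 0$ forces $y=0$.

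The main obstacle is the bookkeeping in the convergence estimate: one must convert a level-$a$ spectral-norm bound into a genuine level-$c$ absolute convergence on the larger thickening, and then verify that this convergence is uniform enough on each $TS_{l/k}^{c,\eta}$ for $\eta\in[\eta_0,1)$ so that the limiting Taylor series actually defines a global section of $\widetilde\calF$ on $TS_{l/k}^{c,\geq\eta_0}$ (not merely on each fiber). Once $\Theta$ is known to be an isomorphism, the ring-homomorphism statement in the special case $\widetilde\calF = f_*\calO_{TS_{l/k\backslash l}^{a,\geq\eta_0}}$ follows because $\TT(xy) = \TT(x)\TT(y)$ (the $\partial/\partial\delta_j$ are derivations), and the injection into $\Gamma(TS_{l/k\backslash l}^{c,\geq\eta_0}, \calO)$ is tautological since $H^0_\nabla$ is by definition a submodule of global sections. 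The logarithmic case is identical after replacing the bound on $\delta_0$ by $|\delta_0|\leq\eta^{c+1}$ and using the modified spectral estimate $|\partial/\partial\delta_0|_{\sp} \leq p^{-1/(p-1)}\eta^{-a-1}$; the product $\eta^{(c-a)|\alpha|}$ in the convergence bound survives unchanged.
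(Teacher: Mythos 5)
Your proposal is correct and follows essentially the same route as the paper: the paper also defines $\Theta^{-1}(x) = \TT(\tilde x; \partial/\partial\delta_0,\dots,\partial/\partial\delta_m; \delta_0,\dots,\delta_m)$ for a lift $\tilde x$, gets convergence on the level-$c$ space from the strict inequality $p^{-1/(p-1)}\eta^{-a} < p^{-1/(p-1)}\eta^{-c}$, and deduces the ring-homomorphism statement from multiplicativity of the Taylor series. Your extra details (the explicit $|n!|\geq p^{-n/(p-1)}$ estimate and the power-series uniqueness argument for injectivity) are just a fuller writing-out of what the paper leaves implicit.
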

\begin{proof}
The proof for the log version goes verbatim the same, except inserting subscript log appropriately and for the $\delta_0$ coordinate, increasing the exponents on $\eta$ by 1.  The proof for the tensor $F$ version goes  word by word the same except we need to tensor $F$ everywhere.

Now, we prove the lemma for the nonlog case over $\QQ_q$.  In fact we may define an inverse of to the map $\Theta$ using Taylor series:
\[
\Theta^{-1} (x) = \TT(\tilde x; \partial / \partial \delta_0, \dots, \partial / \partial \delta_m; \delta_0,\dots,  \delta_m)
\]
for $x \in \Gamma(\mathbf Z, \widetilde \calF |_{\mathbf Z})$, where $\tilde x$ is a lift of $x$ in $\Gamma (TS_{l/k}^{c, \geq \eta_0}, \widetilde \calF)$.  The Taylor series converges over $TS_{l/k}^{c, \geq \eta_0}$ by the condition $|\partial / \partial\delta_i|_{\widetilde \calF_{a, \eta}} \leq p^{-1/(p-1)} \eta^{-a} < p^{-1/(p-1)} \eta^{-c}$ for all $j \in J$ and $\eta \in [\eta_0, 1)$.  Moreover, the Taylor series converges to a horizontal section in $H^0_\nabla(TS_{l/k}^{c, \geq \eta_0}, \widetilde \calF )$.

When $\widetilde \calF = f_*\calO_{TS_{l/k\bs l}^{a, \geq \eta_0}}$, $\Theta$ is a homomorphism, which can be also seen from the fact that Taylor series gives a ring homomorphism of rings (see Notation~\ref{N:Taylor-series}).
\end{proof}

The following theorem is one of the key steps of the proof of Hasse-Arf theorem.  This is the main ingredient (a) described in the introduction.  It allows us to compare the differential ramification breaks with the geometric connected components of the thickening spaces; we will  later identify the thickening spaces with the lifts of the Abbes-Saito spaces (Theorem~\ref{T:AS=TS}).

\begin{theorem}\label{T:ram-break-equiv-disconn}
Let $\rho: G_{l/k} \rar GL(V_\rho)$ be a \emph{faithful} $p$-adic representation over $F$ with $l/k$ satisfying the Hypotheses~\ref{H:J-finite-set} and \ref{H:tot-ram}. Then, for $b>1$, the following conditions are equivalent:

\emph{(1)} $\rho$ has differential ramification break $\leq b$.

\emph{(2)} For any rational number $c>b$, when $\eta_0 \rar 1^-$, $\widetilde \calF= \widetilde \calF_\rho$ is a trivial $\nabla$-module over $TS_{l/k}^{c, \geq \eta_0} \times_{\QQ_q} F$ relative to $Z_l^{\geq \eta_0^{1/e}} \times_{\QQ_q} F$.

\emph{(3)} For any rational number $c> b$, when $\eta_0 \rar 1^-$, $TS_{l/k \backslash l}^{c, \geq \eta_0}$ has exactly $[l:k]$ connected components.

\emph{(4)} For any rational number $c> b$, when $\eta_0 \rar 1^-$, $Z_{l'}^{\geq \eta_0^{1/ e'}} \times_{Z_l^{\geq \eta_0^{1/e}}, \pi} TS_{l/k \backslash l}^{c, \geq \eta_0}$ has exactly $[l:k]$ connected components for some finite extension $l'/l$, where $e'$ is the na\"ive ramification degree of $l'/k$.

Also, the similar conditions for logarithmic spaces are equivalent provided that $b>0$.
\end{theorem}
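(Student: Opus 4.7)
The plan is to establish the cycle of implications (1) $\Rightarrow$ (2) $\Rightarrow$ (3) $\Rightarrow$ (4) $\Rightarrow$ (1); the logarithmic version then goes through verbatim after the exponent shifts on $\eta$ for the $\delta_0$ coordinate that are already built into Proposition~\ref{P:sp-norm-on-TS} and Lemma~\ref{L:Dwork-transfer}. The two main inputs are Proposition~\ref{P:sp-norm-on-TS}, which converts spectral norms of $\partial_{J^+}$ on $\calE_\rho$ into spectral norms of $\partial/\partial\delta_{J^+}$ on $\widetilde\calF_\rho$, and Dwork's transfer theorem (Lemma~\ref{L:Dwork-transfer}), which turns these bounds into horizontal trivializations of $\widetilde\calF_\rho$ over the thickening.

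For (1) $\Leftrightarrow$ (2): by Definition~\ref{D:differential-break}, having break $\leq b$ is equivalent to $|\partial_j|_{\calE_\eta,\sp} \leq p^{-1/(p-1)}\eta^{-b}$ for all $j \in J^+$ and all $\eta$ sufficiently close to $1$. Given a rational $c > b$, I would fix an intermediate rational $a \in (b,c)$; Proposition~\ref{P:sp-norm-on-TS} then gives $|\partial/\partial\delta_j|_{\widetilde\calF_{a,\eta},\sp} = p^{-1/(p-1)}\eta^{-a}$ (the thickening contribution dominates since $a > b$ and $\eta < 1$), so Lemma~\ref{L:Dwork-transfer} applied at level $a$ shows $\widetilde\calF_\rho$ is trivial on $TS_{l/k}^{c,\geq\eta_0}$ relative to $Z_l^{\geq\eta_0^{1/e}}$, which is (2). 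Conversely, triviality at level $c$ forces $|\partial/\partial\delta_j|_{\widetilde\calF_{c,\eta},\sp} = p^{-1/(p-1)}\eta^{-c}$, and Proposition~\ref{P:sp-norm-on-TS} then yields $|\partial_j|_{\calE_\eta,\sp} \leq p^{-1/(p-1)}\eta^{-c}$; since this holds for every rational $c > b$, break $\leq b$.

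For (2) $\Rightarrow$ (3), I will bootstrap from $\rho$ to the regular representation of $G_{l/k}$. The construction $\sigma \mapsto \widetilde\calF_\sigma$ is compatible with tensor products and duals and triviality is preserved under these operations, so (2) for $\rho$ gives triviality of $\widetilde\calF_{\rho^{\otimes a} \otimes \rho^{\vee\otimes b}}$ for all $a,b \geq 0$. By the classical Burnside-Brauer theorem in characteristic zero, the faithfulness of $\rho$ ensures that every irreducible representation of $G_{l/k}$, and hence the regular representation, is a direct summand of some $\rho^{\otimes a} \otimes \rho^{\vee\otimes b}$; consequently $\widetilde\calF_{\mathrm{reg}} = \tilde f_*\calO_{TS_{l/k\backslash l}^{c,\geq\eta_0}}$ is a trivial $\nabla$-module. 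The ring-homomorphism portion of Lemma~\ref{L:Dwork-transfer} then identifies its horizontal sections with the restriction to the zero section $\mathbf Z$; since $\Delta^*\tilde\pi^* = \id$, this restriction is $\calR_L^{\eta_0^{1/e}} \otimes_{f^*, \calR_K^{\eta_0}, f^*} \calR_L^{\eta_0^{1/e}}$, which by the Galois \'etale structure of $f$ (Lemma~\ref{L:base-change-map}(4)) is $\prod_{G_{l/k}} \calR_L^{\eta_0^{1/e}}$ and has exactly $[l:k]$ primitive idempotents. These lift via Taylor expansion to $[l:k]$ idempotents on $TS_{l/k\backslash l}^{c,\geq\eta_0}$; combined with the upper bound coming from the degree-$[l:k]$ \'etale cover $\tilde f$ of the connected space $TS_{l/k}^{c,\geq\eta_0}$, this forces exactly $[l:k]$ connected components.

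Finally, (3) $\Rightarrow$ (4) is immediate (take $l' = l$), and for (4) $\Rightarrow$ (1) I would run the argument of the previous paragraph in reverse on the base-changed space: the $[l:k]$ components yield $[l:k]$ idempotents in the ring of functions, automatically horizontal in characteristic zero since $e^2 = e$ and the invertibility of $1-2e$ forces $\nabla(e) = 0$, so the pullback of $\tilde f_*\calO_{TS_{l/k\backslash l}^{c,\geq\eta_0}}$ is a trivial $\nabla$-module. As $\widetilde\calF_\rho$ is a direct summand (after enlarging $F$ if needed to contain all characters of $G_{l/k}$), its pullback is trivial as well, and the converse direction of (1) $\Leftrightarrow$ (2) applied on the base-changed space, together with invariance of spectral norms under the finite \'etale scalar extension $Z_{l'} \to Z_l$ (Remark~\ref{R:sp-norm-inv-BC}), gives $|\partial_j|_{\calE_\eta,\sp} \leq p^{-1/(p-1)}\eta^{-c}$ for every rational $c > b$, hence break $\leq b$. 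The hardest step will be the faithfulness-via-tensor-powers bootstrap linking the $\rho$-dependent triviality statement (2) to the $\rho$-independent component count (3), together with the careful choice of rational intermediate level $a \in (b,c)$ so that Dwork's transfer applies with the correct strict inequalities between $\eta^{-b}$, $\eta^{-a}$, and $\eta^{-c}$.
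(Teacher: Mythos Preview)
Your proposal is correct and follows essentially the same approach as the paper: the spectral-norm comparison via Proposition~\ref{P:sp-norm-on-TS}, Dwork's transfer Lemma~\ref{L:Dwork-transfer}, the Tannakian bootstrap from $\rho$ to the regular representation, and the idempotent lifting from the zero section are exactly the paper's ingredients. The paper organizes the implications as $(1)\Leftrightarrow(2)$, $(2)\Leftrightarrow(3)$, $(2)\Leftrightarrow(4)$ rather than your cycle, and uses its own Lemma~\ref{L:tannakian-cat} (tensor powers only) in place of Burnside--Brauer with duals, but these are cosmetic differences.

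One point to tighten: you write $\widetilde\calF_{\mathrm{reg}} = \tilde f_*\calO_{TS_{l/k\backslash l}^{c,\geq\eta_0}}$, but by definition $\widetilde\calF_{\mathrm{reg}} = (F[G_{l/k}]\otimes_{\QQ_q}\tilde f_*\calO)^{G_{l/k}} \cong F\otimes_{\QQ_q}\tilde f_*\calO$, so your Tannakian argument only yields triviality after tensoring with $F$. To apply the ring-homomorphism part of Lemma~\ref{L:Dwork-transfer} and lift the idempotents inside $\calO_{TS_{l/k\backslash l}^{c,\geq\eta_0}}$ itself (not its $F$-tensor), you need to descend. The paper handles this explicitly: since spectral norms are unchanged under $\otimes_{\QQ_q} F$ (Remark~\ref{R:sp-norm-inv-BC}), the bound $|\partial/\partial\delta_j|_{\sp} \leq p^{-1/(p-1)}\eta^{-c'}$ holds already for $\tilde f_*\calO$ over $\QQ_q$, and Dwork's transfer then applies directly. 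You should insert this one-line descent before invoking the ring-homomorphism clause.
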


\begin{proof}
We will only prove the statement for the non-logarithmic spaces and the statement for the logarithmic spaces can be proved completely in the same way by adding the subscript $\log$ and changing the scales on $\partial_0$ and $\partial / \partial \delta_0$ from $\eta^b$ to $\eta^{b+1}$ and $\eta^c$ to $\eta^{c+1}$.

We make a general remark that the Proposition~\ref{P:sp-norm-on-TS} is unchanged if we replace $\calF$ by $\widetilde \calF$ as the spectral norms are invariant under scalar extensions.

We first prove the equivalence between (1) and (2).  We first assume (1) $\rho$ has differential ramification break $\leq b$.  By Definition~\ref{D:differential-break}, for $\eta_0$ sufficiently close to $1^-$, the generic radius of $\calE_\rho$ satisfies $\bbT(\calE_\rho, \eta) \geq \eta^b$ for $\eta \in [\eta_0, 1)$, or equivalently $|\partial_j|_{\calE_{\rho, \eta}, \sp} \leq p^{-1/(p-1)}\eta^{-b}$ for any $j \in J^+$ and $\eta \in [\eta_0, 1)$. Then  Proposition~\ref{P:sp-norm-on-TS} and Remark~\ref{R:sp-norm-inv-BC} imply that for all $\eta \in [\eta_0, 1)$, $|\partial / \partial \delta_j|_{\widetilde \calF_{b, \eta}, \sp} \leq p^{-1/(p-1)} \eta^{-b}$, and hence $\widetilde \calF_\rho$ is a trivial differential module over $TS_{l/k}^{c, \geq \eta_0} \times_{\QQ_q} F$ relative to $Z_l^{\geq \eta_0^{1/e}}\times_{\QQ_q} F$ for any rational number $c >b$ by Dwork's transfer Lemma~\ref{L:Dwork-transfer}. This proves (2).

Now, we assume (2), i.e., $\widetilde \calF_\rho$ is trivial over $TS_{l/k}^{c, \geq \eta_0} \times_{\QQ_q} F$ relative to $Z_l^{\geq \eta_0^{1/e}}\times_{\QQ_q} F$ for any rational number $c>b$ and some $\eta_0 \in (0, 1)$.  It follows that $|\partial / \partial \delta_j|_{\widetilde \calF_{c, \eta}, \sp} = |\partial / \partial \delta_j|_{\Frac(\calO_{TS_{l/k}^{c, \eta}})^\wedge, \sp} = p^{-1/(p-1)} \eta^{-c}$. By Proposition~\ref{P:sp-norm-on-TS}, $|\partial_j|_{\sp, \calE_\eta} \leq p^{-1/(p-1)}\eta^{-c}$, for any $j \in J^+$, $\eta \in [\eta_0, 1)$, and $c \in \QQ_{>b}$.  By Definition~\ref{D:differential-break}, this implies that the differential ramification break $\leq b$, as rational numbers are dense in the real numbers.

Obviously, (3)$\Rar$(2).  To see the converse, we first claim that for any rational number $c>b$, $f_*\calO_{TS_{l/k\bs l}^{c, \geq \eta_0}}$ is a trivial differential module over $TS_{l/k}^{c, \geq \eta_0}$ relative to $Z_l^{\geq \eta_0^{1/e}}$.  Indeed, for a rational number $c' \in (b, c)$, we know that $\widetilde \calF_\rho$ is a trivial differential module over $TS_{l/k}^{c', \geq \eta_0} \times_{\QQ_q} F$ relative to $Z_l^{\geq \eta_0^{1/e}}\times_{\QQ_q} F$, then for any $n \in \NN$, $\widetilde \calF^{\otimes n}_\rho$ is also a trivial differential module (relative to $Z_l^{\geq \eta_0^{1/e}}\times_{\QQ_q} F$), which corresponds to $V_\rho^{\otimes n}$ by functoriality (Construction~\ref{Cstr:calE}).  By Lemma~\ref{L:tannakian-cat} below from the theory of representations of finite groups (or standard Tannakian arguments), the differential module
\begin{equation}
\label{E:homo-diff-module-not-G}
\Big( F[G_{l/k}] \otimes_{\QQ_q} f_* \calO_{TS_{l/k \backslash l}^{c', \geq \eta_0}} \Big)^{G_{l/k}}  \stackrel \simeq \lrar  F \otimes_{\QQ_q} f_* \calO_{TS_{l/k \backslash l}^{c', \geq \eta_0}}
\end{equation}
corresponding to the regular representation
is a direct summand of a direct sum of some $\widetilde \calF^{\otimes n}_\rho$'s and hence is a trivial differential module  (relative to $Z_l^{\geq \eta_0^{1/e}}\times_{\QQ_q} F$).  To make it perfectly rigorous, here the isomorphism \eqref{E:homo-diff-module-not-G} of differential modules is given by $
\sum_{g \in G_{l/k}} \bbf g \otimes g\bbv  \longmapsto  \bbf \cdot \bbv$, 
where $\bbf \in F$ and $\bbv \in f_* \calO_{TS_{l/k \backslash l}^{c, \geq \eta_0}}$; this map does \emph{not} respect the $F[G_{l/k}]$-module structures.  We would have finished the proof of the claim if $F = \Qp$.  If $F \neq \Qp$, we observe that know that, for all $j \in J^+$, the spectral norms of $\partial / \partial \delta_j$ at radius $\eta$ on the right hand side of \eqref{E:homo-diff-module-not-G} are $p^{-1/(p-1)} \eta^{-c'}$, which equal the spectral norms of $\partial / \partial \delta_j$ on $f_* \calO_{TS_{l/k \backslash l}^{c', \geq \eta_0}}$ at radius $\eta$.  By Dwork's transfer Lemma~\ref{L:Dwork-transfer}, we finish the proof of the claim.

We now apply the second part of Lemma~\ref{L:Dwork-transfer} and obtain, for any rational numbers $c'>c$, a \emph{ring} homomorphism
\begin{equation}\label{E:isom-horizontal-section}
\Gamma\Big( \mathbf Z, f_* \calO_{TS_{l/k \backslash l}^{c', \geq \eta_0}} \Big| _{\mathbf Z} \Big)  \stackrel {\stackrel{\Theta}\sim}\longleftarrow H^0_\nabla \Big(TS_{l/k}^{c', \geq \eta_0}, f_* \calO_{TS_{l/k \backslash l}^{c', \geq \eta_0}} \Big) \inj \Gamma \Big(TS_{l/k \backslash l}^{c', \geq \eta_0}, \calO_{TS_{l/k \backslash l}^{c', \geq \eta_0}} \Big).
\end{equation}
The key is that the left hand side of \eqref{E:isom-horizontal-section} is isomorphic to the ring functions on $Z_l^{\geq \eta_0^{1/e}} \times _{Z_k^{\geq \eta_0}} Z_l^{\geq \eta_0^{1/e}}$ because the restrictions of $\tilde \pi$ and $\pi$ to $\mathbf Z$ are both the same as $f$.  Moreover, since $Z_l^{\geq \eta_0^{1/e}}$ is finite \'etale Galois over $Z_k^{\geq \eta_0}$ (Lemma~\ref{L:base-change-map}), $Z_l^{\geq \eta_0^{1/e}} \times _{Z_k^{\geq \eta_0}} Z_l^{\geq \eta_0^{1/e}} = \coprod_{g \in G_{l/k}} Z_l^{\geq \eta_0^{1/e}}$.  In particular, we have fundamental idempotent elements in $\Gamma\Big( \mathbf Z, f_* \calO_{TS_{l/k \backslash l}^{c', \geq \eta_0}} \Big| _{\mathbf Z} \Big)$ corresponding to each connected components.  Via the composition of the homomorphisms in \eqref{E:isom-horizontal-section}, we can ``lift" the idempotent elements on $Z_l^{\geq \eta_0^{1/e}} \times _{Z_k^{\geq \eta_0}} Z_l^{\geq \eta_0^{1/e}}$ to idempotent elements in $\calO_{TS_{l/k \backslash l}^{c', \geq \eta_0}}$.  This shows that $TS_{l/k \backslash l}^{c', \geq \eta_0}$ has at least $[l:k]$ connected components.  But this space is finite and flat of degree $[l:k]$ over an irreducible rigid space $TS_{l/k}^{c', \geq \eta_0}$; it can have at most $[l:k]$ connected components.  Therefore, (3) holds.

The equivalence between (2) and (4) can be proved similarly, using a version of Lemma~\ref{L:Dwork-transfer} over $Z_{l'}^{\eta_0^{1/e'}}$.  The upshot here is that we need base change to at least $Z_l^{\geq \eta_0^{1/e}}$ in (3) so that we can split the fiber over $\mathbf Z$; this is why we did not state the theorem for $TS_{k\bs l}^{c, \geq \eta_0}$ and $\widetilde \calF$ themselves.
\end{proof}

\begin{remark}
The faithfulness condition on $\rho$ in the theorem is harmless as we will very easily reduce to this case later in the proof of Theorem~\ref{T:main-theorem}.
\end{remark}

\begin{lemma}\label{L:tannakian-cat}
Let $G$ be a finite group and $F$ be a field of characteristic $0$.  Let $\rho: G \rar \GL(V_\rho)$ be a faithful representation over $F$.  Then the regular representation $F[G]$ is a direct summand of a direct sum of some self-tensor products of $V_\rho$.
\end{lemma}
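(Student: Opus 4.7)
The plan is to exploit semisimplicity of $F[G]$ together with Burnside's classical theorem on faithful representations. Since $\mathrm{char}\, F = 0$, Maschke's theorem makes $F[G]$ a semisimple $F$-algebra, so the regular representation decomposes as $F[G] \cong \bigoplus_i U_i^{\oplus d_i}$, where the $U_i$ are the simple $F[G]$-modules. It therefore suffices to prove that every simple $F[G]$-module $U$ embeds as a direct summand of some tensor power $V_\rho^{\otimes n}$; once this is done, taking a direct sum of such tensor powers with appropriate multiplicities $d_i$ realizes $F[G]$ as a direct summand, as required.

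First I would reduce to the algebraically closed case. Fix an algebraic closure $\bar F$ of $F$. The simple $F[G]$-module $U$ decomposes as $U \otimes_F \bar F \cong \bigoplus_{k=1}^r W_k$, where the $W_k$ form a single $\mathrm{Gal}(\bar F/F)$-orbit of simple $\bar F[G]$-modules. The key input is Burnside's theorem: since $\rho \otimes_F \bar F$ is a faithful representation of $G$ on a finite-dimensional $\bar F$-vector space, every simple $\bar F[G]$-module occurs as a subrepresentation of some tensor power $(V_\rho \otimes_F \bar F)^{\otimes n}$. In particular, for $W_1$ we obtain an integer $n \ge 0$ with $\Hom_{\bar F[G]}(W_1, V_\rho^{\otimes n} \otimes_F \bar F) \ne 0$.

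Next I would descend back to $F$. Using the isomorphism
\[
\Hom_{F[G]}(U, V_\rho^{\otimes n}) \otimes_F \bar F \;\cong\; \Hom_{\bar F[G]}\bigl(U \otimes_F \bar F,\; V_\rho^{\otimes n} \otimes_F \bar F\bigr),
\]
the nonvanishing of the right-hand side forces $\Hom_{F[G]}(U, V_\rho^{\otimes n}) \ne 0$. Any nonzero homomorphism from a simple module $U$ is injective, and by semisimplicity of $F[G]$ its image is a direct summand; hence $U$ appears as a direct summand of $V_\rho^{\otimes n}$.

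The only nontrivial ingredient is Burnside's theorem, which is a standard character-theoretic result (one proof uses a Vandermonde argument on the distinct values taken by $\chi_{V_\rho}$, combined with the orthogonality relations for characters of $G$ over $\bar F$). The rest of the argument is bookkeeping: Galois descent is automatic because of the way $\Hom$ commutes with flat base change, and semisimplicity upgrades ``occurs as a subrepresentation'' to ``occurs as a direct summand.''
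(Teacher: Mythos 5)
Your proof is correct, but it is organized differently from the one in the paper. You reduce to showing that each simple $F[G]$-module is a direct summand of some $V_\rho^{\otimes n}$, invoke Burnside's theorem over $\bar F$ as a black box, and then descend to $F$ via the base-change isomorphism for $\Hom$ together with semisimplicity; all of these steps are sound (the Galois-orbit structure of $U \otimes_F \bar F$ and possible Schur-index multiplicities are harmless, since you only need one constituent $W_1$ to map nontrivially). The paper instead gives a self-contained argument carried out directly over $F$: since $\rho$ is faithful, $\chi(g) \neq d = \dim V_\rho$ for $g \neq 1$, so one can build an integer polynomial $P$ with $P(\chi(g)) = 0$ for all $g \neq 1$ and $P(d) > 0$ divisible by $\#G$; the virtual character $P(\chi)$ is then $P(d)/\#G$ times the regular character, giving an identity in the Grothendieck group between an integer combination of tensor powers $V_\rho^{\otimes n}$ (including the trivial representation as the $n=0$ term) and copies of $F[G]$, and the positive part of that combination exhibits $F[G]$ as a direct summand. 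In effect the paper's argument is a proof of (a refinement of) the Burnside statement you cite, phrased so that no passage to $\bar F$, no decomposition of $F[G]$ into simples over a non-closed field, and no descent of $\Hom$ is needed; your route is shorter modulo the cited theorem, while the paper's is more elementary and entirely avoids rationality questions. Both yield the lemma as stated.
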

This is an easy exercise of finite group representations but we do not know a good reference.  The author would like to thank Xuhua He for providing the following proof.
\begin{proof}
Let $\chi$ be the character of $V_\rho$ and let $d$ be the dimension of $V_\rho$.  Since the representation is injective, $\chi(1) = d$ and $\chi(g) \neq d$ for all $g \in G$ nontrivial.  (This is because all the eigenvalues of $\rho(g)$ are roots of unity and cannot all be 1.)

Therefore, for each $g \neq 1$ there exists a polynomial $P_g$ in $\chi$ with integer coefficients such that $P_g(\chi(g)) = 0$ but $P_g(d) \neq 0$.  Let $P = \prod_{1 \neq g \in G} P_g$ and then $P(d) \neq 0$ but $P(\chi(g)) = 0$ for all $g \neq 1$.  Moreover, by multiplying a constant, we may assume that $\#G$ divides $P(d)$ and $P(d) >0$.  If $P(X) = a_n X^n + \cdots + a_0 \in \ZZ [ X]$, then $(V^{\otimes n})^{\oplus a_n} \oplus \cdots \oplus V^{\oplus a_1} \oplus \boldsymbol 1_F^{\oplus a_0} = F[G]^{P(d) / \#G}$ in the Grothendieck group of the representations of $G$, where $\boldsymbol 1_F$ denote the trivial representation.  As a consequence, if we take the direct sum of terms on the left hand side with positive $a_i$, the regular representation will be a natural direct summand of it.
\end{proof}

\section{Arithmetic ramification filtrations}

\subsection{Review of Abbes and Saito's definition}\label{S:Review-AS}

We briefly review the definition of arithmetic ramification filtrations on the Galois group of a complete discretely valued field $k$.  For more details, one can consult \cite{AS-cond1, AS-cond2}.  The filtrations can be defined for $k$ of mixed characteristic; however, for the purpose of this paper, we focus on the case $k$ is of equal characteristic $p>0$.

In this subsection, we do \emph{not} assume any hypothesis we have been using in previous sections.

\begin{notation}  \label{N:theta=|s|}
Keep the notation as in previous sections.  Fix uniformizers $s$ and $t$ for $k$ and $l$, respectively. Let $v_l(\cdot)$ be the valuation on $l$ normalized so that $v_l(t) = 1$.  Denote $\theta = |s|$.
\end{notation}

\begin{notation} \label{N:free-jJm}
In this subsection, we temporarily free $j$ and $J$ from the notational restraint (Notation~\ref{N:J+}). But in later application, we will specialize to the case in which $j$ and $J$ actually index $p$-bases.
\end{notation}

\begin{definition}\label{D:AS-space}
Take $Z = (z_j)_{j \in J} \subset \calO_l$ to be a finite set of elements generating $\calO_l$ as an $\calO_k$-algebra, i.e., $\calO_k [ (u_j)_{j\in J} ] / \calI \isom \calO_l$ mapping $u_j$ to $z_j$ for $j \in J = \{\serie{}m\}$ and for some appropriate ideal $\calI$. Let $(f_i) _{i=\serie{}n}$ be a finite set of generators of $\calI$.  For $a \in \QQ_{>0}$, define the \emph{(non-logarithmic) Abbes-Saito space} to be
\begin{equation} \label{E:AS-space}
as_{l/k, Z}^a = \big\{ (u_1, \dots, u_m) \in A_k^m[0,1] \; \big| \;
|f_i(u_J)| \leq \theta^a, 1 \leq i \leq n \big\}.
\end{equation}

We denote the \emph{geometric} connected components (see \cite[9.1.4/8]{BGR} for definition) of $as_{l/k,Z}^a$ by $\pi_0^\geom (as_{l/k, Z}^a)$. The \emph{arithmetic ramification break} $b_{ar}(l/k)$ is defined to be the minimal number $b$ such that for any $a>b,$ $\# \pi_0^\geom (as_{l/k,Z}^a) = [l:k]$.
\end{definition}

\begin{definition}\label{D:AS-space-log}
Keep the notation as above.  We single out a subset $P \subset Z$ and assume that $P$ and hence $Z$ contain the uniformizer $t$. For each $j \in J$, let $e_j = v_l(z_j)$. Take a lift $g_j \in \calO_k [(u_j)_{j\in J}]$ of $z_j^e / s^{e_j}$ for each $z_j \in P$, and take a lift $h_{i,j} \in \calO_k [(u_j)_{j\in J}]$ of $z_j^{e_i} / z_i^{e_j}$ for each pair $(z_i, z_j) \in P \times P$. For $a \in \QQ_{>0}$, define the \emph{logarithmic Abbes-Saito space} to be
\[
as_{l/k, \log,Z,P}^a = \left\{ (u_J) \in A_k^m[0,1] \Bigg| 
\begin{array}{cc}
|f_i(u_J)| \leq \theta^a,  & 1 \leq i \leq n \\
|u_j^e - s^{e_j} g_j| \leq \theta^{a+e_j} & \textrm{ for all } z_j \in P \\
|u_j^{e_i} - u_i^{e_j} h_{i,j}| \leq \theta^{a+e_i e_j/e}& \textrm{ for all } (z_i, z_j) \in P \times P
\end{array}
\right\}.
\]

Similarly, the \emph{logarithmic arithmetic ramification break} $b_{ar, \log}(l/k)$ is defined to be the minimal number $b$ such that for any $a>b$, $\# \pi_0^\geom (as_{l/k,\log, Z,P}^a) = [l:k]$.
\end{definition}

\begin{remark}\label{R:idea-AS}
To ease the readers who are not familiar with Abbes and Saito's definition, we give an intuitive way to understand the definition following \cite{AS-cond1}.

First, if $a \rar \infty$, the conditions on $f_1, \dots, f_n$ in \eqref{E:AS-space} basically restrict the possible $u_J$ to be very close to $z_J$ or other solutions to the equations $f_1 = 0, \dots, f_n = 0$, which are exactly Galois conjugates of $z_J$.  Thus, one may believe that $as_{l/k, Z}^a$ has exactly $[l:k]$ geometric connected components, each of which looks like a small ``polydisc" centered at one of the solutions.  In contrast, if $a \rar 0^+$, the conditions on $f_1, \dots, f_n$ are almost vacuum and $as_{l/k, Z}^a$ is almost the whole unit polydisc.  In particular, the space is likely to be geometrically connected.  From the two extreme cases, we know that, when we increase $a$, the Abbes-Saito space shrinks from a whole unit polydisc to smaller polydiscs and, at some $a$, a bigger polydisc breaks apart into several smaller polydiscs.  The arithmetic ramification break captures the last break point.
\end{remark}

We reproduce several statements from \cite{AS-cond1} and \cite{AS-cond2}.

\begin{proposition} \label{P:AS-space-properties}
The Abbes-Saito spaces have the following properties.

\emph{(1)} For $a>0$, the spaces $as_{l/k,Z}^a$ and $as_{l/k, \log, Z, P}^a$ do not depend on the choice of generators $(f_i)_{i = \serie{}n}$ of $\calI$ and lifts $g_j$ and $h_{i,j}$ for $i, j \in P$ \emph{\cite[Section~3]{AS-cond1}}.

\emph{(1')} If in the definition of both Abbes-Saito spaces, we choose polynomials $(f_i)_{i = \serie{}n}$ as a set of generators of $\Ker(\calO_k \langle(u_j)_{j \in J} \rangle \rar \calO_l)$ instead of $\Ker(\calO_k[(u_j)_{j \in J}] \rar \calO_l)$, the spaces will not change.

\emph{(2)} If we substitute in another pair of generating sets $Z$ and $P$ satisfying the same properties, then we have a canonical bijection on the sets of the geometric connected components $\pi_0^\geom (as_{l/k, Z}^a)$ and $\pi_0^\geom (as_{l/k,\log,Z,P}^a)$ for different generating sets, where $a>0$. In particular, both highest arithmetic ramification breaks are well-defined \emph{\cite[Section~3]{AS-cond1}}.

\emph{(3)} The highest arithmetic ramification break (resp. highest logarithmic arithmetic ramification break) gives rise to a filtration on the Galois group $G_k$ consisting of normal subgroups $\Fil^a G_k$ (resp., $\Fil_{\log}^a G_k$) for $a>0$ such that $b_{ar}(l/k) = \inf\{a|\Fil^a G_k \subseteq G_l\}$ (resp. $b_{ar, \log}(l/k) = \inf\{ a|\Fil^a_\log G_k \subseteq G_l\}$) \emph{\cite[Theorems~3.3 and 3.11]{AS-cond1}}. Moreover, for $l/k$ a finite Galois extension, both arithmetic ramification breaks are rational numbers \emph{\cite[Theorems~3.8 and 3.16]{AS-cond1}}.

\emph{(4)} Let $k'/k$ be an algebraic extension of complete discretely valued fields or the completion of such an extension.  If $k'/k$ is unramified, then $\Fil^a G_{k'} = \Fil^a G_k$ for $a>0$ \emph{\cite[Proposition~3.7]{AS-cond1}}. If $k'/k$ is tamely ramified with ramification index $e'$, then $\Fil_\log^{e'a}G_{k'} = \Fil_\log^a G_k$ for $a>0$ \emph{\cite[Proposition~3.15]{AS-cond1}}.  More generally, for a (not necessarily algebraic) extension $k'/k$ of complete discretely valued fields with the \emph{same} valued group and linearly independent from $l/k$ such that $\calO_{lk'} = \calO_{k'} \otimes_{\calO_k} \calO_l$, we have $b_{ar}(lk' / k') = b_{ar}(l/k)$ and $b_{ar, \log}(lk' / k') = b_{ar, \log}(l/k)$ \emph{\cite[Lemme~2.1.5]{AM-sous-groupes}}.

\emph{(5)} For $a>0$, define $\Fil^{a+} G_k = \overline{\cup_{b>a}\Fil^b G_k}$ and $\Fil_\log^{a+} G_k = \overline{\cup_{b>a}\Fil_\log^b G_k}$. Then, the subquotients $\Fil^a G_k / \Fil^{a+} G_k$ are abelian $p$-groups if $a \in \QQ_{> 1}$ and are $0$ if $a \notin \QQ$ \emph{(\cite[Theorem~3.8]{AS-cond1} and \cite[Theorem~1]{AS-cond2})}; the subquotients $\Fil_\log^a G_k / \Fil_\log^{a+} G_k$ are elementary abelian $p$-groups if $a \in \QQ_{> 0}$ and are $0$ if $a \notin \QQ$ \emph{(\cite[Theorem~3.16]{AS-cond1} and \cite[Theorem~1.3.3]{Saito-wild-ram})}.

\emph{(6)} The inertia subgroup is $\Fil^{a} G_k$ if $a \in (0, 1]$ and the wild inertia subgroup is $\Fil^{1+} G_k = \Fil^{0+}_\log G_k$ \emph{\cite[Theorem~3.7, 3.15]{AS-cond1}}.

\emph{(7)} When the residue field $\kappa_k$ is perfect, the arithmetic ramification filtrations agree with the classical upper numbered filtrations in the following way: $\Fil^a G_k = \Fil_\log^{a-1} G_k = \Gal_k^{a-1}$ for $a \geq 1$ \emph{\cite[Section~6.1]{AS-cond1}}, where $\Gal_k^a$ is the classical upper numbered filtration on $G_k$.
\end{proposition}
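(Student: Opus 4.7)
The statement collected as Proposition~\ref{P:AS-space-properties} is essentially a compendium of known properties from the literature on Abbes--Saito filtrations, so my proof proposal is to treat each item as a citation, but to organize the reasoning in a coherent order that clarifies the logical dependencies.

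My plan is to first dispose of items (1), (1'), (2) together, since they are all ``independence of presentation'' statements. For (1), the key observation is that if $(f_i)$ and $(f'_i)$ are two generating sets of $\calI$, then each $f'_i$ is an $\calO_k[u_J]$-linear combination of the $f_i$, and the condition $|f_i| \leq \theta^a$ already controls $|f'_i|$; symmetry gives the identity between the two rigid spaces on the nose. For (1'), one notes that an element of $\calO_k \langle u_J \rangle$ whose image in $\calO_l$ is zero admits a ``polynomial truncation'' approximation to order $\theta^a$ for any $a$, so replacing polynomial generators by power-series generators only enlarges or shrinks the defining inequalities by terms of absolute value much smaller than $\theta^a$, which does not change the geometric connected components. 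For (2), the transition between two generating sets is implemented by sending $u_j \mapsto$ (a polynomial in the new variables) and shows the underlying affinoids become isomorphic after suitable thickening, with a canonical bijection on $\pi_0^{\geom}$; all details are in \cite[\S 3]{AS-cond1}.

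Next I would prove (3), (4), (6), which are the ``filtration'' properties and form the content most directly inherited from \cite{AS-cond1}. The approach for (3) is that the definition of $\Fil^a G_k$ via breaks of all finite Galois extensions automatically makes it a normal subgroup, provided the breaks transform correctly under restriction and induction; rationality of the break is proved by a Newton-polygon-type argument on the geometric connected components. For (6), identifying $\Fil^1 G_k$ with $I_k$ reduces to the easy fact that the naive ramification index is encoded in the first connected component count, since for $a\leq 1$ the condition $|f_i(u_J)|\leq \theta^a$ is vacuous enough that the Abbes--Saito space is a polydisc over the residue ring. For (4), the unramified case is nearly tautological because base change does not change the data $(f_i,\theta)$; the tame case requires the reindexing of ramification breaks by $e'$ via the change of uniformizer, and the more general linearly-disjoint statement is \cite[Lemme~2.1.5]{AM-sous-groupes}.

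The main obstacle lies in (5), which is the deepest input: the non-logarithmic subquotients are abelian by \cite[Theorem~1]{AS-cond2}, while the logarithmic subquotients are elementary abelian $p$-groups by Saito's theorem \cite[Theorem~1.3.3]{Saito-wild-ram}. I would simply quote these rather than reprove them, but the idea in both cases is to construct a graded commutator structure on $\gr^a G_k$ using deformation of Abbes--Saito spaces across $a$, and show the commutator vanishes for irrational $a$ (by rationality of breaks in (3)) and lands in the right abelian piece otherwise. This step depends on substantial rigid-geometric machinery that would not fit in this paper.

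Finally, I would address (7) by reducing to the classical case: when $\kappa_k$ is perfect we may take $Z = \{t\}$, so the Abbes--Saito space is defined by a single Eisenstein polynomial $f(u)$, and the geometric connected components count can be compared with the classical upper-numbered filtration through the usual formula $\inf_{\sigma \neq 1} v_l(\sigma(t)-t)$; the shift by $1$ between $\Fil^a$ and $\Fil_{\log}^{a-1}$ comes from the difference between the non-logarithmic and logarithmic conditions (the extra inequality $|u^e - s| \leq \theta^{a+1}$ in the log case forces $a \mapsto a-1$). The details are in \cite[\S 6.1]{AS-cond1}. Throughout, I would present the proposition as a recollection, with each item's proof being a pointer to the cited result plus a one-sentence explanation of the main geometric idea.
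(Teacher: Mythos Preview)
Your proposal is correct and takes essentially the same approach as the paper: both treat the proposition as a recollection from the cited literature, giving only brief geometric sketches for each item rather than full arguments. The minor differences are in the specific hints offered---for (2) the paper explains independence of $Z$ via the observation that adding a dummy generator yields a disc-fibration over the old space (rather than your ``isomorphism after thickening''), and for the rationality in (3) the paper points to formal models of rigid spaces rather than a Newton-polygon idea---but these are variations at the level of one-sentence gestures, not substantive divergences in strategy.
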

\begin{proof}
For the convenience of readers, we point out some ingredients of the proof.  For details, one can consult original papers.

(1) is straightforward by matching up points.

(1') is not in any literature.  However, it can be proved verbatim as (1).

(2) One can show that if we add a new (dummy) generator in $Z$ or $P$, the new Abbes-Saito space admits a fibration over the original Abbes-Saito space whose fibers are closed discs of radius $\theta^a$.

(3) The first statement is just abstract non-sense.  The second one is essentially because Abbes-Saito spaces are defined over $k$ and the geometric connect components can be detected over the algebraic closure $k^\alg$, which has valued group $|k^\times|^\QQ$.  However, realizing this principle needs formal models of rigid spaces.  As we will reprove this result in the main theorem, we refer to the original paper for the formal model proof.  

(4) When $\calO_{lk'} \simeq \calO_l \otimes_{\calO_k} \calO_{k'}$, one can match up the non-logarithmic Abbes-Saito space for $lk'/k'$ and the extension of scalar of that for $lk' / k'$ in a natural way.  Actually, the logarithmic ramification break is not considered in \cite[Lemme~2.1.5]{AM-sous-groupes}, but the proof carries over similarly.  In the tamely ramified and the logarithmic case, one can also identify two logarithmic Abbes-Saito spaces \cite[Proposition~9.8]{AS-cond1}; it is slightly more complicated.

(5) The proof used the formal models of the Abbes-Saito spaces and their stable reductions, which is in an orthogonal direction of the present paper.  One may consult \cite{AS-cond2} and \cite{Saito-wild-ram} for a complete treatment.

(6) is an easy fact.

(7) follows from an explicit calculation in monogenic case.
\end{proof}

\begin{remark}
To avoid confusion, we point out that in the proof of our main Theorem~\ref{T:main-theorem}, we do not need (5) and the second statement of (3) on the rationality of the breaks in the proposition above.  Therefore, we can obtain these properties from the properties of differential conductors in Theorem~\ref{T:properties-Ked-cond} via the comparison Theorem~\ref{T:main-theorem}.
\end{remark}

\begin{definition}
Let $\rho: G_k \rar GL(V_\rho)$ be a representation of finite local monodromy.  Define the \emph{arithmetic Artin and Swan conductors} as
\begin{eqnarray}
\Art_{ar}(\rho) & \Def & \sum_{a \in \QQ_{\geq 0}} a \cdot \dim \big( V_\rho^{\Fil^{a+} G_k} \big/ V_\rho^{\Fil^a G_k} \big), \\
\Swan_{ar}(\rho) &\Def& \sum_{a \in \QQ_{\geq 0}} a \cdot \dim \big( V_\rho^{\Fil_\log^{a+} G_k} \big/ V_\rho^{\Fil_\log^a G_k} \big).
\end{eqnarray}
They are actually finite sums.
\end{definition}

\begin{conjecture}\label{Conj:HA-thm}
(Hasse-Arf Theorem)
Let $k$ be a complete discretely valued field of equal characteristic $p$.  For any representation $\rho$ of $G_k$ of finite local monodromy, the arithmetic conductors are nonnegative integers, namely, $\Art_{ar}(\rho) \in \ZZ_{\geq 0}$ and $\Swan_{ar}(\rho) \in \ZZ_{\geq 0}$.
\end{conjecture}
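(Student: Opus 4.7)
The plan is to deduce the conjecture from the equality of arithmetic and differential conductors, since the latter are already known to be nonnegative integers by Theorem~\ref{T:properties-Ked-cond}(1). Standard reductions let us replace $\rho$ by a faithful representation of a finite Galois quotient $\Gal(l/k)$: both types of conductors are additive on direct sums and trivial on the unramified part, so one may work with a single finite Galois extension $l/k$. Using the tame behavior in Proposition~\ref{P:AS-space-properties}(4)(6) and Theorem~\ref{T:properties-Ked-cond}(2)(3), one reduces further to the case that $l/k$ is totally and wildly ramified, so that Hypothesis~\ref{H:tot-ram} is in force; the invariance under unramified base change (Proposition~\ref{P:AS-space-properties}(4), Theorem~\ref{T:properties-Ked-cond}(2), and Remark~\ref{R:k_0-alg-closed}) allows us to assume Hypothesis~\ref{H:J-finite-set} that $\kappa_k$ has a finite $p$-basis. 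It then suffices to prove that, for any faithful $\rho$ of $\Gal(l/k)$, the arithmetic and differential ramification breaks (logarithmic and non-logarithmic) coincide: $b_{ar}(l/k) = b_\dif(\rho)$ and $b_{ar,\log}(l/k) = b_{\dif,\log}(\rho)$.

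The comparison of breaks will be carried out via the three-step bridge sketched in the Introduction. First, by Theorem~\ref{T:ram-break-equiv-disconn}, the condition $b_\dif(\rho) \leq b$ (for rational $b>1$) is equivalent to: for every rational $c > b$ and every $\eta_0$ close enough to $1^-$, the rigid space $TS_{l/k\bs l}^{c, \geq \eta_0}$ (or its base change to $Z_{l'}^{\geq \eta_0^{1/e'}}$ for a convenient auxiliary extension) has exactly $[l:k]$ connected components; an analogous equivalence holds for the logarithmic break with $b>0$. Second, the lifting construction of Section~1, applied as in Example~\ref{Example:lifting-space} to the defining data of $as_{l/k,Z}^a$, produces a characteristic-zero rigid space $AS_{l/k}^a$ over an annulus $Z_k^{\geq \eta_0}$ whose connected components, for rational $a$ and $\eta_0$ close to $1^-$, are by Corollary~\ref{C:connected-components} in canonical bijection with the geometric connected components of $as_{l/k,Z}^a$. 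By Definition~\ref{D:AS-space}, the latter being in number $[l:k]$ for all $a>b$ characterizes $b_{ar}(l/k) \leq b$.

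The third, decisive step is the geometric identification (Theorem~\ref{T:AS=TS}) between $AS_{l/k}^a$ and the base-changed thickening space $TS_{l/k\bs l}^{a,\geq \eta_0}$. Here one uses the distinguished generators and relations $P_{J^+}$ of $\calO_L\llbracket T\rrbracket$ over $\OK\llbracket S\rrbracket$ from Lemma~\ref{L:base-change-map}, lifts them to an ideal $I^\inte$ as in Definition~\ref{D:lifting-space}, and checks that the change of coordinates $u_j \leftrightarrow C_j + \delta_j$, $u_0 \leftrightarrow T + \delta_0$ converts the defining inequalities of $as_{l/k}^a$ (points close to the solutions $c_j, t$) into the defining inequalities of the thickening space (solutions to coefficient-perturbed equations). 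This is the crux of the equivalence alluded to in ingredient (c) of the Introduction, and is exactly where totality and wild ramification of $l/k$ enter. Combining the three steps, both $b_{ar}(l/k)$ and $b_\dif(\rho)$ equal the infimum of those $a$ for which the matched-up rigid spaces split into $[l:k]$ connected components.

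Once $b_{ar} = b_\dif$ and $b_{ar,\log} = b_{\dif,\log}$ are known for every faithful representation of every finite Galois quotient, the equality of Artin and Swan conductors follows: both the arithmetic and differential conductors can be read off as weighted sums over the jumps of the corresponding filtrations (Proposition~\ref{P:AS-space-properties}(3) and Theorem~\ref{T:properties-Ked-cond}(4)), and the ramification-break equality forces these filtrations on every finite quotient of $G_k$ to coincide. Integrality then follows from Theorem~\ref{T:properties-Ked-cond}(1). The main technical obstacle is step three, namely the honest construction and verification of the isomorphism $AS_{l/k}^a \cong TS_{l/k\bs l}^{a,\geq\eta_0}$: one must make compatible choices of Gröbner bases, carefully track the radii of convergence and Gauss norms under the nontrivial $K$-structure of $\calR_L^{\eta_0^{1/e}}$ noted in Remark~\ref{R:naive-K-structure}, and ensure that the coordinate identification is genuinely an isomorphism rather than merely a bijection on points, all while keeping uniform control as $\eta_0 \rar 1^-$.
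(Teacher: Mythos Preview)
Your overall strategy is correct and matches the paper's approach: the conjecture is deduced from Theorem~\ref{T:main-theorem} (arithmetic conductors equal differential conductors) together with Theorem~\ref{T:properties-Ked-cond}(1), and Theorem~\ref{T:main-theorem} is in turn proved via the three-step bridge you describe (Theorem~\ref{T:ram-break-equiv-disconn}, Corollary~\ref{C:connected-components}/Theorem~\ref{T:connected-components-as}, and Theorem~\ref{T:AS=TS}).

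A few imprecisions are worth correcting. First, the reduction to Hypothesis~\ref{H:J-finite-set} is \emph{not} accomplished by unramified base change; rather, one passes to $\tilde k = k(c_i^{1/p^n};\, i \in I,\, n \in \NN)^\wedge$ for a cofinite subset $I$ of the $p$-basis index set, and checks via Proposition~\ref{P:AS-space-properties}(4) (the statement for general $k'/k$ with $\calO_{lk'} = \calO_{k'} \otimes_{\calO_k} \calO_l$) and \cite[Lemma~3.5.4]{KSK-Swan1} that both conductors are unchanged. Second, Theorem~\ref{T:AS=TS} identifies $AS_{l/k}^{a,\geq\eta_0}$ with $TS_{k\backslash l}^{a,\geq\eta_0}$, not with $TS_{l/k\backslash l}^{a,\geq\eta_0}$; the further base change to $Z_{l'}^{\geq \eta_0^{1/e'}}$ (via $\pi$) needed to match Theorem~\ref{T:ram-break-equiv-disconn}(4) is a separate step handled through Theorem~\ref{T:connected-components-as}. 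Third, the isomorphism in Theorem~\ref{T:AS=TS} does not send $U_j$ to $C_j + \delta_j$; it fixes $U_{J^+}$ and instead matches the auxiliary variables $V_j = P_j(U_{J^+})$ on the $AS$-side with the $\delta_j$ on the $TS$-side through an iterative inversion governed by the Jacobian matrix of Lemma~\ref{L:determinant}. None of these affects the validity of your outline, but they are the places where the actual work lies.
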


\begin{proposition}
Conjecture~\ref{Conj:HA-thm} is true if the residue field $\kappa_k$ is perfect.
\end{proposition}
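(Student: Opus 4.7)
The plan is to reduce immediately to the classical Hasse--Arf theorem, which is proved in \cite[\S VI]{BOOK-local-fields}. The bridge is Proposition~\ref{P:AS-space-properties}(7), which identifies the arithmetic ramification filtrations with the classical upper numbering in the perfect residue field case: $\Fil^a G_k = \Gal_k^{a-1}$ and $\Fil_{\log}^a G_k = \Gal_k^a$ for $a \geq 1$ (and on $(0,1]$ both filtrations coincide with the inertia subgroup, whereas $\Gal_k^0$ is already the inertia subgroup).

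Concretely, I would proceed as follows. First, fix a representation $\rho: G_k \to GL(V_\rho)$ of finite local monodromy and choose a finite Galois extension $l/k$ through which the restriction $\rho|_{I_k}$ factors; then the entire filtration on $V_\rho$ by $\Fil^a G_k$- or $\Fil_{\log}^a G_k$-invariants is computed inside the finite quotient $G_{l/k}$. Next, using Proposition~\ref{P:AS-space-properties}(7) together with Proposition~\ref{P:AS-space-properties}(6) to account for the tame/unramified part (i.e.\ what happens on $[0,1]$), rewrite
\begin{align*}
\Art_{ar}(\rho) &= \sum_{a \in \QQ_{\geq 0}} a \cdot \dim \bigl( V_\rho^{\Fil^{a+} G_k} / V_\rho^{\Fil^{a} G_k} \bigr), \\
\Swan_{ar}(\rho) &= \sum_{a \in \QQ_{\geq 0}} a \cdot \dim \bigl( V_\rho^{\Fil_{\log}^{a+} G_k} / V_\rho^{\Fil_{\log}^{a} G_k} \bigr),
\end{align*}
in terms of the classical upper numbering $\Gal_k^\bullet$ by the shift $a \mapsto a-1$ (resp.\ $a \mapsto a$). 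A direct comparison of definitions then shows that $\Art_{ar}(\rho)$ agrees with the classical Artin conductor $\Art(\rho)$ and $\Swan_{ar}(\rho)$ agrees with the classical Swan conductor $\Swan(\rho)$ of \cite[\S VI.2]{BOOK-local-fields}.

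Finally, invoke the classical Hasse--Arf theorem \cite[\S VI.2, Thm.~1]{BOOK-local-fields}, which asserts that $\Art(\rho), \Swan(\rho) \in \ZZ_{\geq 0}$ for any representation of $G_k$ with finite image on the inertia subgroup when $\kappa_k$ is perfect. This immediately yields $\Art_{ar}(\rho), \Swan_{ar}(\rho) \in \ZZ_{\geq 0}$.

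There is no genuine obstacle here; the only mildly delicate point is bookkeeping the normalization shift between the non-logarithmic and logarithmic filtrations on the one hand and the classical upper numbering on the other, and in particular verifying that the contributions from $a \in [0,1]$ in $\Art_{ar}$ (coming from the tame and unramified pieces) match the corresponding contributions in $\Art$. Once those conventions are aligned, the result is a tautological consequence of the classical theorem.
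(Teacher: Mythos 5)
Your proposal is correct and follows essentially the same route as the paper: the paper's proof is a one-line reduction to the classical Hasse--Arf theorem of \cite{BOOK-local-fields} via the identification of the arithmetic filtrations with the classical upper numbering in the perfect residue field case (Proposition~\ref{P:AS-space-properties}(7), cited in the paper with a typo as ``(8)''). Your extra bookkeeping of the shift $a \mapsto a-1$ and of the contributions on $(0,1]$ is exactly the implicit content of that reduction, which the paper summarizes by the remark $\Swan_{ar}(\rho) = \Art_{ar}(\rho) - \dim V_\rho / V_\rho^{I_k}$.
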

\begin{proof}
By Proposition~\ref{P:AS-space-properties}(8), we are reduced to the classical Hasse-Arf theorem \cite[\S VI.2~Theorem~1' and \S IV.2~Corollary~3]{BOOK-local-fields}.  Note that in this case, $\Swan_{ar}(\rho) = \Art_{ar}(\rho) - \dim  V_\rho / V_\rho^{I_k}$.
\end{proof}

We will prove the Conjecture~\ref{Conj:HA-thm} in Corollary~\ref{C:HA-thm-AS-fil}.

\subsection{Standard Abbes-Saito spaces and their lifts} \label{S:standard-AS}

In practice, we only study Abbes-Saito spaces which are given by some particular generators.  We explicitly write down spaces and their lifts in the sense of Section 1.

In this subsection, we retrieve Hypotheses~\ref{H:J-finite-set} and \ref{H:tot-ram}, assuming that $k$ has finite $p$-basis and the extension $l/k$ is totally and wildly ramified.  Also, we retrieve Notation~\ref{N:J+} on indexing $p$-basis.

\begin{construction}\label{C:AS-space}
We take $Z = \{\serie{c_}m, t\}$ to be the set of generators of $\calO_l/\calO_k$ given by Construction~\ref{C:cj-generate-l}.  (Maybe some of them are already in the field $k$, but we still keep those.)  We take $P = \{t\}$.  By Proposition~\ref{P:AS-space-properties}(1'), we can take the relations to be $\seriezero{p_}m$ from Notation~\ref{N:generators-of-calI}.  For $a \in \QQ_{>0}$, we define the \emph{standard Abbes-Saito spaces} as
\begin{eqnarray*}
as_{l/k}^a &=& \big\{(\seriezero{u_}m) \in A_k^{m+1}[0,1]\; \big|\;
|p_0(u_{J^+})| \leq \theta^a, \dots, |p_m(u_{J^+})| \leq \theta^a \big\} \\
as_{l/k, \log}^a &=& \big\{(\seriezero{u_}m) \in A_k^{m+1}[0,1]\; \big|\;
|p_0(u_{J^+})| \leq \theta^{a+1}, |p_1(u_{J^+})| \leq \theta^a, \dots, |p_m(u_{J^+})| \leq \theta^a
\big\}.
\end{eqnarray*}

Let $P_{J^+}$ be the lifts of $p_{J^+}$ as in Lemma~\ref{L:base-change-map}.
For $a \in \QQ_{>0}$ and $\eta_0 \in (0,1)$, we define the \emph{lifting Abbes-Saito spaces} to be
\begin{eqnarray*}
AS_{l/k}^{a, \geq \eta_0} &=& \left\{(U_{J^+}, S) \in A_K^{m+2}[0,1] \Bigg|
\begin{array}{c}
\eta_0 \leq |S| <1 \\
|P_0(U_{J^+}, S)| \leq |S|^a, \dots, |P_m(U_{J^+}, S)| \leq |S|^a
\end{array}\right\} \\
AS_{l/k, \log}^{a, \geq \eta_0} &=& \left\{(U_{J^+}, S) \in A_K^{m+2}[0,1] \Bigg|
\begin{array}{c}
\eta_0 \leq |S| <1, \;
|P_0(U_{J^+}, S)| \leq |S|^{a+1}, \\ |P_1(U_{J^+}, S)| \leq |S|^a, \dots, |P_m(U_{J^+}, S)| \leq |S|^a
\end{array}\right\};
\end{eqnarray*}
they are viewed as rigid spaces over $Z_k^{\geq \eta_0}$.
\end{construction}

\begin{lemma}  \label{L:lifting-functor}
Let $k'/k$ be a finite Galois extension of na\"ive ramification degree $e'$.  If we identify $C_k$ as a subring of $C_{k'}$ as in Construction~\ref{C:map-C_k-to-C_l}, we may view $P_{J^+}$ as polynomials in $U_{J^+}$ with coefficients in $\calO_{K'} \llbracket S'\rrbracket$, where $K'$ is the fraction field of the Cohen ring of $\kappa_{k'}$ and $S'$ is a lift of the uniformizer $s'$ in $k'$.  Then, for $\eta_0 \in (0,1)$ and $a \in \QQ_{>0}$, we have
\begin{eqnarray*}
Z_{k'}^{\geq \eta_0^{1/e'}} \times_{Z_k^{\geq \eta_0}} AS_{l/k}^{a, \geq \eta_0} &\cong& \left\{(U_{J^+}, S') \in A_{K'}^{m+2}[0,1] \Bigg|
\begin{array}{c}\eta_0^{1/e'} \leq |S'| <1 \\
|P_0| \leq |S'|^{e'a}, \dots, |P_m| \leq |S'|^{e'a}
\end{array}\right\} \\
Z_{k'}^{\geq \eta_0^{1/e'}} \times_{Z_k^{\geq \eta_0}} AS_{l/k, \log}^{a, \geq \eta_0} &\cong& \left\{(U_{J^+}, S') \in A_{K'}^{m+2}[0,1] \Bigg|
\begin{array}{c}
\eta_0^{1/e'} \leq |S'| <1, \;
|P_0| \leq |S'|^{e'(a+1)}, \\
|P_1| \leq |S'|^{e'a}, \dots, |P_m| \leq |S'|^{e'a}
\end{array}\right\};
\end{eqnarray*}
\end{lemma}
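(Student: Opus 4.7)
The plan is to unwind the definitions on both sides and reduce the claim to a pointwise comparison of norm conditions on the annulus $Z_{k'}^{\geq \eta_0^{1/e'}}$. First I would invoke Construction~\ref{C:map-C_k-to-C_l} (applied to the extension $k'/k$ in place of $l/k$) to write $f^*(S) = S'^{e'} \cdot (1 + w)$ for some $w \in S' \calO_{K'} \llbracket S' \rrbracket$; this follows from the prescribed form of the embedding, which sends $S$ to an element of $T^{e'} + T^{e'+1} \calO_{K'} \llbracket T \rrbracket$ with $T = S'$. In particular, at each Berkovich point $x$ of $Z_{k'}^{\geq \eta_0^{1/e'}}$ one has $|1 + w(x)| = 1$ since $|S'(x)| < 1$, and hence the pointwise identity $|f^*(S)(x)| = |S'(x)|^{e'}$ holds. (Equivalently, $f^*(S)/S'^{e'}$ is a global unit of Gauss norm $1$ on $Z_{k'}^{\geq \eta_0^{1/e'}}$; this is also the content of Lemma~\ref{L:base-change-map}(2), which says that $f^*$ is an isometry on Gauss norms after the reparametrization $\eta \mapsto \eta^{1/e'}$.)

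Next I would compute the fiber product directly. Via the embedding $\calO_K \llbracket S \rrbracket \hookrightarrow \calO_{K'} \llbracket S' \rrbracket$ supplied by $f^*$, the ring of rigid analytic functions on $Z_{k'}^{\geq \eta_0^{1/e'}} \times_{Z_k^{\geq \eta_0}} AS_{l/k}^{a, \geq \eta_0}$ is generated by $U_{J^+}$ over $\calR_{K'}^{\eta_0^{1/e'}}$ subject to $|U_j| \leq 1$ for all $j \in J^+$ together with the affinoid conditions $|P_i(U_{J^+}, f^*(S))| \leq |f^*(S)|^a$ for $i \in J^+$. Applying the identity $|f^*(S)| = |S'|^{e'}$ established above, these conditions become $|P_i(U_{J^+}, f^*(S))| \leq |S'|^{e'a}$, which is exactly the defining condition of the space on the right-hand side of the first displayed isomorphism (with $P_i$ regarded as a polynomial in $U_{J^+}$ over $\calO_{K'} \llbracket S' \rrbracket$, as prescribed in the statement).

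For the logarithmic analogue the argument is identical, with the exponent $a$ on $|S|$ replaced by $a+1$ for $P_0$; the same scaling $|f^*(S)| = |S'|^{e'}$ transports this into $e'(a+1)$, while the conditions on $P_1, \ldots, P_m$ scale from $|S|^a$ to $|S'|^{e'a}$.

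The only point requiring a little care is to ensure that the substitution is compatible with the rational exponents, so that the two sides agree as affinoid subdomains in the (Berkovich) sense and not merely on $k^\alg$-points or on formal level. This reduces to observing that the inequality $|P_i| \leq |f^*(S)|^a$ may be rewritten (after clearing the denominator $q$ from $a = p/q$) as $|P_i|^q \leq |f^*(S)|^p$, and then the unit $f^*(S)/S'^{e'}$ of Gauss norm $1$ allows one to replace $|f^*(S)|^p$ by $|S'|^{e' p}$ without altering the subspace. There is no essential obstacle beyond this bookkeeping.
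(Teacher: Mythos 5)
Your proposal is correct and follows essentially the same route as the paper: the paper's proof consists of unwinding the fiber product and observing that the only nontrivial point is the replacement of $|P_j| \leq |S|^{a}$ (resp.\ $|S|^{a+1}$) by $|P_j| \leq |S'|^{e'a}$ (resp.\ $|S'|^{e'(a+1)}$), which holds because $|S| = |S'|^{e'}$ via Lemma~\ref{L:base-change-map}(2), i.e.\ exactly your identity $|f^*(S)| = |S'|^{e'}$ coming from $f^*(S) \in S'^{e'} + S'^{e'+1}\calO_{K'}\llbracket S'\rrbracket$. Your final paragraph on rational exponents is harmless bookkeeping but unnecessary, since the pointwise equality $|f^*(S)(x)| = |S'(x)|^{e'}$ already makes the two norm conditions literally equivalent.
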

\begin{proof}
The only thing not obvious is that we replace $|P_j| \leq |S|^{a(+1)}$ by $|P_j| \leq |S'|^{e'a(+e')}$; this is because $|S| = |S'|^{e'}$ as proved in Lemma~\ref{L:base-change-map}(2).
\end{proof}

\begin{remark}
Note that $Z_k^{\geq \eta_0} \rar Z_{k'}^{\eta_0^{1/e'}}$ is not a morphism between rigid spaces for the same reason explained in Remark~\ref{R:naive-K-structure}.  So, strictly speaking, the $Z_{k'}^{\geq \eta_0^{1/e'}} \times_{Z_k^{\geq \eta_0}} AS_{l/k}^{a, \geq \eta_0}$ and the log counterpart should be thought of as the geometric incarnations of the tensor products of the corresponding ring of analytic functions.  The new spaces are, however, well-defined rigid analytic spaces over $Z_{k'}^{\geq \eta_0^{1/e'}}$.
\end{remark}

\begin{theorem}  \label{T:connected-components-as}
For $a \in \QQ_{>0}$, there is a one-to-one correspondence between the geometric connected components of $as_{l/k,(\log)}^a$ and the following limit of connected components:
\[
\varprojlim_{k'/k} \lim_{\eta_0 \rar 1^-} \pi_0^{\geom} \Big( Z_{k'}^{\geq \eta_0^{1/e'}} \times_{Z_k^{\geq \eta_0}} AS_{l/k(, \log)}^{a, \geq \eta_0} \Big),
\]
where $e'$ is the na\"ive ramification degree of $k' / k$ and the second limit only takes $\eta_0 \in p^\QQ \cap (0,1)$.
\end{theorem}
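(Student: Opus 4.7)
The plan is to apply the lifting machinery of Section~1 at each finite Galois level $k'/k$, match connected components there via Corollary~\ref{C:connected-components}, and then take the inverse limit.

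First, for each finite Galois extension $k'/k$ with na\"ive ramification degree $e'$, I would view $as_{l/k(,\log)}^a \times_k k'$ as an affinoid rigid space over $k' \cong \kappa_{k'}((s'))$ cut out in a polydisc by inequalities of the form $|p_j| \leq |s'|^{e'a}$ (with the log case differing only in the exponent on $p_0$, via $|p_0| \leq |s'|^{e'(a+1)}$). Applying the Cohen ring map $\calO_K\llbracket S\rrbracket \hookrightarrow \calO_{K'}\llbracket S'\rrbracket$ provided by Construction~\ref{C:map-C_k-to-C_l} for the extension $k'/k$ pushes the lifts $P_{J^+}$ of $p_{J^+}$ into $\calO_{K'}\llbracket S'\rrbracket[U_{J^+}]$. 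Example~\ref{Example:lifting-space} then exhibits $\{(U_{J^+},S'):\eta_0^{1/e'}\leq |S'|<1,\ |U_{J^+}|\leq 1,\ |P_j|\leq |S'|^{e'a}\}$ as a lifting space (in the sense of Definition~\ref{D:lifting-space}) for $as_{l/k(,\log)}^a\times_k k'$, and Lemma~\ref{L:lifting-functor} identifies it with $Z_{k'}^{\geq \eta_0^{1/e'}} \times_{Z_k^{\geq \eta_0}} AS_{l/k(,\log)}^{a,\geq \eta_0}$ for $\eta_0 \in p^\QQ$ sufficiently close to $1^-$.

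Next, I would invoke Corollary~\ref{C:connected-components} at each level $k'$ to obtain a canonical bijection
\[
\pi_0\bigl(as_{l/k(,\log)}^a \times_k k'\bigr) \;\stackrel{\sim}{\longleftrightarrow}\; \pi_0\bigl(Z_{k'}^{\geq \eta_0^{1/e'}} \times_{Z_k^{\geq \eta_0}} AS_{l/k(,\log)}^{a,\geq \eta_0}\bigr)
\]
for rational $\eta_0$ close enough to $1^-$ (with a threshold depending on $k'$). Because both sides are encoded by idempotents in the reduction modulo the uniformizer, these bijections will be compatible under enlarging $k'$ and shrinking the outer radius.

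Finally, I would take the inverse limit over finite Galois $k'/k$. Since $as_{l/k(,\log)}^a$ is an affinoid over $k$, its geometric connected components are all defined over some finite extension, so $\varprojlim_{k'/k}\pi_0(as_{l/k(,\log)}^a\times_k k')=\pi_0^{\geom}(as_{l/k(,\log)}^a)$. For such $k'$ the bijection above pins the right-hand cardinality to this fixed finite number, so no further splitting over an algebraic closure of $K'$ is possible at that level; hence $\pi_0=\pi_0^{\geom}$ on the lifted base-change there, and passing to the limit yields the desired correspondence. The log case is entirely parallel.

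The main obstacle I anticipate is book-keeping the lower bounds on $\eta_0$ coming from Notations~\ref{N:eta_0} and~\ref{N:new-eta_0}: these thresholds depend on the Gr\"obner-basis exponent $\bbj_I$ and the spectral-vs-quotient-norm constant $c$ for the ideal at level $k'$, and one must verify they can all be met in the limit $\eta_0 \to 1^-$. A secondary subtlety is the promotion from $\pi_0$ to $\pi_0^{\geom}$ on the lifted side (together with the reducedness hypothesis required by Section~\ref{S:quotient-norm}, which for our $as_{l/k(,\log)}^a$ holds after possibly enlarging $a$ infinitesimally); both follow from the cardinality matching described above.
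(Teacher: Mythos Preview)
Your approach is essentially the same as the paper's: identify the base change $Z_{k'}^{\geq \eta_0^{1/e'}} \times_{Z_k^{\geq \eta_0}} AS_{l/k(,\log)}^{a,\geq \eta_0}$ with a lifting space of $as_{l/k(,\log)}^a \times_k k'$ via Lemma~\ref{L:lifting-functor} and Example~\ref{Example:lifting-space}, then invoke Corollary~\ref{C:connected-components}. Two minor remarks. First, Example~\ref{Example:lifting-space} requires the exponents $a_i$ to lie in $\NN$, so the identification only applies directly when $e'a \in \ZZ$; the paper makes this explicit, and you should too (the inverse limit is cofinal in such $k'$, so this is harmless). Second, your worry about reducedness is unnecessary: the affinoid algebra in Example~\ref{Example:lifting-space} is $k'\langle u_{J^+}, v_{J^+}\rangle/(v_j (s')^{e'a} - p_j)$, which is isomorphic to the Tate algebra $k'\langle u_{J^+}\rangle$ and hence reduced on the nose---no infinitesimal enlargement of $a$ is needed. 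Your cardinality argument for upgrading $\pi_0$ to $\pi_0^{\geom}$ on the lifted side is a useful clarification that the paper leaves implicit.
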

\begin{proof}
By Lemma~\ref{L:lifting-functor} and Example~\ref{Example:lifting-space}, when $e'a \in \ZZ$, $Z_{k'}^{\geq \eta_0^{1/e'}} \times_{Z_k^{\geq \eta_0}} AS_{l/k(, \log)}^{a, \geq \eta_0}$ is a lifting space of $as_{l/k(, \log)}^{a}$.  The theorem then follows from Corollary~\ref{C:connected-components}.
\end{proof}

\begin{remark} \label{R:Berkovich-vs-classical}
Here, we need $\eta_0 \in p^\QQ \cap (0,1)$ because Corollary~\ref{C:connected-components} requires it.
\end{remark}

\begin{remark}
Introducing this ramified extension $k'/k$ to make $e'a \in \ZZ$ may not be essential, but it eases the proof.
\end{remark}

\subsection{Comparison of rigid spaces} \label{S4:comparison-rigid-spaces}

In this subsection, we will prove that the lifting  Abbes-Saito spaces are isomorphic to some thickening spaces we constructed in Subsection~\ref{S:calF}.  In this subsection, we continue to assume Hypotheses~\ref{H:J-finite-set} and \ref{H:tot-ram}.

Before proving the comparison theorem, we need to closely analyze Construction~\ref{C:cj-generate-l} and give a new view of $\tilde \pi^*$ using differentials.  However, the proofs of the following two lemmas are not so enlightening in this generality; the reader may skip them when reading the paper for the first time, but see Remark~\ref{R:easy-proof}.

\begin{lemma}  \label{L:Taylor-homo}
Modulo $p$, the homomorphism $\tilde \pi^*$ gives a continuous homomorphism $\overline {\tilde \pi}^*: \kappa_k \rar \kappa_k \llbracket \delta_J \rrbracket$.
For $\bar g \in \kappa_k$, we can write $d\bar g = \bar g_1 d \bar b_1 + \cdots + \bar g_m d \bar b_m$ in $\Omega^1_{\kappa_k/ \Fp}$.  Then $\overline {\tilde \pi}^*(\bar g) \equiv \bar g + \bar g_1 \delta_1 + \cdots + \bar g_m \delta_m$ modulo $(\delta_J)^2 \cdot \kappa_K \llbracket \delta_J \rrbracket$.
\end{lemma}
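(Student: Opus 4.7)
The plan is to unwind $\tilde\pi^*$ to first order in the $\delta_J$'s and identify the linear term with the natural basis of derivations dual to $d\bar b_J$.

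First I would check that $\tilde \pi^*$ descends modulo $p$. Since $\tilde\pi^*$ is a continuous $\calO_{K_0}$-algebra homomorphism $\OK \to \OK\llbracket\delta_J\rrbracket$ (Corollary~\ref{C:cohen-deformation}), it carries $p\OK$ into $p\OK\llbracket\delta_J\rrbracket$; hence it induces a ring homomorphism $\overline{\tilde\pi}^*: \kappa_k \to \kappa_k\llbracket\delta_J\rrbracket$, which is continuous for the $p$-adic (equivalently, discrete) topology on $\kappa_k$ and the $(\delta_J)$-adic topology on the target. This establishes the first assertion.

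Next I would analyze the reduction of $\overline{\tilde\pi}^*$ modulo $(\delta_J)^2$. For each $\bar g \in \kappa_k$, write
\[
\overline{\tilde\pi}^*(\bar g) \equiv \bar g_0 + \sum_{j\in J} D_j(\bar g)\,\delta_j \pmod{(\delta_J)^2}
\]
for uniquely determined elements $\bar g_0, D_j(\bar g) \in \kappa_k$. Specializing at $\delta_J = 0$ recovers the identity on $\kappa_k$ (because $\Delta^*\circ \tilde\pi^* = \id$ modulo $p$), so $\bar g_0 = \bar g$. Expanding $\overline{\tilde\pi}^*(\bar g \bar h) = \overline{\tilde\pi}^*(\bar g)\overline{\tilde\pi}^*(\bar h)$ to first order in each $\delta_j$ yields the Leibniz rule $D_j(\bar g \bar h) = D_j(\bar g)\bar h + \bar g D_j(\bar h)$; additivity is immediate. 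Hence each $D_j$ is an $\Fp$-linear derivation $\kappa_k \to \kappa_k$.

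To compute the $D_j$'s, I would apply the above to the $p$-basis elements. Since $\tilde\pi^*(B_{j'}) = B_{j'} + \delta_{j'}$ by construction, reducing mod $p$ gives $\overline{\tilde\pi}^*(\bar b_{j'}) = \bar b_{j'} + \delta_{j'}$, so $D_j(\bar b_{j'})$ equals the Kronecker delta $\delta_{jj'}$. Because $(\bar b_j)_{j\in J}$ is a $p$-basis of $\kappa_k$, the classes $d\bar b_j$ form a $\kappa_k$-basis of $\Omega^1_{\kappa_k/\Fp}$ (Remark~\ref{R:p-basis-expression}), so any continuous derivation $\kappa_k \to \kappa_k$ is determined by its values on the $\bar b_j$. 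Thus $D_j = \partial/\partial\bar b_j$, and writing $d\bar g = \sum \bar g_j\, d\bar b_j$ gives $D_j(\bar g) = \bar g_j$, which is the claimed formula.

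No step is a real obstacle; the only point demanding care is the descent to $\kappa_k$: one must verify that $D_j$ is well-defined on all of $\kappa_k$ (not only on the subring $\kappa_0[\bar b_J]$ where the formula is visible by inspection). This is handled by continuity of $\overline{\tilde\pi}^*$ together with the fact that an $\Fp$-derivation on $\kappa_k$ is uniquely determined by its values on a $p$-basis, so the computation on the dense subring $\kappa_0[\bar b_J]$ suffices.
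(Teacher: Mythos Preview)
Your argument is correct and essentially equivalent to the paper's, though packaged differently. The paper writes $\bar g = \sum_{e_J=0}^{p-1} \bar a_{e_J}^p\, \bar b_J^{e_J}$ explicitly via the $p$-basis, then computes $d\bar g$ and $\overline{\tilde\pi}^*(\bar g)$ directly from this formula and compares. You instead observe abstractly that the first-order coefficients $D_j$ are $\Fp$-derivations of $\kappa_k$ and identify them with $\partial/\partial\bar b_j$ by evaluating on the $p$-basis; since $d\bar b_J$ is a basis of $\Omega^1_{\kappa_k/\Fp}$, this pins down the derivations uniquely. Both arguments rest on the same fact (a derivation of $\kappa_k$ is determined by its values on a $p$-basis), and neither is materially shorter or more general than the other.

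One small cleanup: your final paragraph about ``continuity'' and the ``dense subring $\kappa_0[\bar b_J]$'' is unnecessary and a bit misleading. The field $\kappa_k$ carries the discrete topology here, so density is not the right notion, and $\kappa_0[\bar b_J]$ need not be dense in any useful sense. You do not need it: $D_j$ is already defined on all of $\kappa_k$ as the $\delta_j$-coefficient of $\overline{\tilde\pi}^*$, and the identification $D_j = \partial/\partial\bar b_j$ follows purely from the fact that two derivations agreeing on a $p$-basis agree everywhere (equivalently, from the universal property of $\Omega^1_{\kappa_k/\Fp}$). Simply drop the density/continuity remark and the proof is clean.
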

\begin{proof}
Use the $p$-basis to express $\bar g$ (uniquely) as $\bar g = \sum_{e_J = 0}^{p-1} \bar a_{e_J}^p \bar b_J^{e_J}$
for some $\bar a_{e_J} \in \kappa_k$.  Thus, $d\bar g = \sum_{e_J = 0}^{p-1} \bar a_{e_J}^p d( \bar b_J^{e_J})$.  On the other hand, $\overline {\tilde \pi}^*(\bar g) \equiv \sum_{e_J = 0}^{p-1} \bar a_{e_J}^p (\bar b_J +\delta_J)^{e_J}$ modulo $(\delta_J)^p \cdot \kappa_K \llbracket \delta_J \rrbracket$.  The statement follows by simply comparing the two formulas.
\end{proof}

\begin{remark}
The analogous result for $\tilde \pi^*$ is true.  Actually, $\tilde \pi^*$ is the Taylor expansion homomorphism (see \cite[Definition~2.2.2]{KSK-Swan1}).
\end{remark}

\begin{lemma} \label{L:determinant}
Keep the notation as in Section~\ref{S:construction-calE}.  We have
\[
\det \Big(\frac{\partial (\tilde \pi^*(P_i) -P_i)} {\partial \delta_j} \Big)_{i,j \in J^+} \Big| _{\delta_{J^+} = 0} \in \big( \calO_K\llbracket S \rrbracket \langle U_{J^+}\rangle / (P_{J^+}) \big)^\times = (\calO_L \llbracket T \rrbracket)^\times.
\]
In particular, the corresponding matrix is invertible.
\end{lemma}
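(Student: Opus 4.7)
Since $\tilde\pi^*$ acts on the coefficient ring $\OK\llbracket S\rrbracket$ by $B_j\mapsto B_j+\delta_j$ (for $j\in J$) and $S\mapsto S+\delta_0$ while fixing each $U_i$, the chain rule gives
\[
\frac{\partial(\tilde\pi^*(P_i)-P_i)}{\partial\delta_j}\bigg|_{\delta_{J^+}=0} \;=\; \frac{\partial P_i}{\partial X_j},
\]
where we set $X_0:=S$ and $X_j:=B_j$ for $j\in J$. Thus it suffices to show that the $(m+1)\times(m+1)$ Jacobian $N:=\bigl(\partial P_i/\partial X_j\bigr)_{i,j\in J^+}$ has unit determinant in $\OK\llbracket S\rrbracket\langle U_{J^+}\rangle/(P_{J^+})=\calO_L\llbracket T\rrbracket$.

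The plan is to compare $\det N$ to $\det M$, where $M:=\bigl(\partial P_i/\partial U_j\bigr)_{i,j\in J^+}$. Working in $\Omega^1_{\calO_L\llbracket T\rrbracket/\Zp}$, which is free of rank $m+1$ on $\{dU_0=dT,\,dU_1=dC_1,\dots,dU_m=dC_m\}$, the identity $dP_i=0$ reads
\[
\sum_{j\in J^+} \frac{\partial P_i}{\partial U_j}\,dU_j + \sum_{j\in J^+}\frac{\partial P_i}{\partial X_j}\,dX_j = 0.
\]
Let $\Phi$ be the matrix of the natural map $\phi:\Omega^1_{\OK\llbracket S\rrbracket/\Zp}\otimes\calO_L\llbracket T\rrbracket\to\Omega^1_{\calO_L\llbracket T\rrbracket/\Zp}$ in the bases $\{dX_j\}$ (source) and $\{dU_k\}$ (target), so $dX_l=\sum_k \Phi_{kl}\,dU_k$. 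Substituting and using linear independence of the $dU_k$ yields $M+N\Phi^T=0$ as matrices over $\calO_L\llbracket T\rrbracket$, whence $\det M = (-1)^{m+1}\det N\cdot\det\Phi$.

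The remaining task, which is the main obstacle, is to show that $\det M$ and $\det\Phi$ generate the same ideal in $\calO_L\llbracket T\rrbracket$. From the presentation $\calO_L\llbracket T\rrbracket = \OK\llbracket S\rrbracket\langle U_{J^+}\rangle/(P_{J^+})$ one gets $\Coker M=\Omega^1_{\calO_L\llbracket T\rrbracket/\OK\llbracket S\rrbracket}$; the second fundamental exact sequence for $\Zp\to\OK\llbracket S\rrbracket\to\calO_L\llbracket T\rrbracket$ gives $\Coker\Phi=\Omega^1_{\calO_L\llbracket T\rrbracket/\OK\llbracket S\rrbracket}$ as well. Since $\Frac(\calO_L\llbracket T\rrbracket)/\Frac(\OK\llbracket S\rrbracket)$ is a finite extension of characteristic-zero fields (hence separable), $\phi$ becomes an isomorphism after inverting any nonzero element; consequently $\Phi$ is injective on its torsion-free source and $\det\Phi$ is a nonzerodivisor. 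Then $M$ and $\Phi$ are both $(m+1)\times(m+1)$ matrices with isomorphic torsion cokernels, so their $0$-th Fitting ideals coincide: $(\det M)=(\det\Phi)$. Combined with the determinantal identity above, this forces $\det N\in\calO_L\llbracket T\rrbracket^\times$. The subtle point to verify carefully is that the second fundamental sequence applies cleanly in the $p$-adically continuous setting used throughout the paper, so that the two descriptions of $\Omega^1_{\calO_L\llbracket T\rrbracket/\OK\llbracket S\rrbracket}$ really give the same finitely presented module and the Fitting ideal comparison is legitimate.
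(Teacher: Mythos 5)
Your proposal is correct in substance, but it is a genuinely different argument from the one in the paper. The paper proves the lemma by reducing the matrix modulo $(p,T)$ and checking full rank over the residue field directly: the $P_0$-row contributes the unit $\gothd(\bar c_1,\dots,\bar c_m)$, and for the remaining rows the mod-$p$ Taylor expansion (Lemma~\ref{L:Taylor-homo}) converts each row into the coordinates of $d(\bar c_i^{p^{r_i}})$ in $\Omega^1_{\bbk_{i-1}/\FF_p}$, which are shown to be linearly independent using the specific filtration $\bbk_0\subset\bbk_1\subset\cdots$ of Construction~\ref{C:cj-generate-l}; this is elementary, needs no exactness properties of continuous differentials, and does not require that the $P_{J^+}$ generate the whole kernel. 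You instead work integrally: after identifying the matrix with the Jacobian $\bigl(\partial_j P_i\bigr)$ (which is the characteristic-zero first-order analogue of Lemma~\ref{L:Taylor-homo}, asserted in the remark following it, and can be justified by noting that $g\mapsto \frac{\partial \tilde\pi^*(g)}{\partial\delta_j}\big|_{\delta=0}$ is a continuous derivation sending $B_i\mapsto\delta_{ij}$, $S\mapsto\delta_{0j}$), you compare the two natural square presentations of $\Omega^1_{\calO_L\llbracket T\rrbracket/\OK\llbracket S\rrbracket}$ — the conormal presentation coming from the $U$-Jacobian and the cotangent (Jacobi--Zariski, which you call ``second fundamental'') sequence — and conclude $(\det M)=(\det\Phi)$ via $0$-th Fitting ideals, with generic separability in characteristic zero giving $\det\Phi\neq 0$ so that the unit can be extracted. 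What your route buys is conceptual clarity and independence from the fine structure of the generators: you only use that there are $m+1$ relations generating the kernel (this is exactly Lemma~\ref{L:base-change-map}(1), and your argument genuinely needs it, unlike the paper's), and the statement becomes ``the two presentations of the same differential module have the same determinant ideal,'' which is more likely to transport to other settings such as mixed characteristic. What it costs is the technical care you yourself flag: right-exactness of the conormal and cotangent sequences for $p$-adically continuous differentials over these complete rings, the identification of continuous with algebraic Kähler differentials for the module-finite extension $\OK\llbracket S\rrbracket\to\calO_L\llbracket T\rrbracket$ (needed so that separability of $\Frac(\calO_L\llbracket T\rrbracket)/\Frac(\OK\llbracket S\rrbracket)$ makes the cokernels torsion), and the freeness of $\Omega^1_{\calO_L\llbracket T\rrbracket/\Zp}$ on $dT,dC_1,\dots,dC_m$; all of these do hold here, so the plan goes through, but they are exactly the points that the paper's mod-$(p,T)$ computation avoids.
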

\begin{proof}
It is enough to prove that the matrix is of full rank modulo $(p, T)$.  First, modulo $(p, T)$, the first row will be all zero except the first element which is $\gothd(\bar c_1, \dots, \bar c_m) \in \kappa_l^\times$.  Hence, we need only to look at 
\begin{equation} \label{E:matrix}
\Big(\frac{\partial (\tilde \pi^*(P_i) -P_i)} {\partial \delta_j} \Big)_{i, j \in J} \mod (p, T, \delta_{J^+})
 = \Big(\frac{\partial (\overline {\tilde \pi}^*(\bar \gothb_i) - \bar \gothb_i)} {\partial \delta_j} \Big)_{i,j \in J} \mod (t, \delta_{J^+})
\end{equation}
Let $\bar \alpha_{ij} \in \kappa_l$ denote the entries in the matrix on the right hand side of \eqref{E:matrix}, where we identify $\calO_k \langle u_{J^+} \rangle / (p_{J^+}, u_0) \isom \kappa_l$.  Under this identification,  $\bar \gothb_i$ will become $\bar c_i^{p^{r_i}}$ for all $i \in J$.  It suffices to show that the $i$-th row is $\kappa_l$-linearly independent from the first $i-1$ rows for all $i$.  If we write 
\[
\bar \gothb_i = \sum_{e_1=0}^{p^{r_0}-1} \cdots \sum_{e_{i-1} = 0}^{p^{r_{i-1}}-1} \bar 
\lambda_{e_1, \dots, e_{i-1}} u_1^{e_1} \cdots u_{i-1}^{e_{i-1}},
\]
where $\bar \lambda_{e_1, \dots, e_{i-1}} \in \kappa_k$ for which $d \bar \lambda_{e_1, \dots, e_{i-1}} = \bar \mu_{e_1, \dots, e_{i-1}, 1}d \bar b_1 + \cdots + \bar \mu_{e_1, \dots, e_{i-1}, m}d \bar b_m$, then by Lemma~\ref{L:Taylor-homo},
\begin{align*}
\bar \alpha_{i1} d\bar b_1 + \cdots + \bar \alpha_{im} d\bar b_m &= \sum_{e_1=0}^{p^{r_0}-1} \cdots \sum_{e_{i-1} = 0}^{p^{r_{i-1}}-1} 
\bar c_1^{e_1} \cdots \bar c_{i-1}^{e_{i-1}} \big( \bar \mu_{e_1, \dots, e_{i-1}, 1}d \bar b_1 + \cdots + \bar \mu_{e_1, \dots, e_{i-1}, m}d \bar b_m \big) \\
& \equiv d(\bar c_i^{p^{r_i}}) \textrm{ modulo } \big(d\bar c_1, \dots, d\bar c_{i-1} \big)
\end{align*}
in $\Omega^1_{\bbk_{i-1}/\Fp}$; it is in fact nontrivial because $d\bar c_1, \dots, d\bar c_m$ form a basis of $\Omega^1_{\kappa_L / \Fp}$ and hence there should not be any auxiliary relation among $d\bar c_1, \dots, d\bar c_i$ in $\Omega^1_{\bbk_i/\Fp}$.  But we know that the sums $\bar \alpha_{i'1} d\bar b_1 + \cdots + \bar \alpha_{i'm}db_m$ for $i'<i$ all lie in the submodule of $\Omega^1_{\bbk_{i-1}/\Fp}$ generated by $d\bar c_1, \dots, d\bar c_{i-1}$.  Hence the $i$-th row of the matrix in \eqref{E:matrix} is ($\bbk_{i-1}$-)linearly independent from the first $i-1$ rows.  The lemma follows.
\end{proof}

\begin{remark}\label{R:easy-proof}
When $\kappa_l / \kappa_k$ is modular in the sense of \cite{Sweedler-insep-extn}, we can choose the $p$-basis of $\kappa_k$ so that $\bar c_j^{p^{r_j}} = \bar b_j$; in that case, the above lemma is much easier to prove because the matrix, modulo $(p, T)$, is lower triangular with 1 on the diagonal.  However, this may not be the case in general; see also Remark~\ref{R:Sweedler}.
\end{remark}

\begin{theorem}\label{T:AS=TS}
There exists $\eta'_0 \in (0, 1)$ such that for any $a \in \QQ_{>1}$ and any $\eta_0 \in (\max\{p^{-1/a}, \eta'_0\},1)$, there exists an isomorphism of rigid spaces over $Z_k^{\geq \eta_0}$:
\begin{equation}
\label{E:TS=AS-nonlog-eta0}
TS_{k\backslash l}^{a, \geq \eta_0} \simeq AS_{l/k}^{a, \geq \eta_0}.
\end{equation}

Similarly, There exists $\eta'_0 \in (0, 1)$ such that for any $a \in \QQ_{>0}$ and any $\eta_0 \in (\max\{p^{-1/a}, \eta'_0\},1)$, there exists an isomorphism of rigid spaces over $Z_k^{\geq \eta_0}$:
\begin{equation}
\label{E:TS=AS-log-eta0}
TS_{k\backslash l, \log}^{a, \geq \eta_0} \simeq AS_{l/k, \log}^{a, \geq \eta_0}.
\end{equation}
\end{theorem}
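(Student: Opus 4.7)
The plan is to construct explicit mutually inverse analytic maps between the two spaces using a rigid analytic Hensel / implicit function theorem. By Lemma~\ref{L:base-change-map}(1), the presentation $\calR_L^{\eta_0^{1/e}}\simeq\calR_K^{\eta_0}\langle U_{J^+}\rangle/(P_{J^+})$ via $f^*$ yields
\[
\calO_{TS_{k\backslash l}^{a,\geq\eta_0}}\;\simeq\;\calO_{TS_k^{a,\geq\eta_0}}\langle U_{J^+}\rangle\big/\bigl(\tilde\pi^*(P_{J^+})\bigr),
\]
so $TS_{k\backslash l}^{a,\geq\eta_0}$ carries coordinates $(S,\delta_{J^+},U_{J^+})$ with $\eta_0\leq|S|<1$, $|\delta_j|\leq|S|^a$, $|U_j|\leq 1$, and the \emph{exact} equations $\tilde\pi^*(P_j)(U_{J^+})=0$ for $j\in J^+$, while $AS_{l/k}^{a,\geq\eta_0}$ carries coordinates $(S,U_{J^+})$ with the \emph{approximate} conditions $|P_j(U_{J^+},S)|\leq|S|^a$. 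I will show that the natural forgetful map $\Psi\colon(S,\delta_{J^+},U_{J^+})\mapsto(S,U_{J^+})$ is the desired isomorphism, with inverse $\Phi$ obtained by solving $\tilde\pi^*(P_j)(U_{J^+})=0$ for $\delta_{J^+}$ in terms of $(S,U_{J^+})$.

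Expanding the Cohen-ring deformation around $\delta_{J^+}=0$ gives
\[
\tilde\pi^*(P_j)(U_{J^+})\;=\;P_j(U_{J^+},S)+\sum_{j'\in J^+}A_{jj'}\,\delta_{j'}+R_j(\delta_{J^+}),
\]
with $A_{jj'}=\partial(\tilde\pi^*(P_j)-P_j)/\partial\delta_{j'}\big|_{\delta=0}$ and $R_j$ of order $\geq 2$ in $\delta_{J^+}$. Lemma~\ref{L:determinant} states that $\det A$ reduces modulo $(p,T)$ to a unit in $\kappa_l$, so $\det A$ is a unit on all of $AS_{l/k}^{a,\geq\eta_0}$ once $\eta_0$ is sufficiently close to~$1$. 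Starting Newton iteration from $\delta^{(0)}=0$, the initial residual $P_j(U_{J^+},S)$ has norm $\leq|S|^a$ by the defining condition of $AS_{l/k}^{a,\geq\eta_0}$, so $|\delta^{(1)}|\leq|S|^a$; the hypothesis $\eta_0>p^{-1/a}$, equivalently $|S|^a>|p|$, is precisely the quantitative $p$-adic bound needed for the quadratic remainder $R_j$ to stay strictly smaller than the linear term throughout the iteration, forcing Newton's method to be contractive within $\{|\delta_j|\leq|S|^a\}$ and to converge to a unique analytic $\delta_{J^+}(U_{J^+},S)$ with $|\delta_j|\leq|S|^a$. This defines $\Phi$. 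Well-definedness of $\Psi$ follows from $P_j=-(A\delta)_j-R_j$ together with $|A_{jj'}|\leq 1$ and $|R_j|\leq|S|^{2a}$, and the identities $\Phi\circ\Psi=\id$, $\Psi\circ\Phi=\id$ then hold by uniqueness in the Hensel step.

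For the logarithmic isomorphism~\eqref{E:TS=AS-log-eta0}, the same strategy applies once one verifies the compatibility of $A$ with the finer scaling. The explicit shape $P_0\in U_0^e-\gothD S+(p,U_0 S,S^2)\cdot\calO_K\llbracket S\rrbracket\langle U_{J^+}\rangle$ from Lemma~\ref{L:base-change-map}(1) forces $|A_{00}|=1$ and $|A_{0,j'}|\leq|S|$ for $j'\in J$ (since the $B_{j'}$-dependent coefficients of $P_0$ land in the ideal $(S)$), while $|A_{j,j'}|\leq 1$ for $j\in J$ generally. These match the logarithmic scaling $|\delta_0|\leq|S|^{a+1}$, $|\delta_j|\leq|S|^a$ so that each product $|A_{jj'}\delta_{j'}|$ respects the target bound on $|P_j|$, and the identical Hensel iteration produces $\Phi_{\log}$ and $\Psi_{\log}$. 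The principal technical obstacle of the proof is the Hensel step itself: because $\tilde\pi^*$ on $\calO_K$ is not a literal polynomial Taylor series but the unique Cohen-ring lift, controlling the norms of the remainders $R_j$ against the Jacobian $A$ requires careful $p$-adic bookkeeping, and the quantitative bound $|S|^a>|p|$ coming from $\eta_0>p^{-1/a}$ is precisely what makes the contraction estimate go through.
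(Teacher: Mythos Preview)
Your approach is essentially the paper's: both construct the forward map by fixing $U_{J^+}$ (your $\Psi$ is the geometric side of the paper's $\chi_1$, which sends $V_j\mapsto P_j(U_{J^+})$) and build the inverse by iteratively solving $\tilde\pi^*(P_{J^+})=0$ for $\delta_{J^+}$ using the invertibility of the same Jacobian $A$ from Lemma~\ref{L:determinant} (your Newton $\Phi$ corresponds to the paper's fixed-point recursion~\eqref{E:delta-in-terms-of-V}--\eqref{E:iteration}).

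Two small inaccuracies are worth noting. First, your explanation of the condition $\eta_0>p^{-1/a}$ is off: since $\tilde\pi^*$ maps $\calO_K\llbracket S\rrbracket$ into $\calO_K\llbracket S,\delta_{J^+}\rrbracket$ (Proposition~\ref{P:cohen-functorial} and its corollaries), every coefficient of $R_j$ is already integral, so the contraction $|R_j|\leq|S|^{2a}<|S|^a$ needs no $p$-adic input. Second, in the logarithmic case your claimed bound $|A_{0,j'}|\leq|S|$ for $j'\in J$ is not quite what Lemma~\ref{L:base-change-map}(1) gives: the relevant ideal is $(p,U_0S,S^2)$, not $(S)$, so one only gets $|A_{0,j'}|\leq\max(|p|,|S|)$, and you must absorb $\eta_0\geq p^{-1}$ into $\eta'_0$ when $a\leq 1$. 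The paper sidesteps this by using the packaged multiplicative estimate~\eqref{E:compare-norm-of-pi-and-tilde-pi} from Theorem~\ref{T:thickening-morphism} (namely $|\tilde\pi^*(g)-g|_{TS_{k,\log}^{a,\eta}}\leq\eta^a|g|$) rather than bounding entries of $A$ individually, which handles both $j=0$ and $j\in J$ uniformly.
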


\begin{proof}
We will give the proof for the log-spaces and make  changes for the non-log spaces when necessary.  The proofs in two cases will be almost the same except that when constructing the morphism $\chi_2$, we have slightly different approximations.  We will match up the ring of functions on the two rigid spaces in \eqref{E:TS=AS-log-eta0} (resp. \eqref{E:TS=AS-nonlog-eta0}).  Fix an $\eta_0 \in (p^{-1/a}, 1)$ satisfying Hypothesis~\ref{H:eta_0}. ($\eta'_0$ is given by the conditions in Hypothesis~\ref{H:eta_0}.)

Recall that $\calO_{TS_{k, \log}^{a, \geq \eta_0}} = \calR_K^{\eta_0} \langle S^{-a-1}\delta_0, S^{-a}\delta_J \rangle$ (resp. $\calO_{TS_k^{a, \geq \eta_0}} = \calR_K^{\eta_0} \langle S^{-a}\delta_{J^+} \rangle$).  For each $j \in J^+$, $\tilde \pi^*(P_j)$ is the polynomial $P_j$ with coefficients replaced by their pull-backs to $\calO_{TS_{k, \log}^{a, \geq \eta_0}}$ (resp. $\calO_{TS_k^{a, \geq \eta_0}}$) via $\tilde \pi^*$. So the ring of functions on $TS_{k\backslash l, \log}^{a, \geq \eta_0}$ (resp. $TS_{k\backslash l}^{a, \geq \eta_0}$) is
\begin{equation}\label{E:def-TS}
\begin{array}{c}
\calR_{1, \log} = \calR_K^{\eta_0} \langle S^{-a-1}\delta_0, S^{-a}\delta_J \rangle \langle U_{J^+} \rangle \big/ \big(\tilde \pi^*(P_{J^+})\big)\\
(\textrm{resp. } \calR_1 = \calR_K^{\eta_0} \langle S^{-a}\delta_{J^+} \rangle \langle U_{J^+} \rangle \big/ \big(\tilde \pi^*(P_{J^+})\big)\ ).
\end{array}
\end{equation}

By Lemma~\ref{L:base-change-map}(1),
\begin{eqnarray*}
\tilde \pi^* (P_j) &\in& U_j^{p^{r_j}} - \tilde \pi^*(\gothB_j) + (p, U_0, S, \delta_0) \cdot \OK \llbracket \delta_{J^+}, S\rrbracket [ U_{J^+} ],  \\
\tilde \pi^* (P_0) &\in& U_0^e - \tilde \pi^*(\gothD) S - \delta_0 + (p, U_0S, S^2, U_0\delta_0, S\delta_0, \delta_0^2)\cdot \OK\llbracket \delta_{J^+}, S \rrbracket [U_{J^+}],
\end{eqnarray*}

Thus, we can view $\calR_{1, \log}$ (resp. $\calR_1$) as a finite free module over $\calO_{TS_{k, \log}^{a, \geq \eta_0}}$ (resp. $\calO_{TS_k^{a, \geq \eta_0}}$) with basis $\big\{ U_{J^+}^{e_{J^+}} \big|\, 0 \leq e_0 < e;\, 0 \leq e_j < p^{r_j}, j \in J \big\}$.  For each $\eta \in [\eta_0, 1)$, we provide $\calR_{1, \log}$ (resp. $\calR_1$) the following norm: for $g = \sum \lambda_{e_{J^+}}U_{J^+}^{e_{J^+}}$ with $\lambda_{e_{J^+}} \in \calO_{TS_{k, \log}^{a, \geq \eta_0}}$ (resp. $\lambda_{e_{J^+}} \in \calO_{TS_k^{a, \geq \eta_0}}$), summed over $e_0 = 0, \dots, e-1$ and $e_j = 0, \dots, p^{r_j}-1$ for $j \in J$, we define
$$
|g|_{\calR_{1, \log}, \eta} = \max_{e_{J^+}} \{|\lambda_{e_{J^+}}|_{TS_{k, \log}^{a, \eta}} \cdot \eta^{e_0/e}\} \quad \textrm{ (resp. } |g|_{\calR_1, \eta} = \max_{e_{J^+}} \{|\lambda_{e_{J^+}}|_{TS_k^{a, \eta}} \cdot \eta^{e_0/e}\} \ ).
$$
It is clear that $\calR_{1, \log}$ (resp. $\calR_1$) is the Fr\'echet completion for the norms $|\cdot|_{\calR_{1, \log}, \eta}$ (resp. $|\cdot|_{\calR_1, \eta}$) for all $\eta \in [\eta_0, 1)$.

On the other hand, by the definition of ${AS}_{l/k, \log}^{a, \geq \eta_0}$ (resp. ${AS}_{l/k}^{a, \geq \eta_0}$), its ring of functions is
\begin{eqnarray*}
&\calR_{2, \log} = \calR_K^{\eta_0} \langle S^{-a-1}  V_0, S^{-a}V_J \rangle \langle U_{J^+}\rangle \big/ (P_{J^+} -V_{J^+}) \\
&\textrm{(resp. }\calR_2 = \calR_K^{\eta_0} \langle S^{-a}V_{J^+} \rangle \langle U_{J^+}\rangle \big/ (P_{J^+} -V_{J^+})\ ),
\end{eqnarray*}
which is clearly a finite free module over
$$
W_\log = \calR_K^{\eta_0} \langle V_0 / \eta^{a+1}, V_J / \eta^a \rangle \quad \textrm{ (resp. } W = \calR_K^{\eta_0} \langle V_{J^+} / \eta^a \rangle \ )
$$
with basis $\big\{U_{J^+}^{e_{J^+}} \big|\, 0 \leq e_0 < e;\ 0 \leq e_j < p^{r_j}, j \in J \big\}$.  Similarly, for $\eta \in [\eta_0, 1)$, we provide $\calR_{2, \log}$ (resp. $\calR_2$) with the following norm: for $g = \sum \lambda_{e_{J^+}}U_{J^+}^{e_{J^+}}$ with $\lambda_{e_{J^+}} \in W_\log$ (resp. $\lambda_{e_{J^+}} \in W$) summed over $e_0 = 0, \dots, e-1$ and $e_j = 0, \dots, p^{r_j}-1$ for $j \in J$, we define 
$$
|g|_{\calR_{2, \log}, \eta} = \max_{e_{J^+}} \{|\lambda_{e_{J^+}}|_{W_\log} \cdot \eta^{e_0/e}\} \quad \textrm{ (resp. } |g|_{\calR_2, \eta} = \max_{e_{J^+}} \{|\lambda_{e_{J^+}}|_W \cdot \eta^{e_0/e}\}\ ).
$$
It is clear that $\calR_{2, \log}$ (resp. $\calR_2$) is the Fr\'echet completion for the norms $|\cdot|_{\calR_{2, \log}, \eta}$ (resp. $|\cdot|_{\calR_2, \eta}$) for all $\eta \in [\eta_0, 1)$.

We will identify $U_{J^+}$ in different rings but the $V_{J^+}$ will not be same as $\delta_{J^+}$.  Be caution that the two norms will not be the same under the identification, but they will give the same topology.

Now, we define a continuous $K$-homomorphism $\chi_1: \calR_{2, \log} \rar \calR_{1, \log}$ (resp. $\chi_1: \calR_2 \rar \calR_1$) so that $\chi_1(S) = S$, $\chi_1(U_j) = U_j$, $\chi_1(V_j) = P_j(U_{J^+})$ for all $j \in J^+$.  We need only to check that for any $\eta \in [\eta_0, 1)$,
\begin{equation}\label{E:r1<r2}
|\chi_1(V_j)|_{\calR_{1, \log}, \eta} \leq \left\{
\begin{array}{ll}
\eta^{a+1} & j = 0 \\
\eta^a & j \in J 
\end{array}
\right. \textrm{ (resp. } |\chi_1(V_j)|_{\calR_1, \eta} \leq \eta^a,\  \forall j \in J^+).
\end{equation}
Here we need separate arguments for logarithmic case and non-logarithmic case.  In the logarithmic case, Inequality \eqref{E:compare-norm-of-pi-and-tilde-pi} tells us $|P_j - \tilde \pi^*(P_j)|_{\calR_{1, \log}, \eta} \leq \eta^a |P_j|_{\calR_{2, \log}, \eta}$ for $j \in J^+$, which exactly gives the bound in \eqref{E:r1<r2} because $|P_0|_{\calR_{2, \log}, \eta} \leq \eta$ and $|P_j|_{\calR_{2, \log}, \eta} \leq 1$ for $j \in J$ by  Lemma~\ref{L:base-change-map}(1).  In the non-logarithmic case, combining Lemma~\ref{L:base-change-map}(1) and inequality \eqref{E:bound-of-tilde-pi-non-log}, one has $|P_j - \tilde \pi^*(P_j)|_{\calR_1, \eta} \leq \eta^a$ for $j \in J^+$; Inequality \eqref{E:r1<r2} follows.

Conversely, we will define a continuous $K$-homomorphism $\chi_2: \calR_{1, \log} \rar \calR_{2, \log}$ (resp. $\chi_2: \calR_1 \rar \calR_2$) as the inverse to $\chi_1$.  Obviously, we need $\chi_2(S) = S$, $\chi_2(U_j) = U_j$ for all $j \in J^+$.  The only thing not clear is $\chi_2(\delta_j)$ for all $j \in J^+$.

By Lemma~\ref{L:determinant}, let
\[
A := \Big(\frac{\partial (\tilde \pi^*(P_i) -P_i)} {\partial \delta_j} \Big)_{i,j \in J^+}\Big| _{\delta_{J^+} = 0} \in \GL_{m+1}(\calO_L\llbracket T \rrbracket) \cong \GL_{m+1}\big( \calO_K \llbracket S \rrbracket \langle U_{J^+}\rangle / (P_{J^+}) \big).
\]
Let $A^{-1}$ denote the $(m+1) \times (m+1)$ matrix whose entries are in the free $\calO_K\llbracket S \rrbracket$-module generated by the basis in Lemma~\ref{L:base-change-map}(1) and whose image in $M_{m+1}(\calO_K \llbracket S \rrbracket \langle U_{J^+}\rangle / (P_{J^+}))$ is the inverse of $A$.  Thus,
\begin{equation}\label{E:A-times-A-1}
A^{-1} \cdot A - I \in \Mat_{m+1} \big( (\delta_{J^+}) \cdot \calO_K \llbracket S \rrbracket \langle U_{J^+} \rangle \big),
\end{equation}
where $I$ is the $(m+1) \times (m+1)$ identity matrix.  Now, we write
\begin{equation}\label{E:delta-in-terms-of-V}
\vectzero{\delta_}m = (I-A^{-1}A) \vectzero{\delta_}m - A^{-1} \left( \Bigg(\begin{array}{c}
\tilde \pi^* (P_0) - P_0 \\ \vdots \\ \tilde \pi^*(P_m) - P_m \end{array}\Bigg) - A \vectzero{\delta_}m \right) - A^{-1} \vectzero{P_}m;
\end{equation}
the last term is just $- A^{-1} \cdot \chi_1(V_{J^+})$.  We need to bound the first two terms.

By \eqref{E:A-times-A-1}, $I - A^{-1}A$ has norm $\leq \eta^a$.  Hence, in the non-logarithmic case, the first term in \eqref{E:delta-in-terms-of-V} has norm $\leq \eta^{2a}$; in the logarithmic case the first term in \eqref{E:delta-in-terms-of-V} has norm $\leq \eta^{2a}$, except for the first row, which has norm $\leq \eta^{2a+1}$.  By the definition of $A$ and Theorem~\ref{T:thickening-morphism}, the second term in \eqref{E:delta-in-terms-of-V} has norm $\leq \eta^{2a}$ in the non-logarithmic case; it has norm $\leq \eta^{2a}$ in the logarithmic case, except for the first row, which has norm $\leq \eta^{2a+1}$.

Since we want $\chi_2$ to be the inverse of $\chi_1$, we define recursively by
\begin{equation}
\label{E:iteration}
\chi_2 \vectzero{\delta_}m = -A^{-1} \vectzero{V_}m + \chi_2 \vectzero{\Lambda_}m,
\end{equation}
where $\Lambda_{J^+}$ denotes the sum of the first two terms in \eqref{E:delta-in-terms-of-V}.  Since $\Lambda_{J^+}$ have strictly smaller norms than $\delta_{J^+}$ and $\Lambda_{J^+}$ are in the ideal $(\delta_{J^+})$, one can plug the image of $\chi_2(\delta_{J^+})$ back into $\chi_2(\Lambda_{J^+})$ and iterate this substitution.  This iteration will converge to the values of $\chi(\delta_{J^+})$, as an element in $\calR_{2, \log}$ (resp. $\calR_2$).  Moreover, from the construction, one can see that
\begin{eqnarray*}
&|\chi_2(\delta_j)|_{\calR_1} \leq \eta^a, \textrm{ for all } j \in J^+, \eta \in [\eta_0,1),& \\
&|\chi_2(\delta_0)|_{\calR_{1, \log}} \leq \eta^{a+1}\textrm{ and } |\chi_2(\delta_j)|_{\calR_{1, \log}} \leq \eta^a \textrm{ for all } j \in J, \eta \in [\eta_0, 1). &
\end{eqnarray*}
Hence, if we define $\chi_2: \calR_K^{\eta_0}\langle S^{-a-1}\delta_0, S^{-a}\delta_J\rangle\langle U_{J^+} \rangle \rar \calR_{2, \log}$ (resp. $\chi_2: \calR_K^{\eta_0}\langle S^{-a}\delta_{J^+}\rangle\langle U_{J^+} \rangle \rar \calR_{2}$) such that $\chi_2(u_{J^+}) = u_{J^+}$ and $\chi_2(\delta_{J^+})$ to be the limit we obtained above; this gives a continuous homomorphism.  We will check that this homomorphism factors through $\calR_{1, \log}$ (resp. $\calR_1$).  Indeed, by the recursive formula \eqref{E:iteration} which is \eqref{E:delta-in-terms-of-V} after applying $\chi_2$, we see exactly that
\[
-A^{-1} \chi_2\Bigg(\begin{array}{c}
\tilde \pi^*(P_0) - P_0 \\ \vdots \\ \tilde \pi^*(P_m) - P_m \end{array}\Bigg) - A^{-1}\vectzero{V_}{m} = \Bigg(\begin{array}{c}
0 \\ \vdots \\ 0  \end{array}\Bigg)
\]
We know that $A^{-1}$ has invertible image in $GL_{m+1}(\calO_K\llbracket S\rrbracket\langle U_{J^+} / (P_{J^+}))$, and hence is invertible over $\calR_{1, \log}$ (resp. $\calR_1$).  We must have $0= \chi_2(\tilde \pi^*(P_j) - P_j) +V_j = \chi_2(\tilde \pi^*(P_j)) +V_j -P_j = \chi_2(\tilde \pi^*(P_j))$ for all $j \in J^+$.  This proves that $\chi_2$ factors through $\calR_{1, \log}$ (resp. $\calR_1$).

Finally, we claim that $\chi_2$ and $\chi_1$ are inverse to each other.  One may check this from the definition directly.  Alternatively, we observe that, by our definition, they are inverse to each other on a dense subset $K[S, u_{J^+}]$, the polynomial ring inside the Fr\'echet algebras; and therefore, they have to be inverse to each other and give an isomorphism between the ring of functions on Abbes-Saito space and the ring of functions on thickening space.
\end{proof}

\begin{remark}
The isomorphisms constructed in Theorem~\ref{T:AS=TS} are canonical in the sense that they match up $U_{J^+}$ on the both sides.  However, slight perturbations of the isomorphisms will continue to be isomorphic.  This point will be important when studying the mixed characteristic case.
\end{remark}

\subsection{Comparison of conductors}
\label{S4:Comparison-thm}

In this subsection, we will prove the comparison between the arithmetic conductors and the differential conductors.  As a reminder, we do not impose Hypotheses~\ref{H:J-finite-set} and \ref{H:tot-ram} in this subsection.

\begin{theorem} \label{T:main-theorem}
Let $k$ be a complete discretely valued field of equal characteristic $p>0$ and let $G_k$ be its absolute Galois group.  For a $p$-adic representation $\rho: G_k \rar GL(V_\rho)$ of finite local monodromy, the arithmetic Artin conductor $\Art_{ar}(\rho)$ of $\rho$ coincides with the differential Artin conductor $\Art_\dif(\rho)$; the arithmetic Swan conductor $\Swan_{ar}(\rho)$ coincides with the differential Swan conductor $\Swan_\dif(\rho)$.
\end{theorem}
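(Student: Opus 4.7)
The plan is to assemble the three ingredients (a), (b), (c) advertised in the introduction, after first reducing to a clean geometric input. The first step will be a standard reduction to the case of a faithful representation $\rho$ of $G_{l/k}$ where $l/k$ is a finite totally and wildly ramified Galois extension and $\kappa_k$ admits a finite $p$-basis. The reduction to finite $p$-basis uses the comments following Theorem~\ref{T:properties-Ked-cond} and Proposition~\ref{P:AS-space-properties}(4) together with a standard descent of both filtrations under increasing the base. The reduction to finite local monodromy is by definition, and the reduction to totally wildly ramified is via the tame/unramified behaviour of both conductors (Theorem~\ref{T:properties-Ked-cond}(2)(3) versus Proposition~\ref{P:AS-space-properties}(4)(6)). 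After these reductions, since each conductor is the weighted sum over the jumps of its own filtration, it suffices to prove the two filtrations agree on each finite Galois quotient, and by induction on $[l:k]$ it suffices to compare the \emph{highest} ramification breaks: $b_{\dif}(\rho)=b_{ar}(l/k)$ and analogously in the logarithmic setting.

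The key computation will then be a chain of equivalences, packaged for a fixed rational $b>1$ (respectively $b>0$ in the log case). By the criterion for differential ramification breaks in Theorem~\ref{T:ram-break-equiv-disconn}(1)$\Leftrightarrow$(4), one has $b_{\dif}(\rho)\leq b$ iff for every rational $c>b$ and $\eta_0$ sufficiently close to $1^-$, the space $Z_{l'}^{\geq\eta_0^{1/e'}}\times_{Z_l^{\geq\eta_0^{1/e}}}TS_{l/k\backslash l}^{c,\geq\eta_0}$ has exactly $[l:k]$ connected components for a suitable finite extension $l'/l$. Ingredient (c), Theorem~\ref{T:AS=TS}, identifies $TS_{k\backslash l}^{c,\geq\eta_0}\cong AS_{l/k}^{c,\geq\eta_0}$ as rigid spaces over $Z_k^{\geq\eta_0}$, and base-changing along $Z_l^{\geq\eta_0^{1/e}}\to Z_k^{\geq\eta_0}$ converts the thickening-space condition into a condition on connected components of $Z_{l'}^{\geq\eta_0^{1/e'}}\times_{Z_k^{\geq\eta_0}}AS_{l/k}^{c,\geq\eta_0}$. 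Ingredient (b), packaged as Theorem~\ref{T:connected-components-as} (which rests on Corollary~\ref{C:connected-components}), then identifies this latter limit of connected components with $\pi_0^{\geom}(as_{l/k}^c)$. The chain closes: $b_{\dif}(\rho)\leq b$ iff $\#\pi_0^{\geom}(as_{l/k}^c)=[l:k]$ for every rational $c>b$, iff $b_{ar}(l/k)\leq b$.

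Taking infima over $b$ and running the identical argument for the logarithmic variants (choosing rational exponents so Remark~\ref{R:Berkovich-vs-classical} applies) yields $b_{\dif}(\rho)=b_{ar}(l/k)$ and $b_{\dif,\log}(\rho)=b_{ar,\log}(l/k)$. To upgrade the highest-break equality to the full conductor equality, I would argue inductively. Given a faithful $\rho$ factoring through $G_{l/k}$, the highest arithmetic break determines the largest integer $a$ with $\Fil^{a+}G_k\subseteq G_l$, and similarly on the differential side; agreement of the highest breaks across all finite Galois subextensions forces $\Fil^{a}G_k=G_k^{a}$ and $\Fil^{a}_{\log}G_k=G^{a}_{k,\log}$ as filtrations on $G_k$, and the Artin/Swan conductors are then equal by their definitions as weighted alternating dimensions of filtration invariants.

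The principal technical obstacle will be ingredient (c), i.e.\ Theorem~\ref{T:AS=TS}: constructing mutually inverse continuous homomorphisms $\chi_1,\chi_2$ between the functions on $TS_{k\backslash l}^{a,\geq\eta_0}$ and on $AS_{l/k}^{a,\geq\eta_0}$. The delicate part is the invertibility of the Jacobian-type matrix $A=(\partial(\tilde\pi^*(P_i)-P_i)/\partial\delta_j)|_{\delta=0}$ modulo $(p,T)$ (Lemma~\ref{L:determinant}), which requires the careful inductive choice of $p$-basis and generators in Construction~\ref{C:cj-generate-l} and Notation~\ref{N:generators-of-calI}, and the norm bookkeeping that feeds the iteration \eqref{E:iteration} through a contraction argument. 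Once that isomorphism is in hand, the Gröbner basis / division algorithm machinery of Section~1 (giving Corollary~\ref{C:connected-components}) and the Dwork transfer step (Lemma~\ref{L:Dwork-transfer}, Theorem~\ref{T:ram-break-equiv-disconn}) stitch together the rest without further surprise.
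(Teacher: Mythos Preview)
Your proposal is correct and follows essentially the same route as the paper: reduce to the totally wildly ramified case with finite $p$-basis, then run the chain of equivalences through Theorem~\ref{T:ram-break-equiv-disconn}, Theorem~\ref{T:AS=TS}, and Theorem~\ref{T:connected-components-as} to identify the highest breaks. The only cosmetic difference is that the paper reduces directly to \emph{irreducible} $\rho$ (so that both conductors equal $(\text{break})\times\dim V_\rho$ with no further work), whereas you reduce to faithful $\rho$ and then invoke an inductive/filtration-agreement argument to pass from highest-break equality to conductor equality; the paper's reduction is slightly cleaner but yours is equally valid.
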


\begin{proof}
It suffices to prove for irreducible representations, as all the conductors are additive.  Since all the conductors remain the same if we pass to the completion of the unramified closure of $k$ (Proposition~\ref{P:AS-space-properties}(4), Theorem~\ref{T:properties-Ked-cond}(2)), we may assume that the residue field $\kappa_k$ is separably closed; hence $\rho$ factors through the Galois group of a finite totally ramified extension $l/k$ as $\rho: G_k \surj \Gal(l/k) \inj \GL(V_\rho)$ with the second map injective.  Moreover, we may assume that $l/k$ is wildly ramified because the theorem is known when $l/k$ is tamely ramified (Proposition~\ref{P:AS-space-properties}(6) and Theorem~\ref{T:properties-Ked-cond}(3)).  To sum up, we may assume Hypothesis~\ref{H:tot-ram}.  In particular, $b_{ar}(l/k)>1$ and $b_{ar, \log}(l/k) >0$.

Next, we want to reduce to the case when the $p$-basis of $k$ is finite.  By Construction~\ref{C:cj-generate-l}, one can choose $p$-basis of $l$ so that all but finitely many of them are actually in $k$.  Let $(c_i)_{i\in I}$ be a subset of those elements in the $p$-basis which lie in $k$.  Denote $\tilde k = k(c_i^{1/p^n}, i \in I, n \in \NN)^\wedge$ and $\tilde l = l \tilde k$.  We claim that $\calO_{\tilde l} = \calO_l \otimes _{\calO_k} \calO_{\tilde k}$.  Indeed, after base change to $\tilde k$, the valued groups do not change: $|\tilde k^\times| = |k^\times|$.  Thus, $[|\tilde l^\times|:|\tilde k^\times|] \geq [|l^\times|: |k^\times|]$.  On the other hand, the residue field extension of $\tilde l / \tilde k$ has degree at least the same as $\kappa_l / \kappa_k$ because $\bar c_{J \backslash I}$ are not in the residue field of $\tilde k$.  But we know that the degree of the extension does not increase.  Therefore, we have equality on both na\"ive ramification degrees and degrees of residue field extension.  It is then clear that $\calO_{\tilde l} = \calO_l \otimes _{\calO_k} \calO_{\tilde k}$, as the right hand side contains the uniformizer of the left hand side and both sides are isomorphic modulo that uniformizer.  Therefore, by Proposition~\ref{P:AS-space-properties}(4), $b_{ar} (\tilde l / \tilde k) = b_{ar} (l/k)$.

On the differential conductors side, \cite[Lemma~3.5.4]{KSK-Swan1} (the non-log case follows by similar argument) shows that we can consider only finitely many elements in the $p$-basis and the differential conductors are unchanged after making inseparable field extension with respect to other elements in the $p$-basis.

To sum up, we can make an inseparable extension so that all conductors do not change, and we are reduced to the case where Hypothesis~\ref{H:J-finite-set} holds.

Now, we will prove the comparison theorem for the Swan conductors and the proof for the Artin conductors follows verbatim, except replacing Swan by Art and $a>0$ by $a>1$ and dropping all the log's in the subscripts.

Since $\rho$ is irreducible, $\Swan_{ar}(\rho) = b_{ar, \log}(l/k) \cdot \dim V_\rho$.  Recall that in Subsection~\ref{S2-def-diff-conductors}, we can associate to $\rho$ a differential module $\calE_\rho$ over $\calR_K^{\eta_0} \otimes_{\QQ_q} F$ for some $\eta_0 \in (0,1)$.  As the representation $\rho$ is irreducible, $\calE_\rho$ has a unique ramification break $b_{\dif, \log}(\calE_\rho)$.  So the differential Swan conductor of $\rho$ is $\Swan_\dif(\rho) = b_{\dif, \log}(\calE_\rho) \cdot \dim V_\rho$.  Therefore, to conclude, it suffices to show that $b_{ar, \log}(l/k) = b_{\dif, \log}(\calE_\rho)$.

This follows from the following equivalence relations.
\begin{align*}
&\ a > b_{\dif, \log}(\calE_\rho) \\
\Longleftrightarrow & \begin{array}{l}
\textrm{for any (or some) extension } l'/l \textrm{ with na\"ive ramification degree } e',\\
\pi_0^\geom \Big( Z_{l'}^{\geq \eta_0^{1/ee'}} \times_{Z_l^{\geq \eta_0^{1/e}}} TS_{l/k \backslash l, \log}^{a, \geq \eta_0}\Big) = [l:k] \textrm{ when } \eta_0 \rar 1^-
\end{array}
\ \ \textrm{(Theorem~\ref{T:ram-break-equiv-disconn} (1)$\Leftrightarrow$(4))} \\
\Longleftrightarrow &\ \pi_0^\geom \big( as_{l/k, \log}^a \big) = [l:k] \qquad \textrm{(Theorem~\ref{T:connected-components-as})} \\
\Longleftrightarrow &\ a > b_{ar, \log}(l/k),
\end{align*}
where $a$ is a rational number.
\end{proof}

\begin{remark}
In an early version of this paper, Theorem~\ref{T:main-theorem} is stated for representations with finite image.  Andrea Pulita pointed out that this could be extended to the finite local monodromy case by a standard argument as in the proof.
\end{remark}

\begin{corollary}\label{C:HA-thm-AS-fil}
\emph{(1) (Hasse-Arf Theorem)} Let $k$ be a complete discretely valued field of equal characteristic $p>0$, let $G_k$ be its absolute Galois group, and let $\rho: G_k \rar GL(V_\rho)$ be a $p$-adic representation of finite local monodromy.  Then the arithmetic Artin conductor $\Art_{ar}(\rho)$ and the arithmetic Swan conductor $\Swan_{ar}(\rho)$ are integers.

\emph{(2)} Let $k$ be a complete discretely valued field of equal characteristic $p > 0$. Then the subquotients $\Fil^a G_k / \Fil^{a+} G_k$ (resp. $\Fil_\log^a G_k / \Fil_\log^{a+} G_k$) of the arithmetic ramification filtrations are elementary $p$-abelian groups if $a \in \QQ_{> 1}$ (resp. $a \in \QQ_{> 0}$) and are trivial if $a \notin \QQ$.
\end{corollary}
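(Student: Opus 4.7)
Both parts will be deduced directly from the comparison Theorem~\ref{T:main-theorem} combined with the already-established properties of the differential side (Theorem~\ref{T:properties-Ked-cond}), so essentially no new geometric input is required.

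For part (1), I would simply observe that by Theorem~\ref{T:main-theorem} we have $\Art_{ar}(\rho) = \Art_\dif(\rho)$ and $\Swan_{ar}(\rho) = \Swan_\dif(\rho)$ for any $p$-adic representation $\rho$ of finite local monodromy, and by Theorem~\ref{T:properties-Ked-cond}(1) the right-hand sides are nonnegative integers. This gives the Hasse--Arf statement with no further work.

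For part (2), the plan is to show that the arithmetic ramification filtration on $G_k$ coincides with the differential ramification filtration defined in Theorem~\ref{T:properties-Ked-cond}(4), and then invoke the structural statement about subquotients given there. To match the filtrations, I would argue via representations of finite quotients: for each finite Galois extension $l/k$ and each finite-image representation $\rho$ factoring through $G_{l/k}$, the highest differential (resp.\ logarithmic differential) break of $\rho$ equals the highest arithmetic (resp.\ logarithmic arithmetic) break of $l/k$ relevant to $\rho$, as an immediate consequence of Theorem~\ref{T:main-theorem} applied to the irreducible constituents of $\rho$. Then, by the characterization $b_{ar}(l/k) = \inf\{a : \Fil^a G_k \subseteq G_l\}$ from Proposition~\ref{P:AS-space-properties}(3) and the parallel definition of $G_k^a$ in Theorem~\ref{T:properties-Ked-cond}(4) in terms of $R_a$, the two nested families $\{\Fil^a G_k\}$ and $\{G_k^a\}$ cut out the same normal subgroups of $G_k$, and similarly in the logarithmic setting. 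Hence $\Fil^a G_k = G_k^a$ and $\Fil^{a+} G_k = G_k^{a+}$ for all $a$, and the desired structure of the subquotients then follows from Theorem~\ref{T:properties-Ked-cond}(4).

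I do not expect any serious obstacle. The only mildly delicate point is to verify that the definitions of $G_k^a$ (via sets $R_a$ of representations with break strictly less than $a$) and of $\Fil^a G_k$ (via the filtration characterization $\inf\{a : \Fil^a G_k \subseteq G_l\}$) use the same convention (strict vs.\ non-strict inequality, and left- vs.\ right-continuity in $a$), so that the identification of filtrations is on the nose rather than off by a limit. This is a purely formal check: both filtrations are defined by a ``strictly less than $a$'' condition on breaks, with $\Fil^{a+}$ and $G_k^{a+}$ given as closures of the unions over $b > a$, so the two indexing conventions agree and the equality $\Fil^a G_k = G_k^a$ holds for every $a$. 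Once this bookkeeping is set up correctly, parts (1) and (2) are one-line consequences of Theorems~\ref{T:main-theorem} and~\ref{T:properties-Ked-cond}.
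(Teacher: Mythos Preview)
Your proposal is correct and matches the paper's own approach exactly: the paper's proof of this corollary is the single sentence ``It follows from Theorems~\ref{T:properties-Ked-cond} and \ref{T:main-theorem},'' and you have simply unpacked how that citation works, including the routine verification that equality of conductors (via Theorem~\ref{T:main-theorem}) forces equality of breaks and hence of the two filtrations.
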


\begin{proof}
It follows from Theorems~\ref{T:properties-Ked-cond} and \ref{T:main-theorem}.
\end{proof}

\section{Applications}
\setcounter{equation}{0}
In this section, we give two applications of the comparison Theorem~\ref{T:main-theorem}.  The first one is to deduce an integrality result concerning the ramification filtration of finite flat group schemes, introduced in \cite{AM-sous-groupes}.  The other one is to compare the arithmetic and differential Artin conductors to the Artin conductor defined by Borger \cite{Borger-conductor}.

\begin{remark}  \label{R:only-equal-char}
It is worthwhile to point out first that all applications in this section can be carried over to the mixed characteristic case if there is a good theory of differential conductors.  For the application to finite flat group schemes, one needs the Hasse-Arf theorem of arithmetic Artin conductors; for the comparison with Borger's Artin conductor, one needs a mixed characteristic version of Proposition~\ref{P:diff-inv-p-th-root}.  In the absence of these statements, we only focus on the equal characteristic $p$ case throughout this section.
\end{remark}

\subsection{Hasse-Arf theorem for finite flat group schemes}
\label{S:ffgs}

We first recall some definitions and basic properties from \cite{AM-sous-groupes} and \cite{Hattori-tame-char}.  Then, we use a theorem by Raynaud \cite[Th\'eor\`em~3.1.1]{BBM-dieudonne-crisII} to reduce the integrality result to the case of finite Galois extension of complete discretely valued fields.

Keep the notation as in previous sections.  We do not assume any hypothesis on $k$ (and there will be no $l$ in this subsection).

\begin{convention}
All finite flat groups schemes are commutative.
\end{convention}

The construction of the canonical filtration on a generically \'etale finite flat group scheme is similar to that of the arithmetic ramification filtration.

\begin{definition}
Let $A$ be a finite flat $\calO_k$-algebra.  Write $A = \calO_k[x_1, \dots, x_n]/\calI$ with $\calI$ an ideal generated by $f_1, \dots, f_r$.  For $a \in \QQ_{>0}$, define the rigid space
\[
X^a = \big\{ (x_1, \dots, x_n) \in A_K^n[0,1]\; \big| \;
|f_\alpha(x_1, \dots, x_n)| \leq \theta^a, \alpha = 1, \dots, r \big\},
\]
where $\theta = |s|$ as in Notation~\ref{N:theta=|s|}.  The \emph{highest break} $b(A / \calO_k)$ is the smallest number such that for all $a> b(A / \calO_k)$, $\pi_0^\geom(X^a) = \rank_{\calO_k}A$.  This is the same as Definition~\ref{D:AS-space} if $A = \calO_l$, except here we use the ring of integers instead of the fields in the notation.
\end{definition}

\begin{notation}
For a finite flat group scheme $G = \Spec A$, it is \emph{generically \'etale} if $G \times_{\calO_k}k$ is \'etale over $k$; it is \emph{generically trivial} if $G \times_{\calO_k}k$ is a disjoint union of copies of $\Spec k$.
\end{notation}

\begin{definition}
For a geometrically \'etale finite flat group scheme $G = \Spec A$, we have a natural map of points $G(k^\alg) \inj X^a(k^\alg)$; further composing with the map for geometric connected components, we obtain a map
\[
\sigma^a: G(k^\alg) \inj X^a(k^\alg) \rar \pi_0^\geom(X^a).
\]
Define $G^a$ to be the closure of $\ker \sigma^a$. We use $b(G/\calO_k)$ to denote the highest break $b(A/\calO_k)$; then for $a > b(G / \calO_k)$, $G^a = \Spec \calO_k$.
\end{definition}

\begin{proposition}
\emph{\cite[Lemme~2.3.2]{AM-sous-groupes}}
Let $0 \rar G' \rar G \rar G'' \rar 0$ be an exact sequence of finite flat group schemes.  Then for $a>0$, $0 \rar G'^a \rar G^a \rar G''^a \rar 0$ is exact.
\end{proposition}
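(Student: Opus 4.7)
The plan is to work on geometric points (each $H^a$ is a closed subgroup scheme of the ambient finite flat group scheme, determined by its $k^\alg$-points with the Galois action) and to use compatibly chosen presentations throughout. Writing $A'' = \calO_k[\underline y]/J$ and extending along the inclusion $A'' \inj A$ to $A = \calO_k[\underline y, \underline z]/(J, K)$, we obtain $A' = A \otimes_{A''} \calO_k = \calO_k[\underline z]/K_0$, where $K_0$ is got by evaluating $\underline y$ at the augmentation $\epsilon: A'' \rar \calO_k$. The projection $(\underline y, \underline z) \mapsto \underline y$ then induces a morphism of rigid spaces $X^a_G \rar X^a_{G''}$, compatible with $G \rar G''$ on geometric points, and by functoriality of $\pi_0^\geom$ this yields the morphisms $G'^a \rar G^a \rar G''^a$. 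The composition vanishes since $G' \rar G''$ is zero.

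For exactness at $G^a$, observe that with the above presentations the subspace of $X^a_G$ cut out by $\underline y = \epsilon(\underline y)$ is canonically identified with $X^a_{G'}$, because the constraints $|J(\underline y)| \leq \theta^a$ hold trivially at the augmentation point. Hence the restriction of $\sigma^a_G$ to $G'(k^\alg) \subset G(k^\alg)$ factors through $\sigma^a_{G'}$, giving $G'(k^\alg) \cap G^a(k^\alg) = G'^a(k^\alg)$. Since both sides are the sets of $k^\alg$-points of closed subgroup schemes of $G'$, this identifies $\ker(G^a \rar G''^a) = G'^a$ as closed subgroup schemes.

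The main obstacle is surjectivity of $G^a \rar G''^a$. Given $P \in G''^a(k^\alg)$, faithful flatness of $G \rar G''$ yields some lift $Q_0 \in G(k^\alg)$, but in general $\sigma^a_G(Q_0)$ need not vanish. The strategy is to use the $G'$-torsor structure on $G \rar G''$ to modify $Q_0$: translating by $Q' \in G'(k^\alg)$ produces $Q_0 \cdot Q'$ still lying over $P$, and we seek $Q'$ such that $\sigma^a_G(Q_0 \cdot Q') = 0$. Geometrically, this reduces to showing that the projection $X^a_G \rar X^a_{G''}$ has, at least over each geometric connected component of $X^a_{G''}$, the structure of an $X^a_{G'}$-bundle, so that fiberwise connected components are indexed by $\pi_0^\geom(X^a_{G'})$ and the identity component of $X^a_G$ meets each fiber in a translate of the identity component of the fibre copy of $X^a_{G'}$.

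The hardest step will be establishing this fibration structure rigorously. Since $K(\underline y, \underline z)$ depends nontrivially on $\underline y$, the space $X^a_G$ is not literally a product of $X^a_{G''}$ with $X^a_{G'}$; one must show that, over a geometric component of $X^a_{G''}$, the $\underline y$-dependence of $K$ can be absorbed by a change of coordinates compatible with the group structure. The required input is Hopf-algebra compatibility of the presentation: translation by a $k^\alg$-point of $G'$ is an automorphism of $G$ over $G''$ which should extend (or closely approximate) an automorphism of $X^a_G$ over $X^a_{G''}$ sufficient to produce the fiberwise identifications. Once this is in hand, surjectivity of $G^a \rar G''^a$ follows, since every fiber of $X^a_G \rar X^a_{G''}$ above a component of $X^a_{G''}$ meets the identity component of $X^a_G$ in exactly the translates corresponding to $G'^a(k^\alg)$, giving the desired lift of $P$.
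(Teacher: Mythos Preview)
The paper does not prove this proposition; it simply cites \cite[Lemme~2.3.2]{AM-sous-groupes}. So there is no in-paper argument to compare against, and your attempt must stand on its own.

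Your outline has the right shape but contains a genuine gap that you only half-acknowledge. In the paragraph on exactness at $G^a$, you assert $G'(k^\alg)\cap G^a(k^\alg)=G'^a(k^\alg)$ from the factorization of $\sigma^a_G|_{G'}$ through $\sigma^a_{G'}$. That factorization gives only the inclusion $G'^a(k^\alg)\subseteq G'(k^\alg)\cap G^a(k^\alg)$. The reverse inclusion requires that a point of $G'(k^\alg)$ lying in the identity component of $X^a_G$ already lies in the identity component of $X^a_{G'}$; equivalently, that $\pi_0^\geom(X^a_{G'})\rar\pi_0^\geom(X^a_G)$ is injective near the identity. This is not formal from the closed embedding $X^a_{G'}\hookrightarrow X^a_G$, and indeed the Caution immediately following the proposition in the paper emphasizes that for a general subgroup scheme $H\subset G$ one does \emph{not} know how to compare $H^a$ with $H\times_G G^a$. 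So the quotient structure must be used essentially, and your argument for this step does not use it.

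For surjectivity you correctly isolate the crux --- that $X^a_G\rar X^a_{G''}$ should behave like an $X^a_{G'}$-torsor over each component --- but you do not establish it, and the hoped-for mechanism (extending translation by $G'(k^\alg)$ to an automorphism of $X^a_G$) is not obviously available: translation is a morphism of schemes, not of the chosen presentation, and the Abbes--Saito space depends on the presentation. The argument in \cite{AM-sous-groupes} handles both issues simultaneously by exploiting the functoriality of $\pi_0^\geom(X^a_{-})$ under morphisms of pairs $(\calO_k\textrm{-algebra},\textrm{generators})$ together with the torsor isomorphism $G'\times G\isom G\times_{G''}G$; this is the missing structural input you would need to complete the proof.
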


\begin{caution}
For a subgroup scheme $H \subset G$ and $a \geq 0$, we do not know how to link $H^a$ with $H \times_G G^a$.
\end{caution}

The following question is first raised in \cite{Hattori-tame-char}; and its proof is essentially due to Hattori.  The author would like to thank him for clarifying this and the permission to include the proof here.

\begin{theorem}
Let $\calO_k$ be a complete discrete valuation ring of equal characteristic $p$. For any generically trivial finite flat groups scheme $G$ over $\calO_k$, $b(G / \calO_k)$ is a nonnegative integer.
\end{theorem}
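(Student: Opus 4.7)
The plan is to use Raynaud's embedding theorem \cite[Th\'eor\`em~3.1.1]{BBM-dieudonne-crisII} to reduce the problem to the ramification theory of a finite abelian Galois extension $l/k$, where integrality of the highest ramification break follows from the Hasse-Arf theorem (Corollary~\ref{C:HA-thm-AS-fil}) applied character-by-character.

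First I would reduce to the case where $G$ has $p$-power order. The prime-to-$p$ part $G_{p'}$ of $G$ is \'etale and, being generically trivial over the Henselian ring $\calO_k$, is in fact a constant group scheme; a direct analysis of the defining equations shows $b(G_{p'}/\calO_k)=0$. By the exact-sequence compatibility of the highest break \cite[Lemme~2.3.2]{AM-sous-groupes} applied to $0\to G_{p'}\to G\to G_p\to 0$, this reduces to the case where $G$ has $p$-power order.

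Next, I would apply Raynaud's theorem to realize $G$ as the kernel of an isogeny $f:A_1\to A_2$ between two abelian schemes over $\calO_k$. For a suitably chosen section $P\in A_2(\calO_k)$, the geometric fiber $f^{-1}(P)(\bar k)$ is a torsor under $G(\bar k)=G(k)$ whose field of definition is a finite Galois extension $l/k$; the Galois group $\Gal(l/k)$ embeds into the abelian group $G(k)$ via the translation action, hence $\Gal(l/k)$ is itself abelian. Using the translation action of $G(k)$ on $f^{-1}(P)$ to align coordinates, I would identify, after appropriate shrinking, the rigid space $X^a$ defining $b(G/\calO_k)$ with the Abbes-Saito space $as_{l/k}^a$ defining $b_{ar}(l/k)$, obtaining $b(G/\calO_k)=b_{ar}(l/k)$.

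Finally, since $\Gal(l/k)$ is abelian, any nontrivial subquotient of the image of the arithmetic ramification filtration $\Fil^a G_k/\Fil^{a+}G_k$ in $\Gal(l/k)$ supports a nontrivial character whose Artin conductor equals $a$; by the Hasse-Arf theorem (Corollary~\ref{C:HA-thm-AS-fil}(1)) this conductor is a nonnegative integer, so $b_{ar}(l/k)$, being the maximum such $a$, is a nonnegative integer. The main obstacle will be the comparison in the third step: precisely matching the defining equations of $G$ with those of $\calO_l$ requires careful use of the formal group law of $A_1$ near the identity, and one must verify that the shrinking preserves geometric connected components at precisely the radii $\theta^a$ which control $b(G/\calO_k)$ and $b_{ar}(l/k)$.
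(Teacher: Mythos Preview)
Your overall strategy---Raynaud's embedding followed by Hasse-Arf for the resulting abelian Galois extension---is exactly the paper's. The difference, and the place where your version has a gap, is in how you pass from $b(G/\calO_k)$ to a ramification break of a genuine field extension.

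You propose to take a fiber $f^{-1}(P)$ over a closed section $P \in A_2(\calO_k)$ and compare $X^a$ with $as_{l/k}^a$ for the resulting $l$. Two obstacles arise. First, there is no guarantee that a section $P$ exists for which the torsor $f^{-1}(P)_k$ is connected: the image of $A_2(k) \to H^1(k, G(k))$ need not contain a class corresponding to a surjection $G_k \twoheadrightarrow G(k)$, so you may never produce an $l$ with $\Gal(l/k)=G(k)$. Second, even granting such a $P$, the finite flat $\calO_k$-scheme $f^{-1}(P)$ need not be normal, hence need not equal $\Spec \calO_l$; the identification $b(f^{-1}(P)/\calO_k) = b_{ar}(l/k)$ then requires a separate argument you have not supplied. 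Your proposed ``alignment by translation'' only exists over $l$ (or $\bar k$), not over $k$, so it does not directly give the comparison of rigid spaces you want.

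The paper sidesteps both issues by passing instead to the \emph{generic point} of the special fiber: if $\alpha, \beta$ denote the generic points of the special fibers of $\gothA, \gothB$, then $\calO_{\gothB,\beta}^\wedge / \calO_{\gothA,\alpha}^\wedge$ is automatically an extension of complete discrete valuation rings (by smoothness of the abelian schemes), and it is Galois with group $G(k)$ since $G$ is generically trivial. The equality $b(G/\calO_k) = b(\calO_{\gothB,\beta}^\wedge / \calO_{\gothA,\alpha}^\wedge)$ is then a direct citation of \cite[Lemme~2.1.6]{AM-sous-groupes}, and no explicit matching of rigid spaces is needed. (The paper's preliminary reduction is also slightly different: it passes to the connected component of the identity rather than to the $p$-primary part, but either is fine.)
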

\begin{proof}
We may assume that $G$ is connected by taking the connected component of the identity.  By a theorem of Raynaud \cite[Th\'eor\`em~3.1.1]{BBM-dieudonne-crisII}, we may realize $G$ as the kernel of an isogeny $f: \gothB \rar \gothA$ of two abelian schemes over $\Spec \calO_k$.  Let $\alpha$ and $\beta$ be generic points of the special fibers of $\gothA$ and $\gothB$, respectively.  Then by \cite[Lemme~2.1.6]{AM-sous-groupes}, $b(\calO_{\gothB, \beta}^\wedge / \calO_{\gothA, \alpha}^\wedge) = b(G / \calO_k)$.

Since the generic fiber of $G$ is a disjoint union of copies of $\Spec k$, we know that $\calO_{\gothB, \beta}^\wedge / \calO_{\gothA, \alpha}^\wedge$ is a generically \'etale finite Galois extension of complete discrete valuation rings, with Galois group $G(k)$; in particular, all irreducible representations of this Galois group over an algebraically closed field are one dimensional.  By Hasse-Arf Theorem~\ref{T:main-theorem}, $b(\calO_{\gothB, \beta}^\wedge / \calO_{\gothA, \alpha}^\wedge) = b(G / \calO_k)$ is an integer.
\end{proof}

\subsection{Generic $p^\infty$-th roots}
\label{S5:dummy-var}

In this subsection, we introduce the notation of generic $p^\infty$-th roots.  This idea was first introduced in \cite{Borger-conductor} as a key ingredient of Borger's Artin conductor.

Keep the notation as in previous sections.  We assume Hypothesis~\ref{H:J-finite-set} that $k$ has a finite $p$-basis $b_J$.

\begin{notation}
Let $x_1, \dots, x_m$ be transcendental over $k$.  Define $k'$ to be the completion of $k(x_1, \dots, x_m)$ with respect to the $(1, \dots, 1)$-Gauss norm.  Set $l' = k' l$.  Obviously, $l'$ is the completion of $l(x_1, \dots, x_m)$ with respect to the $(1, \dots, 1)$-Gauss norm.  We call $x_1, \dots, x_m$ \emph{dummy variables}.
\end{notation}

\begin{definition}\label{D:gen-pinfty-root}
We use \emph{adding generic $p^\infty$-th roots} to refer to the following procedure.  Consider
\[
k \inj \tilde k = k' \big((b_j + x_j s)^{1/p^n}; j\in J, n\in \NN\big)^\wedge, 
\]
instead of $k$, namely, put all $p$-power roots of $b_j + x_j s$ for all $j \in J$ into $k'$ and then take the completion.  We provide $\tilde k$ with the $p$-basis $x_J$, i.e., replacing $b_j$ by $x_j$ for all $j \in J$.  For a finite field extension $l/k$, we replace it by the extension of the composite $\tilde l =l\tilde k / \tilde k$.  Note that $\Gal(\tilde l / \tilde k) = \Gal(l/k)$ as $\tilde k$ is linearly independent from $l$.  
\end{definition}

The proof of the following proposition is essentially the same as \cite[Lemma~3.5.4]{KSK-Swan1}.  It is also implicitly contained in Borger's construction of Artin conductors (Subsection~\ref{S5:comp-borger-cond}).

\begin{proposition}\label{P:finite-gen-pth-roots}
Let $l/k$ be a finite Galois extension of complete discretely valued fields of equal characteristic $p$ and with finite $p$-basis.  Then, after finitely many operations of adding generic $p^\infty$-th roots, the field extension has separable residue field extension.
\end{proposition}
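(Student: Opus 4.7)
The plan is to apply the operation iteratively and show that each application produces a strictly simpler residue-field situation, so that finitely many suffice. Induction will be on an invariant of $\kappa_{\tilde l^{(n)}}/\kappa_{\tilde k^{(n)}}$ (for instance, the inseparable residue degree together with a secondary complexity measure), and the main work is verifying this invariant strictly decreases at each step.

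First I would check the effect of a single operation on $\tilde k$. Each $(b_j+x_js)^{1/p^n}$ has residue $\bar b_j^{1/p^n}$, a unit, so the adjunctions are residually purely inseparable and leave the valuation group unchanged; hence $s$ is still a uniformizer of $\tilde k$ and $\kappa_{\tilde k}=\kappa_k^{1/p^\infty}(x_J)$, with $x_J$ itself a $p$-basis. In particular the finite $p$-basis hypothesis is preserved, and $\kappa_{\tilde k}$ contains $\kappa_k^{1/p^\infty}$, hence the entire purely inseparable closure of $\kappa_k$ in $\kappa_l$.

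Next I would convert this residue-level absorption into actual ramification of $\tilde l/\tilde k$. Using the good generators of Construction~\ref{C:cj-generate-l}, pick an inseparable generator $\alpha\in\kappa_l$ with $\alpha^{p^n}=\bar b_j$ and lift it to $\tilde\alpha\in\calO_l$ satisfying $\tilde\alpha^{p^n}\equiv B_j\pmod{s}$; write $\tilde\alpha^{p^n}-B_j=su$ with $u\in\calO_l$. Then in $\tilde l$,
\[
\bigl(\tilde\alpha-(B_j+x_js)^{1/p^n}\bigr)^{p^n}=\tilde\alpha^{p^n}-(B_j+x_js)=s(u-x_j),
\]
and since $x_j$ is transcendental over $k$, $u-x_j$ is a unit in $\calO_{\tilde l}$, so the displayed difference has valuation exactly $1/p^n$; this forces $p^n$ to divide the ramification index of $\tilde l/\tilde k$.

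The main obstacle is that one operation need not fully separabilize the residue extension: for composita such as $l=k(\alpha_1,\alpha_2)$ with $\alpha_i^p+s\alpha_i=b_i$, ratios of the above ramification-producing differences have residues that are $p$-th roots of expressions like $(\bar b_2^{1/p}+x_2)/(\bar b_1^{1/p}+x_1)$; these involve the new $p$-basis $x_J$ and hence are not $p$-th powers in $\kappa_{\tilde k}$, so fresh inseparability appears at a deeper level. However this new content is confined to the freshly introduced dummy variables, so a further application of the operation (now with respect to $x_J$ and fresh dummies $y_J$) absorbs it via the analogue of the displayed identity one level up, making the chosen invariant strictly decrease. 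Finitely many such iterations therefore terminate the process.
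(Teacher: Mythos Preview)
Your displayed identity is correct and is indeed the heart of the matter: if $\tilde\alpha$ lifts a $p$-th root of $\bar b_j$, then $\tilde\alpha-(b_j+x_js)^{1/p}$ acquires valuation exactly $1/p$ in $\tilde l$. But the conclusion you draw from it is too weak to drive an induction. You only deduce $p\mid e(\tilde l/\tilde k)$; when $e(l/k)$ is already divisible by $p$ (because some lower step in a tower is genuinely ramified), this gives no new information, and the inseparable residue degree need not drop. Your ``secondary complexity measure'' is never defined, and the paragraph about fresh inseparability being ``confined to the dummy variables'' and hence absorbed ``one level up'' is a plausibility argument, not a proof: you would have to exhibit, for the new residue extension $\kappa_{\tilde l}/\kappa_{\tilde k}$, a generator whose $p$-th power lies in the new $p$-basis $x_J$ up to $p$-th powers, and then quantify why the resulting situation is strictly simpler. (Also, Construction~\ref{C:cj-generate-l} does not give $\bar c_j^{p^{r_j}}=\bar b_j$; it only gives $\bar c_j^{p^{r_j}}\in\kappa_k(\bar c_1,\dots,\bar c_{j-1})$, cf.\ Remark~\ref{R:Sweedler}. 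You can still arrange $\alpha^p=\bar b_j$ for a single $\alpha$ by adjusting the $p$-basis, but not in general for higher $n$.)

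The paper avoids this difficulty by choosing a sharper, manifestly integral invariant. It first filters $l/k$ by a tower of $\ZZ/p\ZZ$-Galois steps $k_i/k_{i-1}$, lets $i_0$ be the largest index below which all steps have separable residue, and focuses on the single step $k_{i_0+1}/k_{i_0}$. For a generator $g$ of its Galois group it sets
\[
\gamma=\min_{w\in\calO_{k_{i_0+1}}}v_{k_{i_0+1}}\bigl(g(w)-w\bigr),
\]
a nonnegative integer whenever the residue extension is inseparable. Writing the minimal equation of a generator $z$ as $z^p+a_1z^{p-1}+\cdots+a_p=0$ with $\bar a_p\notin\kappa_k^p$, after one operation one has $a_p=\alpha^p+\beta$ with $\beta\in\gothm$, so $z'=z+\alpha$ has positive valuation and $z'/\pi_{k_{i_0}}$ is integral; then $g(z'/\pi)-z'/\pi$ has valuation $\gamma-1$. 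Thus $\gamma$ drops by at least $1$ per operation, forcing the step to acquire separable residue in finitely many operations, whence $i_0$ increases. This is exactly the kind of precise, monotone, integer-valued invariant your sketch is missing.
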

\begin{proof}
First, the tamely ramified part is always preserved under these operations.  So, we can assume that $l/k$ is totally wildly ramified and hence the Galois group $G_{l/k}$ is a $p$-group.  We can filter the extension $l/k$ as $k = k_0 \subset \cdots \subset k_n = l$, where $k_i / k_{i-1}$ is a (wildly ramified) $\ZZ / p\ZZ$-Galois extension and $k_i / k$ is Galois for each $i = \serie{}n$.  Each of these subextensions 
\begin{enumerate}
\item [(a)] either has inseparable residue field extension (and hence has na\"ive ramification degree 1),
\item [(b)] or has separable residue field extension (and hence has na\"ive ramification degree $p$).
\end{enumerate}

Let $i_0$ be the maximal number such that $k_i / k_{i-1}$ has separable residual extension for $i = \serie{}i_0$.  Obviously adding generic $p^\infty$-th roots does not decrease $i_0$ because after adding generic $p^\infty$-th roots, the na\"ive ramification degree of $\tilde k_{i_0} / \tilde k$ still equals to the degree $p^{i_0}$.  It then suffices to show that after finitely many operations of adding generic $p^\infty$-th roots, $k_{i_0+1} / k_{i_0}$ has separable residue field extension.  Suppose the contrary.

Let $g \in G_{k_{i_0+1} / k_{i_0}} \simeq \ZZ / p\ZZ$ be a generator.  We claim that $\gamma = \min_{w \in \calO_{k_{i_0+1}}} \big(v_{k_{i_0+1}}(g(w) - w) \big)$ decreases by at least 1 after adding $p^\infty$-th roots.  This would conclude the proposition, because the number $\gamma$ is always a nonnegative integer, which would lead to a contradiction.

Let $z$ be a generator of $\calO_{k_{i_0+1}}$ as an $\calO_{k_{i_0}}$-algebra.  It satisfies an equation
\begin{equation}\label{E:k_i_0-over-k_i_0+1}
z^p + a_1 z^{p-1} + \cdots  + a_p = 0
\end{equation}
where $\serie{a_}{p-1} \in \gothm_{k_{i_0}}$ and $a_p \in \calO_{k_{i_0}}^\times$ with $\bar a_p \in \kappa_{k_{i_0}}^\times \backslash (\kappa_{k_{i_0}}^\times)^p = \kappa_k^\times \backslash (\kappa_k^\times)^p$.  It is easy to see that $\gamma = v_{k_{i_0}}(g(z) - z)$.

Adding generic $p^\infty$-th roots to $k$ gives us the field $\tilde k$.  Now, the field extension $\tilde k k_{i_0+1} / \tilde k k_{i_0}$ is also generated by $z$ as above.  But we can write $a_p = \alpha^p +\beta$ for $\alpha \in \calO_{\tilde k k_{i_0}}$ and $\beta \in \gothm_{\tilde k k_{i_0}}$.  Hence if we substitute $z' = z+\alpha$ into \eqref{E:k_i_0-over-k_i_0+1}, we get
$z'^p + a'_1 z'^{p-1} + \cdots + a'_p = 0$, with $\serie{a'_}p \in \gothm_{\tilde k k_{i_0}}$.  Hence, $v_{\tilde k k_{i_0+1}}(z') > 0$.  By assumption that the extension $\tilde k k_{i_0+1} / \tilde k k_{i_0}$ has na\"ive ramification degree $1$, a uniformizer $\pi_{k_{i_0}}$ of $k_{i_0}$ is also a uniformizer for $\tilde k k_{i_0+1}$ and hence $z' / \pi_{k_{i_0}}$ lies in $\calO_{\tilde k k_{i_0+1}}$.  Thus,
$$
\gamma' = \min_{w \in \calO_{\tilde k k_{i_0+1}}} \big(v_{\tilde k k_{i_0+1}}(g(w) - w) \big) \leq v_{\tilde k k_{i_0+1}} \big(g(z' / \pi_{k_{i_0}}) - z' / \pi_{k_{i_0}}\big) = v_{k_{i_0+1}}\big(g(z) - z\big) - 1 = \gamma - 1.
$$

This proves the claim and hence the proposition.
\end{proof}

\subsection{Borger's Artin conductors}
\label{S5:comp-borger-cond}

We start with reviewing Borger's definition of Artin conductors following \cite{Borger-conductor}.  Then, we prove the comparison theorem linking this to arithmetic and differential conductors.

Keep the notation as above.  Let $k$ be a complete discretely valued field of equal characteristic $p$, with no further hypothesis added.  In fact, 
Borger's construction works of mixed characteristic case, but we only focus on the equal characteristic case (see Remark~\ref{R:only-equal-char}).

\begin{definition}
An $\FF_p$-algebra $R$ is called \emph{perfect} if $F: x \mapsto x^p$ is an isomorphism.  For a $\FF_p$-algebra $R$, we use $R^\pf = \cup_{n \in \NN} R^{1/p^n}$ to denote its \emph{perfection}.  Let $\CRP_{\calO_k}$ be the subcategory of the category of $\calO_k$-algebras consisting of flat $\calO_k$-algebras $A$, complete with respect to the $\gothm_k$-adic topology and for which $A / \gothm_k A$ is perfect.
\end{definition}

\begin{proposition}\label{P:Borger-cond-constr}
\emph{\cite[Theorem~1.4]{Borger-conductor}} This category $\CRP_{\calO_k}$ has an initial object $\calO_k^u$, the \emph{universal residual perfection} of $\calO_k$.  We have an equivalence of categories
\begin{equation} \label{E:equiv-cate-Borger}
\CRP_{\calO_k} \isom \PerfAlg_{\overline {\calO_k^u}}, \quad A \mapsto A / \gothm_k A,
\end{equation}
where $\PerfAlg_{\overline {\calO_k^u}}$ is the category of perfect $\calO_k^u / \gothm_k \calO_k^u$-algebras.
\end{proposition}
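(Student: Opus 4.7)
The plan is to construct $\calO_k^u$ as a transfinite iterated application of the ``adding generic $p^\infty$-th roots'' construction of Definition~\ref{D:gen-pinfty-root}, and then to use Hensel's lemma repeatedly to obtain both the universal property and the equivalence of categories. Concretely, setting $R_0 = \calO_k$, I would at each successor stage index over a generating set for the non-perfect part of the residue ring $\bar R_\alpha := R_\alpha / \gothm_k R_\alpha$, add a corresponding family of dummy variables, and adjoin all $p^n$-th roots of $\tilde a + x_{\bar a} s$ for $\tilde a$ any lift of $\bar a \in \bar R_\alpha$, in the manner of Definition~\ref{D:gen-pinfty-root}. At limit ordinals one takes the directed colimit and then the $\gothm_k$-adic completion. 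Each individual step preserves flatness over $\calO_k$ (polynomial extensions and extensions of the form $R[X]/(X^p - u)$ with $u$ a unit are flat, and the Gauss-norm completion of a flat algebra is flat), and the residue ring of the stable value is perfect by construction; this stable value is $\calO_k^u$.

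For the universal property, fix $A \in \CRP_{\calO_k}$. The strategy is to build the unique $\calO_k$-algebra map $\calO_k^u \rar A$ by the same transfinite recursion along the tower $(R_\alpha)$. At each successor step, the dummy variable $x_{\bar a}$ may be sent to any lift $X_{\bar a}\in A$ of the corresponding element of $A/\gothm_k A$; one must then produce canonical $p^n$-th roots of $\tilde a + X_{\bar a} s$ in $A$. Existence follows from Hensel's lemma in the $\gothm_k$-adically complete $A$: the polynomial $Y^{p^n} - (\tilde a + X_{\bar a} s)$ reduces modulo $\gothm_k$ to $Y^{p^n} - \bar a$, which has a unique solution in the perfect ring $A/\gothm_k A$, and Newton iteration promotes this to a lift in $A$. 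Uniqueness of $p^n$-th roots uses that $A$ is $p$-torsion-free (a consequence of flatness over $\calO_k$). Passing to the $\gothm_k$-adic completion at the end yields $\calO_k^u \rar A$; since every choice was forced by the target, the map is unique.

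The equivalence of categories \eqref{E:equiv-cate-Borger} then follows formally from the universal property, combined with a lifting construction for objects. Full faithfulness of the functor $A \mapsto A/\gothm_k A$ reduces to showing that any $\overline{\calO_k^u}$-algebra map $\bar \phi : A/\gothm_k A \rar A'/\gothm_k A'$ lifts uniquely to an $\calO_k^u$-algebra map $A \rar A'$, which is the same Hensel-plus-$p$-torsion-freeness argument applied to $A'$ in place of $\calO_k^u$. For essential surjectivity, given a perfect $\overline{\calO_k^u}$-algebra $\bar B$, present $\bar B$ as a quotient of a perfection of a polynomial algebra over $\overline{\calO_k^u}$ modulo a set of relations, lift the generators via the same Hensel procedure to produce a flat $\calO_k^u$-algebra, impose the lifted relations, and $\gothm_k$-adically complete; the resulting object lies in $\CRP_{\calO_k}$ and reduces to $\bar B$.

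The main technical obstacle will be showing that the transfinite iteration actually stabilizes at some bounded cardinal and that $\gothm_k$-adic completion commutes appropriately with the directed colimit at limit stages, preserving both flatness over $\calO_k$ and the perfectness of the residue ring in the limit. A related subtlety is verifying independence from the arbitrary choices of lifts $\tilde a$ and of the enumeration at each stage; this should follow from the uniqueness part of the universal property, so the argument is essentially self-bootstrapping once existence is established.
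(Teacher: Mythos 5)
This statement is not proved in the paper at all: it is quoted verbatim from Borger (\cite[Theorem~1.4]{Borger-conductor}), so your proposal is necessarily a different route, and the real question is whether it works. It does not, as written. The load-bearing step in your universal-property and full-faithfulness arguments is ``Hensel's lemma/Newton iteration produces $p^n$-th roots of $\tilde a + X_{\bar a}s$ in the $\gothm_k$-adically complete ring $A$.'' This fails in residue characteristic $p$: the derivative of $Y^{p^n}-u$ is $p^nY^{p^n-1}$, which is zero in equal characteristic $p$, so the smoothness hypothesis of Hensel/Newton is violated and no iteration converges. Worse, existence of $p$-power roots is genuinely restrictive: since $A$ is flat over $\calO_k$, $\gothm_k$-adically complete, and $A/\gothm_kA$ is perfect, the Frobenius-limit (Teichm\"uller) construction gives a canonical multiplicative and additive section $A/\gothm_kA \rar A$, hence $A \cong (A/\gothm_kA)\llbracket s \rrbracket$, and $\bigcap_n A^{p^n}$ is exactly this Teichm\"uller copy of $A/\gothm_kA$. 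Consequently $\tilde a + X_{\bar a}s$ admits all $p$-power roots for exactly \emph{one} value of $X_{\bar a}$, namely $X_{\bar a} = ([\bar a]-\tilde a)/s$, not for ``any lift''; your write-up is internally inconsistent on this point (``may be sent to any lift'' versus ``every choice was forced''), and the correct mechanism is the Teichm\"uller section (or, in the spirit of this paper, the Cohen-ring functoriality of Proposition~\ref{P:cohen-functorial}), not Hensel.

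There is a second structural problem. If you really perform the successor steps ``in the manner of Definition~\ref{D:gen-pinfty-root}'', i.e.\ with Gauss-norm completions, you adjoin the dummy variables \emph{generically}: the residue ring of each stage becomes a perfection of a rational function field, and the limit is (a version of) the \emph{generic} residual perfection $\calO_k^g$ rather than the universal one. Such an object is not initial in $\CRP_{\calO_k}$: its residue ring contains the inverses of the new variables, so there is no map to any $A$ in which the forced images of those variables are non-units (for instance zero). The initial object must have residue ring the perfection of a \emph{polynomial} ring $\kappa_k[v_{i,j}]$, as in Proposition~\ref{P:struc-ku-kg}; with that correction, in equal characteristic no transfinite iteration is needed at all, since one can take $\calO_k^u = (\kappa_k[v_{i,j}])^\pf\llbracket S\rrbracket$ with structure map $s \mapsto S$, $b_j \mapsto b_j + \sum_{i>0}v_{i,j}S^i$, and verify initiality directly using the canonical isomorphism $A \cong (A/\gothm_kA)\llbracket s\rrbracket$ above. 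Finally, your essential-surjectivity step (``impose the lifted relations and complete'') is not justified: imposing relations can introduce $s$-torsion and so leave $\CRP_{\calO_k}$; the correct construction is simply $\bar B \mapsto \bar B\llbracket s\rrbracket$ with the $\calO_k$-algebra structure transported coefficient-wise through $\calO_k^u$.
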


\begin{definition}
Let $\calO_k^g$ be the inverse image of $\Frac(\calO_k^u / \gothm_k \calO_k^u)$ under \eqref{E:equiv-cate-Borger}, called the \emph{generic residual perfection} of $\calO_k$.  Denote $k^g = \Frac(\calO_k^g)$.  By Proposition~\ref{P:Borger-cond-constr}, $\calO_k^g$ is a complete discrete valuation ring with perfect residue field.

We have a homomorphism of Galois groups $G_{k^g} \rar G_k$.  Given a representation $\rho$ of $G_k$ with finite image, we define the \emph{Borger's Artin conductor} $\Art_B(\rho)$ to be $\Art(\rho_{G_{k^g}})$, where the latter term is as in the classical definition \cite{BOOK-local-fields}.
\end{definition}

\begin{remark}
In \cite{Borger-conductor}, Borger only defined Artin conductors for representations of finite image.  We expect his definition can be extended to representations of finite local monodromy.  However, this additional freedom is not essential, so we stick to finite image case to ease the argument.
\end{remark}

Obviously, Borger's Artin conductors have a Hasse-Arf property naturally inherited from that of $k^g$, a complete discretely valued field with perfect residue field.

\begin{proposition}\label{P:Borger-cond-prop}
\emph{\cite[Theorem~A]{Borger-conductor}} Borger's Artin conductor $\Art_B(\rho)$ is a nonnegative integer and it coincides with the classical definition when the residue field $\kappa_k$ is perfect.

\emph{\cite[Proposition~2.3]{Borger-conductor}} Furthermore, $\Art_B(\rho)$ is unchanged after a finite unramified extension of $k$.
\end{proposition}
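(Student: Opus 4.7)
The plan is to deduce all three assertions from the single fact, already established in Proposition~\ref{P:Borger-cond-constr}, that $k^g$ is a complete discretely valued field with perfect residue field.

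First, for the Hasse-Arf assertion: since $\kappa_{k^g} = \Frac(\calO_k^u/\gothm_k \calO_k^u)$ is a field by construction, and it is perfect (being the fraction field of a perfect ring), the classical Hasse-Arf theorem \cite[\S VI.2~Theorem~1']{BOOK-local-fields} applied to $k^g$ immediately gives $\Art(\rho|_{G_{k^g}}) \in \ZZ_{\geq 0}$; by definition this is $\Art_B(\rho)$.

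Next, for the coincidence with the classical conductor when $\kappa_k$ is perfect, I would verify that $\calO_k^g = \calO_k$. In this case $\calO_k$ itself lies in $\CRP_{\calO_k}$, and since every object of $\CRP_{\calO_k}$ is by definition a flat complete $\calO_k$-algebra, each receives a unique $\calO_k$-algebra structure map, so $\calO_k$ is the initial object: $\calO_k^u = \calO_k$. Then $\calO_k^u/\gothm_k \calO_k^u = \kappa_k$ is already a field, so its fraction field is itself, and under the equivalence of Proposition~\ref{P:Borger-cond-constr} this pulls back to $\calO_k$ itself. Hence $\calO_k^g = \calO_k$, $k^g = k$, and $\Art_B(\rho)$ reduces to the classical $\Art(\rho)$.

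Finally, for invariance under a finite unramified extension $k'/k$, the key claim to establish is that $\calO_{k'}^g$ is a finite unramified extension of $\calO_k^g$; once this is in hand, the standard invariance of classical Artin conductors under unramified base change gives $\Art(\rho|_{G_{k'^g}}) = \Art(\rho|_{G_{k^g}})$, i.e., $\Art_B(\rho|_{G_{k'}}) = \Art_B(\rho)$. To establish the claim, I would observe that $\calO_k^g \cpltotimes_{\calO_k} \calO_{k'}$ is a complete flat $\calO_{k'}$-algebra whose reduction modulo $\gothm_{k'}$ is $(\calO_k^g/\gothm_k \calO_k^g) \otimes_{\kappa_k} \kappa_{k'}$, a finite separable extension of a perfect field, hence perfect; so this algebra lies in $\CRP_{\calO_{k'}}$, and a check of the universal property identifies it with $\calO_{k'}^g$.

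The main obstacle will be the last universal-property check in the unramified invariance step, which requires tracking how the equivalence of Proposition~\ref{P:Borger-cond-constr} behaves under the base change $\calO_k \to \calO_{k'}$. The other two parts are essentially formal consequences of the definitions combined with the classical Hasse-Arf theorem.
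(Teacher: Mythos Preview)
The paper does not give its own proof of this proposition; it simply records the results with citations to Borger's paper, and the sentence immediately preceding the statement already remarks that the Hasse--Arf property is ``naturally inherited from that of $k^g$, a complete discretely valued field with perfect residue field.'' So there is nothing to compare against beyond that one-line observation, which your first argument reproduces exactly.

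Your arguments for the first two assertions are correct and are precisely the intended reasoning. For the unramified-invariance step, your strategy is the right one, but one point deserves more care than you give it: the universal property you invoke is the one characterizing $\calO_{k'}^u$, not $\calO_{k'}^g$. The argument you sketch cleanly shows $\calO_{k'}^u \cong \calO_k^u \cpltotimes_{\calO_k} \calO_{k'}$; to pass to $\calO_{k'}^g$ you then need that $(\calO_k^u/\gothm_k)\otimes_{\kappa_k}\kappa_{k'}$ is still a domain, so that its fraction field is $\Frac(\calO_k^u/\gothm_k)\otimes_{\kappa_k}\kappa_{k'}$, and that the equivalence \eqref{E:equiv-cate-Borger} is compatible with this base change. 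Both are easy once one uses the explicit description of $k^u$ in Proposition~\ref{P:struc-ku-kg}, but phrasing it as ``a check of the universal property identifies it with $\calO_{k'}^g$'' skips over exactly the step you yourself flag as the obstacle. Since the paper simply cites \cite[Proposition~2.3]{Borger-conductor} here, the cleanest fix is either to do the same or to route the argument through $\calO_{k'}^u$ explicitly.
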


Moreover, Borger proved that his definition coincides with a variant of arithmetic Artin conductor $\Art_K$ for characters using the definition of Kato \cite{Kato-cond}.  (As we will not use Kato's definition, we just mention the following proposition as a fact.)

\begin{proposition}
\emph{\cite[Theorem~B]{Borger-conductor}} Let $\chi$ be a class in $H^1(G_k, \QQ / \ZZ)$ and $\chi'$ its image in $H^1(G_{k^g}, \QQ / \ZZ)$. Then $\Art_K(\chi) = \Art_K(\chi')$.  In particular, for a rank one representation $\rho$ of $G_k$ with finite image, $\Art_K(\rho) = \Art_B(\rho)$.
\end{proposition}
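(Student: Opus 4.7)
The plan is to prove the equality $\Art_K(\chi) = \Art_K(\chi')$ by showing that Kato's conductor is invariant under the construction $k \mapsto k^g$. I will reduce the general case step by step.

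First, by the structure of $H^1(G_k, \QQ/\ZZ)$ and additivity of conductors on direct sums, it suffices to treat characters of $p$-power order; tame characters contribute identically on both sides since $k$ and $k^g$ have the same tame quotient of the inertia. For a $p$-primary character $\chi$ of order $p^n$, I would use Kato's explicit definition of the refined Swan conductor as an element of $\Omega^1_{\calO_k(\log)}$ attached to a good generator of the corresponding cyclic extension $l/k$, from which $\Art_K(\chi)$ is read off.

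Next, I would establish the key invariance: adding a generic $p^\infty$-th root in the sense of Definition~\ref{D:gen-pinfty-root} preserves $\Art_K(\chi)$. The reason is that the generator of the Artin--Schreier--Witt tower cutting out the extension admits a canonical lift across the base change, and the associated refined Swan class transforms in a controlled way with respect to the substitution $b_j \mapsto x_j$ in the $p$-basis; the degrees of the terms measured by the valuation are unchanged because $\tilde{k}/k$ is flat with the same value group. Iterating this invariance together with Proposition~\ref{P:finite-gen-pth-roots} reduces us after \emph{finitely many} steps to a situation where the residue field extension cut out by $\chi$ is separable. At that stage, $\chi$ lifts to a character of a Galois extension with separable residue extension, and Kato's conductor agrees with the classical Artin conductor of the pushforward, so in particular matches $\Art(\chi|_{G_{k^g}}) = \Art_K(\chi')$.

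The main obstacle will be the passage to the limit object $k^g$ itself, since the universal residual perfection of Proposition~\ref{P:Borger-cond-constr} is not a union of fields of finite type but a completion of a colimit in $\CRP_{\calO_k}$. One must verify that the ramification data of $\chi$ stabilizes at some finite stage in this colimit; once it does, the remaining base change to $k^g$ is essentially an unramified-type enlargement (after the residue field has been made separable at a finite stage) which leaves the Artin conductor unchanged by Proposition~\ref{P:Borger-cond-prop}. Combining stabilization with the stepwise invariance of the previous paragraph yields $\Art_K(\chi) = \Art_K(\chi')$, and the rank-one consequence $\Art_K(\rho) = \Art_B(\rho)$ is then immediate from the definition of $\Art_B$.
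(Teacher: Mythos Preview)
The paper does not prove this proposition at all: it is stated purely as a citation of \cite[Theorem~B]{Borger-conductor}, and the sentence immediately preceding it in the text says explicitly ``(As we will not use Kato's definition, we just mention the following proposition as a fact.)'' So there is nothing to compare your attempt against; the paper simply imports Borger's result from the literature.

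That said, a brief comment on your sketch as a standalone argument. The overall strategy (reduce to $p$-primary characters, prove invariance under adding generic $p^\infty$-th roots, stabilize after finitely many steps, then pass to $k^g$) is in the right spirit, and indeed resembles how the paper handles the \emph{differential} Artin conductor in Proposition~\ref{P:diff-inv-p-th-root} and Theorem~\ref{T:ar=diff=Borger}. But the crucial step --- that Kato's refined Swan conductor is unchanged under adding generic $p^\infty$-th roots --- is asserted rather than proved: you say the refined Swan class ``transforms in a controlled way'' under $b_j \mapsto x_j$, but this is exactly the content of the theorem and requires a genuine computation with Artin--Schreier--Witt generators and the module $\Omega^1_{\calO_k}(\log)$. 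The paper's own machinery (spectral norms of differential operators) does not apply here, since $\Art_K$ is defined via Kato's filtration, not via $p$-adic differential modules. If you want a self-contained proof, you would need to either reproduce Borger's original argument or establish directly that $\Art_K = \Art_{\dif}$ in rank one (which is the Chiarellotto--Pulita result \cite{ChPu-cond}) and then invoke Proposition~\ref{P:diff-inv-p-th-root}.
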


Borger gave the following explicit descriptions of $k^u$ and $k^g$.

\begin{proposition}\label{P:struc-ku-kg}
We have $k^u = \big(\kappa_k [v_{i,j}\,|\,j \in J, i \in \NN]\big)^\pf((\pi_{k^u}))$.  The homomorphism $k \inj k^u$ is determined by $s \mapsto \pi_{k^u}$ and $b_j \mapsto b_j + \sum_{i >0} v_{i, j}\pi_{k^u}^i$.  Also, $k^g = \Frac\big( \kappa_k [v_{i,j};j \in J, i \in \NN]^\pf\big) ((\pi_{k^u}))$ and the homomorphism $k \rar k^g$ is given by composing $k \inj k^u$ with the natural morphism $k^u \inj k^g$.
\end{proposition}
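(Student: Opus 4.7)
The plan is to check that $\tilde R := (\kappa_k[v_{i,j}: j\in J,\, i\in\NN])^\pf[\![\pi]\!]$, equipped with the $\calO_k$-algebra structure $s\mapsto\pi$ and $b_j\mapsto b_j + \sum_{i\geq 1} v_{i,j}\pi^i$, is an initial object of $\CRP_{\calO_k}$. Proposition~\ref{P:Borger-cond-constr} will then identify it with $\calO_k^u$, and inverting $\pi$ gives the formula for $k^u$; the statement for $k^g$ will follow by pulling $\Frac(\tilde R/\pi\tilde R)$ back through the equivalence~\eqref{E:equiv-cate-Borger}. Membership of $\tilde R$ in $\CRP_{\calO_k}$ is immediate: it is $\pi$-adically complete and separated; its residue ring $(\kappa_k[v_{i,j}])^\pf$ is perfect by construction; the $\gothm_k$-adic and $\pi$-adic topologies agree because $s$ maps to $\pi$; and $\tilde R$ is flat over $\calO_k = \kappa_k[\![s]\!]$ since $\pi$ is a nonzerodivisor in $\tilde R$.

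The real content is universality. Given $A\in\CRP_{\calO_k}$ with structure map $\alpha:\calO_k\to A$, set $\bar A := A/\gothm_k A$; the equal-characteristic $p$ hypothesis together with perfectness of $\bar A$ lets me construct a canonical ring section $\sigma_A:\bar A\to A$ of the residue map. For $\bar a\in\bar A$, pick any lift $a_n\in A$ of $\bar a^{1/p^n}$; the identity $(x-y)^{p^n} = x^{p^n} - y^{p^n}$ in characteristic $p$ forces $a_n^{p^n}$ to depend on $\bar a$ only modulo $\gothm_k^{p^n}A$, so $\sigma_A(\bar a) := \lim_n a_n^{p^n}$ is well-defined, and the same identity makes it a ring homomorphism. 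Since $\alpha(s)$ is a nonzerodivisor with $A/\alpha(s)A = \bar A$ and $A$ is $\alpha(s)$-adically complete, $\sigma_A$ upgrades to a ring isomorphism $A\cong\bar A[\![\alpha(s)]\!]$, so each $\alpha(b_j)$ admits a unique expansion $\alpha(b_j) = \sigma_A(\bar b_j) + \sum_{i\geq 1}a_{i,j}\,\alpha(s)^i$ with $a_{i,j}\in\bar A$.

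From this expansion I would define $\varphi:\tilde R\to A$ by $\pi\mapsto\alpha(s)$, $v_{i,j}\mapsto a_{i,j}$, $v_{i,j}^{1/p^N}\mapsto a_{i,j}^{1/p^N}$ (the unique $p^N$-th root in the perfect ring $\bar A$), and on $\kappa_k^\pf$ via $\sigma_A$ composed with the unique extension $\kappa_k^\pf\to\bar A$; $\pi$-adic continuity extends this to all of $\tilde R$, and by construction $\varphi(b_j+\sum v_{i,j}\pi^i) = \alpha(b_j)$. Uniqueness is forced because any $\calO_k$-algebra map $\tilde R\to A$ must send $\pi$ to $\alpha(s)$, must coincide with $\sigma_A$ on the perfect residue coefficient ring (as $\sigma_A$ is the unique ring-theoretic section into $A$), and therefore must send $v_{i,j}$ to the coefficient $a_{i,j}$ of the unique power-series expansion of $\alpha(b_j)$. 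This proves $\tilde R = \calO_k^u$, and the equivalence~\eqref{E:equiv-cate-Borger} then identifies $\calO_k^g$ with $\Frac((\kappa_k[v_{i,j}])^\pf)[\![\pi_{k^u}]\!]$, yielding the stated description of $k^g$. The main obstacle I anticipate is constructing $\sigma_A$ and leveraging it to obtain the clean identification $A\cong\bar A[\![\alpha(s)]\!]$; once that is in hand, the extraction of $v_{i,j}$ from $\alpha(b_j)$ and both existence and uniqueness of $\varphi$ follow formally.
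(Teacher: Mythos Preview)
The paper does not actually prove this proposition: it is stated as a result of Borger, prefaced by ``Borger gave the following explicit descriptions of $k^u$ and $k^g$'', and no argument is supplied. Your proposal therefore does more than the paper does here, by giving a self-contained verification of the universal property.

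Your argument is correct in outline and in most details. The Teichm\"uller-type section $\sigma_A:\bar A\to A$ is genuinely a ring homomorphism in equal characteristic $p$ (because $(x+y)^{p^n}=x^{p^n}+y^{p^n}$), and together with flatness and completeness it yields the identification $A\cong \bar A\llbracket \alpha(s)\rrbracket$. The extraction of the coefficients $a_{i,j}$ and the definition of $\varphi$ are then forced. Two points deserve a sentence more than you give them. First, the $\calO_k$-algebra structure on $\tilde R$ itself---i.e.\ the existence of a continuous ring map $\kappa_k\to\tilde R$ reducing to the inclusion $\kappa_k\hookrightarrow(\kappa_k[v_{i,j}])^\pf$ and sending $b_j$ to $b_j+\sum_i v_{i,j}\pi^i$---is not automatic; it follows from the characteristic-$p$ analogue of Proposition~\ref{P:cohen-functorial}. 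Second, and relatedly, you check that $\varphi$ sends $s$ and each $b_j$ to the right place, but to conclude that $\varphi$ is an $\calO_k$-algebra map you must know that two continuous maps $\kappa_k\to A$ which agree modulo $\alpha(s)$ and agree on the $p$-basis are equal; this is again the uniqueness in (the reduction of) Proposition~\ref{P:cohen-functorial}, whose proof does not actually require the target to be local or noetherian, only $\gothm$-adically complete. Once these are made explicit your proof is complete.
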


\subsection{Comparison with Borger's conductors}

The key to prove the comparison between Borger's Artin conductors and the arithmetic Artin conductors is to study how the arithmetic Artin conductors behave under the operations of adding generic $p^\infty$-th roots.

In this subsection, we do not impose any hypothesis on $k$.

\begin{proposition}\label{P:diff-inv-p-th-root}
Assume Hypothesis~\ref{H:J-finite-set}. For representations of finite images, the differential Artin conductor for a representation of finite image is unchanged after adding generic $p^\infty$-th roots.
\end{proposition}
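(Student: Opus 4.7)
The plan is to factor the operation $k \hookrightarrow \tilde k$ into two intermediate steps and handle each by a separate (and more elementary) argument. Write $k \hookrightarrow k' \hookrightarrow \tilde k$, where $k'$ is the completion of $k(x_J)$ with respect to the $(1,\dots,1)$-Gauss norm, so that $\kappa_{k'} = \kappa_k(x_J)$ is a purely transcendental extension of $\kappa_k$, and $\tilde k / k'$ is a purely inseparable residue-field extension obtained by adjoining all $p^\infty$-th roots of $b_j + x_j s$.  Since the differential Artin conductor is additive, I may first reduce to the case of a single irreducible representation $\rho$ with finite image.

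For the first step $k \to k'$: the residue field $\kappa_{k'}$ admits the $p$-basis $b_J \cup x_J$, and correspondingly the Cohen ring $\calO_{K'^{(1)}}$ of $\kappa_{k'}$ carries additional canonical lifts $X_J$ and extra derivations $\partial/\partial X_j$.  Under the natural map $\calO_K \hookrightarrow \calO_{K'^{(1)}}$ (which sends each $B_j$ to itself), the base-changed module $\calE_\rho \otimes_{\calR_K^{\eta_0}} \calR_{K'^{(1)}}^{\eta_0}$ is identified with $\calE_{\rho|_{G_{k'}}}$.  Now each new operator $\partial/\partial X_j$ annihilates $\calE_\rho \otimes 1$ (because $X_j$ is transcendental over $\calO_K$), so by Remark~\ref{R:sp-norm-inv-BC} its spectral norm on the base-changed module equals $|\partial/\partial X_j|_{F'_\eta, \sp} = p^{-1/(p-1)}$, contributing a zero break.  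The operators $\partial/\partial B_j$ and $\partial/\partial S$ retain exactly their old spectral norms on $\calE_\rho$.  Hence the radius multiset $\bbS(\calE_\rho, \eta)$ is unchanged, and $\Art_\dif(\rho)=\Art_\dif(\rho|_{G_{k'}})$.

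For the second step $k' \to \tilde k$: this is a purely inseparable residue-field extension in which every $\bar b_j \in \kappa_{k'}$ becomes a $p^\infty$-th power in $\kappa_{\tilde k} = \kappa_k^{\operatorname{pf}}(x_J)$, so effectively $b_J$ is removed from the $p$-basis.  The invariance of the differential \emph{Swan} conductor under precisely this type of extension is the content of \cite[Lemma~3.5.4]{KSK-Swan1}.  The plan is to adapt that argument to the non-log setting: choose the map $\calO_{K'^{(1)}} \to \calO_{\tilde K}$ sending each $B_j$ to the Teichm\"uller-type lift $[\bar b_j] \in W(\kappa_k^{\operatorname{pf}}) \subset \calO_{\tilde K}$, so that $dB_j = 0$ in $\Omega^1_{\calO_{\tilde K}/\ZZ_p}$.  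Then use Lemma~\ref{L:rotation} to reduce, after a rotation of coordinates, to the case in which $\partial_0 = \partial/\partial S$ is eventually dominant on $\calE_\rho \otimes \calR_{K'^{(1)}}$; under this reduction, the spectral norm governing the break of $\calE_\rho \otimes \calR_{\tilde K}$ is already carried by $\partial/\partial S$, and so matches the original break.  The only difference from the log case in \cite{KSK-Swan1} is a uniform extra factor of $\eta^{-1}$ in the normalization of $\partial_0$ (compare Definition~\ref{D:scale-multiset-on-pDE}), which is harmless throughout.

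The main obstacle is the second step, and specifically the verification that dropping the operators $\partial/\partial B_j$ when passing to $\calR_{\tilde K}$ does not strictly lower the break.  This is where the rotation Lemma~\ref{L:rotation} (reducing to eventually dominant $\partial_0$) is essential: without it, one only obtains the inequality $\Art_\dif(\rho|_{G_{\tilde k}}) \leq \Art_\dif(\rho)$ directly from the Teichm\"uller-lift computation, and a separate argument (a gauge transformation between the different lifts of $B_j$) is needed for the reverse inequality, paralleling the manipulation in \cite[Lemma~3.5.4]{KSK-Swan1}.
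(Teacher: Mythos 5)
Your step (1) is fine and is implicitly part of the paper's computation, but step (2) contains the real content and there is a genuine gap there. First, the appeal to \cite[Lemma~3.5.4]{KSK-Swan1} is a misattribution: that lemma lets one perfect $p$-basis directions that are \emph{not involved} in the extension $l/k$, whereas here the roots being adjoined are of the perturbed elements $b_j + x_j s$ with \emph{all} of $b_J$ potentially involved in the ramification; this is exactly the situation that lemma does not cover, and the ``genericity'' of the perturbation is the whole point of the proposition. Second, your choice of lift $B_j \mapsto [\bar b_j]$ is not compatible with the inclusion $k' \hookrightarrow \tilde k$: in $\calO_{\tilde k}$ the unique element admitting all $p$-power roots and reducing to $\bar b_j$ is $b_j + x_j s$, not $b_j$, so the Teichm\"uller-type lift of $\bar b_j$ reduces to $b_j + x_j s$ mod $p$. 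Hence either your map does not lift the field inclusion (and then it secretly encodes the substitution $B_j \mapsto B_j + X_j S$, whose effect on the module you must track), or you must take $B_j \mapsto [\bar b_j] - X_j S$, in which case $dB_j \neq 0$ and the claimed simplification ``drop the operators $\partial/\partial B_j$'' disappears. Either way, by the chain rule the surviving operators over $\tilde K$ act as $S\,\partial_{B_j} + \partial_{X_j}$ and $\sum_j X_j\,\partial_{B_j} + \partial_S$, and the proposition reduces to the equality
\[
\max\big\{|\partial_{B_J}|_{\calE_\eta,\sp},\, |\partial_S|_{\calE_\eta,\sp},\, |\partial_{X_J}|_{\calE_\eta,\sp}\big\} \;=\; \max\big\{\big|\textstyle\sum_j X_j\partial_{B_j}+\partial_S\big|_{\sp},\, \big|S\partial_{B_j}+\partial_{X_j}\big|_{\sp}\big\},
\]
i.e.\ that no cancellation occurs in these linear combinations. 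This is the heart of the paper's proof of Proposition~\ref{P:diff-inv-p-th-root}, and it is proved there precisely by using that the $X_J$ are transcendental over $K$ (the ``generic'' in generic $p^\infty$-th roots). Your proposal never invokes this.

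Your fallback — use Lemma~\ref{L:rotation} to make $\partial_0$ eventually dominant and then claim the break is carried by $\partial/\partial S$ alone — does not close the gap. Dominance produced by that rotation is only non-strict (ties among the maximal spectral norms are the typical outcome), and for commuting operators $|A+B|_{\sp}$ can drop below $\max\{|A|_{\sp},|B|_{\sp}\}$ exactly when there is a tie; so dominance of $\partial_S$ does not by itself rule out cancellation in $\partial_S + \sum_j X_j\partial_{B_j}$. (If the dominance were strict the ultrametric argument would work, but you cannot guarantee strictness.) The missing reverse inequality $\Art_\dif(\rho|_{G_{\tilde k}}) \geq \Art_\dif(\rho)$ therefore still requires the transcendence argument; the paper's proof consists of exactly this one rotation $B_j \mapsto B_j + X_j S$ together with the no-cancellation identity above, after which ``forgetting'' the operators $\partial_{B_J}$ visibly leaves the break unchanged.
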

\begin{proof}
Since the operation of adding $p^\infty$-th roots does not change the Galois group of the finite Galois extension, we may assume that the representation is irreducible and totally and wildly ramified.  Hence it suffices to consider the differential ramification break of a totally and wildly ramified finite Galois extension $l/k$.

Recall that we have a differential module $\calE$ over $Z_k^{\geq \eta_0} = A_K^1[\eta_0, 1)$ for some $\eta_0 \in (0,1)$ with differential operators $\partial_{B_J}$ and $\partial_S$, associated to the regular representation of $\Gal(l/k)$ over $\Qp$.  The base change $k \inj k' = k(x_J)^\wedge$ is translated into the base change of $\calE$ into $\calE'$, from $Z_k^{\geq \eta_0}$ to $Z_{k'}^{\geq \eta_0} = A_{K(X_J)^\wedge}^1[\eta_0, 1)$, where $K' = K(X_J)^\wedge$ is the completion of $K(X_J)$ with respect to the $(1, \dots, 1)$-Gauss norm;  $\calE'$ has differential operators $\partial_{B_J}$, $\partial_{X_J}$, and $\partial_S$.

Consider the rotation $f:Z_{k'}^{\geq \eta_0} \rar Z_{k'}^{\geq \eta_0}$ by $f^*(B_J) = B_J +X_JS$, $f^*(X_J) = X_J$, and $f^*(S) = S$;  write $\partial'_{B_J}$, $\partial'_{X_J}$, and $\partial'_S$ for the action of differential operators on $f^*\calE'$.  Then,
\[
\partial'_{B_J} = \partial_{B_J}, \qquad 
\partial'_{X_J} = S \cdot \partial_{B_J} + \partial_{X_J}, \qquad
\partial'_S = \sum_{j \in J} X_j \cdot \partial_{B_j} + \partial_S.
\]
Since $X_J$ are transcendental over $K$, we have
\begin{equation} \label{E:sp-norm-rotation}
\max\{|\partial_{B_J}|_{\calE_\eta, \sp}, |\partial_S|_{\calE_\eta, \sp}, |\partial_{X_J}|_{\calE_\eta, \sp}\} = |\partial'_S|_{\calE'_\eta, \sp} = \max\{|\partial'_{X_J}|_{\calE'_\eta, \sp}, |\partial'_S|_{\calE'_\eta, \sp}\},\quad \forall \eta \in [\eta_0, 1).
\end{equation}

Note that adding generic $p^\infty$-th roots to $k$, exactly corresponds to replacing $\calE$ by $f^*\calE'$ and \emph{forgetting} the differential operators $B_J$.  By \eqref{E:sp-norm-rotation} above, the differential non-logarithmic ramification break of $\tilde l / \tilde k$ is the same as that of $l/k$.
\end{proof}

\begin{theorem}\label{T:ar=diff=Borger}
For a complete discretely valued field $k$ of equal characteristic $p$ and a representation $\rho$ of its Galois group $G_k$ with finite image, the arithmetic Artin conductors $\Art_{ar}(\rho)$ as well as the differential Artin conductors $\Art_\dif(\rho)$ are the same as Borger's Artin conductors $\Art_B(\rho)$.
\end{theorem}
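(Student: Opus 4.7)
The plan is to combine the main comparison Theorem~\ref{T:main-theorem} with the invariance of the differential Artin conductor under generic $p^\infty$-root operations (Proposition~\ref{P:diff-inv-p-th-root}), reducing the problem to a totally ramified monogenic extension where all three conductors can be read off from the same Eisenstein polynomial, and then matching up with Borger's definition through the universal property of $k^g$. By Theorem~\ref{T:main-theorem} we have $\Art_{ar}(\rho) = \Art_\dif(\rho)$, so only the identification with $\Art_B(\rho)$ remains. Standard additivity reduces to $\rho$ irreducible, and invariance of all three conductors under completion of the maximal unramified extension of $k$ (Propositions~\ref{P:AS-space-properties}(4) and \ref{P:Borger-cond-prop}, Theorem~\ref{T:properties-Ked-cond}(2)) lets us assume $\kappa_k$ separably closed and $\rho$ factoring through $G_{l/k}$ for some finite totally wildly ramified Galois extension $l/k$. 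The reduction in the proof of Theorem~\ref{T:main-theorem} further permits Hypothesis~\ref{H:J-finite-set}.

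Next I would invoke Proposition~\ref{P:finite-gen-pth-roots}: after finitely many operations of adding generic $p^\infty$-th roots, the resulting extension $\tilde l/\tilde k$ has separable residue extension; since each step preserves both the total degree and the na\"ive ramification degree (both equal to $[l:k]$), the residue extension is in fact trivial. Hence $\tilde l/\tilde k$ is totally ramified with $\calO_{\tilde l}$ monogenic over $\calO_{\tilde k}$. Applying Proposition~\ref{P:diff-inv-p-th-root} at each step and Theorem~\ref{T:main-theorem} yields $\Art_{ar}(\rho) = \Art_{ar}(\tilde\rho)$, so the goal becomes $\Art_{ar}(\tilde\rho) = \Art_B(\rho)$.

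In the monogenic totally ramified setting, the standard Abbes-Saito space $as_{\tilde l/\tilde k}^a$ degenerates to a single disc cut out by an Eisenstein polynomial, and its geometric connected components are controlled by how the roots of that polynomial cluster in an algebraic closure of $\tilde k$. This is exactly the classical computation underlying Proposition~\ref{P:AS-space-properties}(7); since it involves only the Galois action on $\calO_{\tilde l}$, it applies verbatim despite $\kappa_{\tilde k}$ being imperfect. Thus $\Art_{ar}(\tilde\rho)$ equals the classical Artin conductor $\Art_{cl}(\tilde\rho)$ defined via the classical upper-numbering filtration on $G_{\tilde l/\tilde k} = G_{l/k}$, and this value is preserved by passage to $\tilde k_0$, the completion of the maximal unramified extension of $\tilde k$ (which has perfect residue field).

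To conclude, I would match $\Art_{cl}(\tilde\rho)$ with $\Art_B(\rho)$ using Borger's universal property: since $\calO_{\tilde k_0}$ lies in $\CRP_{\calO_k}$, Proposition~\ref{P:Borger-cond-constr} furnishes a canonical map $k^g \hookrightarrow \tilde k_0$. Because each step producing $\tilde k$ from $k$ (adding dummy variables, then $p^\infty$-roots) preserves the valuation group of $k$, and because the further passage to $\tilde k_0$ is unramified, the induced embedding $k^g \hookrightarrow \tilde k_0$ is itself unramified with perfect residue extension; classical Artin conductors are preserved, and the definition $\Art_B(\rho) = \Art(\rho|_{G_{k^g}})$ closes the chain. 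The main obstacle is precisely this final step: one must carefully invoke the universal property of $k^u$, verify that the resulting map $k^g \to \tilde k_0$ is genuinely unramified by tracking valuation groups through each generic $p^\infty$-root operation, and check compatibility of the Galois representations involved under the tower $G_{\tilde k_0} \to G_{k^g} \to G_k$.
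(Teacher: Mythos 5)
Your opening reductions and the use of Proposition~\ref{P:diff-inv-p-th-root} together with Proposition~\ref{P:finite-gen-pth-roots} match the paper's strategy, but the final step --- the only place where Borger's conductor actually enters --- does not work as written. You claim that $\calO_{\tilde k_0}$ (the completion of the maximal unramified extension of $\tilde k$) lies in $\CRP_{\calO_k}$, so that Proposition~\ref{P:Borger-cond-constr} furnishes a map $k^g \hookrightarrow \tilde k_0$. This is false: the residue field of $\tilde k_0$ is separably closed but \emph{not} perfect, since after adding generic $p^\infty$-th roots the dummy variables $x_J$ become a $p$-basis of $\kappa_{\tilde k}$ and acquire no $p$-th roots under unramified extension. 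Hence $\calO_{\tilde k_0}/\gothm_k\calO_{\tilde k_0}$ is not a perfect ring, $\calO_{\tilde k_0}$ is not an object of $\CRP_{\calO_k}$, and no map is provided by the universal property (which in any case produces maps \emph{out of} $\calO_k^u$, and one would still have to check factorization through $\calO_k^g$). In fact the comparison must go in the opposite direction: the paper embeds the finitely-modified field (your $\tilde k$, its $k_2$) \emph{into} $k^g$, using the explicit description of $k^g$ in Proposition~\ref{P:struc-ku-kg} (sending the dummy variables $x_{r,j}$ to explicit series in the $v_{i,j}$), and then invokes the general base-change invariance of Proposition~\ref{P:AS-space-properties}(4) (Abbes--Mokrane), which applies precisely because after Proposition~\ref{P:finite-gen-pth-roots} the extension $\tilde l/\tilde k$ is totally ramified, so $\calO_{k^g\tilde l} = \calO_{k^g}\otimes_{\calO_{\tilde k}}\calO_{\tilde l}$. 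On $k^g$, which genuinely has perfect residue field, arithmetic, differential, classical and Borger conductors all coincide, and the chain closes. Your proposal never supplies the hypothesis needed to move the conductor from $\tilde k$ to $k^g$, and the map you propose to do it with does not exist.

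Two smaller points. First, your identification of $\Art_{ar}(\tilde\rho)$ with a ``classical'' conductor via Proposition~\ref{P:AS-space-properties}(7) is used outside its stated scope ($\kappa_{\tilde k}$ is imperfect); for a totally ramified monogenic extension this can likely be patched, but it is an extra argument, and it is also unnecessary once one argues as above, since the comparison is performed only after landing in $k^g$. Second, the reduction to separably closed residue field via the \emph{completed} maximal unramified extension is not covered by Proposition~\ref{P:Borger-cond-prop} as cited (finite unramified extensions only); the paper avoids this by passing only to a finite unramified extension to make $l/k$ totally ramified.
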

\begin{proof}
First we may assume that $\rho$ is irreducible and it factors exactly through the Galois group $G_{l/k}$ of a totally ramified Galois extension $l/k$ because all conductors are additive and remain the same under a (finite) unramified extension (Theorem~\ref{T:properties-Ked-cond}(2) and Propositions~\ref{P:AS-space-properties}(4) and \ref{P:Borger-cond-prop}).  As $k^g$ has a perfect residue field, $\Art_B(\rho) = \Art_B(\rho|_{G_{k^g}}) = \Art_\dif(\rho|_{G_{k^g}})$ are the same as in the classical definition.  It suffices to show $\Art_\dif(\rho) = \Art_\dif(\rho|_{G_{k^g}})$.

Similarly to the proof of Theorem~\ref{T:main-theorem}, one may add the $p^\infty$-th roots of all but finitely many elements of the $p$-basis into $k$ without changing the differential Artin conductors.  In other words, there exists $k \inj k_1 = k(b_j^{p^{-n}}| j \in J \backslash J_0, n \in \NN)^\wedge$ for some finite set $J_0 \subset J$, such that $\Art_\dif(\rho) = \Art_\dif(\rho|_{\Gal_{k_1}})$.  Since the residue field of $k^g$ is perfect, there exists $k_1 \inj k^g$ extending $k \inj k^g$.  Hence, we may assume Hypothesis~\ref{H:J-finite-set}, i.e., $k$ has a finite $p$-basis.

By Proposition~\ref{P:finite-gen-pth-roots}, we can do finitely many operations of adding generic $p^\infty$-th roots and make the resulting field extension $k_2 l / k_2$ not fiercely ramified and $\Art_\dif(\rho|_{G_{k_1}}) = \Art_\dif(\rho|_{G_{k_2}})$.  In order to link $k_2$ with $k^g$, we need to show that we have a homomorphism $k_2 \inj k^g$ extending $k_1 \inj k^g$, for which we return to the proof of Proposition~\ref{P:finite-gen-pth-roots} and construct the homomorphism step by step.

The $r$-th ($1\leq r \leq r_0$) step of adding generic $p^\infty$-th roots is to construct
$$
k^{(r)}_1 = \Big(k_1^{(r-1)}(x_{r, J})\big( (x_{r-1,j} + x_{r,j}\pi_k)^{1/p^n} ;\, j \in J, n\in \NN \big)\Big)^\wedge.
$$
where $x_{0,j} = b_j$ for $j \in J$ and $k_1^{(0)} = k_1$.  One checks that mapping
$$
x_{r, j} \mapsto \sum_{r'\geq r} v_{r', j} \pi_{k^g}^{r'-r},\quad \forall j \in J, r = \serie{}r_0;
$$
gives the desired homomorphism $k_2 \inj k^g$.

Now, $k_2l / k_2$ has na\"ive ramification degree $[k_2l: k_2]$, so $\calO_{k^gl} = \calO_{k^g} \otimes_{\calO_{k_2}} \calO_{k_2l}$.  Hence, $\Art_\dif(\rho|_{G_{k_2}}) = \Art_{ar}(\rho|_{G_{k_2}}) = \Art_{ar}(\rho|_{G_{k^g}}) = \Art_\dif(\rho|_{G_{k^g}})$ via Theorem~\ref{T:main-theorem} and Proposition~\ref{P:AS-space-properties}(4).  This finishes the proof.
\end{proof}

\bibliographystyle{plain}

\end{document}